\theoremstyle{plain}
\newtheorem{thm}{Theorem}
\newtheorem{cor}[thm]{Corollary}
\newtheorem{prop}[thm]{{\bf Proposition}}
\newtheorem{lem}[thm]{{\bf Lemma}}
\newtheorem{obs}[thm]{{\bf Observation}}
\newcounter{hyp_counter}
\theoremstyle{definition}
\newtheorem{claim}{Claim}
\newtheorem{defn}{Definition}[section]
\theoremstyle{remark}
\newtheorem{rem}{Remark}
\newtheorem{example}{Example}
\newcommand{\Aut}{\operatorname{Aut}}
\newcommand{\Exp}{\operatorname{Exp}}
\newcommand{\Id}{\operatorname{Id}}
\newcommand{\Diff}{\operatorname{Diff}}
\newcommand{\Q}{\mathbb{Q}}
\newcommand{\R}{\mathbb{R}}
\newcommand{\SL}{\operatorname{SL}}
\newcommand{\spann}{\operatorname{span}}
\newcommand{\vol}{\operatorname{vol}}
\newcommand{\Z}{\mathbb{Z}}
\newcommand{\wt}[1]{\widetilde{#1}}
\newcommand{\abs}[1]{\left| #1\right|}
\newcommand{\mc}[1]{\mathcal{#1}}
\newcommand{\mf}[1]{\mathfrak{#1}}
\newcommand{\pez}[1]{\left( #1\right)}
\newcommand{\lcc}[2]{#1_{#2}}                                %#2th term in lower central series of #1
\newcommand{\cgrad}[2]{#1_{(#2)}}                        %#2th term in the Carnot grading of #1
\def\blfootnote{\xdef\@thefnmark{}\@footnotetext}
\title{Local Lyapunov Spectrum Rigidity of Nilmanifold Automorphisms}
\author{Jonathan DeWitt}
\date{\today}
\begin{document}

\maketitle

\begin{abstract}
We study the regularity of a conjugacy between an Anosov automorphism $L$ of a nilmanifold $N/\Gamma$ and a volume-preserving, $C^1$-small perturbation $f$. We say that $L$ is \emph{locally Lyapunov spectrum rigid} if this conjugacy is $C^{1+}$ whenever $f$ is $C^{1+}$ and has the same volume Lyapunov spectrum as $L$. For $L$ with simple spectrum, we show that local Lyapunov spectrum rigidity is equivalent to $L$ satisfying both an irreducibility condition and an ordering condition on its Lyapunov exponents.
\end{abstract}

\section{Introduction}

Since \blfootnote{This material is based upon work supported by the National Science Foundation Graduate Research Fellowship under Grant No. DGE-1746045.}their introduction, Anosov diffeomorphisms have been a central class of examples in the field of dynamical systems.
A diffeomorphism $f$ of a closed Riemannian manifold $M$ is \emph{Anosov} if the tangent bundle of $M$ splits into the continuous direct sum of two $Df$-invariant subbundles $TM=E^u\oplus E^s$ such that $Df$ uniformly expands the length of vectors in $E^u$ and uniformly contracts the length of vectors in $E^s$ (see subsection \ref{subsec:anosov_diffeomorphisms} for a more precise description). We refer to $E^u$ as the \emph{unstable bundle} and $E^s$ as the \emph{stable bundle} associated to $f$.
An important feature of Anosov diffeomorphisms is their structural stability. This means that there exists a $C^1$ neighborhood $\mc{U}$ of $f$ such that if $g\in \mc{U}$ then there exists a homeomorphism $h$ such that $h g h^{-1}=f$. The map $h$ is called a conjugacy and $f$ and $g$ are said to be conjugate. See \autocite[Sec. 2.3]{hasselblatt1995introduction} for more concerning structural stability.

In this paper, we study a rigidity phenomenon concerning conjugacies between two Anosov diffeomorphisms. It is well known that a conjugacy between two Anosov diffeomorphisms is necessarily H\"older continuous, and, in general, no more than H\"older continuity can be expected. 
If a conjugacy between two maps is $C^1$, then the maps are said to be $C^1$ conjugate.
Maps that are $C^1$ conjugate have many common features and so there are natural reasons why two diffeomorphisms cannot be $C^1$ conjugate. In our work, we consider two obvious obstructions to the existence of a $C^1$ conjugacy. Our main result is to show, in a particular setting, that if there is not an obvious obstruction, then there is indeed a $C^1$ conjugacy between a map and its perturbation.

To begin, we will describe the most elementary obstruction to the existence of a conjugacy: the periodic data. Suppose that $f$ and $g$ are two diffeomorphisms that are $C^1$ conjugate by a conjugacy $h$, so that $f=hgh^{-1}$. If $p$ is a periodic point of $f$ of period $n$, then $h(p)$ is a periodic point of $g$ of period $n$. By differentiating, we see that 
\[
D_pf^n=D_{h(p)}hD_{h(p)}g^n(D_{h(p)}h)^{-1}.
\]
Consequently, we see that the differentials of $f^n$ at $p$ and $g^n$ at $h(p)$ are conjugate.
Given two diffeomorphisms $f$ and $g$ and a conjugacy $h$, we say that $f$ and $g$ have the same \emph{periodic data with respect to }$h$ if for each periodic point $p$, if $p$ has period $n$, then $D_pf^n$ and $D_{h(p)}g^n$ are conjugate as linear maps. The previous discussion shows that for two diffeomorphisms to be $C^1$ conjugate, it is necessary for them to have the same periodic data.

We say that a map is $C^{1+}$ when the map is $C^1$ and its derivative is $\theta$-H\"older continuous for some $\theta>0$. We write $\Diff^{1+}_{\vol}(M)$ for the set of volume-preserving diffeomorphisms of $M$ that are $C^{1+}$. 
We are now able to introduce one of the two kinds of rigidity that we will study. 
\begin{defn}
We say that an Anosov diffeomorphism $f$ is \emph{locally periodic data rigid} if there exists a $C^1$ neighborhood $\mc{U}$ of $f$ inside of $\Diff^{1+}(M)$ such that if $g\in \mc{U}$ and $f$ and $g$ have the same periodic data with respect to a conjugacy $h$, then $h$ is $C^{1+}$.
\end{defn}

This type of rigidity has been well studied in the case of Anosov automorphisms of tori. Such an automorphism is defined by the natural action of an element of $\SL(n,\Z)$ on the torus $\R^n/\Z^n$. See, for instance, \autocite{de1987invariants} and \autocite{de1988invariants} for some early results in the case of automorphisms of low dimensional tori. More recently, this problem has been studied by Gogolev, Kalinin, and Sadovskaya.
In \autocite[Thm. 1]{gogolev2018local}, they establish local periodic data rigidity of an Anosov automorphism $L$ of a torus under the assumptions that no three eigenvalues of $L$ have the same modulus and that $L$ and $L^4$ are both irreducible. For earlier results, see also \autocite{gogolev2008smooth} and \autocite{gogolev2011local}.  In recent work, Saghin and Yang \autocite{saghin2018lyapunov} obtained several additional results on the torus.

In this paper, we obtain local periodic data rigidity results for Anosov automorphisms of nilmanifolds. Before stating this result, we briefly explain this setting and why it is the appropriate generalization. A nilmanifold is a smooth manifold obtained by the following construction. One begins with a nilpotent Lie group $N$ and a discrete subgroup $\Gamma$ such that $N/\Gamma$ is compact. The manifold $N/\Gamma$ is then known as a nilmanifold. If $L$ is an automorphism of $N$ preserving $\Gamma$, then $L$ descends to a map on the quotient $N/\Gamma$. Write $\mf{n}$ for the Lie algebra of $N$. The automorphism $L$ induces an automorphism of the Lie algebra $\mf{n}$. If the induced map on $\mf{n}$ has no eigenvalues of unit modulus, then the map on $N/\Gamma$ is an Anosov diffeomorphism. We refer to the map on the quotient $L\colon N/\Gamma\to N/\Gamma$ as an \emph{Anosov automorphism}. It is conjectured that if $f\colon M\to M$ is an Anosov diffeomorphism, then $M$ is finitely covered by a nilmanifold. Consequently, studying Anosov automorphisms on nilmanifolds is quite natural. As far as the author is aware, there are no known examples of Anosov automorphisms exhibiting periodic data rigidity on a non-toral nilmanifold. In this paper we establish the first such example.

We say that an Anosov automorphism has \emph{simple spectrum} if the magnitudes of all the eigenvalues of the induced map on $\mf{n}$ are distinct. In this paper, we restrict our study to automorphisms with simple spectrum. 
 However, even with this restriction not every automorphism exhibits local periodic data rigidity. In the toral case, a condition called \emph{irreducibility} is necessary for periodic data rigidity. We generalize the definition of irreducibility to a nilmanifold automorphism. As it turns out, irreducibility alone is insufficient to ensure periodic data rigidity. We also introduce a condition on the spectrum that we call \emph{sortedness}. Precise statements of these conditions are given later, as they rely on a more detailed development of the notion of an Anosov automorphism of a nilmanifold. For irreducibility, see section \ref{sec:irreducibility}. For sortedness, see Definition \ref{defn:sorted_spectrum}.
\begin{thm}\label{thm:periodic_rigidity}
Suppose that $L\colon N/\Gamma\to N/\Gamma$ is an Anosov automorphism with simple spectrum.  Then $L$ is locally periodic data rigid if and only if $L$ is irreducible and has sorted spectrum.
\end{thm}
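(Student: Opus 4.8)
The plan is to prove the two implications of the equivalence separately; the ``if'' direction is the substantive one, while the ``only if'' direction is handled by producing explicit perturbations. Write $\lambda_1,\dots,\lambda_d$ for the eigenvalues of $L|_{\mf n}$, which by simple spectrum are all real with distinct moduli $|\lambda_1|<\dots<|\lambda_d|$, with eigenvectors $v_1,\dots,v_d$. Suppose first that $L$ is irreducible with sorted spectrum, and let $f\in\Diff^{1+}_{\vol}(N/\Gamma)$ be $C^1$-close to $L$ with conjugacy $h$ (so $hfh^{-1}=L$) having the same periodic data with respect to $h$. The flat eigenline foliations give $L$ a dominated splitting $T(N/\Gamma)=\bigoplus_i E^L_i$ into $DL$-invariant lines; since a finest dominated splitting persists under $C^1$-perturbation and varies continuously, $f$ inherits a finest dominated splitting $\bigoplus_i E^f_i$ into H\"older line bundles close to the $E^L_i$, refining the splitting $E^s_f\oplus E^u_f$. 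On each $E^f_i$ the scalar cocycle $\log\|Df|_{E^f_i}\|$ is H\"older over the transitive Anosov map $f$ and has all periodic orbit averages equal to $\log|\lambda_i|$, so by the Liv\v{s}ic theorem it is cohomologous to the constant $\log|\lambda_i|$; thus the Lyapunov data of $f$ matches that of $L$ line by line.

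The heart of the ``if'' direction is to upgrade $h$ to a $C^{1+}$ map along $W^s_f$ and $W^u_f$. For this I would apply the theory of non-stationary normal forms (sub-resonant polynomial linearizations), in the form of Guysinsky--Katok and Kalinin--Sadovskaya, along the unstable and stable foliations of $f$: since all unstable exponents are distinct and, after the Liv\v{s}ic normalization, effectively constant, one obtains smooth coordinates on each unstable leaf in which $f$ acts by a polynomial automorphism of degree controlled by the multiplicative resonances among the $\lambda_i$. The sortedness hypothesis is precisely what lets these normal-form coordinates be chosen consistently with the nilpotent group structure, identifying each unstable leaf of $f$ with $N^u=\exp(\mf n^u)$ and $f$ with an automorphism of it, matching $L$, whose unstable leaves are literally cosets of $N^u$ with $L$ acting affinely. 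Comparing the normal forms of $f$ and $L$ and invoking both the matched periodic data and irreducibility (which rules out the $L$-invariant rational structure needed to support an inequivalent perturbation), one gets that $h$ is a $C^{1+}$ diffeomorphism along unstable leaves, uniformly in the leaf; the symmetric argument handles $W^s_f$. Since $h$ is then uniformly $C^{1+}$ along the transverse foliations $W^u_f$ and $W^s_f$, the regularity theorem of Journ\'e (equivalently de la Llave--Marco--Moriy\'on) makes $h$ globally $C^{1+}$.

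For the ``only if'' direction I would argue the contrapositive: if $L$ is reducible or not sorted, I would exhibit $f\in\Diff^{1+}_{\vol}(N/\Gamma)$, $C^1$-close to $L$, with the same periodic data as $L$ but a conjugacy that is not $C^{1+}$. If $L$ is reducible it preserves a proper nontrivial rational $L$-invariant subalgebra $\mf h\subset\mf n$, giving an $L$-equivariant nilmanifold fibration $\pi\colon N/\Gamma\to M'$; set $f(x)=L(x)\cdot\psi(\pi(x))$ for a small $C^\infty$ map $\psi\colon M'\to\exp(\mf h)$ chosen so that the resulting fiberwise cocycle over the base dynamics is not smoothly cohomologous to a constant. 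The diagonal blocks of $D_pf^n$ with respect to the fibration agree with those of $D_{h(p)}L^n$, and since all eigenvalue moduli are distinct $D_pf^n$ and $D_{h(p)}L^n$ are conjugate for every periodic point, so the periodic data is unchanged, while $h$ cannot be $C^{1+}$ since it would smoothly trivialize $\psi$. If instead $L$ is not sorted, the failure is witnessed by a bracket relation clashing with the modulus ordering, and I would use that configuration to construct a perturbation supported transverse to the corresponding sub-foliation of $W^u_f$ or $W^s_f$ that preserves the periodic data but forces the normal form of $f$ along that sub-foliation to be genuinely nonlinear, in contrast to the linear normal form of $L$, obstructing $C^{1+}$ regularity of $h$.

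The step I expect to be the main obstacle is the core of the ``if'' direction: showing that the non-stationary normal forms of $f$ along $W^u_f$ and $W^s_f$ actually match those of $L$, i.e., that the only resonant terms surviving are those already forced by the Lie bracket of $\mf n$ and that no extra polynomial corrections remain. This is where simple spectrum, sortedness, and irreducibility all enter: one must propagate the periodic data matching from first order into every higher-order normal-form coefficient via Liv\v{s}ic-type arguments for the associated higher-degree cocycles, and then organize these, using the nilpotent structure and sortedness, into the identification of leaves with $N^u$ and $N^s$. Handling the resonance bookkeeping on a general, non-graded nilpotent Lie algebra, and verifying that irreducibility excludes the remaining invariant structure, is where the real difficulty lies.
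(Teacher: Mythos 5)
There is a genuine gap, and it sits exactly where you flag it: the core of the ``if'' direction. Your plan defers everything to the claim that the non-stationary normal forms of $f$ along $W^u_f$ match those of $L$ and that sortedness plus irreducibility ``organize'' the identification of leaves with $N^u$; but that is the entire content of the theorem, not a step one can cite. Concretely, three things are never established. First, the intermediate one-dimensional distributions $E^{u,f}_i$ of the perturbation are only H\"older and need not be integrable a priori; the paper has to prove unique integrability of each $E^{u,f}_i$ (its Proposition on quasi-isometries and escape speeds, using the coarse geometry of $N$ via Guivarc'h length), and nothing in your outline substitutes for this. Second, you never show that $h$ carries the flag of strong unstable foliations of $f$ to that of $L$; this is where irreducibility actually enters in the paper --- density of the weak leaves $W_i^u\Gamma$ in $N_k\Gamma$ is combined with the fact that the $h$-image of $\mc{S}^{u,f}_{i+1}$ has isometric holonomies between $\mc{W}_i^{u,L}$ leaves to force it to equal $\mc{S}^{u,L}_{i+1}$ --- whereas in your text irreducibility appears only as the slogan that it ``rules out the invariant rational structure.'' Third, the passage from matched exponents to $C^{1+}$ regularity of $h$ along each weak foliation is measure-theoretic in the paper (an invariant $C^{1+}$ volume from the periodic data, Gibbs expanding states, the Pesin/Ledrappier entropy criterion, and the Saghin--Yang regularity lemma); your Liv\v{s}ic normalization of the scalar cocycles $\log\|Df|_{E^f_i}\|$ gives none of this by itself, and the resonance bookkeeping you would need for the normal-form route is precisely what you admit you cannot yet do. The paper also deliberately routes periodic data rigidity through Lyapunov spectrum rigidity via Kalinin's periodic approximation, which your outline bypasses without replacing.

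The ``only if'' direction is closer in spirit to the paper (shearing a fast direction into a slow one), but it too is incomplete. Your reducible-case perturbation $f(x)=L(x)\cdot\psi(\pi(x))$ must act by translation in a \emph{central} (or at least carefully chosen invariant) direction for it to be well defined on $N/\Gamma$ and to keep the triangular block structure that preserves periodic data; and the assertion that ``$h$ cannot be $C^{1+}$ since it would smoothly trivialize $\psi$'' is not automatic --- the twisted cohomological equation always has a continuous solution $\mc{J}(\phi)$, and one must exhibit a specific $\phi$ for which that solution fails to be Lipschitz along the fast direction, which the paper does by a distributional identity and a representation-theoretic orthogonality argument on $L^2(\underline{N}/\underline{\Gamma})$. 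In the unsorted case you give no construction at all. Finally, when the failure of sortedness or irreducibility occurs only on a proper quotient $(N/N_k)/\pi(\Gamma)$, your perturbation lives on the quotient, not on $N/\Gamma$; the paper needs a separate induction step (non-rigidity persists under central extensions, via $A$-maps of principal torus bundles and the fact that a $C^1$ conjugacy upstairs preserves fibers and descends), and this transfer mechanism is absent from your proposal.
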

The proof of sufficiency in this theorem relies on periodic approximation (Proposition \ref{prop:periodic_approximation}), which reduces the sufficiency claim to that in Theorem \ref{thm:rigidity}.
 The necessity of the condition follows from Theorem \ref{thm:non_rigid}.

As the title of this paper suggests, we also study Lyapunov spectrum rigidity. Before we can state our rigidity result in this direction, we briefly develop the necessary language. Suppose that $f$ is a diffeomorphism of a manifold $M$ of dimension $n$ preserving an ergodic invariant measure $\mu$. Then there exists a list of numbers $\lambda_1\le \cdots\le \lambda_n$ such that for $\mu$-a.e.\@ $x\in M$, and any non-zero $v\in T_xM$ there exists $1\le i\le n$ such that
\[
\lim_{n\to \infty} \frac{1}{n}\log \|D_xf^nv\|=\lambda_i.
\]
 The numbers $\lambda_i$ are referred to as the Lyapunov exponents of $f$ with respect to $\mu$. Note that some of the $\lambda_i$ may be repeated. We refer to this list with multiplicity as the \emph{Lyapunov spectrum} of $f$ with respect to $\mu$. In the case of an Anosov automorphism $L$, $\vol$ is an ergodic invariant measure. If the induced map on $\mf{n}$ has eigenvalues $\lambda_1,...,\lambda_n$, then the Lyapunov exponents of $L$ with respect to volume are the numbers $\log \abs{\lambda_i}$ for $1\le i\le n$. Hence when we restrict to $L$ with simple spectrum, the Lyapunov exponents of $L$ with respect to volume are all distinct. 
For more information, see \autocite[Supplement]{hasselblatt1995introduction}.

There is a close relationship between Lyapunov exponents and periodic data. The work of Kalinin in \autocite{kalinin2011livsic} contains a very useful result relating periodic data to Lyapunov exponents. We will not recapitulate this result in full, but instead state a conclusion that follows from it. Suppose that $f$ is an Anosov diffeomorphism of a nilmanifold. Kalinin establishes the following for the Lyapunov exponents of measures invariant under $f$. Suppose that $\mu$ is an ergodic invariant measure and that $\chi_1\le \cdots \le \chi_n$ are the Lyapunov exponents, listed with multiplicity, of $f$ with respect to $\mu$. For any periodic point $p$ there is a natural ergodic invariant measure supported on the orbit of $p$, namely, the uniform measure. Write the Lyapunov exponents of this measure with multiplicity as $\chi_1^{(p)}\le \cdots \le \chi_n^{(p)}$. What Kalinin shows is that for every $\epsilon>0$, there exists a point $p$, so that for $1\le i\le n$, $\abs{\chi_i-\chi_i^{(p)}}<\epsilon$ \autocite[Thm. 1.4]{kalinin2011livsic}. In this sense the Lyapunov exponents of $\mu$ are approximated by the Lyapunov exponents at a periodic point. If an Anosov diffeomorphism has the same periodic data as a linear example, then every periodic point has the same Lyapunov exponents. Consequently, we deduce the following proposition, which is immediate from Kalinin's work.

\begin{prop}\label{prop:periodic_approximation}
(Periodic Approximation) Suppose that $f$ is an Anosov diffeomorphism with the same periodic data as an Anosov automorphism $L$. Then the Lyapunov exponents of every ergodic invariant measure of $f$ coincide with those of $L$.
\end{prop}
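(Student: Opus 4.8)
The plan is to derive Proposition~\ref{prop:periodic_approximation} directly from Kalinin's approximation theorem \autocite[Thm. 1.4]{kalinin2011livsic} combined with the hypothesis that $f$ has the same periodic data as $L$. First I would fix an ergodic $f$-invariant measure $\mu$ and let $\chi_1\le\cdots\le\chi_n$ denote its Lyapunov spectrum, listed with multiplicity. The key input is that for every $\epsilon>0$ there is a periodic point $p$ of $f$ whose Lyapunov spectrum $\chi_1^{(p)}\le\cdots\le\chi_n^{(p)}$ (the spectrum of the uniform measure on the orbit of $p$) satisfies $\abs{\chi_i-\chi_i^{(p)}}<\epsilon$ for all $1\le i\le n$. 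The second key input is that, since $f$ has the same periodic data as $L$ with respect to some conjugacy $h$, for a periodic point $p$ of period $m$ the linear map $D_pf^m$ is conjugate to $D_{h(p)}L^m$; conjugate linear maps have the same eigenvalue moduli, so the Lyapunov exponents of the uniform measure on the orbit of $p$ are exactly $\frac{1}{m}\log\abs{\zeta}$ over the eigenvalues $\zeta$ of $D_pf^m$, which equals the list $\log\abs{\lambda_1},\dots,\log\abs{\lambda_n}$ of Lyapunov exponents of $L$. In particular $\chi_i^{(p)}$ is independent of $p$ and equals the $i$-th Lyapunov exponent of $L$ in sorted order.

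Combining these two observations, for every $\epsilon>0$ we obtain $\abs{\chi_i-\log\abs{\lambda_i}}<\epsilon$ for all $i$ (after sorting), and since $\epsilon$ is arbitrary we conclude $\chi_i=\log\abs{\lambda_i}$ for each $i$. As $\mu$ was an arbitrary ergodic invariant measure, this proves the claim. I would also remark that the identification of the periodic Lyapunov spectrum of $L$ with $\{\log\abs{\lambda_i}\}$ uses the structure of Anosov automorphisms of nilmanifolds recalled in the introduction, namely that the derivative of $L$ is (conjugate to) the induced automorphism of $\mf{n}$ at every point, so that $D_{h(p)}L^m$ has the same eigenvalue moduli as $(L_*)^m$ on $\mf{n}$; this is standard and can be stated without elaboration.

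The proof is essentially a bookkeeping exercise once Kalinin's theorem is invoked, so there is no serious obstacle; the only point requiring a little care is making sure the matching of exponents is compatible with the sorting convention, i.e. that the $i$-th smallest exponent of the periodic measure converges to the $i$-th smallest exponent of $\mu$ — but this is exactly the form of Kalinin's statement quoted above, so no additional argument is needed. One could alternatively phrase the whole thing as: the set of possible Lyapunov spectra of ergodic invariant measures of $f$ is contained in the closure of the set of periodic Lyapunov spectra, and the latter is the single point $(\log\abs{\lambda_1},\dots,\log\abs{\lambda_n})$ by the periodic data hypothesis.
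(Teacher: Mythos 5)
Your proposal is correct and follows the same route as the paper: the paper deduces Proposition \ref{prop:periodic_approximation} exactly as you do, by combining Kalinin's periodic approximation theorem \autocite[Thm. 1.4]{kalinin2011livsic} with the observation that the same-periodic-data hypothesis forces every periodic orbit of $f$ to have the Lyapunov exponents $\log\abs{\lambda_i}$ of $L$. Nothing further is needed.
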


We now introduce a notion of local rigidity that pertains to the volume Lyapunov spectrum. 
\begin{defn}\label{def:lyapunov_spectrum_rigid}
Suppose that $L\in \Diff^{1+}_{\vol}(M)$ is an Anosov automorphism.
We say that $L$ is \emph{locally Lyapunov spectrum rigid} if there exists a $C^1$ neighborhood $\mc{U}$ of $L$ in $\Diff^{1+}_{\vol}(M)$ such that if $g\in \mc{U}$, and the Lyapunov spectrum of $g$ with respect to volume is equal to the Lyapunov spectrum of $L$ with respect to volume, then $g$ is $C^{1+}$ conjugate to $L$.
\end{defn}
There are dynamical systems other than Anosov automorphisms that exhibit Lyapunov spectrum rigidity. For instance, Butler \autocite{butler2017characterizing} recently showed that closed locally symmetric spaces of negative curvature are characterized either in terms of the Lyapunov exponents of their geodesic flow or the periodic data of their geodesic flow. While Butler's result also concerns Lyapunov spectrum rigidity, his approach is quite different from the approach in this paper.

A $C^{1+}$ Anosov diffeomorphism with the same periodic data as an Anosov automorphism preserves a $C^{1+}$ volume \autocite[Thm 19.2.5]{katok1997introduction}.
Thus, by Proposition \ref{prop:periodic_approximation}, we see that in order to show that an Anosov automorphism is locally periodic data rigid that it suffices to show that the automorphism is locally Lyapunov spectrum rigid. 
Obtaining local Lyapunov spectrum rigidity is our main result.
\begin{thm}\label{thm:rigidity}
Suppose that $L\colon N/\Gamma\to N/\Gamma$ is an Anosov automorphism with simple Lyapunov spectrum. Then $L$ is locally Lyapunov spectrum rigid if and only if $L$ is irreducible and has sorted spectrum.
\end{thm}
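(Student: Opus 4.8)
\textbf{Proof proposal for Theorem~\ref{thm:rigidity}.}

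The plan is to split into the two implications. For necessity, I would establish the contrapositive by invoking Theorem~\ref{thm:non_rigid}: if $L$ fails to be irreducible or fails to have sorted spectrum, one constructs a $C^{1+}$ volume-preserving perturbation $g$, arbitrarily $C^1$-close to $L$, that has the same volume Lyapunov spectrum as $L$ but is not $C^{1+}$ conjugate to $L$. The point to check here is that such perturbations can be taken in $\Diff^{1+}_{\vol}(M)$ with the correct volume spectrum; since Proposition~\ref{prop:periodic_approximation} links periodic data and Lyapunov spectra, and Theorem~\ref{thm:non_rigid} is stated precisely to rule out periodic data rigidity, I expect the volume-spectrum version to follow by the same construction, possibly after adjusting the perturbation so it preserves volume and matches the spectrum rather than the full periodic data.

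The substance is the sufficiency direction: assuming $L$ is irreducible and has sorted spectrum, and given a volume-preserving $C^{1+}$ perturbation $g$ with the same volume Lyapunov spectrum as $L$, produce a $C^{1+}$ conjugacy $h$. First I would use structural stability to get the (a~priori only H\"older) conjugacy $h$ with $h g h^{-1} = L$, and use that both maps preserve a smooth volume (for $g$ this is the hypothesis; the conjugacy then transports volume classes). The strategy is to bootstrap regularity of $h$ along the invariant foliations coming from the simple-spectrum splitting. Since $L$ has simple spectrum, its stable and unstable bundles each refine into one-dimensional (Lyapunov) subbundles, and $g$, having the same volume Lyapunov spectrum, inherits a measurable — in fact, by non-stationary linearization / Livsic-type arguments, H\"older — splitting into one-dimensional subbundles with matching exponents. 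The key steps would be: (i) show $h$ intertwines the finest coarse-Lyapunov foliations of $g$ and $L$; (ii) on each one-dimensional leaf, the conjugacy is a map of $\R$ conjugating a smooth expansion/contraction to a linear one with the \emph{same} rate, hence is $C^{1+}$ along leaves by the classical Sternberg/de~la~Llave one-dimensional linearization theorem; (iii) assemble the leafwise regularity into joint regularity using the Journ\'e lemma across a spanning family of transverse H\"older foliations with uniformly $C^{1+}$ leaves. The irreducibility hypothesis is what guarantees, via the Galois-theoretic argument familiar from the toral case in \autocite{gogolev2018local}, that the coarse-Lyapunov subbundles of $g$ are not forced to be merely H\"older, i.e.\ that there is no nontrivial algebraic obstruction preventing the matching of the finer structure; sortedness is what ensures the ordering of exponents across the stable and unstable directions is compatible with a single spanning collection of foliations whose holonomies can be controlled simultaneously, so that Journ\'e's lemma applies.

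For step (ii), the crucial input is that the volume Lyapunov spectrum of $g$ equals that of $L$: this forces the leafwise expansion rate of $g$ on the $i$-th line bundle to have the same Lyapunov exponent $\log|\lambda_i|$ as $L$, which is exactly the non-resonance/matching condition needed so that the leafwise conjugacy — solving a cohomological equation of the form $\phi \circ g = a \cdot \phi$ with $a$ a constant equal to the corresponding eigenvalue modulus — has a $C^{1+}$ solution. I would cite the Liv\v{s}ic-regularity machinery (the same circle of ideas as Kalinin's work \autocite{kalinin2011livsic}, already referenced) to upgrade the measurable solution to H\"older and then to $C^{1+}$ along the one-dimensional leaf using bounded distortion. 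The main technical bookkeeping is in the nilpotent setting: unlike the torus, the Lie algebra $\mf n$ is not abelian, so the coarse-Lyapunov subbundles need not be jointly integrable in the naive way, and the lower-central-series filtration of $\mf n$ interacts with the Lyapunov grading. I would handle this by working with the filtration-adapted coordinates on $N$ (so that $L$ acts block-upper-triangularly compatibly with both the central series and the Lyapunov order), which is precisely where the \emph{sorted spectrum} hypothesis is used: it ensures the Lyapunov order refines the central-series filtration in a way that keeps each relevant intermediate foliation well-defined and with $C^{1+}$ leaves.

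\textbf{Expected main obstacle.} The hardest part will be step~(iii) in the non-abelian setting — assembling leafwise $C^{1+}$ regularity of $h$ into global $C^{1+}$ regularity. On the torus one uses that stable and unstable foliations, together with their one-dimensional sub-foliations, form a collection of transverse foliations spanning $TM$ to which Journ\'e's lemma applies directly; on a nilmanifold the one-dimensional Lyapunov sub-foliations of $g$ need not be integrable individually, so one must instead find an appropriate flag of integrable sub-foliations (built from sums of consecutive Lyapunov line bundles that \emph{are} tangent to subgroups of $N$) whose leaves are $C^{1+}$, and verify that this flag still spans $TM$ and satisfies the hypotheses of a suitable generalization of Journ\'e's lemma. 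Making the flag exist and be $C^{1+}$ is exactly what irreducibility plus sortedness are engineered to provide, so the proof will consist of carefully translating those two hypotheses into the integrability-and-regularity statements that feed the Journ\'e assembly step.
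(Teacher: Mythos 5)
Your overall skeleton matches the paper: necessity is delegated to Theorem~\ref{thm:non_rigid} (and the paper does check, via Proposition~\ref{prop:periodic_approximation} and volume preservation of the shears, that the constructed perturbations have the same volume spectrum), and sufficiency is an intertwining-plus-leafwise-regularity-plus-Journ\'e scheme. But your two central mechanisms for sufficiency have genuine gaps. First, step~(ii): you propose to get $C^{1+}$ regularity of $h$ along each one-dimensional leaf by solving a cohomological equation / invoking Sternberg--de la Llave linearization and ``Liv\v{s}ic-regularity machinery,'' on the grounds that the leafwise expansion rate of $g$ matches $\log|\lambda_i|$. That matching is only a volume-a.e.\ statement; under the hypothesis of equal \emph{volume} Lyapunov spectrum you have no periodic data and no pointwise or leafwise rate matching, which is exactly what a Liv\v{s}ic or linearization argument needs. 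The paper routes around this entirely differently: equality of the volume spectra plus the Pesin entropy formula forces $(h_f)_*\vol=\vol$ (volume is the measure of maximal entropy for both maps), then Lemma~\ref{lem:pesin} gives absolute continuity of the conditionals of volume along the weak foliation $\mc{W}_i^{u,f}$, and the Saghin--Yang criterion (Lemma~\ref{lem:regularitycriterion}, equivalence of Gibbs expanding states) yields uniform $C^{1+}$ regularity of $h$ along leaves. Without some substitute of this measure-theoretic argument your step~(ii) does not go through.

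Second, step~(i) --- that $h$ intertwines the fine invariant foliations --- is asserted rather than argued, and it is where essentially all of the paper's work lies. In the paper this is an induction on the flag of strong foliations $\mc{S}_i^{u,f}$: sorted spectrum enters through the coarse geometry of $N$ (Guivarc'h length, escape speeds, Theorem~\ref{thm:V_i_characterization}) together with the quasi-isometry of leaves under $h$ (Corollary~\ref{cor:QI_foliations}) to prove that $E_i^{u,f}$ is uniquely integrable and that $h$ carries $\mc{W}_i^{u,f}$ to $\mc{W}_i^{u,L}$ (Proposition~\ref{prop:slowtoslow}); then the image $\mc{F}=h_f(\mc{S}_{i+1}^{u,f})$ is shown to have isometric holonomies between weak leaves (Lemma~\ref{lem:F_is_isometric}), and irreducibility is used --- not through a Galois-theoretic statement about H\"older subbundles, but through the density $\overline{W\Gamma}=N_k\Gamma$ of Definition~\ref{defn:irreducibility} and the commutator/weak-vs-strong-distance argument of Proposition~\ref{prop:isometric_foliations} --- to force $\mc{F}=\mc{S}_{i+1}^{u,L}$, which is what advances the induction. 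Your description of what irreducibility and sortedness ``are engineered to provide'' does not supply these arguments, and in particular the integrability of the individual weak line bundles for the perturbation (which you correctly flag as the main obstacle) is obtained in the paper precisely from the sorted-spectrum escape-speed characterization, not from an integrable flag of sums of consecutive line bundles. Your concluding Journ\'e assembly step, by contrast, is essentially the paper's.
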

This theorem follows from the combination of Theorem \ref{thm:sufficiency} and Theorem \ref{thm:non_rigid}, each of which shows one direction of the equivalence. We show that the theorem is not vacuous by constructing explicit examples of such automorphisms in section \ref{sec:examples}. By periodic approximation (Proposition \ref{prop:periodic_approximation}), Theorem \ref{thm:rigidity} also shows the sufficiency of the condition in Theorem \ref{thm:periodic_rigidity}. We believe that this theorem establishes the first known instance of Lyapunov spectrum rigidity for an Anosov automorphism that is not on a torus. For previous work on Lyapunov spectrum rigidity in the torus setting, see \autocite{saghin2018lyapunov} as well as
\autocite{gogolev2018local}.

In the nilmanifold case, there are several new complications that arise in the study of the local rigidity of Anosov automorphisms.
 One major complication is that certain weak foliations that exist in the toral case do not exist in the nilmanifold case because the distributions that define them are not integrable. Another difficulty is that the foliations that arise in our proof are not minimal, whereas in the work of Gogolev, Kalinin, and Sadovskaya, all foliations arising in their proof are minimal by assumption \autocite[Thm. 1.1]{gogolev2018local}. This work provides some of the first examples of rigidity in a setting without an abunance of minimal dynamical foliations. A final important difficulty is that in $\R^n$ a geodesic is a line, i.e. minimizes distance between its points. In a nilpotent Lie group, geodesics may take very inefficient paths at large scale. This difficulty is overcome by working directly within unstable manifolds and using their coarse geometry. This approach seems to be a novelty.

A more technical difference between our result and previous results is that we allow the perturbation to have slightly lower regularity. We assume that the perturbation is $C^{1+}$ whereas in \autocite{gogolev2018local} the perturbation is assumed to be $C^2$. This seems to be the lowest that the regularity  of $f$ can be lowered using the current approach. See Subsection \ref{subsec:regularity_criteria} for a more detailed discussion.

\

\textbf{Acknowledgements.} The author thanks Aaron Brown, Clark Butler, and Amie Wilkinson for their critical comments on earlier versions of this paper. The author thanks Andrey Gogolev and Michael Neaton for helpful discussions. The author also thanks Meg Doucette and Liam Mazurowski for their help improving the readability of the paper.

\subsection{Sketch of proof of Theorem \ref{thm:rigidity}}

Our proof of the sufficiency of the condition in Theorem \ref{thm:rigidity} follows the approach taken in \autocite{gogolev2018local}. We construct a neighborhood $\mc{U}$ of $L$  in the following way. By Theorem \ref{thm:mather_spectrum_perturbation}, we may choose the neighborhood $\mc{U}$ so that if $f\in \mc{U}$ then the unstable bundle $E^u=E^{u,f}$ for $f$ splits into the direct sum of one-dimensional H\"older continuous $Df$-invariant subbundles $E_i^{u,f}$.  We index these subbundles so that if $i<j$ then the expansion properties of $Df$ acting on $E_i^{u,f}$ are weaker than those of $Df$ acting on $E_j^{u,f}$. We call the $E_i^u$ distribution the $i$th unstable distribution. For each $1\le j\le \dim E^u$, the distribution $\oplus_{i\ge j} E_i^{u,f}$ uniquely integrates to a foliation, which we call the $i$th strong unstable foliation and denote by $\mc{S}_i^{u,f}$. We use this superscript notation analogously for other objects depending on the map $f$. Note that the indices $i$ are arranged so that $\mc{S}_1^{u,f}$ is the full unstable foliation and the dimension of a leaf of $\mc{S}_i^{u,f}$ decreases as $i$ increases. This construction is standard and recalled in Proposition \ref{prop:strong_is_uniformly_immersed}. We say that a conjugacy $h$ \emph{intertwines} two foliations $\mc{F}$ and $\mc{G}$ if $h(\mc{F}(x))=\mc{G}(h(x))$ for each $x$. Here and elsewhere, $\mc{F}(x)$ is the leaf of the foliation $\mc{F}$ through the point $x$.

The proof of the theorem is by an inductive argument. The core claim in the induction is that if $h$ is a conjugacy and $h$ intertwines $\mc{S}_i^{u,f}$ and $\mc{S}_i^{u,L}$, then $h$ intertwines $\mc{S}_{i+1}^{u,f}$ and $\mc{S}_{i+1}^{u,L}$. To prove this claim, we construct a one-dimensional foliation $\mc{W}_i^{u,f}$ that restricted to an $\mc{S}_i^{u,f}$ leaf has a global product structure with the $\mc{S}_{i+1}^{u,f}$ foliation (see subsection \ref{subsec:foliations} for the definition of this term). The foliation $\mc{W}_i^{u,f}$ is tangent to the $E_i^{u,f}$ distribution, and so one thinks of the $\mc{W}_i^{u,f}$ foliation as the weakest part of the $\mc{S}_i^{u,f}$ foliation. Constructing the $\mc{W}_i^{u,f}$ foliation is involved and uses a detailed study of the coarse geometry of nilpotent Lie groups, which comprises section \ref{sec:coarse_geometry}. The proof also uses the result that $h$ induces a quasi-isometry on leaves of the $\mc{S}_i^{u,f}$ foliation (see Corollary \ref{cor:QI_foliations}). In this argument, $L$ having sorted spectrum plays a crucial role.

As a byproduct of the construction of $\mc{W}_i^{u,f}$ in Proposition \ref{prop:slowtoslow}, we also obtain that $h$ intertwines the $\mc{W}_i^{u,f}$ foliation with an algebraically defined and analytic foliation $\mc{W}_i^{u,L}$. We pull back the $\mc{S}_{i+1}^{u,f}$ foliation by $h$ to obtain a foliation $\mc{F}$. We then study the holonomies of $\mc{F}$ between leaves of the $\mc{W}_i^{u,L}$ foliation. Surprisingly, by Lemma \ref{lem:F_is_isometric}, these holonomies are isometries. Via an additional argument appearing in section \ref{sec:foliations_with_isometric_isometries}, we show that the foliation $\mc{F}$ is equal to the foliation $\mc{S}_{i+1}^{u,L}$. This additional argument also requires a detailed study of the geometry of nilmanifolds. In this argument, the assumption of irreducibility plays its most important role. One finally concludes that $h$ is $C^{1+}$ by studying the disintegration of volume along the $\mc{W}_i^{u,f}$ foliations; the key idea used is discussed in Subsection \ref{subsec:regularity_criteria}.

The proof of necessity of the condition in Theorem \ref{thm:rigidity} is a systematic procedure for perturbing an automorphism $L$ should either sortedness of the Lyapunov spectrum or irreducibility fail. The general idea is to shear a fast direction into a slower one. This argument appears in section \ref{sec:necessity}.

\section{Preliminaries}

\subsection{Anosov diffeomorphisms}\label{subsec:anosov_diffeomorphisms}

We say that an automorphism of a finite-dimensional real vector space is \emph{hyperbolic} if it has no eigenvalues of modulus one.
 A hyperbolic linear map $A\colon \R^n\to \R^n$ decomposes $\R^n$ into the direct sum of two subspaces: a stable subspace, $E^s$, on which $A$ is a contraction, and an unstable subspace, $E^u$, on which $A^{-1}$ is a contraction.
  We say that an eigenvector $v$ of $A$ is stable or unstable according to which subspace it lies in. In this paper, we study diffeomorphisms whose differentials satisfy an analogous property.
A diffeomorphism $f$ of a compact manifold $M$ is \emph{Anosov} if there exists a continuous splitting of $TM$ into the direct sum of two $Df$-invariant subbundles $E^{s,f}$ and $E^{u,f}$, a Riemannian metric on $M$, and $\lambda>1$ such that, for any $x\in M$,
\[
\|D_xf\mid_{E^{s,f}}\|< \lambda^{-1}<1<\lambda<\|D_xf^{-1}\mid_{E^{u,f}}\|^{-1}.
\]
We refer to the distributions $E^{s,f}$ and $E^{u,f}$ as the stable and unstable distributions of $f$, respectively.

A nilmanifold $N/\Gamma$ is a smooth manifold obtained as the quotient of a nilpotent Lie group $N$ by a cocompact lattice $\Gamma$. For information on nilmanifolds and nilpotent Lie groups, see \autocite{raghunathan1972discrete}. A right-invariant metric on $N$ descends to a metric on $N/\Gamma$ and makes the projection $\pi\colon N\to N/\Gamma$ a local isometry.

\begin{defn}\label{defn:anosov_automorphism}
We say that a map $L\colon N/\Gamma\to N/\Gamma$ of a nilmanifold $N/\Gamma$ is an \emph{Anosov automorphism} if the natural lift of $L$ to $N$ is an automorphism of $N$ and the differential of the lift at $e\in N$ is hyperbolic. In an abuse of notation, we may use $L$ to refer to both the map on $N$ and $N/\Gamma$.
\end{defn}

An Anosov automorphism is an Anosov diffeomorphism. On a nilmanifold, every Anosov diffeomorphism is topologically conjugate to an Anosov automorphism. In the case of nilmanifolds, this is a theorem of Franks \autocite{franks1969anosov} and Manning \autocite{manning1974there}. An infranilmanifold is a manifold that is finitely covered by a nilmanifold. Some infranilmanifolds also admit Anosov diffeomorphisms due to a refinement of the construction in the nilmanifold case. 

In this paper, we consider the regularity of a conjugacy $h$ between an Anosov automorphism $L$ and an Anosov diffeomorphism $f$. In general, there may be infinitely many such conjugacies. However, all conjugacies between $f$ and $L$ have the same regularity. For a discussion of all possible conjugacies, see \autocite{kalinin2010errata}.

\begin{prop}\label{prop:same_regularity}
Suppose that $f$ is an Anosov diffeomorphism and $L$ is an Anosov automorphism. If, for some $k\in \mathbb{N}$ and $\theta\in [0,1)$, there exists a $C^{k+\theta}$ conjugacy between $f$ and $L$, then every other conjugacy between $f$ and $L$ is $C^{k+\theta}$.
\end{prop}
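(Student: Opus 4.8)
The plan is to reduce the statement to the fact that the centralizer of $L$ in $\Homeo(N/\Gamma)$ consists of affine maps, and then to finish with an elementary composition argument. Fix the given $C^{k+\theta}$ conjugacy $h_0$, and let $h$ be any other conjugacy between $f$ and $L$, so that $h_0 f h_0^{-1}=L=hfh^{-1}$. Substituting $f=h_0^{-1}Lh_0$ into $hfh^{-1}=L$ shows that $g:=h\circ h_0^{-1}$ is a homeomorphism of $N/\Gamma$ satisfying $gL=Lg$. If every such $g$ is real-analytic (indeed affine), then $h=g\circ h_0$ is the composition of the $C^{k+\theta}$ map $h_0$ with the analytic map $g$ on the outside, hence again $C^{k+\theta}$ by Fa\`a di Bruno; since $h$ was an arbitrary conjugacy, this is exactly the proposition. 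So the entire content lies in the following claim.

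\textbf{Claim.} Every homeomorphism $g$ of $N/\Gamma$ with $gL=Lg$ is affine. To prove this I would proceed as follows. Since $g$ is uniformly continuous on the compact manifold $N/\Gamma$, the dynamical descriptions of the stable and unstable foliations of $L$ (a leaf through $x$ being $\{y:d(L^nx,L^ny)\to 0 \text{ as }n\to+\infty\}$, and the analogue for the unstable foliation) show that $g$ permutes their leaves. For the automorphism $L$ these leaves are the projections of the left cosets of the subgroups $N^s=\exp(\mf{n}^s)$ and $N^u=\exp(\mf{n}^u)$, where $\mf{n}^s,\mf{n}^u\subseteq\mf{n}$ are the $DL_e$-invariant subalgebras on which $DL_e$ contracts, respectively expands. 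Lift $g$ to $\tilde g\colon N\to N$. Using hyperbolicity of $DL_e$ --- concretely, the surjectivity of $\delta\mapsto L(\delta)\delta^{-1}$ on $N$, which holds because $DL_e-\Id$ is invertible --- one may adjust the lift, at the cost of replacing $g$ by a translate of itself (harmless, since translations are affine), so that $\tilde gL=L\tilde g$ on all of $N$ and $\tilde g(e)=e$; moreover $\tilde g$ is equivariant, $\tilde g(n\gamma)=\tilde g(n)A(\gamma)$, for the induced automorphism $A$ of $\Gamma$, which commutes with $L$. Because $L$ is Anosov, the unstable leaf $\pi(N^u)$ through the base point is dense in $N/\Gamma$, so $\Gamma$ projects densely onto $N^u$ in the factorization $N=N^sN^u$, and likewise onto $N^s$. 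Feeding this density into the facts that $\tilde g$ preserves the two coset foliations, that $\tilde g(n\gamma)=\tilde g(n)A(\gamma)$, and that $\tilde g(e)=e$ pins $\tilde g$ down to agree with the Lie group automorphism of $N$ extending $A$ on a dense set, hence everywhere; thus $\tilde g$, and therefore $g$, is affine.

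The step I expect to be the main obstacle is the last one: forcing $\tilde g$ to be a Lie group automorphism from the information that it commutes with $L$, preserves the stable and unstable coset foliations, and is $\Gamma$-equivariant. In the toral case this is a short computation in the coordinates $\R^n=E^s\oplus E^u$: $\tilde g$ splits as a product $\tilde g_s\times\tilde g_u$, each factor a self-conjugacy of a purely contracting or expanding linear map, and the dense projections of $\Z^n$ onto $E^s$ and $E^u$ together with the equivariance force $\tilde g_s=A|_{E^s}$ and $\tilde g_u=A|_{E^u}$; one genuinely needs the lattice here, since a self-conjugacy of an expanding linear map on a single leaf need not even be $C^1$. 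On a nilpotent group the same outline applies, but $N=N^sN^u$ is only a diffeomorphism rather than a group isomorphism and the adjoint distortion makes these ``coordinates'' coarsely badly behaved --- precisely the kind of difficulty that the coarse-geometry estimates developed elsewhere in this paper are built to control, and which I would invoke at this point. Alternatively, since a complete description of the conjugacies between $f$ and $L$ is available in the literature, one may simply quote that the self-conjugacies of $L$ are affine. Either way, once the Claim is established the proposition follows as in the first paragraph.
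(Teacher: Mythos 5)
Your proposal is correct and follows essentially the same route as the paper: the paper forms the self-conjugacies $h'h^{-1}$ and $hh'^{-1}$ of $L$ and invokes the theorem of Conze and Marcuard that any homeomorphism commuting with an Anosov automorphism is affine, then concludes by composition exactly as in your first paragraph. Your attempted from-scratch proof of the Claim is left incomplete at the step you yourself flag (pinning the lift $\tilde g$ down to a group automorphism), but your fallback of simply quoting the known affinity of the centralizer is precisely what the paper does, so the argument stands.
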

\begin{proof}
 Conze and Marcuard \autocite[Thm. 1]{conze1970conjugaison} proved that if $\gamma$ is a homeomorphism and $A$ is an Anosov automorphism such that $\gamma A\gamma^{-1}=A$,  then $\gamma$ is an affine transformation.  
Note that if $hf=Lh$ and $h'f=Lh'$, then $h'h^{-1}Lhh'^{-1}=L$ and $hh'^{-1}Lh'h^{-1}=L$. Applying the result of Conze and Marcuard to $h'h^{-1}$ and $h'^{-1}h$, we see that both are affine and hence $C^{\infty}$. This implies that $h'$ is as regular as $h$. 
\end{proof}

\subsubsection{Mather spectrum}

We say that a diffeomorphism has \emph{simple Mather spectrum} if there exists a continuous splitting of $TM$ into one-dimensional $Df$-invariant subbundles $E_i$, i.e.  $TM=E_1\oplus \cdots \oplus E_{\dim M}$, and there exist constants $a_i<b_i$ such that $b_i<a_{i+1}$ and a Riemannian metric on $M$ such that if $v\in E_i$,
\begin{equation}\label{eq:mather_growth_estimate}
a_i\|v\|\le \|D_xf v\|\le b_i\|v\|.
\end{equation}
Note that this is different from the usual definition of Mather spectrum but is equivalent. See, for instance, the introduction to \autocite{jiang1995integrability}. For a diffeomorphism $f$ with simple Mather spectrum, we say that the spectrum is contained in $[a_i,b_i]$-rings in accordance with the constants $a_i$ and $b_i$ in equation \eqref{eq:mather_growth_estimate}.

We will use the following standard result when making perturbations. See, for example, \autocite[Thm. A]{jiang1995integrability}.
\begin{thm}\label{thm:mather_spectrum_perturbation}
 Suppose that $f$ has simple Mather spectrum contained in $[a_i,b_i]$-rings. For every $\epsilon>0$, there exists a neighborhood $\mc{U}$ of $f$ in $\Diff^1(M)$ such that the Mather spectrum of any diffeomorphism $g\in \mc{U}$ is contained in $[a_i',b_i']$ rings with $0<a_i-a_i'<\epsilon$ and $0<b_i'-b_i<\epsilon$, $1\le i\le \dim M$, and the corresponding splitting $TM=\oplus_i E_i^g$ satisfies $\rho(E_i^f,E_i^g)\le \epsilon$, where $\rho$ is a metric on the Grassmanian of $1$-planes induced by the Riemannian metric on $M$.
\end{thm}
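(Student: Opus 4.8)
The plan is to prove this standard persistence statement by the usual cone-field (``roughness of hyperbolicity'') argument, applied once to each spectral gap of $f$. Write $n = \dim M$. The first step is to pass from the operator form of the simple-Mather-spectrum hypothesis to a pointwise picture: for a small auxiliary $\delta>0$, by Mather's theorem (equivalently, by the equivalence of the two forms of the condition discussed around \eqref{eq:mather_growth_estimate}) there is a Riemannian metric making the $E_i^f$ mutually orthogonal and satisfying $(a_i-\delta)\|v\|\le \|D_xf v\|\le (b_i+\delta)\|v\|$ for all $x$ and all $v\in E_i^f$. Fix this metric. For $1\le j\le n-1$ put $G_j^f=E_1^f\oplus\cdots\oplus E_j^f$ and $H_j^f=E_{j+1}^f\oplus\cdots\oplus E_n^f$; by orthogonality $\|D_xf\mid_{G_j^f}\|\le b_j+\delta < a_{j+1}-\delta\le m(D_xf\mid_{H_j^f})$, where $m(A)=\|A^{-1}\|^{-1}$ is the conorm. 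Choosing $r_j\in(b_j,a_{j+1})$, the cocycle $r_j^{-1}Df$ is uniformly hyperbolic with stable bundle $G_j^f$ and unstable bundle $H_j^f$, so for a sufficiently small cone width there are cone fields $\mc C_j$ around $H_j^f$ and $\mc C_j^*$ around $G_j^f$, which we take nested ($\mc C_1\supseteq\mc C_2\supseteq\cdots$ and $\mc C_1^*\subseteq\mc C_2^*\subseteq\cdots$), with
\[
D_xf\big(\mc C_j(x)\big)\subseteq \operatorname{int}\mc C_j(fx)\cup\{0\},\qquad D_xf^{-1}\big(\mc C_j^*(x)\big)\subseteq \operatorname{int}\mc C_j^*(f^{-1}x)\cup\{0\},
\]
and the one-step bounds $\|D_xf v\|\ge (a_{j+1}-2\delta)\|v\|$ for $v\in\mc C_j$ and $\|D_xf v\|\le (b_j+2\delta)\|v\|$ for $v\in\mc C_j^*$ (the cone-width loss absorbed into $\delta$).

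Each of these inclusions and inequalities is a strict inequality depending continuously on $(x,v,f)$ over the compact sets $x\in M$ and $v$ in the relevant cone intersected with the unit sphere, so there is a $C^1$-neighborhood $\mc U_j$ of $f$ such that every $g\in\mc U_j$ satisfies the same cone inclusions for $Dg$ and $Dg^{-1}$ and one-step bounds $\|D_xg v\|\ge (a_{j+1}-2\delta-\eta_j)\|v\|$ on $\mc C_j$ and $\|D_xg v\|\le (b_j+2\delta+\eta_j)\|v\|$ on $\mc C_j^*$, with $\eta_j=\eta_j(\mc U_j)\to 0$ as $\mc U_j$ shrinks. For $g\in\mc U_j$ set
\[
H_j^g=\bigcap_{k\ge 0}D_{g^{-k}x}g^{k}\big(\mc C_j(g^{-k}x)\big),\qquad G_j^g=\bigcap_{k\ge 0}D_{g^{k}x}g^{-k}\big(\mc C_j^*(g^{k}x)\big).
\]
Standard cone-trapping arguments give that these are continuous $Dg$-invariant subbundles of the same ranks as $H_j^f$ and $G_j^f$, contained in $\mc C_j$ and $\mc C_j^*$ respectively; since those cones are uniformly transverse, $TM=G_j^g\oplus H_j^g$, and the one-step bounds pass from the cones to the bundles in the fixed metric.

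Now take $\mc U=\bigcap_{j=1}^{n-1}\mc U_j$. For $g\in\mc U$, nestedness of the cone fields makes $G_1^g\subseteq\cdots\subseteq G_{n-1}^g$ and $H_1^g\supseteq\cdots\supseteq H_{n-1}^g$ a transverse flag and coflag, so $E_1^g=G_1^g$, $E_n^g=H_{n-1}^g$, and $E_i^g=G_i^g\cap H_{i-1}^g$ for $2\le i\le n-1$ define a continuous $Dg$-invariant splitting $TM=E_1^g\oplus\cdots\oplus E_n^g$ into one-dimensional subbundles. Since $E_i^g\subseteq \mc C_i^*\cap\mc C_{i-1}$, and $\mc C_i^*$, $\mc C_{i-1}$ are narrow cones around the uniformly transverse subspaces $G_i^f$, $H_{i-1}^f$ whose intersection is $E_i^f$, we get $\rho(E_i^f,E_i^g)\le\epsilon$ after shrinking $\mc U$. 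Finally $E_i^g\subseteq H_{i-1}^g$ and $E_i^g\subseteq G_i^g$ yield, in the fixed metric,
\[
(a_i-2\delta-\eta)\|v\|\le \|D_xg v\|\le (b_i+2\delta+\eta)\|v\|\qquad(v\in E_i^g),\ \ \eta=\textstyle\max_j\eta_j.
\]
Choosing $\delta$ small and then $\mc U$ small, the constants $a_i'=a_i-2\delta-\eta$ and $b_i'=b_i+2\delta+\eta$ satisfy $0<a_i-a_i'<\epsilon$, $0<b_i'-b_i<\epsilon$, and $b_i'<a_{i+1}'$, so $g$ has simple Mather spectrum in $[a_i',b_i']$-rings with the splitting $\bigoplus_i E_i^g$, which is the assertion.

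The only place genuine content enters, rather than bookkeeping, is the first step — Mather's theorem, which turns the spectral hypothesis into pointwise one-step bounds with constants arbitrarily close to $a_i,b_i$ — together with the robustness step. It is worth stressing that one cannot shortcut robustness by invoking continuity of the spectral projections of the weighted shift $v\mapsto Df\,(v\circ f^{-1})$ on continuous sections, since $f\mapsto$(this operator) is not continuous in operator norm (the composition $v\mapsto v\circ f^{-1}$ is not norm-continuous in $f$ on $C^0$); the cone-field formulation above is precisely what makes the $C^1$-openness transparent, and the rest is the linear algebra of intersecting finitely many narrow, uniformly transverse cone fields.
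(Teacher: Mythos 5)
Your argument is correct, but note that the paper does not prove this statement at all: it is quoted as a standard result with a citation to Jiang's Theorem A, so there is no ``paper proof'' to match. Your cone-field argument is in fact essentially the standard route by which such persistence results are established (invariant cone families/graph transform rather than naive spectral perturbation), and your closing remark identifies exactly why: the assignment $g\mapsto g_*$, $(g_*v)(x)=D g\,(v(g^{-1}x))$, on continuous vector fields is not norm-continuous in $g$, so one cannot simply cite continuity of spectral projections of the Mather operator; the openness has to be extracted from pointwise, compactness-robust data, which is what the cones encode. Two small bookkeeping points, neither a gap. First, the paper's definition of simple Mather spectrum already packages the pointwise estimate \eqref{eq:mather_growth_estimate}, so you do not need Mather's theorem to get one-step bounds; the only thing your auxiliary $\delta$ is really paying for is orthogonalizing the bundles $E_i^f$, and the metric obtained by declaring them orthogonal is merely continuous, so you should either smooth it (absorbing the loss into $\delta$, as you implicitly do) or keep the continuous metric and note that it is uniformly comparable to the original one on the compact manifold $M$, so that the Grassmannian estimate $\rho(E_i^f,E_i^g)\le\epsilon$ transfers. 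Second, your nestedness of the cone families $\mc{C}_j$ and $\mc{C}_j^*$ does require the orthogonality (or at least uniform transversality with controlled angles) to hold with equal widths, which your adapted metric provides; with that in place the flag argument $G_i^g=G_{i-1}^g\oplus\bigl(G_i^g\cap H_{i-1}^g\bigr)$ and the dimension count giving $TM=\bigoplus_i E_i^g$ go through exactly as you wrote them.
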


Note that simple Mather spectrum gives an estimate, equation \eqref{eq:mather_growth_estimate}, that is uniform over all of $M$.
 In the sequel, we  work with the decomposition of $TM$ into one-dimensional subbundles having the properties given in Theorem \ref{thm:mather_spectrum_perturbation}. In the case of simple Mather spectrum, there is a splitting of the unstable bundle into continuous one-dimensional $Df$-invariant subbundles. In the notation of the theorem, we write:
\begin{equation}
E^{u,f}=\bigoplus_{i=1}^{\dim E^u} E^{u,f}_i,
\end{equation}
and order the subspaces so that $i<j$ implies that the maximum expansion $b_i$ of $E^{u,f}_i$ is smaller than the minimum expansion $a_j$ of $E^{u,f}_j$. We refer to $E_i^{u,f}$ as the \emph{$i$th unstable subspace}.

\subsection{Dynamical foliations for Anosov automorphisms}

We describe algebraically the stable and unstable manifolds of an Anosov automorphism. Let $L$ be an automorphism of a nilpotent Lie group $N$.
For $g\in N$, write $R_g$ for right multiplication by $g$. We trivialize the tangent bundle as $N\times T_eN=N\times \mf{n}$ by the map that sends
\[
(g,v)\mapsto (R_g)_*v\in T_{g}N.
\]
The above map $N\times T_eN\to TN$ is the usual trivialization of the tangent bundle of $N$ on the right. For a diffeomorphism $f\colon N\to N$, the differential of $f$ in this trivialization is
\[
(g,v)\mapsto (f(g),(D_{f(g)}R_{f(g)^{-1}})(D_gf)(D_eR_g)v).
\]
In the case that $f=R_g$, the induced map in the trivialization is
\[
(h,v)\mapsto (R_gh,v).
\]
These constructions allow a precise study of the dynamics of an automorphism of $N$ and of its induced map on a quotient $N/\Gamma$ for an invariant lattice $\Gamma$. The results of such a study are summarized in the following proposition.

\begin{prop}\label{prop:dynamical_foliations_for_nilmanifold_automorphisms}
(Dynamical foliations for nilmanifold automorphisms). Suppose that $L\colon N\to N$ is an automorphism of a nilpotent Lie group $N$ such that $D_eL\colon \mf{n}\to \mf{n}$ has simple real spectrum with no eigenvalues of modulus $1$. Index the eigenvalues of $D_eL$ of modulus greater than one so that
\[
1<\abs{\lambda_1^u}<\cdots<\abs{\lambda_k^u},
\]
and write $E_{\lambda_i^u}$ for the corresponding eigenspace. 
\begin{enumerate}
\item

For $1\le i\le k$, the ``strong" linear subspace $\mf{s}_{i}^u\coloneqq \oplus_{j\ge i} E_{\lambda_j^u}$ is a subalgebra of $\mf{n}$ tangent to a subgroup $S_i^u$. Similarly, the ``weak" subspace $E_{\lambda_i^u}$ is tangent to a subgroup $W_i^u$.  There exists a right-invariant foliation of $N$ obtained from the right translates of these subgroups. The leaf through $x\in N$ of these foliations is equal to $S_i^ux$ and $W_i^ux$, respectively.
\item

Suppose that $\Gamma$ is a lattice such that $L(\Gamma)=\Gamma$. Then $L$ descends to a map $N/\Gamma\to N/\Gamma$. Moreover, for each $1\le i\le k$, the foliations of $N$ with leaves $W_i^ux$ and $S_i^ux$ descend to $L$-invariant foliations of $N/\Gamma$. We refer to these foliations as the $\mc{W}_i^{u,L}$ and $\mc{S}_i^{u,L}$ foliations. For $x\Gamma\in N/\Gamma$, $\mc{W}_i^{u,L}(x\Gamma)=W_i^{u,L}x\Gamma$ and $\mc{S}_i^{u,L}(x\Gamma)=S_i^{u,L}x\Gamma$.
\item

The right-invariant distribution defined by $E_{\lambda_{i}^u}$ at the identity  projects to a distribution on $N/\Gamma$. The leaves of the $\mc{W}_i^{u,L}$ foliation are characterized by their tangency to this distribution. Similarly, the leaves of the $\mc{S}_i^{u,L}$ foliation are characterized by their tangency to the projection of the right-invariant distribution arising from the subalgebra $\mf{s}_i^u=\oplus_{j\ge i} E_{\lambda_j^u}\subseteq \mf{n}$.  In particular, these distributions are uniquely integrable.
\item

Finally, any right-invariant framing of $E_{\lambda_i^u}$ on $N$ projects to a framing of the corresponding distribution on $N/\Gamma$ via the differential of $\pi\colon N\to N/\Gamma$. Fixing such a framing, we identify the action of the differential of $L$ restricted to the $W_i^ux\Gamma$ foliation with the action of $L$ on the right-invariant framing of $E_{\lambda_i^u}$ on $N$, which itself is identified with the action of $D_eL$ on $\mf{n}$ via the trivialization. The action on $E_{\lambda_i^u}\subset \mf{n}$ is multiplication by $\lambda_i^u$. So, with respect to this framing of the projection of $E_{\lambda_i^u}$ to $N/\Gamma$, the differential of $L$ is multiplication by $\lambda_i^u$. Similar considerations apply to $S_i^u$.
\end{enumerate}
\end{prop}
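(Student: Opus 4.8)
The plan is to prove Proposition~\ref{prop:dynamical_foliations_for_nilmanifold_automorphisms} by first doing everything on $N$ itself, where the algebra is transparent, and then pushing forward to $N/\Gamma$. The key linear-algebra input is that $D_eL$ has simple real spectrum; I would begin by recording the standard fact that for an automorphism $L$ of a Lie group, the differential $D_eL$ is a Lie algebra automorphism of $\mf{n}$, so it respects brackets. Combined with the multiplicativity of eigenvalues under the bracket (if $[X,Y]\neq 0$ with $D_eL X=\mu X$, $D_eL Y=\nu Y$, then $D_eL[X,Y]=\mu\nu[X,Y]$), this gives: any span of eigenspaces closed under multiplication of the corresponding eigenvalues is a subalgebra. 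Since $L$ is hyperbolic, products of unstable eigenvalues have modulus $>1$, so for each $i$ the set $\{\lambda_i^u,\dots,\lambda_k^u\}$ spanning $\mf{s}_i^u=\oplus_{j\ge i}E_{\lambda_j^u}$ is ``multiplicatively upward closed'' among the eigenvalues appearing, hence $\mf{s}_i^u$ is a subalgebra. (The top eigenspace $E_{\lambda_i^u}$ taken alone need not be a subalgebra in general, but $[X,X]=0$ means the \emph{one-dimensional} span is trivially a subalgebra.) For a simply connected nilpotent $N$, $\exp$ is a diffeomorphism, so each subalgebra $\mf{s}_i^u$, $E_{\lambda_i^u}$ integrates to a closed connected subgroup $S_i^u$, $W_i^u$; this gives part~(1), with the right-translate foliation coming from the coset partition $\{S_i^u x\}$, $\{W_i^u x\}$. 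These are right-invariant by construction.

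For part~(3) on $N$, I would use the trivialization of $TN$ on the right described just above the proposition: a right-invariant distribution is exactly the pushforward of a fixed subspace of $\mf{n}$ under the $(R_g)_*$, and the coset $S_i^u x$ is an integral manifold because $\mf{s}_i^u$ is a subalgebra (so the distribution is involutive, and the integral manifold through $e$ is $S_i^u$ itself, through $x$ its right translate). Unique integrability then follows from Frobenius. Parts (2),(3),(4) for $N/\Gamma$ are obtained by descent through $\pi\colon N\to N/\Gamma$, which is a covering map and a local isometry for the chosen right-invariant metric. The right-invariance of the distributions/foliations under \emph{all} of $N$, in particular under $\Gamma$ acting by right translation, is what makes them well-defined downstairs; explicitly $S_i^u(x\gamma)=(S_i^u x)\gamma$, so the leaf $\mc{S}_i^{u,L}(x\Gamma):=\pi(S_i^u x)=S_i^u x\Gamma$ is independent of the representative $x$. $L$-invariance is the computation $L(S_i^u x)=L(S_i^u)L(x)=S_i^u L(x)$, using that $D_eL$ preserves $\mf{s}_i^u$ hence $L$ preserves the subgroup $S_i^u$; this descends since $L(\Gamma)=\Gamma$. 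The tangency characterization (part 3 downstairs) is immediate because $\pi$ is a local diffeomorphism carrying the right-invariant distribution on $N$ to the claimed distribution on $N/\Gamma$ and carrying integral manifolds to integral manifolds, and unique integrability is a local property preserved by local diffeomorphisms.

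Part~(4) is essentially bookkeeping about framings: a right-invariant frame of $E_{\lambda_i^u}$ is, in the right trivialization, the constant section $g\mapsto$ (a fixed basis of $E_{\lambda_i^u}\subset\mf{n}$); by the displayed formula for the differential of a map in this trivialization, the differential of the automorphism $L$ acts on this constant section exactly as $D_eL$ acts on $E_{\lambda_i^u}$, namely by multiplication by $\lambda_i^u$ (one dimensional eigenspace). Since $\pi$ is a local isometry intertwining $L$ on $N$ with $L$ on $N/\Gamma$ and carrying the right-invariant frame to the projected frame, the same scalar $\lambda_i^u$ describes $D L$ restricted to the $\mc{W}_i^{u,L}$ distribution downstairs; the identical argument with $\mf{s}_i^u$ handles $S_i^u$.

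I expect the only genuine obstacle to be part~(1): verifying that the ``strong'' spans $\mf{s}_i^u$ are subalgebras and that they integrate to \emph{closed} subgroups so that the cosets form a genuine foliation (rather than merely a singular one). Closedness is where nilpotency and simple connectivity of $N$ are used crucially — $\exp\colon\mf{n}\to N$ is a global diffeomorphism, so every subalgebra is $\exp$ of itself and is automatically a closed, simply connected subgroup; there is no issue of dense winding as there would be for a torus. Everything after that is a routine transfer of right-invariant structures through the covering $\pi$, which I would state but not belabor.
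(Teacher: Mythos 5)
Your proposal is correct and follows essentially the same route as the paper: brackets of eigenvectors multiply eigenvalues, so the strong spans $\mf{s}_i^u$ (and the one-dimensional $E_{\lambda_i^u}$) are subalgebras integrating to subgroups whose right cosets give right-invariant foliations, which then descend through $\pi\colon N\to N/\Gamma$, with part (4) read off from the right trivialization in which $DL$ acts as $D_eL$. Your extra attention to closedness of the subgroups via $\exp$ being a global diffeomorphism is a reasonable refinement of a point the paper leaves implicit, but it does not change the argument.
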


\begin{rem}
In the sequel we use the notation $\mc{W}_i^{u,L}$ and $\mc{S}_i^{u,L}$ to denote these ``weak" and ``strong" foliations on both $N$ and $N/\Gamma$. Given the context in which this notation appears, this should cause no confusion.
\end{rem}

We now outline the proof of the above proposition.

\begin{proof}[Proof of Proposition \ref{prop:dynamical_foliations_for_nilmanifold_automorphisms}.]
Fixing a metric on $T_eN$, we obtain a right-invariant metric on $N$. The automorphism $L$ has differential $D_eL\colon \mf{n}\to \mf{n}$ at $e$. With respect to the trivialization of the tangent bundle on the right, the map $DL\colon N\times T_eN \to N\times T_eN$ is
\[
(g,v)\mapsto (L(g),(D_{L(g)}R_{L(g)^{-1}})(D_gL)(D_eR_g)v).
\]
(Note that $L(g)$ is the image of $g$ under the automorphism $L$ and not left translation.)
In the above expression, the composition of differentials applied to $v$ is the differential of the map $h\mapsto L(hg)L(g)^{-1}=L(h)$ at $e$. As this map is equal to $L$, we see that the map being applied to $v$ is $D_eL$. Consequently, with respect to the trivialization of $TN$ by right translation, $DL$ has the expression
\[
D_g(g,v)=(L(g),(D_eL)v).
\]

By assumption, $\mf{n}$ splits into one-dimensional eigenspaces of $DL$, so that $\mf{n}=\oplus_{\lambda} E_{\lambda}$. By pushing this splitting into eigenspaces through the trivialization, we obtain a right-invariant, $DL$-invariant splitting of $TN$.
A right-invariant metric makes $L$ Anosov on $N$. In particular, any estimate we have on the growth of norm of vectors in $E_{\lambda}\subset \mf{n}$ now holds globally for the splitting on $N$.

\begin{enumerate}
\item

Consider $D_eL\colon \mf{n}\to \mf{n}$ in this way. As we previously assumed, there is a splitting of $\mf{n}$ into one-dimensional eigenspaces of $D_eL$ that we write as $\mf{n}=\oplus_{\lambda} E_{\lambda}$ where $\lambda\in \R$. Observe that if $v\in E_{\lambda}$ and $w\in E_{\lambda'}$ then $DL([v,w])=[DL v,DL w]=\lambda\lambda'[v,w]$. 

As the bracket of eigenvectors is either an eigenvector or zero, for a fixed $i$, $\oplus_{\abs{\lambda}\ge \abs{\lambda_i^u}} E_{\lambda}$ is a subalgebra of $\mf{n}$ and hence is tangent to an analytic subgroup of $N$. We write $S_i^u$ for the analytic subgroup tangent to $\oplus_{\abs{\lambda}\ge \abs{\lambda_i^u}} E_{\lambda}$ at $e$.
 We also consider the one-dimensional subspaces $E_{\lambda}$, which are subalgebras as they are one-dimensional. We write $W_i^u$ for the analytic subgroup tangent to $E_{\lambda_i^u}$ at $e$.

We write $\mf{n}^s$ for the subspace of $\mf{n}$ spanned by eigenvectors with eigenvalue of modulus less than one and $\mf{n}^u$ for the subspace of $\mf{n}$ spanned by eigenvectors with eigenvalue of modulus greater than one. In this case we write $N^u$ and $N^s$ for the corresponding analytic subgroups. Note that $N^u=S_1^u$.

Suppose that $E\subset \mf{n}$ is a $D_eL$-invariant subspace. If $E$ is a subalgebra of $\mf{n}$, then $E$ is tangent to an analytic subgroup of $N$, which we will call $N_E$. The right translates of $N_E$ are tangent to the right translates of $E$. Hence any properties of the differential of $L$ on $E$ hold at every point of $N_Ex$ for all $x\in N$, with respect to a right-invariant metric. Applying this reasoning to the subalgebras $\oplus_{\abs{\lambda}\ge \abs{\lambda_i^u}} E_{\lambda}$ and $E_{\lambda}$ establishes the first part of the proposition. 
\item

Consider a lattice $\Gamma\subset N$ such that $L(\Gamma)=\Gamma$. Note that $N/\Gamma$ is a compact manifold, see \autocite[Thm. II.2.1]{raghunathan1972discrete}, and, as the metric we chose on $N$ is right-invariant, the quotient map $\pi\colon N\to N/\Gamma$ is a local isometry. Moreover, right-invariant structures on $N$ descend to structures on $N/\Gamma$. As the foliations with leaves $W_i^ux$ and $S_i^ux$ are right-invariant, they descend to foliations on $N/\Gamma$ via the projection. We refer to these foliations as $\mc{W}_i^{u,L}$ and $\mc{S}_i^{u,f}$ respectively. 

\item

The tangent distribution to the $W_i^ux$ foliation and $S_i^ux$ foliations are right-invariant, and hence descend to distributions tangent to the foliations $\mc{W}_i^{u,L}$ and $\mc{S}_i^{u,L}$. An integrable smooth distribution is uniquely integrable, so we see that the $\mc{W}_i^{u,L}$ and $\mc{S}_i^{u,L}$ distributions are characterized by their tangent distributions.

\item

The final part is immediate. A fixed framing of the invariant splitting of $\mf{n}$ extends to a right-invariant framing of $N$ that projects to a framing of $N/\Gamma$. As $\pi\colon N\to N/\Gamma$ is a local isometry and respects the action of $L$, we see that the action on a frame may be computed by lifting the frame and then projecting back to $N/\Gamma$. Consequently, any estimates on the framing in $N$ give estimates on the framing in $N/\Gamma$.

\end{enumerate}

\end{proof}

\subsection{Automorphisms of nilmanifolds}\label{subsec:automorphisms}

In this section, we give an explicit description of the eigenspace decomposition associated to an automorphism of a real nilpotent Lie algebra with sorted simple spectrum.

We write $\mathbb{N}=\{1,2,\ldots\}$. An $\mathbb{N}$-grading of a Lie algebra $\mf{g}$ is a direct sum decomposition $\mf{g}=\oplus_{i\in \mathbb{N}}\cgrad{\mf{g}}{i}$ such that $[\cgrad{\mf{g}}{{i}},\cgrad{\mf{g}}{{j}}]=\cgrad{\mf{g}}{i+j}$. We say that a $\mathbb{N}$-grading of a nilpotent Lie algebra is \emph{Carnot} if $\cgrad{{\mf{g}}}{1}$ generates $\mf{g}$ as a Lie algebra, i.e. $\mf{g}$ is the smallest subalgebra of itself containing $\cgrad{\mf{g}}{1}$. 
For a nilpotent Lie algebra $\mf{n}$, we write $\lcc{\mf{n}}{i}$ for the $i$th term in the lower central series of $\mf{n}$. Recall that, by definition, $\lcc{\mf{n}}{1}=\mf{n}$ and $\lcc{\mf{n}}{i+1}=[\mf{n},\lcc{\mf{n}}{i}]$. 
If $\mf{n}$ is a nilpotent Lie algebra equipped with an automorphism $L$ that admits a real eigenbasis, we define the $L$\emph{-grading} of $\mf{n}$ in the following way: define $\cgrad{{\mf{n}}}{i}$ to equal to the linear subspace generated by the eigenvectors of $L$ acting on $\lcc{\mf{n}}{i}$ that do not lie in $\lcc{\mf{n}}{i+1}$. 
While the notation $\cgrad{\mf{n}}{i}$ for the $L$-grading does not demonstrate the dependence on $L$, we only ever consider one automorphism at a time and this should not cause confusion.

We are now able to define sorted spectrum.

\begin{defn}\label{defn:sorted_spectrum}
Suppose $L\colon \mf{n}\to \mf{n}$ is an automorphism of a nilpotent Lie algebra $\mf{n}$. We say that $L$ has \emph{sorted unstable spectrum} if, for any $j>k$ and any two unstable eigenvectors $v$ and $w$ with eigenvalues $\lambda_v$ and $\lambda_w$, if $v\in \cgrad{\mf{n}}{k}$ and $w\in \cgrad{\mf{n}}{j}$, then $\abs{\lambda_w}>\abs{\lambda_v}$. We say that $L$ has \emph{sorted stable spectrum} if $L^{-1}$ has sorted unstable spectrum. We say that an Anosov automorphism of $N/\Gamma$ has sorted spectrum if the induced map on $\mf{n}$ has sorted stable and unstable spectrum.
\end{defn}

\begin{prop}\label{prop:Lgradingcarnot}
Suppose that $L$ is an automorphism of a real nilpotent Lie algebra $\mf{n}$ with a real eigenbasis. Then the $L$-grading of $\mf{n}$ is Carnot. Further, if $v\in \cgrad{\mf{n}}{i}$ is an eigenvector, then there exist $v_1,\ldots,v_i\in \cgrad{\mf{n}}{1}$ such that $v=[v_1,[v_2,\ldots,[v_{i-1},v_i]\ldots]]$.
\end{prop}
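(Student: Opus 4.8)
The plan is to prove the statement about eigenvectors first, by induction on $i$, and then read off the Carnot property as a consequence; the two halves of the proposition are really one argument.

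I would begin with two preliminary remarks. Since $L$ is a Lie algebra automorphism, every term $\lcc{\mf{n}}{i}$ of the lower central series is $L$-invariant: this is immediate by induction from $\lcc{\mf{n}}{1}=\mf{n}$ and $L(\lcc{\mf{n}}{i+1})=L[\mf{n},\lcc{\mf{n}}{i}]=[L\mf{n},L\lcc{\mf{n}}{i}]=[\mf{n},\lcc{\mf{n}}{i}]=\lcc{\mf{n}}{i+1}$. Because $L$ is diagonalizable over $\R$ and $\lcc{\mf{n}}{i+1}\subseteq\lcc{\mf{n}}{i}$ are $L$-invariant, the $L$-eigenvectors contained in $\lcc{\mf{n}}{i}$ but not in $\lcc{\mf{n}}{i+1}$ span a complement of $\lcc{\mf{n}}{i+1}$ in $\lcc{\mf{n}}{i}$, so $\lcc{\mf{n}}{i}=\cgrad{\mf{n}}{i}\oplus\lcc{\mf{n}}{i+1}$; iterating this from $\lcc{\mf{n}}{c+1}=0$ (with $c$ the nilpotency class) gives $\mf{n}=\bigoplus_{i}\cgrad{\mf{n}}{i}$, and more generally $\lcc{\mf{n}}{k}=\bigoplus_{i\ge k}\cgrad{\mf{n}}{i}$, with each $\cgrad{\mf{n}}{i}$ spanned by eigenvectors. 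Here one uses that distinct eigenvalues span distinct eigenlines, so that each eigenvector lies in a single summand $\cgrad{\mf{n}}{i}$. In particular the $L$-grading is at least a direct-sum decomposition of $\mf{n}$.

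For the eigenvector statement I would induct on $i$; the case $i=1$ is trivial. Assume it for degree $i$ and let $v\in\cgrad{\mf{n}}{i+1}$ be an eigenvector, with eigenvalue $\mu$ and $v\notin\lcc{\mf{n}}{i+2}$. Since $v\in\lcc{\mf{n}}{i+1}=[\mf{n},\lcc{\mf{n}}{i}]$ we may write $v=\sum_k [p_k,q_k]$ with $p_k\in\mf{n}$ and $q_k\in\lcc{\mf{n}}{i}$. Expanding each $p_k$ and $q_k$ in an eigenbasis and using that a bracket of two eigenvectors is an eigenvector (or zero) whose eigenvalue is the product of the two, then projecting the resulting sum onto the $\mu$-eigenspace, I may assume each $p_k$ and $q_k$ is an eigenvector, each $[p_k,q_k]\neq 0$, $q_k\in\lcc{\mf{n}}{i}$, and the eigenvalue of $p_k$ times that of $q_k$ equals $\mu$. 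Let $r_k\ge 1$ and $s_k\ge i$ be the indices with $p_k\in\cgrad{\mf{n}}{r_k}$ and $q_k\in\cgrad{\mf{n}}{s_k}$. Then $0\neq[p_k,q_k]\in[\lcc{\mf{n}}{r_k},\lcc{\mf{n}}{s_k}]\subseteq\lcc{\mf{n}}{r_k+s_k}$, and since $[p_k,q_k]$ is a nonzero multiple of $v\notin\lcc{\mf{n}}{i+2}$ we get $r_k+s_k\le i+1$; together with $r_k\ge 1$ and $s_k\ge i$ this forces $r_k=1$ and $s_k=i$. Picking one index $k$ and applying the inductive hypothesis to $q_k\in\cgrad{\mf{n}}{i}$ writes $q_k$ as an $i$-fold right-nested bracket of elements of $\cgrad{\mf{n}}{1}$; absorbing the scalar into $p_k$, $v=[p_k,q_k]$ is then the desired $(i+1)$-fold right-nested bracket. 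As a byproduct this shows $\cgrad{\mf{n}}{i+1}\subseteq[\cgrad{\mf{n}}{1},\cgrad{\mf{n}}{i}]$.

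It remains to deduce that the $L$-grading is Carnot. Generation is immediate: each $\cgrad{\mf{n}}{i}$ is spanned by eigenvectors, each of which is an $i$-fold bracket of elements of $\cgrad{\mf{n}}{1}$, so $\cgrad{\mf{n}}{1}$ generates $\mf{n}$. Given the direct-sum decomposition and the byproduct of the previous paragraph, the remaining grading relations reduce to the inclusion $[\cgrad{\mf{n}}{i},\cgrad{\mf{n}}{j}]\subseteq\cgrad{\mf{n}}{i+j}$, and by the eigenvector statement this in turn reduces to showing that a nonzero right-nested $\ell$-fold bracket of $\cgrad{\mf{n}}{1}$-eigenvectors lies in $\cgrad{\mf{n}}{\ell}$. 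Such a bracket is an eigenvector lying in $\lcc{\mf{n}}{\ell}$, hence in $\cgrad{\mf{n}}{m}$ for some $m\ge\ell$, and the step I expect to be the main obstacle is ruling out $m>\ell$ — equivalently, showing $[\cgrad{\mf{n}}{1},\cgrad{\mf{n}}{\ell}]\cap\lcc{\mf{n}}{\ell+2}=0$, i.e. that bracketing with a degree-one element never pushes an element strictly deeper into the lower central series than the naive degree count. The natural way to close this is to track the eigenvalues: since $\cgrad{\mf{n}}{1}$ generates and, by the previous step, $\cgrad{\mf{n}}{\ell}$ is a quotient of the degree-$\ell$ homogeneous component of the free nilpotent Lie algebra on $\cgrad{\mf{n}}{1}$ as an $L$-module, every eigenvalue occurring in $\cgrad{\mf{n}}{\ell}$ is a product of exactly $\ell$ eigenvalues of $L|_{\cgrad{\mf{n}}{1}}$; one then uses simplicity of the spectrum to force a bracket of a degree-one and a degree-$\ell$ eigenvector into degree exactly $\ell+1$. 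Combining this with the first paragraph yields $\mf{n}=\bigoplus_i\cgrad{\mf{n}}{i}$ with $[\cgrad{\mf{n}}{1},\cgrad{\mf{n}}{i}]=\cgrad{\mf{n}}{i+1}$, which is what it means for the $L$-grading to be Carnot.
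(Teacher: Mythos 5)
Your first two paragraphs track the paper's own route: the splitting $\lcc{\mf{n}}{i}=\cgrad{\mf{n}}{i}\oplus\lcc{\mf{n}}{i+1}$, the inclusion $\cgrad{\mf{n}}{i+1}\subseteq[\cgrad{\mf{n}}{1},\cgrad{\mf{n}}{i}]$ obtained by expanding $v=\sum_k[p_k,q_k]$ into eigenvector components, and the induction giving the nested-bracket form; your index bookkeeping ($r_k+s_k\le i+1$, hence $r_k=1$, $s_k=i$) is if anything more explicit than the paper's. The problem is the step you yourself flag as the main obstacle and only sketch: the inclusion $[\cgrad{\mf{n}}{1},\cgrad{\mf{n}}{\ell}]\subseteq\cgrad{\mf{n}}{\ell+1}$, i.e.\ ruling out that a bracket of a degree-one eigenvector with a degree-$\ell$ eigenvector falls into $\lcc{\mf{n}}{\ell+2}$. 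The mechanism you propose (track eigenvalues, use simplicity of the spectrum) does not close it: simple spectrum is not a hypothesis of the proposition, and even granting it, a product of $\ell+1$ eigenvalues of $L\mid_{\cgrad{\mf{n}}{1}}$ can coincide with a product of a different number of them, so the eigenvalue of the bracket does not determine its layer. Concretely, let $\mf{n}$ be spanned by $x_1,x_2,x_3,z,w$ with the only nonzero brackets $[x_1,x_2]=z$, $[x_1,z]=w$, $[x_2,x_3]=w$ (Jacobi is checked directly), and let $L=\diag(2,3,4,6,12)$ in this basis. This is an automorphism with simple real spectrum, $\cgrad{\mf{n}}{1}=\spann\{x_1,x_2,x_3\}$, $\cgrad{\mf{n}}{2}=\spann\{z\}$, $\cgrad{\mf{n}}{3}=\spann\{w\}$, and yet $[x_2,x_3]=w\in\lcc{\mf{n}}{3}$ precisely because $3\cdot 4=2\cdot 2\cdot 3$. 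So the no-jumping statement you reduced to cannot be established by eigenvalue counting; it fails in the stated generality, and your proposal therefore has a genuine gap at this step.

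For comparison, the paper disposes of this same inclusion in one line: a bracket of eigenvectors is an eigenvector lying in $\lcc{\mf{n}}{i}$, hence lies in $\cgrad{\mf{n}}{i}$ ``by definition'' --- which silently assumes exactly the point at issue, namely that the bracket does not lie in $\lcc{\mf{n}}{i+1}$. So you have correctly isolated where the real content of the Carnot claim sits, but neither your sketch nor the strategy you indicate supplies it. A smaller remark of the same flavor: your nested-bracket paragraph uses that each nonzero $[p_k,q_k]$ is a multiple of $v$, which needs one-dimensionality of the $\mu$-eigenspace and so also goes beyond the stated hypothesis of a real eigenbasis (the paper's induction makes the same implicit use).
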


\begin{proof}
We begin by showing that for $i\ge 2$, $\cgrad{\mf{n}}{i}=[\cgrad{\mf{n}}{1},\cgrad{\mf{n}}{i-1}]$.
So, suppose $i\ge 2$. Then a basis for $[\cgrad{\mf{n}}{1},\cgrad{\mf{n}}{{i-1}}]$ is obtained as a subset of the brackets of a basis for $\cgrad{\mf{n}}{1}$ with a basis for $\cgrad{\mf{n}}{i-1}$. The bracket of eigenvectors of $L$ is another eigenvector of $L$. Consequently, $[\cgrad{\mf{n}}{1},\cgrad{\mf{n}}{i-i}]$ has a basis comprised of eigenvectors of $L$. Further, any bracket is in $\lcc{\mf{n}}{i}$. Thus $[\cgrad{\mf{n}}{1},\cgrad{\mf{n}}{i-1}]\subseteq \cgrad{\mf{n}}{i}$ by definition of $\cgrad{\mf{n}}{i}$. Consequently, we are done once we show that $\cgrad{\mf{n}}{i}\subseteq [\cgrad{\mf{n}}{1},\cgrad{\mf{n}}{i-1}]$.

Suppose that $v\in \cgrad{\mf{n}}{i}$. Then there exist $r_k\in \mf{n}$ and $s_k\in \lcc{\mf{n}}{i-1}$ such that $v=\sum_k [r_k,s_k]$. Write $r_k=r_{k,1}+r_{k,2}$ where $r_{k,1}\in \cgrad{\mf{n}}{1}$ and $r_{k,2}\in \lcc{\mf{n}}{2}$. Similarly, write $s=s_{k,1}+s_{k,2}$ where $s_{k,1}\in \cgrad{\mf{n}}{i-1}$ and $s_{k,2}\in \lcc{\mf{n}}{i}$. Then 
\[
v=\sum_k [r_k,s_k]=\sum_k [r_{k,1}+r_{k,2},s_{k,1}+s_{k,2}]=\sum_{k}[r_{k,1},s_{k,1}]+[r_{k,1},s_{k,2}]+[r_{k,2},s_{k,1}]+[r_{k,2},s_{k,2}].
\]
The second and third terms inside the sum are in $\lcc{\mf{n}}{i+1}$, and the fourth term is in $\lcc{\mf{n}}{i+2}$. Thus as $v\in \cgrad{\mf{n}}{i}$, $[r_{k,1},s_{k,1}]\in \cgrad{\mf{n}}{i}$, and $\lcc{\mf{n}}{i}=\cgrad{\mf{n}}{i}\oplus \lcc{\mf{n}}{i+1}$, the final three terms are $0$. So, $v=\sum_k [r_{k,1},s_{k,1}]$, and so $\cgrad{\mf{n}}{i}=[\cgrad{\mf{n}}{1},\cgrad{\mf{n}}{i-1}]$.

We now show the second claim in the theorem: an eigenvector is the bracket of eigenvectors in $\cgrad{\mf{n}}{1}$. The second claim immediately implies that the grading is Carnot. We give a proof by induction. Suppose that the claim holds for $i$. Let $V=\{v_j\}$ be an eigenbasis for $\cgrad{\mf{n}}{1}$ and let $W=\{w_k\}$ be an eigenbasis for $\cgrad{\mf{n}}{i}$. By hypothesis, each $w_k$ is a repeated bracket of the vectors $v_j$. Observe that
\[
\cgrad{\mf{n}}{i+1}=[\cgrad{\mf{n}}{1},\cgrad{\mf{n}}{i}]=\spann\{[r,s]: r\in \cgrad{\mf{n}}{1}, s\in \cgrad{\mf{n}}{i}\}=\spann\{[v_j,w_k]: v_j\in V, w_k\in W\}.
\]
Thus as $\cgrad{\mf{n}}{i+1}$ is spanned by the vectors $[v_j,w_k]$, it has a basis consisting of vectors of the form $[v_j,w_k]$, each of which is an eigenvector. Thus every eigenvector of $L$ in $\cgrad{\mf{n}}{i+1}$ is of the form $[v_j,w_k]$, and hence is the repeated bracket of eigenvectors in $\cgrad{\mf{n}}{1}$.
\end{proof}

If an automorphism $L$ of $\mf{n}$ is hyperbolic, we write $\mf{n}^s$ and $\mf{n}^u$ for its stable and unstable subspaces. Suppose that $L$ has simple spectrum. Then $\mf{n}^s$ and $\mf{n}^u$ are subalgebras of $\mf{n}$ spanned by the stable and unstable eigenvectors of $L$ acting on $\mf{n}$, respectively. Consequently, these are each subalgebras of $\mf{n}$ as eigenvectors of $L$ bracket to other eigenvectors or to $0$.

\begin{prop}\label{prop:directsum}
Suppose that $L$ is an automorphism of a real nilpotent Lie algebra $\mf{n}$ with sorted spectrum admitting an eigenbasis, and let $\mf{n}^s$ and $\mf{n}^u$ be the stable and unstable subalgebras. Then $[\mf{n}^s,\mf{n}^u]=0$. Thus $\mf{n}=\mf{n}^s\oplus \mf{n}^u$ as a Lie algebra.
\end{prop}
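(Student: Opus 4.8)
The plan is to establish the sharper pointwise assertion that $[v,w]=0$ whenever $v$ is a stable eigenvector and $w$ is an unstable eigenvector of $L$. Since $\mf{n}^s$ is spanned by stable eigenvectors and $\mf{n}^u$ by unstable ones, bilinearity of the bracket then yields $[\mf{n}^s,\mf{n}^u]=0$; as $\mf{n}^s$ and $\mf{n}^u$ are themselves subalgebras, this makes each of them an ideal and exhibits $\mf{n}=\mf{n}^s\oplus\mf{n}^u$ as a direct sum of Lie algebras. For a preliminary normalization, note that every $\cgrad{\mf{n}}{i}$ is $L$-invariant and spanned by eigenvectors of $L$, and that $\mf{n}=\bigoplus_i\cgrad{\mf{n}}{i}$; hence the eigenvectors spanning $\mf{n}^s$ and $\mf{n}^u$ may be taken to lie in single graded components. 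So it suffices to treat $v\in\cgrad{\mf{n}}{a}$ and $w\in\cgrad{\mf{n}}{b}$ with $a,b\ge 1$, in which case $[v,w]\in\cgrad{\mf{n}}{a+b}$ with $a+b>a$ and $a+b>b$.

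Suppose, for contradiction, that $[v,w]\neq 0$. The bracket of two eigenvectors is either zero or an eigenvector, so $[v,w]$ is an eigenvector with eigenvalue $\lambda_v\lambda_w$; since $L$ is hyperbolic, $\abs{\lambda_v\lambda_w}\neq 1$, so $[v,w]$ is either stable or unstable. If $[v,w]$ is unstable, then $w\in\cgrad{\mf{n}}{b}$ and $[v,w]\in\cgrad{\mf{n}}{a+b}$ are both unstable eigenvectors with $a+b>b$, so sortedness of the unstable spectrum forces $\abs{\lambda_v\lambda_w}>\abs{\lambda_w}$, whence $\abs{\lambda_v}>1$, contradicting stability of $v$. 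If instead $[v,w]$ is stable, unwind the definition: $L$ having sorted stable spectrum means $L^{-1}$ has sorted unstable spectrum, and since $L$ and $L^{-1}$ have the same eigenvectors and the same lower central series they induce the same grading. The vectors $v\in\cgrad{\mf{n}}{a}$ and $[v,w]\in\cgrad{\mf{n}}{a+b}$ are unstable eigenvectors of $L^{-1}$, with eigenvalues $\lambda_v^{-1}$ and $(\lambda_v\lambda_w)^{-1}$, and $a+b>a$, so sortedness of the unstable spectrum of $L^{-1}$ forces $\abs{(\lambda_v\lambda_w)^{-1}}>\abs{\lambda_v^{-1}}$, i.e. $\abs{\lambda_v\lambda_w}<\abs{\lambda_v}$, whence $\abs{\lambda_w}<1$, contradicting instability of $w$. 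In either case we obtain a contradiction, so $[v,w]=0$, which finishes the argument.

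This uses only the definition of the $L$-grading, hyperbolicity of $L$, and the two sortedness conditions; neither the Carnot property of Proposition \ref{prop:Lgradingcarnot} nor the description of eigenvectors as iterated brackets is needed. The only place requiring a little care — and the step I would flag as the main (minor) obstacle — is the stable case, where one must translate ``sorted stable spectrum'' into a statement about $L^{-1}$ and observe that the relevant grading is unchanged under passing to the inverse, since the lower central series and the eigenvectors are the same for $L$ and $L^{-1}$.
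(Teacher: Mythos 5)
Your proof is correct and follows essentially the same route as the paper: assume a nonzero bracket of a stable eigenvector $v$ and an unstable eigenvector $w$, observe that it is an eigenvector with eigenvalue $\lambda_v\lambda_w$ sitting strictly deeper in the $L$-grading than $v$ and $w$, and contradict sortedness (of the unstable spectrum of $L$, or of $L^{-1}$, according to whether the bracket is unstable or stable) — the paper dispatches your second case with ``a similar argument applies,'' which you simply carry out explicitly. The only cosmetic point is that a priori one only knows $[v,w]\in\lcc{\mf{n}}{a+b}$ rather than $[v,w]\in\cgrad{\mf{n}}{a+b}$, so one should pass to a nonzero graded component of $[v,w]$ (still an eigenvector with the same eigenvalue, lying in some $\cgrad{\mf{n}}{j}$ with $j\ge a+b$) before invoking the definition of sorted spectrum; the paper's own proof implicitly relies on the same reading.
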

\begin{proof}
Suppose not. Then there exists a stable eigenvector $v\in \mf{n}^s$ and an unstable eigenvector $w\in \mf{n}^u$ such that $[v,w]\neq 0$. Suppose that $\abs{\lambda_v}^{-1}<\abs{\lambda_w}$; if the reverse inequality holds a similar argument applies. Suppose $w\in \cgrad{\mf{n}}{i}$.  Then $[v,w]\in \lcc{\mf{n}}{i+1}$ is an unstable eigenvector with eigenvalue smaller in magnitude than $w$, contradicting that $L$ has sorted spectrum.
\end{proof}

The proof of the following is then almost immediate.

\begin{prop}
Suppose that $L$ is an automorphism of a real nilpotent Lie algebra $\mf{n}$ admitting a real eigenbasis and having sorted spectrum. Then the $L$-grading of the unstable algebra $\mf{n}^u$ is Carnot.
\end{prop}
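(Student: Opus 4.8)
The plan is to reduce the statement directly to Proposition \ref{prop:Lgradingcarnot}, applied not to $\mf{n}$ but to the Lie algebra $\mf{n}^u$ equipped with the automorphism it inherits from $L$. First I would verify that $\mf{n}^u$ really is a Lie algebra: it is spanned by the unstable eigenvectors of $L$, and if $v,w$ are unstable eigenvectors with eigenvalues $\lambda_v,\lambda_w$, then $[v,w]$ is either $0$ or an eigenvector with eigenvalue $\lambda_v\lambda_w$, and $\abs{\lambda_v\lambda_w}=\abs{\lambda_v}\abs{\lambda_w}>1$, so $[v,w]\in\mf{n}^u$. Hence $[\mf{n}^u,\mf{n}^u]\subseteq\mf{n}^u$, and $\mf{n}^u$ is nilpotent, being a subalgebra of the nilpotent Lie algebra $\mf{n}$. (This is exactly the observation recorded just before Proposition \ref{prop:directsum}, and notice it does not even use sorted spectrum.)

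Next I would observe that $L$ preserves $\mf{n}^u$ and is invertible there, so $L|_{\mf{n}^u}$ is an automorphism of $\mf{n}^u$, and the collection of unstable vectors from the given real eigenbasis of $\mf{n}$ is a real eigenbasis for $L|_{\mf{n}^u}$. Thus the pair $(\mf{n}^u,L|_{\mf{n}^u})$ satisfies the hypotheses of Proposition \ref{prop:Lgradingcarnot}, which yields that the $L|_{\mf{n}^u}$-grading of $\mf{n}^u$ is Carnot. It then remains only to note that this grading coincides with the $L$-grading of $\mf{n}^u$ as defined in the paper: the $L$-grading is built from the lower central series $\lcc{(\mf{n}^u)}{i}$ together with the $L$-eigenvectors lying in those terms, and all of this data is intrinsic to $\mf{n}^u$ and the restriction $L|_{\mf{n}^u}$. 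So the two gradings are literally the same, and we are done.

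There is essentially no obstacle here — this is why the paper flags the proof as "almost immediate." The only point that deserves an explicit sentence is the identification in the last step, namely that applying the $L$-grading construction to $\mf{n}^u$ gives the same thing whether one thinks of $L$ globally on $\mf{n}$ or restricted to $\mf{n}^u$; this is immediate from the definition since that construction references only the lower central series of the algebra at hand and the eigenvectors supported in it.
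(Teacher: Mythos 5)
Your reduction is correct for the statement as literally phrased: $\mf{n}^u$ is an $L$-invariant nilpotent subalgebra spanned by the unstable eigenvectors, $L|_{\mf{n}^u}$ is an automorphism of it admitting a real eigenbasis, and Proposition \ref{prop:Lgradingcarnot} applies verbatim, so the intrinsic $L$-grading of $\mf{n}^u$ (the one built from the lower central series $\lcc{\mf{n}^u}{i}$ of $\mf{n}^u$ itself) is Carnot. However, this is a genuinely different route from the paper's, and the difference is precisely the point where your argument never touches the sorted-spectrum hypothesis. The paper first invokes Proposition \ref{prop:directsum} --- which is where sortedness is used --- to get $[\mf{n}^s,\mf{n}^u]=0$, hence $\lcc{\mf{n}}{i}=\lcc{\mf{n}^s}{i}\oplus\lcc{\mf{n}^u}{i}$ and therefore $\cgrad{\mf{n}^u}{i}=\cgrad{\mf{n}}{i}\cap\mf{n}^u$, and only then cites Proposition \ref{prop:Lgradingcarnot}. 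So the paper's proof establishes more than Carnotness of the intrinsic grading: it identifies the grading of $\mf{n}^u$ with the trace on $\mf{n}^u$ of the $L$-grading of the ambient algebra $\mf{n}$, and that identification is what later arguments tacitly rely on (sortedness in Definition \ref{defn:sorted_spectrum} is phrased in terms of $\cgrad{\mf{n}}{k}$, while the escape speeds and the subsequent proposition are phrased in terms of $\cgrad{\mf{n}^u}{j}$, and the two are used interchangeably). Without sorted spectrum these two gradings can genuinely differ: if $s$ is a stable and $u$ an unstable eigenvector with $[s,u]\neq 0$, then $[s,u]$ is an unstable eigenvector lying in $\lcc{\mf{n}}{2}$ but possibly not in $[\mf{n}^u,\mf{n}^u]$, so it would be a first-layer generator of the intrinsic grading while sitting in layer two of the ambient one. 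In short, your argument is a valid and even more economical proof of the literal statement (and shows the hypothesis is superfluous for that reading), but it does not recover the compatibility $\cgrad{\mf{n}^u}{i}=\cgrad{\mf{n}}{i}\cap\mf{n}^u$ that the paper's proof records and uses; if that is the intended content, the sorted-spectrum step via Proposition \ref{prop:directsum} cannot be skipped, and your last paragraph's claim that the identification is ``immediate from the definition'' is exactly the place where it fails without sortedness.
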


\begin{proof}
Proposition \ref{prop:directsum} implies that $\lcc{\mf{n}}{i}=\lcc{\mf{n}^s}{i}\oplus \lcc{\mf{n}^u}{i}$. This implies immediately that $\cgrad{\mf{n}^u}{i}=\cgrad{\mf{n}}{i}\cap \mf{n}^u$ and that $\cgrad{\mf{n}^s}{i}=\cgrad{\mf{n}}{i}\cap \mf{n}^s$.  That the grading is Carnot now follows from Proposition \ref{prop:Lgradingcarnot}.
\end{proof}

From this we easily deduce:

\begin{prop}
Suppose that $L$ is an automorphism of a real nilpotent Lie algebra $\mf{n}$ admitting a real eigenbasis and that $L$ has sorted spectrum. Let $\lambda_1,\ldots,\lambda_{k}$ be the eigenvalues of the eigenvectors in $\cgrad{\mf{n}^u}{1}$. Then if $v$ is an eigenvector in $\cgrad{\mf{n}^u}{j}$, then the eigenvalue of $v$ is equal to $\lambda_{i_1}\cdots \lambda_{i_j}$ where each $i_j$ satisfies $1\le i_j\le k$. 
\end{prop}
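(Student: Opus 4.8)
The plan is to combine the structural results already established: by Proposition \ref{prop:Lgradingcarnot} (applied to the unstable algebra $\mf{n}^u$, whose $L$-grading is Carnot by the preceding proposition), an eigenvector $v\in\cgrad{\mf{n}^u}{j}$ can be written as a repeated bracket $v=[v_1,[v_2,\ldots,[v_{j-1},v_j]\ldots]]$ with each $v_\ell\in\cgrad{\mf{n}^u}{1}$. First I would observe that each $v_\ell$, as a vector in the eigenvector-spanned space $\cgrad{\mf{n}^u}{1}$, need not itself be an eigenvector, so a small adjustment is needed: since $L$ acts diagonalizably on $\mf{n}^u$ with the given eigenbasis, I would instead take $W=\{w_k\}$ an eigenbasis of $\cgrad{\mf{n}^u}{1}$ with eigenvalues among $\lambda_1,\ldots,\lambda_k$, expand the claim of Proposition \ref{prop:Lgradingcarnot}'s inductive step — which in fact produces eigenvectors $[w_{k_1},w_{k_2}]$ spanning $\cgrad{\mf{n}^u}{2}$, and so on — to conclude that a \emph{basis} of $\cgrad{\mf{n}^u}{j}$ consisting of eigenvectors is obtained from $j$-fold brackets $[w_{k_1},[w_{k_2},\ldots,[w_{k_{j-1}},w_{k_j}]\ldots]]$ of the $w_k$.

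The key computation is then elementary: for eigenvectors $u,u'$ with eigenvalues $\mu,\mu'$ one has $L[u,u']=[Lu,Lu']=\mu\mu'[u,u']$, so $[u,u']$ (if nonzero) is an eigenvector with eigenvalue $\mu\mu'$. Iterating, the bracket $[w_{k_1},[w_{k_2},\ldots,[w_{k_{j-1}},w_{k_j}]\ldots]]$ has eigenvalue $\lambda_{k_1}\lambda_{k_2}\cdots\lambda_{k_j}$, a product of $j$ of the $\lambda_i$. Hence every eigenvector appearing in the chosen eigenbasis of $\cgrad{\mf{n}^u}{j}$ has eigenvalue of this form. Since $L$ has simple spectrum, the one-dimensional eigenspaces of $\cgrad{\mf{n}^u}{j}$ are exactly the lines spanned by these basis vectors, so \emph{every} eigenvector in $\cgrad{\mf{n}^u}{j}$ (being a scalar multiple of one of them, as distinct eigenvalues force the eigenspaces apart) has eigenvalue $\lambda_{i_1}\cdots\lambda_{i_j}$ with each $1\le i_\ell\le k$.

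I do not anticipate a serious obstacle here — this is the "almost immediate" deduction the text advertises. The only point requiring mild care is the bookkeeping noted above: Proposition \ref{prop:Lgradingcarnot} as literally stated gives brackets of vectors in $\cgrad{\mf{n}^u}{1}$, not brackets of \emph{eigenvectors}, so one should either re-run its inductive argument keeping track that the spanning brackets can be taken between eigenbasis elements (which its proof already does, via the sets $V$ and $W$ of eigenbasis vectors), or simply cite the stronger intermediate conclusion of that proof. One should also make explicit the use of simple spectrum to pass from "the basis eigenvectors have eigenvalues of the desired form" to "all eigenvectors do," since a priori $\cgrad{\mf{n}^u}{j}$ could contain eigenvectors for eigenvalues not of product form if there were multiplicities — simplicity rules this out.
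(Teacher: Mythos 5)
Your proposal is correct and follows essentially the same route as the paper: the paper's proof is exactly the observation that the $L$-grading of $\mf{n}^u$ is Carnot, so $v$ is an iterated bracket of $j$ eigenvectors in $\cgrad{\mf{n}^u}{1}$, whose eigenvalues multiply under the bracket. Your closing appeal to simple spectrum is not needed (and is not among the stated hypotheses): since $\cgrad{\mf{n}^u}{j}$ is spanned by eigenvectors whose eigenvalues are products of the $\lambda_i$, every eigenvalue of $L$ on that invariant subspace, hence of any eigenvector in it, is already of product form.
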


\begin{proof}
This is immediate because the $L$-grading of $\mf{n}^u$ is Carnot; $v$ is the bracket of $j$ eigenvectors of $L$ lying in $\cgrad{\mf{n}^u}{1}$.
\end{proof}

The eigenvalues of $L$ do not accurately reflect the divergence of points in the large scale geometry of a nilpotent Lie group. We say that an automorphism $L$ of $\mf{n}$ is \emph{expanding} if $\mf{n}=\mf{n}^u$.
Let $L$ be an expanding automorphism of a nilpotent Lie algebra $\mf{n}$. For an eigenvector $v\in \cgrad{\mf{n}}{i}$, write $\lambda_v$ for the eigenvalue of $v$. We define 
\begin{equation}\label{defn:escapespeed}
\sigma_v=\abs{\lambda_v}^{1/i}.
\end{equation}
 We refer to $\sigma_v$ as the \emph{escape speed of $L$ in the direction $v$}.

\begin{cor}\label{cor:smallest_escape_rate}
Suppose that $L$ is an expanding automorphism of a nilpotent Lie algebra $\mf{n}$ and that $L$ has simple real spectrum. Let $v$ be an eigenvector associated to the smallest magnitude eigenvalue $\lambda_1$ of $L$. Then for any eigenvector $w$ such that $v$ and $w$ are linearly independent, we have $\sigma_w>\sigma_v=\abs{\lambda_1}$.
\end{cor}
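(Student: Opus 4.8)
The plan is to exploit the Carnot structure of the $L$-grading of $\mf{n}$ supplied by Proposition \ref{prop:Lgradingcarnot}, which applies to all of $\mf{n}$ here since $L$ is expanding, so $\mf{n}=\mf{n}^u$. The key mechanism is that every eigenvector $u$ lies in a single graded piece $\cgrad{\mf{n}}{i}$ (this uses $\lcc{\mf{n}}{i}=\cgrad{\mf{n}}{i}\oplus \lcc{\mf{n}}{i+1}$, established inside the proof of Proposition \ref{prop:Lgradingcarnot}, together with simplicity of the spectrum), and, by that same proposition, can be written as an iterated bracket $u=[u_1,[u_2,\ldots,[u_{i-1},u_i]\ldots]]$ with each $u_k\in \cgrad{\mf{n}}{1}$ an eigenvector. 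Hence $\lambda_u=\prod_{k=1}^i \lambda_{u_k}$, where each factor satisfies $\abs{\lambda_{u_k}}>1$ (because $L$ is expanding) and $\abs{\lambda_{u_k}}\ge \abs{\lambda_1}$ (by minimality of $\abs{\lambda_1}$).

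First I would check that $v\in \cgrad{\mf{n}}{1}$, which gives $\sigma_v=\abs{\lambda_v}^{1/1}=\abs{\lambda_1}$ directly from the definition \eqref{defn:escapespeed}. Indeed, if $v\in \cgrad{\mf{n}}{i}$ with $i\ge 2$, then writing $v$ as an iterated bracket as above yields $\abs{\lambda_1}=\abs{\lambda_v}=\prod_{k=1}^i\abs{\lambda_{v_k}}\ge \abs{\lambda_1}\cdot\prod_{k=2}^i\abs{\lambda_{v_k}}>\abs{\lambda_1}$, since there are $i-1\ge 1$ remaining factors, each exceeding $1$ — a contradiction.

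Next, fix an eigenvector $w$ linearly independent from $v$, with $w\in \cgrad{\mf{n}}{j}$; it suffices to show $\abs{\lambda_w}>\abs{\lambda_1}^j$, for then $\sigma_w=\abs{\lambda_w}^{1/j}>\abs{\lambda_1}=\sigma_v$. Write $w=[w_1,[\ldots,w_j]\ldots]$ with $w_k\in \cgrad{\mf{n}}{1}$ eigenvectors (for $j=1$ this is just $w=w_1$). Since all $j$ factors $\abs{\lambda_{w_k}}$ are $\ge \abs{\lambda_1}$, I only need one index $k$ with $\abs{\lambda_{w_k}}>\abs{\lambda_1}$. If $j=1$, then $w_1=w$ is independent of $v$, so $\lambda_{w_1}\ne \lambda_1$, and since $L$ has simple spectrum with $\abs{\lambda_1}$ minimal we get $\abs{\lambda_{w_1}}>\abs{\lambda_1}$. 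If $j\ge 2$, I claim the $w_k$ cannot all be scalar multiples of $v$: otherwise the innermost bracket is $[c_{j-1}v,c_j v]=0$, forcing $w=0$. By simplicity of the spectrum, $w_k$ being proportional to $v$ is the same as $\lambda_{w_k}=\lambda_1$, so some $\lambda_{w_k}\ne\lambda_1$ and hence $\abs{\lambda_{w_k}}>\abs{\lambda_1}$. In both cases $\abs{\lambda_w}=\prod_k\abs{\lambda_{w_k}}>\abs{\lambda_1}^j$, completing the proof.

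The one step I expect to require care — and that I would state explicitly — is the $j\ge 2$ observation that a nonzero iterated bracket of eigenvectors drawn from $\cgrad{\mf{n}}{1}$ cannot be assembled from copies of a single eigenvector, since $[v,v]=0$ then annihilates it; this is what upgrades the weak inequality $\abs{\lambda_w}\ge\abs{\lambda_1}^j$ to the strict one. Everything else is bookkeeping with the Carnot factorization and with the minimality and pairwise-distinctness of the eigenvalue magnitudes guaranteed by simple spectrum.
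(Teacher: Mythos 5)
Your proof is correct and follows essentially the same route as the paper: invoke Proposition \ref{prop:Lgradingcarnot} to write each eigenvector as an iterated bracket of eigenvectors in $\cgrad{\mf{n}}{1}$, note one factor has modulus strictly exceeding $\abs{\lambda_1}$ while the rest are at least $\abs{\lambda_1}$, and take the $j$th root. You merely make explicit two points the paper leaves implicit — that $v$ itself lies in $\cgrad{\mf{n}}{1}$, and that the strict factor exists because a nonzero bracket cannot be built from copies of $v$ alone — which is a welcome but not essentially different elaboration.
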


\begin{proof}
By Proposition \ref{prop:Lgradingcarnot}, the $L$-grading of $\mf{n}$ is Carnot. So, for any other eigenvector $w\in \cgrad{\mf{n}}{i}$, we may write $w$ as the bracket of eigenvectors $v_1,\ldots,v_i\in \cgrad{\mf{n}}{1}$, so that the eigenvalue of $w$ is $\lambda_{v_1}\cdots\lambda_{v_i}$. By the assumption of simple spectrum, the modulus of one of the terms in this product is greater than $\abs{\lambda_{1}}$ and the modulus of each other term is at least $\abs{\lambda_1}$, so we obtain $\sigma_w=\abs{\lambda_{v_1}\cdots\lambda_{v_i}}^{1/i}>\abs{\lambda_1}=\sigma_v$.
\end{proof}

\subsection{Foliations}\label{subsec:foliations}

We now recall some notions concerning foliations. For a more detailed discussion, see \autocite{pugh1997holder}.
Let $\mc{F}$ be a foliation of a closed manifold $M$. For $p\in M$,
we may locally represent the leaf $\mc{F}(p)$ as a graph. For a normed space $F$, we write $F(\delta)$ for the closed disk of radius $\delta$ around $0$. For small $\delta>0$, there is a unique function $g(\cdot, y)\colon F_p(\delta)\to F_p^{\perp}$, where $F_p$ is a subspace of $T_pM$,  such that 
\[
\phi\colon (x,y)\mapsto \exp_p\circ (x,g(x,y))
\]
is a foliation chart.

We say that a foliation has \emph{uniformly $C^{k+\theta}$ leaves} if the map $x\mapsto g(x,y)$ is $C^{k+\theta}$ and its derivatives of order less than $k$ with respect to $x$ are continuous in $(x,y)$ and the $k$th derivative varies H\"older continuously with exponent $\theta$. 

We say that a foliation has \emph{uniformly $C^{k+\theta}$ holonomy} if the map $h\colon y\mapsto g(x,y)$ is $C^{k+\theta}$ and its derivatives of order less than $k$ with respect to $y$ depend continuously on $(x,y)$ and the $k$th derivative varies H\"older continuously with exponent $\theta$.

We say that two foliations $\mc{F}$ and $\mc{G}$ of a manifold $M$ have \emph{global product structure} if $\dim \mc{F}+\dim \mc{G}=\dim M$ and for each distinct pair $x,y\in M$ the set $\mc{F}(x)\cap \mc{G}(y)$ consists of exactly one point. Note that given a global product structure, we may identify two leaves of the $\mc{F}$ foliation by ``sliding" along the leaves of the $\mc{G}$ foliation.
We say that two subfoliations $\mc{F}$ and $\mc{G}$ of a foliation $\mc{W}$ have \emph{subordinate global product structure to} $\mc{W}$ if $\dim \mc{F}+\dim \mc{G}=\dim \mc{W}$ and the restrictions of the foliations $\mc{F}$ and $\mc{G}$ to any leaf of $\mc{W}$ give a global product structure on that leaf.

 Given two subfoliations $\mc{F}$ and $\mc{G}$ with subordinate global product structure to a foliation $\mc{W}$ and $a,b\in \mc{W}(c)$, define 
$H^{\mc{F}}_{a,b}\colon \mc{G}(a)\to \mc{G}(b)$ by $H^{\mc{F}}_{a,b}(x)=\mc{F}(x)\cap \mc{G}(b)$. We refer to $H^{\mc{F}}_{a,b}$ as the $\mc{F}$-holonomy between the leaves $\mc{G}(a)$ and $\mc{G}(b)$. Briefly we call it the \emph{$\mc{F}$-holonomy}. Note that for any $a,b\in \mc{W}(c)$, $H^{\mc{F}}_{a,b}$ is continuous. 

We will also take the opportunity to define some notation. Suppose $\mc{F}$ is a foliation with $C^1$ leaves of a Riemannian manifold $M$. The inclusion of a leaf of $\mc{F}$ into $M$ is $C^1$, and so we may pullback the Riemannian metric on $M$ to a metric on the leaf. We endow each leaf with this pullback metric. For two points $x,y$ in the same leaf of $\mc{F}$,  we define the distance $d_{\mc{F}}(x,y)$ to be the distance between $x$ and $y$ with respect to the pullback metric on $\mc{F}(x)$.

\subsubsection{Isometries of one-dimensional algebraic foliations}\label{sec:isometries}

Later, in Lemma \ref{lem:F_is_isometric}, we will obtain a foliation of $N$ whose holonomies are isometries.
For later use, we record an algebraic description of all such isometries.

\begin{prop}\label{prop:isometry_description}
Suppose that $W$ is a one-parameter subgroup of nilpotent Lie group $N$ and that $x,y\in N$. If $I\colon Wx\to Wy$ is an orientation-preserving isometry with respect to the induced Riemannian metric on $Wx$ and $Wy$, then $I$ is given by right multiplication on $N$ restricted to $Wx$. In particular, if $I$ takes $x$ to $y$, then $I$ is the restriction of
\[
z\mapsto zx^{-1}y.
\]
Every other such isometry between $Wx$ and $Wy$ is of the form 
\[
z\mapsto zx^{-1}wy
\]
for some $w\in W$.
\end{prop}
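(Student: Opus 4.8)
The plan is to reduce the statement to the classical fact that an orientation-preserving isometry of the real line is a translation, using the identification from Proposition~\ref{prop:dynamical_foliations_for_nilmanifold_automorphisms}(4) between the right-invariant metric on a leaf $Wx$ and the arclength parametrization by $\mathbb{R}$. First I would fix a generator $v\in\mf{n}$ of the Lie algebra of $W$ (equivalently, of the one-parameter subgroup $\{\Exp(tv):t\in\R\}$), normalized so that $\abs{v}=1$ in the chosen metric on $\mf{n}$. Since the metric on $N$ is right-invariant and $W$ is a one-parameter subgroup tangent to the line $\R v\subset\mf{n}$, the map $t\mapsto \Exp(tv)x$ is a unit-speed, globally length-minimizing parametrization of the leaf $Wx$: indeed, right-translating by $x^{-1}$ is an isometry $Wx\to W$, and within $W$ the curve $t\mapsto\Exp(tv)$ has velocity $(R_{\Exp(tv)})_*v$, which has norm $1$ by right-invariance. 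So each leaf, with its induced path metric, is isometric to $\R$ via such a parametrization.

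Next I would transport the given isometry $I\colon Wx\to Wy$ through these parametrizations. Let $\phi_x\colon\R\to Wx$, $\phi_x(t)=\Exp(tv)x$, and similarly $\phi_y$. Then $\phi_y^{-1}\circ I\circ\phi_x$ is an orientation-preserving isometry of $\R$, hence equals $t\mapsto t+c$ for some constant $c\in\R$. Therefore $I(\Exp(tv)x)=\Exp((t+c)v)y=\Exp(cv)\Exp(tv)y$. Writing $w_0=\Exp(cv)\in W$, and using that an arbitrary point $z\in Wx$ has the form $z=\Exp(tv)x$, i.e. $\Exp(tv)=zx^{-1}$, we get $I(z)=w_0\,zx^{-1}y$. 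The one remaining point is that this must be expressed as \emph{right} multiplication; here I would invoke that $W$ is normalized appropriately, or more simply observe that since $W=\{\Exp(sv):s\in\R\}$ is abelian and, more importantly, one checks directly that $\Exp(cv)\Exp(tv)x = \Exp(tv)\,(x x^{-1}\Exp(cv) x)\cdot$\,... --- rather than belabor this, the cleaner route is to note $w_0 z x^{-1} y = z\,(z^{-1}w_0 z)\,x^{-1}y$ and that $z^{-1}w_0z$ need not lie in $W$, so instead I would directly write the isometry in the left-translation form $z\mapsto w_0 z x^{-1} y$ and then reconcile with the claimed form $z\mapsto z x^{-1} w y$ by re-deriving the parametrization on the \emph{right}: parametrize $Wx$ instead by $t\mapsto x\,\Exp(tv')$ where $v'$ generates the (possibly different) one-parameter group so that right multiplication $z\mapsto zx^{-1}wy$ acts by a shift.

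Concretely, the robust version of the argument is: the three maps $z\mapsto zx^{-1}y$ (from $Wx$ to $Wy$), $z\mapsto zx^{-1}wy$ for $w\in W$ (also from $Wx$ to $Wy$, since $Wx \cdot x^{-1}wy = Wwy = Wy$ as $w\in W$), are all orientation-preserving isometries by right-invariance of the metric, and they realize every possible translation amount between the parametrized copies of $\R$; since any two orientation-preserving isometries of $\R$ differ by a translation and the maps $w\mapsto(z\mapsto zx^{-1}wy)$ already exhaust all translations, $I$ must coincide with one of them. The main obstacle is purely bookkeeping: being careful about which side one parametrizes $W$ on so that the formula comes out as right multiplication exactly as stated, and verifying that $Wx\,x^{-1}wy = Wy$ and that these maps are genuinely isometric (which is where right-invariance of the metric, established in Proposition~\ref{prop:dynamical_foliations_for_nilmanifold_automorphisms}, is used). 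There is no analytic difficulty here — the content is entirely that leaves of a one-dimensional right-invariant foliation are flat, i.e. isometric to $\R$.
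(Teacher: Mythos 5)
Your proposal is correct and, in its final ``robust'' form, is essentially the paper's argument: exhibit the right multiplications $z\mapsto zx^{-1}wy$, $w\in W$, as orientation-preserving isometries $Wx\to Wy$ realizing every translation of the leaf (which is isometric to $\R$), and conclude that $I$ must coincide with one of them. The detour about converting $z\mapsto w_0\,zx^{-1}y$ into right-multiplication form is unnecessary: since $zx^{-1}$ and $w_0$ both lie in the abelian one-parameter subgroup $W$, they commute, so $w_0\,zx^{-1}y=zx^{-1}w_0\,y$ directly.
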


\begin{proof}

First we describe all isometries from $Wx$ to $Wx$. As right multiplication is an isometry on $W$, it restricts to an isometry on submanifolds. In particular, for $w\in W$, note that right multiplication by $x^{-1}wx$ is an isometry from $Wx$ to $Wx$. Consequently, as $Wx$ is isometric to $\R$, we see that these are all orientation-preserving isometries from $Wx$ to $Wx$.

Note now that right multiplication by $x^{-1}y$ is an isometry between $Wx$ and $Wy$ carrying $x$ to $y$. Consequently, by the previous discussion of isometries from $Wx$ to itself, any isometry is given by right multiplication by $x^{-1}y$ followed by right multiplication by $y^{-1}wy$ for some $w\in W$.
\end{proof}

\subsection{A criterion for regularity}\label{subsec:regularity_criteria}

We summarize a result in Saghin and Yang \autocite[Thm. G]{saghin2018lyapunov} that we will use in the proof of Theorem \ref{thm:sufficiency} to establish the regularity of a conjugacy along the leaves of a foliation.

Suppose that $\mc{F}$ is a continuous foliation of a compact manifold $M$ with uniformly $C^{r+\theta}$ leaves where $r\in \mathbb{N}$ and $\theta\in (0,1]$. 
Assume that $\mc{F}$ is invariant under a $C^{r+\theta}$ diffeomorphism,  $f\colon M\to M$, and that with respect to some Riemannian metric $\|\cdot\|$ on $M$ there exists $\lambda>1$ such that for all $x\in M$  $\|D_xf\mid_{T_x\mc{F}}\|\ge \lambda$. 
 We say that such a foliation is a \emph{$C^{r+\theta}$ expanding foliation} for $f$.

A particular type of absolutely continuous measure is used for detecting the regularity of a conjugacy intertwining expanding foliations. We now recall the notion of a disintegration of measures in a manner that is adapted to our context. For a detailed account, see \autocite{rokhlin1952fundamental}. Let $\mu$ be a Borel measure on a manifold $M$ and let $\mc{F}$ be a foliation on $M$. For a foliation chart $\phi\colon B_\tau^n\times B_{\perp}^{n-k}\to U\subset M$ the plaques of the chart form a partition $\mc{P}$ of the set $U$. The natural projection $\pi\colon U\to \mc{P}$ allows us to push foward the restriction of the measure $\mu$ to $U$ to a measure $\overline{\mu}$ on $\mc{P}$. We then seek a system of measures $\{\mu_{P}\}_{P\in \mc{P}}$ where each $\mu_P$ is a Borel measure on the plaque $P$. By \emph{system of measures} we mean that $\mu_{P}(P)=1$ for $\overline{\mu}$-a.e. $P$, that for a fixed continuous $f$ that the function $P\mapsto \int_P f\,d\mu_{P}$ is measurable, and further that we may express an integral over $U$ against $\mu$ as an iterated integral:
\[
\int_U f\,d\mu=\int_{\mc{P}}\int_{P} f\,d\mu_P\,d\overline{\mu}.
\]
By the work of Rokhlin, there exists such a system of measures \autocite{rokhlin1952fundamental}. Further, if we had two systems $\{\mu^\alpha_P\}_{P\in \mc{P}}$ and $\{\mu^\beta_{P}\}_{P\in \mc{P}}$, then $\mu^\alpha_P=\mu^\beta_P$ for $\overline{\mu}$-a.e. $P$. We refer to these systems of measures as the disintegration of the measure $\mu$ along the plaques of the chart $\phi$. Given this notion of disintegration, we make the following slight modification of the definition of a Gibbs expanding state found in \autocite[Def. 2.2]{saghin2018lyapunov}.
As before, we write $C^{1+}$ for an object that is $C^{1+\theta}$ for some $\theta>0$. Note that the notation $C^{1+}$ does not make an implicit statement of uniformity: different maps that are $C^{1+}$ may have different H\"older exponents.
\begin{defn}
 Let $\mc{F}$ be an expanding foliation for a $C^{1+}$ diffeomorphism $f$. An $f$-invariant measure $\mu$ is a \emph{Gibbs expanding} state along $\mc{F}$ if, for any foliation chart of $\mc{F}$, the disintegration of $\mu$ along the plaques of the chart is equivalent to the Lebesgue measure on the plaque for $\mu$-almost every plaque. 
\end{defn}

The following is an abridgement of a more general result, \autocite[Thm. 6]{saghin2018lyapunov}, to the one-dimensional case. However, that result has an additional hypothesis that the dynamics be $C^2$ instead of $C^{1+}$. We state the sharpened version of this result and outline the proof, which is essentially contained in the implication (B1) implies (B5') in \autocite{saghin2018lyapunov}.

\begin{lem}\label{lem:regularitycriterion}
Let $M$ be a smooth closed manifold, and let $f,g\in \Diff^{1+}(M)$. Let $\mc{F}$ is a one-dimensional expanding foliation for $f$, and let $\mc{G}$ be an expanding foliation for $g$ such that $\mc{F}$ and $\mc{G}$ have uniformly $C^{1+}$ leaves. Let $\mu$ be a Gibbs expanding state of $f$ along $\mc{F}$. Suppose that $f$ and $g$ are topologically conjugate by a homeomorphism $h$ and that $h$ intertwines $\mc{F}$ and $\mc{G}$. Then the following two conditions are equivalent:

\begin{enumerate}
\item
$\nu\coloneqq h_*(\mu)$ is a Gibbs expanding state of $g$ along the foliation $\mc{G}$.
\item
$h$ restricted to each $\mc{F}$ leaf within the support of $\mu$ is uniformly $C^{1+}$.
\end{enumerate}
\end{lem}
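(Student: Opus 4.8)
The plan is to prove the two implications separately, exploiting the fact that both $\mc{F}$ and $\mc{G}$ are one-dimensional expanding foliations and that the dynamics restricted to each leaf is uniformly expanding. The central object is the disintegration of $\mu$ (resp. $\nu=h_*\mu$) along plaques; since $\mc{F}$ is one-dimensional, these disintegrations are determined by a density function with respect to arclength, and the Gibbs-expanding condition says exactly that this density is a genuine (positive, finite a.e.) density. I would first pin down the structure of a Gibbs expanding state along a $C^{1+}$ one-dimensional expanding foliation: the conditional density along a leaf must satisfy the usual cocycle/invariance relation coming from $f$-invariance of $\mu$, namely the density at $f(x)$ relates to the density at $x$ by the Jacobian $|D_xf|_{T_x\mc{F}}|$, and since $f$ is $C^{1+}$ and the foliation has uniformly $C^{1+}$ leaves, a bootstrapping argument (the standard transfer-operator/bounded-distortion argument for expanding maps of the interval) shows the conditional density is itself uniformly $C^{\theta}$ — in particular continuous and bounded away from $0$ and $\infty$ on the support. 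This is the ``folklore'' input that makes the equivalence tick.

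\textbf{(2) $\Rightarrow$ (1).} Suppose $h$ restricted to each $\mc{F}$-leaf in $\supp\mu$ is uniformly $C^{1+}$. Then $h$ carries the arclength measure on an $\mc{F}$-plaque to a measure on the corresponding $\mc{G}$-plaque that is equivalent to arclength, with density $|h'|^{-1}$ (leafwise derivative), which is continuous and positive. Composing with the disintegration of $\mu$, which by the structural remark above has continuous positive density with respect to $\mc{F}$-arclength, we get that the disintegration of $\nu=h_*\mu$ along $\mc{G}$-plaques has continuous positive density with respect to $\mc{G}$-arclength. One must check that the pushed-forward family $\{h_*\mu_P\}$ really is the Rokhlin disintegration of $\nu$ for a $\mc{G}$-foliation chart — this is immediate from the uniqueness of disintegrations and the fact that $h$ maps $\mc{F}$-plaques into $\mc{G}$-leaves measurably. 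Hence $\nu$ is a Gibbs expanding state of $g$ along $\mc{G}$.

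\textbf{(1) $\Rightarrow$ (2).} This is the substantive direction and the main obstacle. Here one uses that $h$ conjugates the expanding leaf dynamics: on a fixed $\mc{F}$-leaf $\ell$ through $x\in\supp\mu$, iterating $f$ backward contracts, so $h|_\ell$ is the uniform limit (in the appropriate leafwise-uniform sense) of $g^{-n}\circ h\circ f^n$, and one wants to upgrade the known H\"older continuity of $h$ to $C^{1+}$. The mechanism is that both $\mu_\ell$ and $\nu_{h(\ell)}=h_*\mu_\ell$ have continuous positive densities $\rho_\ell$, $\bar\rho_{h(\ell)}$ with respect to arclength (by the hypothesis (1) and the structural remark), so $h|_\ell$ is forced to solve a first-order ODE: if $s$ is arclength on $\ell$ and $t$ arclength on $h(\ell)$, then $h$ expressed as $t=H(s)$ satisfies $H'(s)=\rho_\ell(s)/\bar\rho_{h(\ell)}(H(s))$ in the weak/measure sense, with continuous right-hand side, whence $H\in C^1$; then $C^{\theta}$ of the densities gives $H\in C^{1+\theta}$. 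The care needed — and where I expect the real work to be — is (a) justifying that the measure-theoretic density identity for $h$ along a single leaf holds for \emph{every} leaf in $\supp\mu$, not just a.e. leaf: this requires using continuity of the densities in the transverse direction together with the expanding dynamics to spread an a.e.\ statement to an everywhere statement (a Livšic-type or ``invariance + density continuity'' argument, exactly the flavor of the $(B1)\Rightarrow(B5')$ step cited from \autocite{saghin2018lyapunov}); and (b) getting \emph{uniformity} of the $C^{1+}$ bound over all leaves, which comes from uniform expansion plus uniform bounded distortion, so the H\"older exponent and constant can be taken independent of the leaf. I would structure the argument by first establishing the leafwise ODE on a full-measure set of leaves from the disintegration identity, then invoking the $f$-equivariance $h\circ f = g\circ h$ and the continuity of everything in play to propagate regularity to all leaves through $\supp\mu$, and finally citing the distortion estimates to conclude uniformity; the details of the propagation step, following \autocite[proof of Thm. 6]{saghin2018lyapunov} in the one-dimensional case, are where one must be most careful.
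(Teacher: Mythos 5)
Your proposal is correct and follows essentially the same route as the paper: identify the conditional densities of the Gibbs expanding states as the explicit Hölder-continuous, positive densities (the bounded-distortion/$\Delta$-formula input, cited to Saghin--Yang), use essential uniqueness of the disintegration plus continuity to get $h_*\mu_x=\nu_{h(x)}$ on every plaque, and then upgrade the resulting leafwise integral identity to uniform $C^{1+}$ regularity of $h$ along leaves. Your ODE formulation $H'=\rho_\ell/(\bar\rho_{h(\ell)}\circ H)$ is just the implicit-function-theorem step of the paper phrased differently, so no substantive difference.
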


\begin{proof}
Fix a foliation box $B^f$ for $\mc{F}$. By this, we mean that $B^f$ is the image of a foliation chart $\psi\colon D_k\times D_{n-k}\to M$, where $D_k$ is a disk. As $h$ intertwines $\mc{F}$ and $\mc{G}$, the set $h(B^f)$, which we call $B^g$, is a foliation box of $\mc{G}$. As $\mu$ and $\nu$ are expanding states, one may explicitly calculate their densities against volume on the leaves of $\mc{F}$. Write $B^f(x)$ for the plaque containing $x$. Specifically, for almost every $x\in B^f$ the disintegration of $\mu$ along the plaque containing $x$ has density $\rho_f$ against volume, where 
\begin{align*}
\rho^f(z)&=\frac{\Delta(x,z)}{\int \Delta(x,z)\,d\vol_{B^f(x)}},\text{ and }\\
\Delta(x,z)&=\lim_{n\to \infty} \frac{\|Df^n\mid_{T_z\mc{F}}\|}{\|Df^n\mid_{T_x\mc{F}\|}}.
\end{align*}
One may check that this density is uniformly H\"older along almost every plaque of $\mc{F}$ \autocite[Prop. 2.3]{saghin2018lyapunov}. If we write $\mu_x$ for the disintegration of $\mu$ along the plaque $B^f(x)$, then we may show that $h_*(\mu_x)=\nu_{h(x)}$ for $\mu$-a.e. $x\in B^f$; one may prove this by using the essential uniqueness of the disintegration. Continuity then forces $h_*(\mu_x)=\nu_{h(x)}$ for every $x\in B^f$. Now consider the restriction of $h$ to a single plaque; we write $h_x\colon B^f(x)\to B^g(h(x))$ for the restricted map. We have already established that $h_*(\mu_x)=\nu_{h(x)}$. We also know that 
\[
\mu_x=\rho^f\,d\vol_{B^f(x)}\text{ and } \nu_{h(x)}=\rho^g\,d\vol_{B^g(h(x))},
\]
where $\rho^f$ and $\rho^g$ are uniformly H\"older. This implies that if $y\in B^f(x)$ then
\[
\int_x^y \rho^f\,d\vol_{B_x^f}=\int_{h(x)}^{h(y)}\rho^g\,d\vol_{B_{h(x)}^g}.
\]
The implicit function theorem then implies that $h_x$ is uniformly $C^{1+}$. By fixing a covering of $M$ by finitely many foliation boxes, we obtain a uniform estimate over all of $M$.
\end{proof}

\begin{rem}
We remark that Lemma \ref{lem:regularitycriterion} is one of the major obstacles to lowering the regularity from $C^{1+}$ in  Theorem \ref{thm:rigidity} to $C^1$. If the dynamics are only assumed to be $C^1$, then there is no a priori reason why the function $\Delta(x,z)$ that appears in the above proof would even be defined.
\end{rem}

We now introduce one final result that will be of use. Suppose that $\mc{F}$ is an expanding foliation for a $C^{1+}$ diffeomorphism $f$ of a manifold $M$ and that $\mu$ is an $f$-invariant measure.
Suppose that $\xi$ is a measurable partition of $M$. We say that $\xi$ is \emph{subordinate to $\mc{F}$ and }$\mu$ if 
\begin{enumerate}
\item
the common refinement $\vee_{i\le 0} f^i\xi$ is the partition into points;
\item
for all $x\in M$,  $\xi(x)$ is contained in a single $\mc{F}$ leaf;
\item
for $\mu$-a.e.\@ $x$, $\xi(x)$ is bounded and contains a neighborhood of $x$ in $\mc{F}(x)$.
\end{enumerate}

In addition, if the partition $f\xi$ is coarser than $\xi$ we say that $\xi$ is an \emph{increasing} partition.
The construction of increasing measurable partitions for expanding foliations is classical. In the $C^{1+}$ setting, see, for instance, \autocite[Sec. 3]{yang2016entropy}.

We now recall a useful result concerning the Pesin entropy formula. See \autocite[Sec. 2.4]{saghin2018lyapunov} for an explanation of this result in the present context. The argument there is an adaptation of the argument of Ledrappier presented in \autocite{ledrappier1985metric}.
\begin{lem}\label{lem:pesin}
Let $f$ be a $C^{1+}$ diffeomorphism and let $\mu$ be an $f$-invariant measure. Suppose that $\mc{F}$ is a $C^{1+}$ expanding foliation for $f$. Suppose that $\xi$ is an increasing measurable partition subordinate to $\mc{F}$ and $\mu$. Then the conditional measures of $\mu$ are absolutely continuous on the leaves of $\mc{F}$ if and only if
\[
H_{\mu}(f^{-1}\xi\mid \xi)=\int \log \|Df\mid_{\mc{TF}}\|\,d\mu,
\]
where $H_{\mu}(f^{-1}\xi\mid \xi)$ is the conditional entropy of $f^{-1}\xi$ given $\xi$.
\end{lem}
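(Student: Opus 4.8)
The plan is to reproduce Ledrappier's argument (as in \autocite{ledrappier1985metric} and \autocite[Sec. 2.4]{saghin2018lyapunov}), organized around a comparison of the conditional measures of $\mu$ with an explicitly constructed family of leafwise absolutely continuous probability measures. First I would fix notation: let $\{\mu_x^\xi\}$ be the system of conditional measures of $\mu$ along $\xi$, let $\mathrm{Leb}_x$ denote the Riemannian volume on $\mc{F}(x)$ restricted to the atom $\xi(x)$ (which is $\mu$-a.e. a bounded neighborhood of $x$ by subordinacy), and set $J^u(x)=\|D_xf\mid_{T_x\mc{F}}\|$, the leafwise Jacobian. Since $\xi$ is increasing, $f^{-1}\xi$ refines $\xi$, and $f$ maps the $f^{-1}\xi$-atom through $x$, namely $f^{-1}(\xi(fx))\subseteq\xi(x)$, diffeomorphically onto $\xi(fx)$. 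The two structural inputs are: (a) transitivity of conditional measures together with $f$-invariance of $\mu$, which gives the pushforward identity $f_*\bigl(\mu_x^\xi|_{f^{-1}\xi(x)}\bigr)=\mu_x^\xi(f^{-1}\xi(x))\,\mu_{fx}^\xi$; and (b) the leafwise change-of-variables formula $f_*(\mathrm{Leb}_x)=J^u(f^{-1}\,\cdot\,)^{-1}\mathrm{Leb}_{fx}$.

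For the direction ``absolute continuity $\Rightarrow$ entropy formula'' I would write $\mu_x^\xi=\rho_x\,\mathrm{Leb}_x$ and combine (a) and (b) into the cocycle relation $-\log\mu_x^\xi(f^{-1}\xi(x))=\log J^u(x)+\log\rho_{fx}(fx)-\log\rho_x(x)$, read off at the base points of the atoms. The invariance forces $\rho_x(z)/\rho_x(x)=\Delta(x,z)$, where $\Delta(x,z)\coloneqq\prod_{n\ge 1}J^u(f^{-n}x)/J^u(f^{-n}z)$ converges and is uniformly Hölder along leaves precisely because $\mc{F}$ has $C^{1+}$ leaves and $f$ is $C^{1+}$ (this is the $\Delta$ already appearing in the proof of Lemma \ref{lem:regularitycriterion}). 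One then checks that the potential $\log\rho_x(x)=-\log\int_{\xi(x)}\Delta(x,\cdot)\,d\mathrm{Leb}_x$ is $\mu$-integrable, so integrating the cocycle relation against $\mu$ cancels the coboundary and yields $H_\mu(f^{-1}\xi\mid\xi)=\int\log J^u\,d\mu$.

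For the converse I would run the relative-entropy comparison. Let $\hat\mu_x=\Delta(x,\cdot)\,\mathrm{Leb}_x/\!\int\Delta(x,\cdot)\,d\mathrm{Leb}_x$ be the candidate leafwise absolutely continuous probability measures; they are well defined on atoms (the proportionality of $\Delta(x,\cdot)$ and $\Delta(y,\cdot)$ for $y\in\xi(x)$ is washed out by the normalization), and the computation of the previous paragraph applied to $\hat\mu$ gives $-\int\log\hat\mu_x(f^{-1}\xi(x))\,d\mu=\int\log J^u\,d\mu$. Hence
\[
H_\mu(f^{-1}\xi\mid\xi)-\int\log J^u\,d\mu=\int\log\frac{\hat\mu_x(f^{-1}\xi(x))}{\mu_x^\xi(f^{-1}\xi(x))}\,d\mu(x).
\]
Disintegrating the outer integral over the atoms of $\xi$ and noting that $f^{-1}\xi$ partitions $\xi(x)$ into pieces on which the integrand is constant, the inner integral is $-D(P_x\,\|\,Q_x)\le 0$, the negative Kullback--Leibler divergence between the probability vectors $P_x=(\mu_x^\xi(A))_A$ and $Q_x=(\hat\mu_x(A))_A$ indexed by the $f^{-1}\xi$-atoms $A\subseteq\xi(x)$. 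This re-proves the leafwise Ruelle inequality $H_\mu(f^{-1}\xi\mid\xi)\le\int\log J^u\,d\mu$, and equality forces $P_x=Q_x$ for $\mu$-a.e.\@ $x$. Applying the same inequality to the refinement of $f^{-n}\xi$ by $f^{-(n+1)}\xi$ and using that $f$-invariance leaves both sides unchanged, equality at the first level propagates to every level, so $\mu_x^\xi$ and $\hat\mu_x$ agree on the atoms of $\bigvee_{i=0}^{n}f^{-i}\xi$ for all $n$; since $\xi$ is subordinate, $\bigvee_{i\ge 0}f^{-i}\xi$ is the partition into points, so these atoms generate the leafwise Borel structure and $\mu_x^\xi=\hat\mu_x\ll\mathrm{Leb}_x$ for $\mu$-a.e.\@ $x$.

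The step I expect to be the main obstacle is the measure-theoretic bookkeeping around the conditional measures: establishing the pushforward identity (a) from Rokhlin's theory \autocite{rokhlin1952fundamental} and, above all, verifying the $\mu$-integrability of the potential $\log\int_{\xi(x)}\Delta(x,\cdot)\,d\mathrm{Leb}_x$ so that the coboundary genuinely integrates to zero. This is exactly where the $C^{1+}$ regularity of $f$ and of the leaves of $\mc{F}$, together with the control on atom geometry coming from subordinacy, are used (compare the Remark following Lemma \ref{lem:regularitycriterion}). A secondary point requiring care is the propagation-to-all-levels argument that upgrades equality on $f^{-1}\xi$-atoms to full equality of $\mu_x^\xi$ with the absolutely continuous candidate $\hat\mu_x$.
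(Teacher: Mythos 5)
The paper does not actually prove this lemma --- it only cites \autocite[Sec.~2.4]{saghin2018lyapunov} and Ledrappier \autocite{ledrappier1985metric} --- and your proposal is a faithful reconstruction of exactly that classical argument: the candidate leafwise densities built from $\Delta(x,z)=\prod_{n\ge 1}J^u(f^{-n}x)/J^u(f^{-n}z)$, the cocycle identity $-\log\hat\mu_x(f^{-1}\xi(x))=\log J^u(x)+\log W(x)-\log W(fx)$, and the Jensen/Kullback--Leibler comparison with propagation along the refining partitions $f^{-n}\xi$, all with the correct orientation of $\Delta$ for invariance. The two steps you flag as delicate (the $\mu$-integrability of $\log\int_{\xi(x)}\Delta(x,\cdot)\,d\mathrm{Leb}_x$ so the coboundary genuinely integrates to zero, and upgrading equality of atom weights at one level to $\mu_x^\xi=\hat\mu_x$) are precisely where the cited proofs invest their effort and are handled there by the standard subordinate-partition estimates, so your plan is correct and essentially the same as the argument the paper relies on.
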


\section{Coarse geometry of Riemannian nilmanifolds and their automorphisms}\label{sec:coarse_geometry}

\subsection{Word metric on Lie groups}

In this section, we show that the right-invariant word metric on a Lie group and the Riemannian distance with respect to a right-invariant metric are quasi-isometric. The important idea used in this section, namely the function $\phi$ defined below, is due to Guivarc'h and was defined in \autocite{guivarch1973croissance}. For a recent use of these same estimates in a different context, see \autocite{cornulier2016gradings}.

Let $G$ be a connected Lie group. Fix a compact symmetric neighborhood $U$ of the identity of $G$ and a right-invariant metric $d_G$ on $G$. For $x,y\in G$, we define $d_U(x,y)$ to be the minimum $n$ such that $yx^{-1}=u_1u_2\cdots u_n$ where $u_i\in U$. Note that $d_U$ is right-invariant but not necessarily left-invariant. 
The following proposition is a special case of \autocite[Prop. 4.4]{breuillard2007geometry}.

\begin{prop}\label{prop:wordandriemannianmetricscoincide}
The metrics $d_U$ and $d_G$ are quasi-isometric: there exist $A\ge 1$,$B> 0$ such that for all $x,y\in G$
\[
\frac{1}{A}d_U(x,y)-B\le d_{G}(x,y)\le Ad_U(x,y)+B.
\]
\end{prop}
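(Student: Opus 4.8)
The plan is to prove the two inequalities in Proposition~\ref{prop:wordandriemannianmetricscoincide} separately, exploiting right-invariance to reduce everything to estimating $d_U(e,g)$ versus $d_G(e,g)$ for $g = yx^{-1}$. Since both metrics are right-invariant, it suffices to show there are constants $A \ge 1$, $B > 0$ with $\frac{1}{A}d_U(e,g) - B \le d_G(e,g) \le A\,d_U(e,g) + B$ for all $g \in G$.

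\textbf{The easy direction ($d_G \lesssim d_U$).} First I would fix $\delta = \sup_{u \in U} d_G(e,u)$, which is finite because $U$ is compact and $d_G$ is continuous. If $d_U(e,g) = n$, write $g = u_1 \cdots u_n$ with each $u_i \in U$. By the triangle inequality and right-invariance of $d_G$,
\[
d_G(e, u_1 \cdots u_n) \le \sum_{k=1}^{n} d_G(u_1 \cdots u_{k-1}, u_1 \cdots u_k) = \sum_{k=1}^n d_G(e, u_k) \le n\delta,
\]
so $d_G(e,g) \le \delta\, d_U(e,g)$, giving the right-hand inequality with $A = \delta$ (or $\max(\delta,1)$) and $B = 0$ — one can absorb any slack into $B$ later.

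\textbf{The harder direction ($d_U \lesssim d_G$).} Here I would use that $U$ is a neighborhood of the identity: choose $r > 0$ so that the Riemannian ball $B_G(e, r) \subseteq U$. Given $g$ with $d_G(e,g) = \ell$, take a path $\gamma\colon [0,1] \to G$ from $e$ to $g$ of length at most $\ell + 1$ (paths realizing distances up to arbitrarily small error exist), and subdivide it into $N := \lceil (\ell+1)/r \rceil$ subarcs each of length at most $r$. Setting $x_0 = e, x_1, \dots, x_N = g$ the consecutive division points, we have $d_G(x_{k-1}, x_k) \le r$, hence by right-invariance $d_G(e, x_k x_{k-1}^{-1}) \le r$, so $x_k x_{k-1}^{-1} \in B_G(e,r) \subseteq U$. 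But $g = x_N x_0^{-1} = (x_N x_{N-1}^{-1})(x_{N-1}x_{N-2}^{-1})\cdots(x_1 x_0^{-1})$ exhibits $g$ as a product of $N$ elements of $U$, so $d_U(e,g) \le N \le (\ell+1)/r + 1 = \frac{1}{r} d_G(e,g) + (\frac{1}{r} + 1)$. This gives the left-hand inequality with $A = \max(\delta, r^{-1})$ (taking the larger of the two constants so a single $A$ works for both bounds) and $B$ the additive constant $\frac{1}{r} + 1$.

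\textbf{Main obstacle.} The genuinely substantive point — and the reason the additive constant $B$ is unavoidable — is the lower bound: one cannot say $d_U$ is bounded below by a multiple of $d_G$ with no additive term, since $d_U$ is integer-valued while $d_G$ takes all small positive values. The mechanism that makes the lower bound work is that $U$ contains a Riemannian ball, which lets a short-step polygonal approximation of a geodesic be read off as a bounded-length word; verifying that such polygonal approximations exist (i.e.\ that $d_G$ is a length metric, which holds since $G$ is a connected Lie group with a Riemannian metric) is the one place that uses more than formal properties of the two metrics. Everything else is bookkeeping with the triangle inequality and right-invariance. I would finish by taking $A$ to be the maximum of the constants appearing in the two directions and $B$ the additive term from the second direction, so that both inequalities hold simultaneously as stated.
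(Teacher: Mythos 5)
Your argument is correct in substance, and it is worth noting that it takes a different route from the paper only in the sense that the paper gives no proof at all: Proposition \ref{prop:wordandriemannianmetricscoincide} is recorded there as a special case of a cited result of Breuillard (Prop.\ 4.4 of the reference on the geometry of Lie groups), so your write-up is a self-contained replacement for that citation. The two ingredients you isolate are exactly the right ones: compactness of $U$ gives the Lipschitz upper bound of $d_G$ by $d_U$, while the fact that $U$ contains a Riemannian ball $B_G(e,r)$, together with connectedness of $G$ (so $d_G$ is a length metric and near-minimizing paths exist, and in particular $U$ generates $G$ so $d_U$ is finite), gives the reverse bound up to the unavoidable additive constant coming from the integer-valuedness of $d_U$. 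Reducing to distances from $e$ via right-invariance and then taking the maximum of the two constants is correct bookkeeping.

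One slip needs repair. In the easy direction you chain through $e, u_1, u_1u_2, \ldots, u_1\cdots u_n$ and assert $d_G(u_1\cdots u_{k-1}, u_1\cdots u_k)=d_G(e,u_k)$ ``by right-invariance.'' Consecutive points in that chain differ by right multiplication by $u_k$ with a common \emph{left} factor, so the identity you invoked is the left-invariance identity; for a right-invariant metric, stripping the way you did produces $d_G\bigl(e, a u_k^{-1} a^{-1}\bigr)$ with $a=u_1\cdots u_{k-1}$, i.e.\ a conjugate of $u_k^{-1}$, which is not uniformly bounded. The fix is a one-line reindexing: chain instead through the suffixes $p_k = u_k u_{k+1}\cdots u_n$ (so $p_{n+1}=e$ and $p_1=g$); then $p_k = u_k\, p_{k+1}$, consecutive points share the right factor $p_{k+1}$, and right-invariance gives $d_G(p_{k+1},p_k)=d_G(e,u_k)\le \delta$, hence $d_G(e,g)\le \delta\, d_U(e,g)$ as you intended. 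In the harder direction your use of right-invariance (cancelling the common right factor $x_{k-1}^{-1}$ to conclude $x_k x_{k-1}^{-1}\in B_G(e,r)\subseteq U$) is correct, and the telescoping product and the count $N\le (\ell+1)/r+1$ are fine.
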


Suppose that $N$ is a nilpotent Lie group and write $\mf{n}$ for the Lie algebra of $N$. As before, write $\lcc{\mf{n}}{k}$ for the $k$th term in the lower central series of $\mf{n}$. A norm $\|\cdot \|$ on $\mf{n}$ induces a norm on $\lcc{\mf{n}}{k}/\lcc{\mf{n}}{k+1}$. Choose vector space complements $\cgrad{\mf{n}}{k}$ to $\lcc{\mf{n}}{k+1}$ inside of $\lcc{\mf{n}}{k}$. The norm restricts to a norm on these subspaces. Decompose an element $x\in \mf{n}$ as $\sum_k x_k$ where each $x_k\in \cgrad{\mf{n}}{k}$. Define the \emph{Guivarc'h length} of an element $x\in \mf{n}$ by
\[
\phi(x)=\max_k \|x_k\|^{1/k}.
\]
Note that if $\mf{n}$ is not abelian then $\phi$ is not a norm. 
The following theorem is implicit in the work of Guivarc'h \autocite{guivarch1973croissance}, though it does not seem to be explicitly stated. A thorough explication of Guivarc'h's  result is given in \autocite[Thm. 2.7]{breuillard2007geometry}.

\begin{thm}\label{thm:guivarchlength}
Let $N$ be a nilpotent Lie group endowed with a right-invariant Riemannian metric. Then there exist constants $A>0,B\ge 0$ such that for any $x\in N$, 
\[
\frac{1}{A}\phi(\log x)-B\le d_N(e,x)\le A\phi(\log x)+B.
\]
\end{thm}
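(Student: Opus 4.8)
The plan is to reduce the estimate to Proposition \ref{prop:wordandriemannianmetricscoincide} and then compare $\phi(\log x)$ with the right-invariant word metric $d_U$ for a conveniently chosen $U$. Since replacing one compact symmetric neighbourhood by another only changes $d_U$ by bounded multiplicative and additive error, I would fix $U=\exp(B)$, where $B\subseteq\mf n$ is the closed unit ball of the chosen norm; thus $U$ contains $\exp(re)$ for every $|r|\le 1$ and every member $e$ of a basis $e_1,\dots,e_m$ of $\mf n$ adapted to the lower central series, say $e_i\in\cgrad{\mf n}{d_i}$ with $d_i$ the weight (and $N$ is simply connected, so $\log$ is globally defined). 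By Proposition \ref{prop:wordandriemannianmetricscoincide}, it then suffices to find $A'\ge 1$, $B'\ge 0$ with $\tfrac1{A'}\phi(\log x)-B'\le d_U(e,x)\le A'\phi(\log x)+B'$ for all $x\in N$; combining with that proposition and adjusting constants gives the statement.

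For the lower bound $\phi(\log x)\lesssim d_U(e,x)$, write $x=u_1\cdots u_n$ with $u_i\in U$ and $n=d_U(e,x)$, so $\|\log u_i\|\le 1$. Iterating Baker--Campbell--Hausdorff and tracking weights, I would prove by induction on $n$ that the component $(\log x)_k$ of $\log x$ in $\cgrad{\mf n}{k}$ satisfies $\|(\log x)_k\|\le C_k\,n^k$ for constants $C_k=C_k(\mf n)$: passing from $u_1\cdots u_n$ to $u_1\cdots u_{n+1}$ changes $(\log x)_k$ by the $\cgrad{\mf n}{k}$-components of the BCH correction brackets, and any bracket landing in weight $\le k$ is a bracket of pieces of $\log(u_1\cdots u_n)$ of total weight $\le k-1$ together with $\log u_{n+1}$, hence of norm $\lesssim n^{k-1}$ by the inductive hypothesis; summing the $O(n^{k-1})$ increments gives $\|(\log x)_k\|\lesssim n^k$, so $\|(\log x)_k\|^{1/k}\lesssim n$ and $\phi(\log x)\lesssim n$.

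The upper bound $d_U(e,x)\lesssim\phi(\log x)+1$ is the main obstacle, precisely because in the non-Carnot case there is no dilation automorphism scaling $\cgrad{\mf n}{k}$ by $t^{k}$, so one cannot transport the problem to a graded model and must instead control a cascade of higher-order errors. Put $R=\max(1,\phi(\log x))$, so $\|(\log x)_k\|\le R^k$ for all $k$. I would first prove the lemma: for $z\in\cgrad{\mf n}{k}$, $\exp(z)$ is a product of $\lesssim\|z\|^{1/k}+1$ elements of $U$. Since $z\in\lcc{\mf n}{k}=[\mf n,\lcc{\mf n}{k-1}]$, write $z$ as a sum of $k$-fold brackets of elements of $\mf n$; the identity $[\exp(sa_1),[\exp(sa_2),\dots,\exp(sa_k)]]=\exp\!\big(s^k[a_1,\dots,a_k]\big)\cdot\epsilon$ with $\epsilon\in\exp(\lcc{\mf n}{k+1})$ and $s$ of order $\|z\|^{1/k}$ realizes the leading part of $\exp(z)$ by a word of the stated length and leaves an error in $\exp(\lcc{\mf n}{k+1})$; as $\lcc{\mf n}{c+1}=0$ for $c$ the nilpotency class, one recurses on the error, the scales staying of order $\|z\|^{1/k}$. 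Next I would write $x$ in coordinates of the second kind, $x=\exp(t_1e_1)\cdots\exp(t_me_m)$, and show the change from the first-kind coordinates $(c_i)$ with $\log x=\sum_i c_ie_i$ is weight-graded: because $[\cgrad{\mf n}{a},\cgrad{\mf n}{b}]\subseteq\lcc{\mf n}{a+b}=\bigoplus_{d\ge a+b}\cgrad{\mf n}{d}$, solving the BCH relations recursively in order of increasing weight shows each $t_i$ is a polynomial in the $c_j$ all of whose monomials $\prod_j c_j^{a_j}$ have weighted degree $\sum_j a_j d_j\le d_i$. Since $|c_j|\lesssim R^{d_j}$ and $R\ge 1$, this gives $|t_i|\lesssim R^{d_i}$, so by the lemma each $\exp(t_ie_i)$ is a product of $\lesssim|t_i|^{1/d_i}+1\lesssim R+1$ elements of $U$, and multiplying the $m$ factors yields $d_U(e,x)\lesssim R+1\lesssim\phi(\log x)+1$.

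I expect the hard part to be the two ingredients in the last paragraph — the commutator lemma and the weight-gradedness of the second-kind coordinate change — since both require controlling the higher-order error terms without a genuine dilation; this is exactly the book-keeping that Guivarc'h's function $\phi$ is tailored to make work. As a fallback, one may instead simply invoke \autocite[Thm.~2.7]{breuillard2007geometry} for the comparison between $\phi$ and $d_U$ and combine it with Proposition \ref{prop:wordandriemannianmetricscoincide}.
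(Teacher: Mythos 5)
Your proposal is correct in outline, but note that the paper does not actually prove this theorem: it attributes the estimate to Guivarc'h and points to \autocite[Thm.~2.7]{breuillard2007geometry} for a full treatment, so your fallback (quoting that result and combining it with Proposition \ref{prop:wordandriemannianmetricscoincide}) is exactly the paper's route, while your main text supplies the self-contained argument that underlies those references. Your two halves are the standard ones: the lower bound by BCH weight bookkeeping giving $\|(\log(u_1\cdots u_n))_k\|\lesssim n^k$, and the upper bound via iterated commutators at scale $s\sim\|z\|^{1/k}$ plus the weight-graded passage from first- to second-kind (Malcev) coordinates, which works provided the adapted basis is ordered by increasing weight so each tail spans a subalgebra. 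The one place needing sharpening is the commutator lemma: the recursion does not close if you only track the norm of the error $\epsilon\in\exp(\lcc{\mf{n}}{k+1})$, since $\|\log\epsilon\|$ can be of order $s^{c}$ ($c$ the nilpotency class) and the lemma as stated would then produce scale $s^{c/(k+1)}$ rather than $s$; the induction hypothesis must be the graded statement you implicitly invoke with ``the scales staying of order $\|z\|^{1/k}$'', namely that if $w\in\lcc{\mf{n}}{k}$ has weight-$j$ components of norm at most $S^{j}$ for all $j\ge k$ (with $S\ge 1$), then $\exp(w)$ is a word of length $O(S)$ in $U$. With that strengthening your argument goes through with constants depending only on the structure constants and the class; what it buys over the paper's citation is a self-contained proof matched to the exact normalization of $\phi$ used here, at the cost of the BCH bookkeeping that \autocite{breuillard2007geometry} packages for you.
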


Using this coarse estimate, we now return to the escape speeds defined in equation \eqref{defn:escapespeed}. The following proposition shows that points lying in the slowest subgroup of an expanding automorphism with simple real spectrum are characterized by their escape speed.

\begin{prop}\label{prop:expandingslowest}
Suppose that $N$ is a nilpotent Lie group and $L\colon N\to N$ is an expanding automorphism of $N$ with simple sorted spectrum. Let $\lambda$ be the smallest modulus eigenvalue of $L$ and let $v$ be an eigenvector of eigenvalue $\lambda$. Let $\Sigma$ be an eigenbasis for $L$ containing $v$. Let $\sigma=\min_{w\in \Sigma\setminus\{v\}} \sigma_w$, where $\sigma_w$ is the escape rate in direction $w$ as defined in equation \eqref{defn:escapespeed}. Then $\sigma>\sigma_{v}=\abs{\lambda}$. Choose any $\eta$ such that $\sigma_v<\eta<\sigma$. Then for any $x\in N$, if there exist $C,D$ such that $d_N(L^n(x),e)\le C\eta^n+D$ for all $n\ge 0$, then $x$ lies in the subgroup tangent to $v$.
\end{prop}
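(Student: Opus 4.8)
The plan is to pass to logarithmic coordinates and show that an orbit which escapes from $e$ no faster than $\eta^n$ cannot have a nonzero component in any eigendirection whose escape speed exceeds $\eta$; since the only eigendirection with escape speed $\le\eta$ is $v$, this forces $x$ into the subgroup tangent to $v$. First, the inequality $\sigma>\sigma_v=\abs{\lambda}$ is immediate from Corollary~\ref{cor:smallest_escape_rate}: each $w\in\Sigma\setminus\{v\}$ is linearly independent from $v$, so $\sigma_w>\sigma_v=\abs{\lambda}$, and $\sigma$ is the minimum of the finitely many numbers $\sigma_w$. Fix $\eta$ with $\sigma_v<\eta<\sigma$; since $L$ is expanding, $\abs{\lambda}>1$, hence $\eta>1$. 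Because $L$ is an automorphism, $L^n\circ\exp=\exp\circ(D_eL)^n$, so in logarithmic coordinates the orbit of $x$ is $\log(L^n x)=(D_eL)^n(\log x)$.

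To bound $d_N(e,L^n x)$ from below I would use the Guivarc'h length $\phi$ (Theorem~\ref{thm:guivarchlength}), choosing the complements $\cgrad{\mf{n}}{k}$ to $\lcc{\mf{n}}{k+1}$ inside $\lcc{\mf{n}}{k}$ to be the pieces of the $L$-grading; this is a legitimate choice, since $\lcc{\mf{n}}{k}=\cgrad{\mf{n}}{k}\oplus\lcc{\mf{n}}{k+1}$ as in the proof of Proposition~\ref{prop:Lgradingcarnot}. Each eigenvector $w\in\Sigma$ lies in $\lcc{\mf{n}}{k}\setminus\lcc{\mf{n}}{k+1}$ for a unique degree $k=k_w$, hence lies in $\cgrad{\mf{n}}{k_w}$, so $\Sigma$ refines into eigenbases of the graded pieces and $(D_eL)^n$ is diagonal in $\Sigma$ and preserves each $\cgrad{\mf{n}}{k}$. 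Writing $\log x=\sum_{w\in\Sigma}a_w w$, the degree-$k$ component of $\log(L^n x)$ is $\sum_{w\in\Sigma\cap\cgrad{\mf{n}}{k}}a_w\lambda_w^n w$, whose norm is bounded below, by equivalence of norms on $\cgrad{\mf{n}}{k}$ and a constant depending only on $\Sigma$, by $\abs{a_w}\abs{\lambda_w}^n$ for any such $w$. Taking $1/k$-th powers and maximizing over the finitely many degrees $k$ yields, for all $n\ge 0$,
\[
\phi(\log(L^n x))\ \ge\ c_0\,\sigma_*^{\,n},\qquad \sigma_*:=\max\{\sigma_w: w\in\Sigma,\ a_w\neq 0\},
\]
for some $c_0>0$ depending on $x$ (and if $\log x=0$ then $x=e$ already lies in the subgroup). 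Matching the a priori arbitrary complements in the definition of $\phi$ with the eigenbasis, together with keeping track of these $1/k$-th powers, is the only slightly fiddly step, and it becomes routine once the complements are chosen to be the $L$-graded pieces; there is no deeper obstacle.

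Finally, Theorem~\ref{thm:guivarchlength} gives $d_N(L^n x,e)\ge A^{-1}\phi(\log(L^n x))-B\ge A^{-1}c_0\sigma_*^n-B$ for all $n\ge 0$. If $x$ does not lie in the one-parameter subgroup tangent to $v$, then $a_w\neq 0$ for some $w\neq v$, so $\sigma_*\ge\sigma_w\ge\sigma>\eta$. Comparing with the hypothesis $d_N(L^n x,e)\le C\eta^n+D$ gives $A^{-1}c_0\sigma_*^n\le C\eta^n+D+B$ for all $n$; dividing by $\eta^n$ and letting $n\to\infty$ forces $A^{-1}c_0(\sigma_*/\eta)^n$ to remain bounded, which is impossible since $\sigma_*/\eta\ge\sigma/\eta>1$. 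Hence $a_w=0$ for all $w\neq v$, i.e.\ $\log x\in\R v$, so $x$ lies in the subgroup tangent to $v$. The only substantive input beyond this soft argument is that sorted simple spectrum forces every eigendirection other than $v$ to have escape speed bounded away from $\abs{\lambda}$, which is exactly Corollary~\ref{cor:smallest_escape_rate}.
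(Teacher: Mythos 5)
Your proof is correct and follows essentially the same route as the paper: decompose $\log x$ in the $L$-eigenbasis adapted to the $L$-grading, note $\log L^n(x)=(D_eL)^n\log x$, bound $\phi(\log L^n(x))$ below by $\sigma_w^n$ times a constant for each eigendirection with nonzero coefficient, and compare with the hypothesis via Theorem~\ref{thm:guivarchlength} to kill all coefficients except the one on $v$. Your explicit handling of the norm-equivalence constant and the choice of Guivarc'h complements as the graded pieces is slightly more careful than the paper's write-up, but it is the same argument.
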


\begin{proof}
That $\sigma>\sigma_v$ is the content of Corollary \ref{cor:smallest_escape_rate}. Now suppose that $\sigma_v<\eta<\sigma$. Suppose that $x\in N$ and that there exist $C, D$ such that $d_N(L^n(x),e)<C\eta^n+D$ for all $n\ge 0$. Write $\log(x)=\sum_k x_k$. Where $x_k\in \mf{n}_{(k)}$. Then $L$ acts on $\log(x)$ by scaling each of its components in the eigenspace decomposition. Specifically, write $\log(x)=\sum_{1\le i\le r}\sum_{1\le j \le \dim \mf{n}_{(i)}} a_{ij}v_{i,j}$ where $v_{i,j}$ is is the $j$th slowest unstable eigenvector in $\mf{n}_{(i)}$. Consequently, $dL^n(\log(x))=\sum_{i}\sum_j \lambda_{v_{i,j}}^n a_{ij}v_{i,j}$. Now, $dL^n(\log x)=\log L^n(x)$. By Theorem \ref{thm:guivarchlength}, there exist $A$ and $B$ such that $Ad_N(x,e)+B\ge \phi(\log (x))$. Thus, for sufficiently large $n$,
\begin{align*}
AC\eta^n+(B+AD)&\ge \phi(\log(L^n))\\
&=\phi\pez{\sum_i \sum_j \lambda_{v_{i,j}}^n a_{ij}v_{i,j}}\\
&=\max_i \|\sum_j \lambda_{v_{i,j}}^n a_{ij}v_{i,j}\|^{1/i}\\
&\ge (\sigma_{v_{i,j}})^n\abs{a_{ij}}^{1/i}\|v_{i,j}\|^{1/i}\\
\end{align*}
But by choice $\eta<\sigma_{v_{i,j}}$ for all $v_{i,j}$ except $v_{1,1}$. Thus $a_{ij}=0$ except for $i=1,j=1$. Thus $x$ lies in the subgroup tangent to $v_{1,1}$ as its logarithm is a multiple of $v_{1,1}$.
\end{proof}

Note that the proof of Proposition \ref{prop:expandingslowest} provides detailed information about the distance between $e$ and $L^n(x)$. However, we have only extracted the above statement concerning the slowest speed because it is all we need.

\begin{thm}\label{thm:V_i_characterization}
Suppose that $N/\Gamma$ is a nilmanifold and $L$ is an automorphism of $N/\Gamma$ with simple sorted spectrum. Fix some $i$;
then by Proposition \ref{prop:dynamical_foliations_for_nilmanifold_automorphisms}, the $i$th strong foliation $\mc{S}^{u,L}_i$ exists. By the same proposition, the $i$th weak foliation $\mc{W}_i^{u,L}$ exists and subfoliates $\mc{S}^{u,L}_i$.
Let $\Sigma$ be an eigenbasis for the action of $dL$ on $\mf{s}_i^u$ and let $v$ be the vector with smallest modulus eigenvalue in $\Sigma$. Let $\sigma=\min_{w\in \Sigma\setminus{\{v\}}} \sigma_w$ be the second slowest escape speed of the action of $dL$ on $\mf{s}_i^u$. Then $\sigma>\sigma_v$, where $\sigma_v$ denotes the escape speed of $v$ associated to the action of $dL$ on $\mf{s}_i^u$. Choose $\eta$ such that $\sigma_v<\eta<\sigma$. Then for any $x\in N/\Gamma$ and $y\in \mc{S}^{u,L}_i(x)$, if there exist $C,D$ such that $d_{\mc{S}^{u,L}_i}(L^n(x),L^n(y))\le C\eta^n+D$ for all $n\ge 0$, then $x\in \mc{W}^{u,L}_i(y)$.
\end{thm}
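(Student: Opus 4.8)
The statement is essentially a ``relative'' version of Proposition~\ref{prop:expandingslowest}: instead of working with an expanding automorphism of all of $N$, we work with the restriction of $dL$ to the strong unstable subalgebra $\mf{s}_i^u$, which is an expanding automorphism of the subgroup $S_i^u$. The plan is to lift the problem to $N$, identify the relevant leaf $\mc{S}_i^{u,L}(x)$ with a right translate of $S_i^u$, transfer the hypothesis $d_{\mc{S}_i^{u,L}}(L^n x, L^n y)\le C\eta^n+D$ into a statement about the intrinsic word/Riemannian geometry of $S_i^u$, and then apply Proposition~\ref{prop:expandingslowest} to the subgroup $S_i^u$ with its restricted automorphism $dL|_{\mf{s}_i^u}$.

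First I would set up the identification. By Proposition~\ref{prop:dynamical_foliations_for_nilmanifold_automorphisms}, the leaf $\mc{S}_i^{u,L}(x)$ is $S_i^u x$ (working on $N$, or on a lift), and the induced Riemannian metric on this leaf is, up to right translation, isometric to the right-invariant metric on $S_i^u$ obtained by restricting the ambient right-invariant metric on $N$ to $\mf{s}_i^u = T_e S_i^u$. Indeed, right multiplication by $x^{-1}$ carries $S_i^u x$ isometrically onto $S_i^u$ (right translations are isometries of the right-invariant metric and preserve the subgroup coset structure), and it intertwines $L$ on $\mc{S}_i^{u,L}(x)$ with $L$ on $S_i^u$ only up to a further translation; more carefully, writing $y = sx$ with $s\in S_i^u$, one has $L^n(y)L^n(x)^{-1} = L^n(yx^{-1}) = L^n(s)\in S_i^u$, so $d_{\mc{S}_i^{u,L}}(L^n x, L^n y) = d_{S_i^u}(e, L^n(s))$ by right-invariance. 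Thus the hypothesis becomes exactly $d_{S_i^u}(e, L^n(s))\le C\eta^n + D$ for all $n\ge 0$.

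Next I would verify the hypotheses of Proposition~\ref{prop:expandingslowest} for the subgroup $S_i^u$: the automorphism $dL|_{\mf{s}_i^u}$ is expanding (all eigenvalues have modulus $>1$ since $\mf{s}_i^u\subseteq \mf{n}^u$), has simple real spectrum (inherited from $L$), and has sorted spectrum — here one needs that the $L$-grading of $\mf{s}_i^u$ is compatible with sortedness; this follows because $\mf{s}_i^u$ is the span of the fast eigenvectors, and by Proposition~\ref{prop:directsum} and the subsequent propositions the lower central series and grading of $\mf{n}^u$ restrict well, so sortedness of $L$ on $\mf{n}$ passes to $dL|_{\mf{s}_i^u}$. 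The eigenvector $v$ and the escape speeds $\sigma_v, \sigma$ in the statement are exactly those produced by Proposition~\ref{prop:expandingslowest} applied to $S_i^u$, so $\sigma > \sigma_v$ is immediate from Corollary~\ref{cor:smallest_escape_rate}. With $\sigma_v < \eta < \sigma$, Proposition~\ref{prop:expandingslowest} then concludes that $s$ lies in the one-parameter subgroup of $S_i^u$ tangent to $v$, which is precisely $W_i^u$ (the $i$th weak subgroup, tangent to $E_{\lambda_i^u}$). Translating back, $y = sx \in W_i^u x = \mc{W}_i^{u,L}(x)$, equivalently $x\in \mc{W}_i^{u,L}(y)$ since the weak foliation leaves are cosets of a subgroup. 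Finally, if we are genuinely on $N/\Gamma$ rather than $N$, I would note that the distance $d_{\mc{S}_i^{u,L}}$ is measured intrinsically along a leaf, and a leaf of $\mc{S}_i^{u,L}$ in $N/\Gamma$ lifts to a leaf $S_i^u \tilde x$ in $N$ with $\pi$ a local isometry along it; since the intrinsic leaf distance only decreases (weakly) under $\pi$ and the relevant $S_i^u$ leaves in $N$ are simply connected, one checks the leaf distance upstairs is comparable, so the polynomial/exponential growth hypothesis transfers upstairs unchanged.

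\textbf{Main obstacle.} The one genuinely delicate point is the passage between the intrinsic leaf metric $d_{\mc{S}_i^{u,L}}$ and the intrinsic word/Riemannian metric on the abstract subgroup $S_i^u$, together with checking that the leaf of $\mc{S}_i^{u,L}$ really is $S_i^u x$ with the expected metric — i.e., that the leaf inclusion is totally geodesic enough for right-translation to be a leaf isometry, and that no issues arise from the leaf being immersed rather than embedded (wrapping around $N/\Gamma$). Once one has cleanly reduced to ``$d_{S_i^u}(e, L^n(s))\le C\eta^n+D$ implies $s$ is tangent to the slowest direction,'' everything else is a direct citation of Proposition~\ref{prop:expandingslowest} and Corollary~\ref{cor:smallest_escape_rate}; the remaining bookkeeping (sortedness restricts to $\mf{s}_i^u$, identifying $W_i^u$ as the tangent subgroup to $v$) is routine given the structure theory already developed in Subsection~\ref{subsec:automorphisms}.
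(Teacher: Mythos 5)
Your proposal is correct and follows essentially the same route as the paper: lift to $N$, write $y=mx$ with $m\in S_i^u$, use right-invariance of the leaf metric to reduce the hypothesis to $d_{\mc{S}_i^{u,L}}(e,L^n(m))\le C\eta^n+D$, and then apply Proposition \ref{prop:expandingslowest} to the restriction of $L$ to $S_i^u$, which is an expanding automorphism with simple sorted spectrum. The extra bookkeeping you flag (transferring leaf distances between $N/\Gamma$ and the lift, and sortedness restricting to $\mf{s}_i^u$) is handled in the paper exactly as you describe.
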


\begin{proof}
We reduce to Proposition \ref{prop:expandingslowest}. Suppose that that for some $C,D$, $d_{\mc{S}^{u,L}_i}(L^n(x),L^n(y))<C\eta^n+D$ for all $n\ge 0$. As distance along unstable leaves is the same in $N/\Gamma$ or in the lifted foliation on $N$, it suffices to work in the universal cover. In the universal cover, the lifted foliation has leaf through $x$ equal to $S^u_ix$, where $S^u_i$ is the strong subgroup defined in Proposition \ref{prop:dynamical_foliations_for_nilmanifold_automorphisms}. Consequently, we may write $y=mx$ for some $m\in S^u_i$. By assumption, $d_{\mc{S}^{u,L}_i}(L^n(x),L^n(y))\le C\eta^n+D$ for all $n\ge 0$. The strong unstable foliation is preserved by right multiplication. Right multiplication preserves the distance along leaves as the leaf metric is induced by a right-invariant metric on $N$. Thus,
\[
d_{\mc{S}^{u,L}_i}(L^n(x),L^n(y))=d_{\mc{S}^{u,L}_i}(e,L^n(y)(L^n(x))^{-1})=d_{\mc{S}^{u,L}_i}(e,L^n(yx^{-1}))=d_{\mc{S}^{u,L}_i}(e,L^n(m))
\]

The restriction of $L$ to $S^{u}_i$ is an expanding automorphism with simple sorted spectrum, and so the previous proposition applies with the same choice of $\eta$. We conclude that $m$ lies in the subgroup of $S^{u}_i$ generated by $v$, as desired.
\end{proof}

\section{Coarse geometry and conjugacies}

In the first subsection, we show a useful result that gives that a conjugacy intertwining the leaves of two sufficiently nice foliations induces a quasi-isometry between the leaves of those foliations. In the second section, we use this result to show that under suitable conditions the $E_i^{u,f}$ distribution is uniquely integrable. The proof of unique integrability is obtained by using that a quasi-isometry respects escape speeds.

Before we begin, we record a basic result showing that a conjugacy interwines stable manifolds. Recall that $\mc{S}_1^{u,f}$ is equal to the full unstable foliation.
\begin{prop}\label{prop:unstabletounstable}
Suppose that $L$ is an Anosov automorphism, $f$ is an Anosov diffeomorphism, and $h$ is a conjugacy satisfying $h\circ f=L\circ h$. Then $h(\mc{S}^{u,f}_1)=\mc{S}_1^{u,L}$ and $h(\mc{S}^{s,f}_1)=\mc{S}^{s,L}_1$.
\end{prop}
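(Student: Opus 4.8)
The plan is to use the standard dynamical characterization of unstable manifolds as the set of points whose backward orbit converges, together with the fact that $h$ is a topological conjugacy and hence preserves this property. First I would recall that for the Anosov diffeomorphism $f$, the leaf $\mc{S}_1^{u,f}(x)$ (the unstable manifold of $x$) coincides with the set $\{y : d(f^{-n}x, f^{-n}y) \to 0 \text{ as } n\to\infty\}$, and similarly for $L$. This is the classical Hadamard--Perron / stable manifold theorem characterization of unstable sets, and it holds uniformly: there are $C>0$ and $\mu\in(0,1)$ with $d(f^{-n}x,f^{-n}y)\le C\mu^n d(x,y)$ for $y$ in the unstable manifold of $x$, and conversely points not on the unstable manifold are eventually separated under backward iteration.

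The key step is then the following observation. Suppose $y\in\mc{S}_1^{u,f}(x)$, so $d(f^{-n}x,f^{-n}y)\to 0$. Since $h$ is uniformly continuous (it is a homeomorphism of a compact manifold) and $h\circ f^{-n} = L^{-n}\circ h$, we get $d(L^{-n}(hx), L^{-n}(hy)) = d(h(f^{-n}x), h(f^{-n}y)) \to 0$, whence $hy \in \mc{S}_1^{u,L}(hx)$. This shows $h(\mc{S}_1^{u,f}(x)) \subseteq \mc{S}_1^{u,L}(hx)$. For the reverse inclusion, apply the same argument to the inverse conjugacy $h^{-1}$, which satisfies $h^{-1}\circ L = f\circ h^{-1}$; this gives $h^{-1}(\mc{S}_1^{u,L}(hx)) \subseteq \mc{S}_1^{u,f}(x)$, i.e. $\mc{S}_1^{u,L}(hx) \subseteq h(\mc{S}_1^{u,f}(x))$. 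Combining the two inclusions yields $h(\mc{S}_1^{u,f}(x)) = \mc{S}_1^{u,L}(hx)$, which is exactly the statement $h(\mc{S}_1^{u,f}) = \mc{S}_1^{u,L}$. The stable case is identical, replacing $f^{-n}, L^{-n}$ by $f^n, L^n$.

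I do not expect any serious obstacle here; the only point requiring a little care is justifying the dynamical characterization of unstable leaves for a general Anosov diffeomorphism (not just the automorphism), but this is entirely standard and can be cited from \autocite{hasselblatt1995introduction} or \autocite[Sec.~2.3]{katok1997introduction}. One should also note that a priori the unstable manifold is an immersed submanifold and the convergence characterization identifies the full leaf (not just a local piece), which follows by iterating forward: if $d(f^{-n}x,f^{-n}y)\to 0$ then for large $N$ the point $f^{-N}y$ lies in the local unstable manifold of $f^{-N}x$, and applying $f^N$ shows $y\in\mc{S}_1^{u,f}(x)$. This is the one place to be slightly careful, but it is routine.
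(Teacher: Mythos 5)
Your proposal is correct and is essentially the paper's own argument: both use the dynamical characterization of stable/unstable leaves via convergence of forward/backward orbits, together with continuity of $h$ and the conjugacy relation. The only difference is that you spell out the reverse inclusion by applying the same argument to $h^{-1}$, a step the paper leaves implicit.
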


\begin{proof}
Two points are in the same stable manifold if and only if they converge to each other under forward interation. If $d(f^n(x),f^n(y))\to 0$, then $d(h(f^n(x)),h(f^n(y)))=d(L^n(h(x)),L^n(h(y)))\to 0$ as $h$ is continuous. So, if $x\in S^{s,f}(y)_1$, then $h(x)\in S_1^{s,L}(h(y))$. The proof is similar in the case of unstable manifolds.
\end{proof}

\subsection{Bounded geometry}

When working with non-compact Riemannian manifolds, it is easy to accidentally allow trivial counterexamples to seemingly reasonable analytic claims. However, there exists a natural class of manifolds that are suitable for analysis: those with bounded geometry.
For further discussion and examples, see \autocite[Ch. 2]{eldering2013normally}, which gives an extended discussion of bounded geometry in a dynamical setting.

A Riemannian metric allows one to judge the size of the jets of a function. For instance, given a Riemannian metric the differential of a map at a point may be viewed as a linear map between two normed spaces. Consequently, the differential at a point is normed by the operator norm.
Similarly, a choice of metric induces a norm on all other bundles and maps that one might consider. Whenever we talk about $C^k$ norms on a Riemannian manifold, we are referring to these uniform norms defined with reference to a metric. One could also define this topology by selecting a distinguished family of charts on the manifold.  We refer to these $C^k$ norms as the uniform norm on a Riemannian manifold.

This approach to the $C^k$ norms may seem displeasing because it is non-canonical. The space $C^{k}$ may no longer contain all $C^k$ maps: some may have infinite norm. Further, if one has a Riemmian manifold with unbounded curvature, then the curvature of this manifold has infinite norm, and thus is not even an element of the space of $C^1$ tensors over the manifold in the uniform norm.  Consequently, we will distinguish the spaces $C^k$ and $C^k_u$. The former is the usual space of maps with $k$ continuous derivatives, and the latter is the subset of $C^k$ where there is a uniform estimate on the derivatives.

Dynamicists often prefer the convenience of working directly in charts. Consequently, when making arguments it is very useful if one has a uniform lower bound on the size of an exponential chart on a Riemannian manifold.  These considerations all bring us to a class of manifolds whose uniform $C^k$ topology is suitable for analysis.

\begin{defn}
We say that a smooth Riemannian manifold $N$ equipped with a smooth metric, $g$, has \emph{bounded geometry} if 
\begin{enumerate}
\item
The global injectivity radius of $N$ is positive; i.e. there is a uniform lower bound on the injectivity radius of the exponential map over all points $n\in N$.
\item
For each $k\ge 0$, there exists $C_k$ such that pointwise
\[
\|\nabla^k R \|\le C_k,
\]
where $\|\cdot\|$ is the norm on tensors induced by $g$ and $R$ denotes the curvature of the Levi-Civita connection $\nabla$ of $g$.
\end{enumerate}
\end{defn}

Consider the universal cover $\wt{N}$ of a compact Riemannian manifold $N$. Endowed with the pullback metric from $N$, $\wt{N}$ has bounded geometry. For an example of a manifold without bounded geometry, consider a Riemannian manifold with a two-dimensional cusp: for any $\epsilon>0$, there is a point $x$ sufficiently deep in the cusp that the injectivity radius of the exponential map at $x$ is less than $\epsilon$.

We also introduce a class of submanifolds of bounded geometry manifolds that are also suitable for analysis.

\begin{defn}\autocite[Def. 2.21]{eldering2013normally}
Let $k\ge 1$ be an integer. Let $N$ be a Riemannian manifold of bounded geometry and let $i\colon M\to N$ be a $C^k$ immersion of a $C^k$ manifold $M$ into $N$. For $x\in M$, we write $M_{x,\delta}$ for the connected component of $x$ in  $i^{-1}(B_{\delta}(i(x))\cap i(M))$, where $B_{\delta}(i(x))$ is the open ball of radius $\delta$ in $N$ centered at $i(x)$. We say that $M$ is a $C^k$ \emph{uniformly immersed submanifold of $N$} when there exists $\delta>0$ such that for all $x\in M$, $M_{x,\delta}$  is represented in normal coordinates on $N$ by the graph of a function $g_x\colon T_xM\to T_xM^{\perp}$. We also require that there is a uniform bound over all $x$ on the $C^k$ norm of $g_x$, where this norm is defined with respect to the natural Euclidean structure on $T_xM$ and $T_xM^{\perp}$.
\end{defn}

Note that our definition of uniformly immersed does not reference the pullback metric on $M$.
The reason for this is that
the definition of bounded geometry includes reference to the exponential map, which need not be defined if $M$ or the pullback metric has regularity lower than $C^2$. However, in the case that $i$ and $M$ are both sufficiently regular being uniformly immersed is equivalent to $M$ having low order bounded geometry \autocite[Lem. 2.27]{eldering2013normally}.
One should also note that the definition is quite adapted to a dynamical setting. In the graph transform approach to constructing unstable manifolds, one essentially obtains the needed estimate in the course of constructing the unstable manifolds.

\begin{prop}\label{prop:strong_is_uniformly_immersed}
Suppose that $f$ is a $C^{1+}$ partially hyperbolic map of a smooth compact manifold $M$ and that $\mc{S}^{u,f}$ is the strong unstable foliation of $M$ defined by the partially hyperbolic splitting of $f$. Then $\mc{S}^{u,f}$ is a foliation with uniformly $C^{1+}$ leaves. Moreover, each leaf of $\mc{S}^{u,f}$ is a $C^1$ uniformly immersed submanifold of $M$.
\end{prop}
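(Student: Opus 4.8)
The plan is to derive both assertions from the classical Hadamard--Perron graph transform construction of strong unstable manifolds, tracking uniformity throughout by exploiting the compactness of $M$. First I would recall the construction of the local strong unstable manifold $W^{uu}_{\mathrm{loc}}(x)$ through a point $x\in M$: fixing exponential charts of a uniform radius, one represents a candidate local leaf through $x$ as the graph of a map from a fixed-radius disk in $E^{u,f}$ at $x$ to its orthogonal complement in $T_xM$, where $E^{u,f}$ denotes the $Df$-invariant distribution tangent to $\mc{S}^{u,f}$. By the partial hyperbolicity inequalities together with the domination of $E^s\oplus E^c$ by $E^{u,f}$, the operator that pushes such a graph forward by $f$ and re-expresses the image as a graph over $E^{u,f}$ is a contraction, in the sup metric, on the complete space of Lipschitz graphs with a fixed Lipschitz constant; its unique fixed point is $W^{uu}_{\mathrm{loc}}(x)$, it is tangent to $E^{u,f}$ at $x$, and assembling these local leaves and pushing them forward by $f$ recovers the foliation $\mc{S}^{u,f}$ of the hypothesis. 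Because $M$ is compact, the chart radius, the $C^1$ norm of $f$, the hyperbolicity and domination constants, and --- since $f$ is $C^{1+}$ --- the H\"older constant of $Df$ are all uniform over $M$, so the disk radius and the Lipschitz bound can be chosen uniformly in $x$.

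Next I would upgrade the regularity. Writing $f\in C^{1+\theta_0}$ with $\theta_0>0$, the graph transform preserves, for $\theta\in(0,\theta_0]$ small enough relative to the hyperbolicity rates, the set of graphs with a common bound on the $C^{1+\theta}$ norm; this is the content of the $C^{1+\theta}$ section theorem, for which see \autocite{pugh1997holder}, and no bunching hypothesis is needed precisely because $\theta$ may be taken small. The fixed point is therefore a uniformly $C^{1+\theta}$ graph, and the same contraction shows that $W^{uu}_{\mathrm{loc}}(x)$ depends continuously, in the $C^1$ topology, on $x$. Sliding these local leaves across a transversal produces foliation charts $\phi(x,y)=\exp_p(x,g(x,y))$ for which $x\mapsto g(x,y)$ is uniformly $C^{1+\theta}$ with its lower-order derivatives jointly continuous in $(x,y)$; by the definition in Subsection \ref{subsec:foliations} this is exactly the statement that $\mc{S}^{u,f}$ has uniformly $C^{1+}$ leaves.

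For the uniform immersion statement, I would fix a leaf $\mc{L}$ and a point $x\in\mc{L}$, so that $\mc{L}=\mc{S}^{u,f}(x)$ and, by the construction above, $W^{uu}_{\mathrm{loc}}(x)$ is the graph over the open disk of radius $r_0$ in $T_x\mc{L}$ of a function whose $C^1$ norm is at most $K$, with $r_0>0$ and $K$ uniform over $M$. The topological boundary of $W^{uu}_{\mathrm{loc}}(x)$ inside $\mc{L}$ is the graph over the sphere of radius $r_0$, which lies outside the extrinsic ball $B_{r_0}(x)$, since orthogonal projection onto $T_x\mc{L}$ does not increase distance. Hence for any $\delta<r_0$ the connected component of $x$ in $B_\delta(x)\cap\mc{L}$ cannot meet that boundary and so is contained in $W^{uu}_{\mathrm{loc}}(x)$; consequently it is the graph over a subdisk of $T_x\mc{L}$ of a function $g_x\colon T_x\mc{L}\to T_x\mc{L}^{\perp}$ whose $C^1$ norm is at most $K$, a bound independent of $\mc{L}$ and of $x\in\mc{L}$. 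This is precisely the definition of a $C^1$ uniformly immersed submanifold of $M$, with the uniform constant $\delta$.

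The main obstacle will be the regularity step: verifying that the graph transform leaves invariant a family of graphs with a \emph{common} $C^{1+\theta}$ bound on a \emph{common} domain, uniformly in the base point. This is exactly where the hypothesis that $f$ is $C^{1+}$ rather than merely $C^1$ is used --- for a $C^1$ map the invariant family can only be taken $C^1$, and then neither ``uniformly $C^{1+}$ leaves'' nor the uniform $C^1$ graph bound underpinning the immersion statement need survive, paralleling the obstruction recorded in the remark after Lemma \ref{lem:regularitycriterion}. Everything else is routine bookkeeping once the uniform estimates of the first two steps are in hand.
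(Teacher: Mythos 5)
Your argument is correct, but it takes a more hands-on route than the paper. The paper disposes of both claims by citation plus a compactness argument: the strong unstable foliation admits a $C^1$ plaquation by \autocite[Cor. 5.6]{hirsch1977invariant}, so every point of every leaf lies in the image of a plaque drawn from a precompact family $\mc{U}\subseteq \text{Emb}^1(D^k,M)$, and precompactness in the $C^1$ topology immediately yields the uniform graph bound required by the definition of a uniformly immersed submanifold (the uniformly $C^{1+}$ leaves statement being the standard regularity of strong unstable plaques for $C^{1+}$ dynamics). You instead re-run the Hadamard--Perron graph transform with explicit uniform constants, invoke the $C^{1+\theta}$ section theorem to get the leaf regularity, and then prove the uniform immersion property directly by the boundary-crossing argument showing that the connected component of $x$ in $B_\delta(x)\cap\mc{L}$ stays inside the local graph of uniformly bounded $C^1$ norm. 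What the paper's route buys is brevity --- uniformity falls out of precompactness of the plaque family without tracking constants --- while your route buys self-containedness and makes explicit the point the paper leaves implicit, namely why the \emph{connected component} of the $\delta$-ball intersected with the leaf is captured by a single plaque; your observation that the relative boundary of the local leaf sits at extrinsic distance at least $r_0$ from $x$ is exactly the missing bookkeeping. One small caution: that distance estimate uses normal coordinates, so $r_0$ (hence $\delta$) must also be taken below the uniform injectivity radius of the compact manifold so that radial coordinate length and Riemannian distance agree; this is harmless but should be said.
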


\begin{proof}
This follows because the strong foliation $\mc{S}^{u,f}$ admits a $C^1$ plaquation, see \autocite[Cor. 5.6]{hirsch1977invariant}. 
Let $\text{Emb}^1(D^k,M)$ be the space of $C^1$ embeddings of the closed $k$-dimensional unit disk in $M$ endowed with the $C^1$ topology.
Admitting such a plaquation implies that there is a subset $\mc{U}\subseteq \text{Emb}^1(D^k, M)$, where $k$ is the dimension of a leaf of $\mc{S}^{u,f}$,  such that each point in the foliation is in the image of such a disk. The precompactness of $\mc{U}$ immediately implies the uniformity estimate in the definition of uniformly immersed.
\end{proof}

The following proposition shows that for a uniformly immersed submanifold we may locally approximate distance along a leaf by the distance in the manifold.

\begin{prop}\label{prop:bounded_approximation}
\autocite[Lem. 2.25]{eldering2013normally}
Let $M$ be a $C^1$ uniformly immersed submanifold of a smooth Riemannian manifold $N$ that has bounded geometry. Write $d_M$ for the Riemannian distance on $M$ induced by the pullback metric and $d_N$ for the Riemannian distance on $N$. Then for any $C>1$ there exists $\delta$ such that if $d_M(x,y)<\delta$, then
\[
d_N(x,y)\le d_M(x,y)\le Cd_N(x,y).
\]
\end{prop}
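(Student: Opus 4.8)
The statement to prove is Proposition~\ref{prop:bounded_approximation}: for a $C^1$ uniformly immersed submanifold $M$ of a bounded geometry manifold $N$, the leaf distance and ambient distance agree up to a multiplicative constant once the leaf distance is small. Since this is quoted as \autocite[Lem.~2.25]{eldering2013normally}, the plan is simply to reproduce the short argument. The easy inequality $d_N(x,y)\le d_M(x,y)$ is immediate: any $C^1$ path in $M$ joining $x$ to $y$ is also a path in $N$, and the pullback metric is by definition the restriction of the ambient metric to tangent vectors of $M$, so ambient length equals leaf length for such a path; taking the infimum over paths in $M$ gives $d_N(x,y)\le d_M(x,y)$.

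For the reverse inequality, the idea is to use the uniform immersion hypothesis to control the ``straightening'' distortion. First I would fix the constant $\delta_0>0$ and the uniform $C^1$ bound on the graph functions $g_x\colon T_xM\to T_xM^\perp$ coming from the definition of uniformly immersed, and also fix a uniform lower bound $r_0>0$ on the injectivity radius of $N$ (using bounded geometry) together with a uniform two-sided comparison between the Riemannian metric of $N$ in a normal chart of radius $r_0$ and the flat Euclidean metric on that chart — this comparison constant tends to $1$ as the chart radius shrinks, again by bounded geometry (uniform curvature bounds). Then, given $C>1$, I would choose $\delta$ small enough (depending only on $C$, $\delta_0$, $r_0$, the Euclidean-vs-Riemannian comparison constant, and the $C^1$ bound on the $g_x$) so that: whenever $d_M(x,y)<\delta$, the point $y$ lies in the component $M_{x,\delta_0}$ represented in the normal chart at $x$ as a graph over $T_xM$, and moreover the graph is so nearly flat on the relevant scale that the arc-length of the graph between the base points of $x$ and $y$ is at most $C$ times the ambient Riemannian distance. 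Concretely: write $x,y$ in the normal chart, let $u\in T_xM$ be the difference of their $T_xM$-components; the leaf path given by $t\mapsto (tu, g_x(tu))$ has Euclidean length at most $(1+\mathrm{Lip}(g_x))\|u\|$, hence Riemannian length at most a constant times $\|u\|$, so $d_M(x,y)$ is bounded by a constant times $\|u\|$; conversely $\|u\|\le \|(u,g_x(u)-g_x(0))\|$ (Euclidean norm of the full displacement) $\le$ constant times $d_N(x,y)$. Chaining these and sending the chart radius to $0$ makes all constants approach $1$, which lets us absorb everything into the single factor $C$.

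The one point requiring care — and the main obstacle — is the bookkeeping of \emph{which} smallness is needed where: the hypothesis controls $d_M(x,y)$, but the graph representation in the definition of uniformly immersed is phrased in terms of the component $M_{x,\delta_0}$ of $i^{-1}(B_{\delta_0}(i(x))\cap i(M))$, i.e. in terms of ambient balls, and one must check that $d_M(x,y)<\delta$ (for $\delta$ chosen appropriately small relative to $\delta_0$) indeed forces $y\in M_{x,\delta_0}$ rather than landing in some other sheet of $i(M)$ that happens to pass nearby. This is handled by first using the already-established inequality $d_N(x,y)\le d_M(x,y)<\delta\le \delta_0$ to get $y$ into the ball $B_{\delta_0}(i(x))$, and then using a short continuity/connectedness argument along a minimizing leaf path (which exists and is short, of length $<\delta$) to see that $y$ lies in the \emph{connected component} $M_{x,\delta_0}$. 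Once the correct sheet is identified, the remaining estimates are the routine Euclidean-vs-Riemannian comparisons described above, and I would not grind through them in detail. Since the result and its proof are already in the literature, it would also be entirely acceptable here to simply cite \autocite[Lem.~2.25]{eldering2013normally} and omit the reproduction.
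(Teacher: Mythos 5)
Your proposal takes essentially the same route as the paper: the paper also simply cites \autocite[Lem.~2.25]{eldering2013normally} and sketches exactly this argument — represent $M$ locally in normal coordinates as a uniformly controlled graph over $T_xM$ and compare the graph length with the ambient (nearly Euclidean) distance, with the uniformity details deferred to the reference. Your additional remarks (the trivial inequality $d_N\le d_M$ and the connectedness check that $y$ lies in the correct component $M_{x,\delta_0}$) are consistent with, and no more than a slight elaboration of, the paper's sketch.
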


For a full proof see \autocite[Lem. 2.25]{eldering2013normally}. However, we will describe briefly the idea. The assumption of bounded geometry allows us to locally approximate the geometry $M$ by the geometry of a graph $g\colon B_{\delta}(0)\subseteq \R^m\to \R^k$ where $m$ is the dimension of $M$, $k$ is the codimension of $M$ in $N$, and $D_0g=0$. Suppose $(p,g(p))$ and $(q,g(q))$ are two points on the graph of $g$. If $\delta$ is chosen to be sufficiently small, then the distance between $(p,g(p))$ and $(q,g(q))$ is approximately the same as the distance between $p$ and $q$ in $\R^m$. More needs to be said, particularly about uniformity, but this is essentially the idea.

\begin{lem}
Suppose that $h\colon N\to N$ is a continuous map of a compact smooth Riemannian manifold $N$. Suppose that $M$ and $M'$ are two $C^1$ uniformly injectively immersed submanifolds of $M$ and that $h\colon M\to M'$ is a bijection. Then $h\mid_{M}$ is a quasi-isometry from $M$ to $M'$ in the induced metric.
\end{lem}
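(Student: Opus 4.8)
The claim is that a continuous map $h$ of a compact Riemannian manifold $N$ that restricts to a bijection between two $C^1$ uniformly injectively immersed submanifolds $M$ and $M'$ is a quasi-isometry from $(M,d_M)$ to $(M',d_{M'})$. I would prove this by combining three ingredients already set up in the excerpt: uniform continuity of $h$ and $h^{-1}$ (available since $N$ is compact, so $h$ is a homeomorphism onto its image and both directions are uniformly continuous), the local comparison between leaf distance and ambient distance on a uniformly immersed submanifold (Proposition \ref{prop:bounded_approximation}), and the standard ``chaining'' argument that upgrades a local Lipschitz-type estimate to a global quasi-isometric one.

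First I would fix the constants. By Proposition \ref{prop:bounded_approximation} applied to $M$ and to $M'$ (with, say, $C = 2$), there is $\delta_0 > 0$ so that whenever $d_M(x,y) < \delta_0$ one has $d_N(x,y) \le d_M(x,y) \le 2 d_N(x,y)$, and similarly with $M'$, $\delta_0'$. By uniform continuity of $h$ on $N$, choose $\delta_1 \in (0,\delta_0)$ so that $d_N(x,y) < \delta_1$ implies $d_N(h(x),h(y)) < \delta_0'$; by uniform continuity of $h^{-1}$ (as a map from $h(N)$, which is compact, back to $N$), choose $\delta_1' \in (0,\delta_0')$ symmetrically. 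Now the core local estimate: if $x,y \in M$ with $d_M(x,y) < \delta_1$, then $d_N(x,y) < \delta_1 < \delta_0$, hence $d_N(x,y) \le d_M(x,y)$ and $d_N(h(x),h(y)) < \delta_0'$, so $d_{M'}(h(x),h(y)) \le 2 d_N(h(x),h(y))$. To control $d_N(h(x),h(y))$ from above by $d_M(x,y)$, I would not try to bound it by a multiple of $d_M(x,y)$ directly — $h$ need only be continuous, not Lipschitz — so instead the right local statement is purely a \emph{scale} statement: there is a scale $\delta_1$ such that $d_M(x,y) < \delta_1 \implies d_{M'}(h(x),h(y)) < \delta_0'$, and symmetrically $d_{M'}(u,v) < \delta_1' \implies d_M(h^{-1}(u),h^{-1}(v)) < \delta_0$. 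This is the only place the hypotheses are used, and it is exactly the kind of local control a quasi-isometry chaining argument needs.

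Then comes the chaining. Given $x,y \in M$ with $d_M(x,y) = \ell$, pick a rectifiable path in $M$ from $x$ to $y$ of length at most $2\ell$ (if $M$ is complete in the leaf metric this is automatic up to an arbitrarily small error; otherwise take a near-geodesic), subdivide it into $m := \lceil 4\ell/\delta_1 \rceil$ consecutive points $x = p_0, p_1, \ldots, p_m = y$ with $d_M(p_{j-1},p_j) < \delta_1/2 < \delta_1$. Applying the local statement to each consecutive pair, $d_{M'}(h(p_{j-1}),h(p_j)) < \delta_0'$, and since this is below the Proposition \ref{prop:bounded_approximation} scale for $M'$ we may also bound each such term by a fixed constant $K := \sup\{ d_{M'}(h(p),h(q)) : d_M(p,q) < \delta_1\}$, which is finite (indeed $\le 2\,\mathrm{diam}_N(N)$ or, more sharply, $\le \delta_0'$ once we also shrink things so $h$ of a $\delta_1$-ball lands in a $\delta_0'$-ball in $M'$). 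Summing the triangle inequality along the chain gives $d_{M'}(h(x),h(y)) \le m K \le (4K/\delta_1)\ell + K$. Running the identical argument for $h^{-1}$ gives $d_M(x,y) \le (4K'/\delta_1')\, d_{M'}(h(x),h(y)) + K'$, which rearranges to the matching lower bound $d_{M'}(h(x),h(y)) \ge (\delta_1'/4K')\, d_M(x,y) - (\delta_1'/4K')K'$. Together these are exactly the quasi-isometry inequalities with $A = \max(4K/\delta_1,\, 4K'/\delta_1')$ and $B = \max(K,\, K')$ (possibly enlarged by a harmless constant); and $h$ is coarsely surjective because it is an honest bijection $M \to M'$.

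\textbf{Main obstacle.} The only genuinely delicate point is the local statement in the chaining: because $h$ is merely continuous, we cannot get a multiplicative local bound $d_{M'}(h(x),h(y)) \lesssim d_M(x,y)$, so the entire argument has to run through the ``fixed number of steps, each of bounded size'' formulation, where uniform continuity of $h$ supplies a uniform \emph{threshold} scale and Proposition \ref{prop:bounded_approximation} converts ambient smallness back to leaf smallness on \emph{both} $M$ and $M'$. One must be careful that the comparison constants from Proposition \ref{prop:bounded_approximation} are genuinely uniform over the (noncompact) leaves — that is precisely what ``uniformly immersed'' buys us — and that ``uniformly \emph{injectively} immersed'' is what makes $h : M \to M'$ and its inverse well-defined maps to which uniform continuity of $h$ and $h^{-1}$ on the compact $N$ applies. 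Everything else is the routine path-subdivision-and-triangle-inequality bookkeeping.
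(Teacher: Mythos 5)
Your proposal is correct and follows essentially the same route as the paper: apply Proposition \ref{prop:bounded_approximation} to both $M$ and $M'$ with uniform constants, use uniform continuity of $h$ on the compact $N$ to get a threshold scale, subdivide a (near-)minimizing leaf path into segments below that scale, sum via the triangle inequality to get an affine upper bound, and run the symmetric argument with $h^{-1}$ for the lower bound. The only cosmetic difference is your explicit bookkeeping of the constants $K,K'$, which the paper handles by directly quoting the bilipschitz comparison on each short segment.
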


\begin{proof}
Let $\eta$ and $C$ be constants such that the conclusion of Proposition \ref{prop:bounded_approximation} holds for $M$ and $M'$. Because
$h$ is a map of a compact manifold there exists $\delta$, such that if $d_{N}(x,y)<\delta$, then $d_N(h(x),h(y))<\eta/2$. Fix a minimum length path $\gamma$ between two points $x$ and $y$ in $M$ such that $d_M(x,y)>\delta/C$.  Divide $\gamma$ into $n$ segments of length $\delta/C$ and one segment, the last, of length less than $\delta/C$. Write the endpoints of these segments as $x_1,\ldots, x_{n}=y$. We then have that 
\[
d_{M'}(h(x),h(y))\le \pez{\sum_{i=1}^{n-2} d_{M'}(h(x_i),h(x_{i+1}))}+d_{M'}(h(x_{n-1}),h(x_{n})).
\]
As $d_M(x_i,x_{i+1})<\delta/C$, we see by the bilipschitz estimate from Proposition \ref{prop:bounded_approximation} that $d_N(x_i,x_{i+1})<\delta$, and hence $d_N(h(x_i),h(x_{i+1}))<\eta/2$. By the bilipschitz estimate again, this time on $M'$, $d_{M'}(h(x_i),h(x_{i+1})\le C\eta/2 $. So, we see that 
\[
d_{M'}(h(x),h(y))\le n\cdot C\eta/2, 
\]
but $n=\lceil d_M(x,y)/(\delta/C)\rceil$, so
\[
d_{M'}(h(x),h(y))\le d_M(x,y)\frac{\eta}{\delta}C^2+1.
\]

To obtain the lower bound, do the same argument using $h^{-1}$. This gives that there exist constants $C',D'$ such that 
\[
d_{M}(h^{-1}(x),h^{-1}(y)) \le C'd_{M'}(x,y)+D
\]
Thus by rearranging quasi-isometry follows.
\end{proof}

Suppose that $\mc{F}$ is a foliation with uniformly $C^1$ leaves and let $\mc{F}'$ be the space that is topologized and given the smooth structure of the disjoint union of the leaves of $\mc{F}$. The inclusion of $\mc{F}'$ into $M$ is a $C^1$ uniform immersion, so the conclusion of Proposition
\ref{prop:bounded_approximation} holds with a uniform constant over the inclusion of all leaves of $\mc{F}$ into $M$. Applying this observation to a map $h$ intertwining two such foliations yields the following corollary.
\begin{cor}\label{cor:QI_foliations}
Suppose that $\mc{F}$ and $\mc{G}$ are two topological foliations with uniformly $C^1$ leaves of a smooth compact manifold $M$. If $h\colon M\to M$ is homeomorphism that intertwines the $\mc{F}$ and $\mc{G}$ foliations, then for all $x\in M$, $h$ is a quasi-isometry from $\mc{F}(x)$ to $\mc{G}(h(x))$ and the constants of the quasi-isometry can be taken to be uniform over all leaves of $\mc{F}$.
\end{cor}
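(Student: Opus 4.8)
The plan is to deduce this directly from the preceding lemma by applying it ``leafwise'' in a uniform way. The key observation is that the preceding lemma, as stated, is about two fixed uniformly immersed submanifolds $M$ and $M'$, but its proof only uses two inputs: (i) the bilipschitz comparison between the leaf metric and the ambient metric at small scales (Proposition \ref{prop:bounded_approximation}), which holds with a \emph{uniform} constant $\delta$ over all leaves of a foliation with uniformly $C^1$ leaves, because such a foliation, viewed as the disjoint union $\mc{F}'$ of its leaves, includes into $M$ as a uniform $C^1$ immersion (Proposition \ref{prop:strong_is_uniformly_immersed}-style reasoning; this is exactly the observation made in the paragraph preceding the corollary); and (ii) the uniform continuity of $h$, i.e.\ the modulus-of-continuity constant relating $d_N(x,y)<\delta$ to $d_N(h(x),h(y))<\eta/2$, which depends only on $h$ and $N$, not on which leaf we are on. So I would first fix, once and for all, the uniform constant $C$ and scale $\eta$ coming from Proposition \ref{prop:bounded_approximation} applied simultaneously to $\mc{F}$ and to $\mc{G}$ (both have uniformly $C^1$ leaves, hence both include uniformly, so one common pair $(C,\eta)$ works), and then fix the modulus-of-continuity scale $\delta$ for $h$ and for $h^{-1}$ relative to these.

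Next, for an arbitrary $x\in M$, I would observe that $h$ restricts to a bijection $\mc{F}(x)\to \mc{G}(h(x))$: indeed $h(\mc{F}(x))=\mc{G}(h(x))$ by hypothesis that $h$ intertwines the foliations, and $h$ is a homeomorphism of $M$ hence injective. Now I run the chaining argument from the proof of the preceding lemma verbatim: take a near-minimal leaf path $\gamma$ in $\mc{F}(x)$ between two points, subdivide it into $\lceil d_{\mc{F}(x)}(\cdot,\cdot)/(\delta/C)\rceil$ subarcs each of leaf-length at most $\delta/C$, use the uniform bilipschitz estimate on $\mc{F}(x)$ to convert each subarc into ambient distance $<\delta$, apply uniform continuity of $h$ to get ambient distance $<\eta/2$ between consecutive images, and finally use the uniform bilipschitz estimate on $\mc{G}(h(x))$ to convert back to leaf distance $\le C\eta/2$ per subarc. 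Summing gives an upper bound of the form $d_{\mc{G}(h(x))}(h(p),h(q))\le d_{\mc{F}(x)}(p,q)\,\tfrac{\eta}{\delta}C^2+1$ with \emph{all} constants independent of $x$. Running the identical argument with $h^{-1}$ (which intertwines $\mc{G}$ and $\mc{F}$ and has its own uniform modulus of continuity) gives the reverse bound, again with uniform constants, and rearranging yields a two-sided quasi-isometry estimate with constants depending only on $\mc{F}$, $\mc{G}$, $N$, and $h$.

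The only genuinely substantive point — and the one I would state carefully rather than wave at — is the uniformity of the constants in Proposition \ref{prop:bounded_approximation} across all leaves of a foliation with uniformly $C^1$ leaves; but this is precisely the content of the sentence immediately preceding the corollary statement, which notes that $\mc{F}'$ includes into $M$ as a $C^1$ uniform immersion, so Proposition \ref{prop:bounded_approximation} applies with one uniform constant. Given that, there is no real obstacle: the proof is a bookkeeping exercise confirming that every constant appearing in the proof of the preceding lemma was already uniform (the bilipschitz constants, the compactness-of-$N$ continuity modulus, the subdivision count). I would therefore write the proof as: ``This is the preceding lemma applied to each pair of leaves $\mc{F}(x)$ and $\mc{G}(h(x))$, together with the observation (made above) that the bilipschitz constants of Proposition \ref{prop:bounded_approximation} and the continuity modulus of $h$ can be chosen uniformly; inspection of the proof of the lemma shows the resulting quasi-isometry constants are then uniform in $x$.''
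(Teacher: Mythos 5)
Your proposal is correct and follows essentially the same route as the paper: the paper's proof is exactly the remark preceding the corollary, namely that the disjoint union $\mc{F}'$ of the leaves includes into $M$ as a $C^1$ uniform immersion so Proposition \ref{prop:bounded_approximation} holds with one constant over all leaves, after which the chaining argument of the preceding lemma (whose constants depend only on those bilipschitz constants and the uniform continuity of $h$ and $h^{-1}$ on the compact manifold) applies verbatim with uniform constants. You have simply spelled out the bookkeeping the paper leaves implicit, so there is nothing to correct.
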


\subsection{Quasi-isometry and unique integrability}

Using Corollary \ref{cor:QI_foliations}, we show in this subsection that, under certain hypotheses, the $E_i^{u,f}$ distribution uniquely integrates to a $1$-dimensional foliation.

\begin{prop}\label{prop:slowtoslow}
Suppose that $L$ is a Anosov automorphim of a nilmanifold $N/\Gamma$ with simple sorted spectrum. Then there exists a $C^1$ neighborhood $\mc{U}$ of $L$ in $\Diff^{1+}(M)$ with the following property. If $f\in \mc{U}$ and $h_f$ is a conjugacy between $f$ and $L$, and, if, for some $i$, $h_f(\mc{S}_i^{u,f})=\mc{S}_i^{u,L}$, then the $E_i^{u,f}$ distribution is uniquely integrable and integrates to a foliation $\mc{W}_i^{u,f}$ with uniformly $C^{1+}$ leaves. In addition,
\begin{enumerate}
\item
For each $x\in N/\Gamma$, $h_f(\mc{W}_i^{u,f}(x))=\mc{W}_i^{u,L}(h_f(x))$.
\item
The $\mc{W}_i^{u,f}$ foliation and the $\mc{S}_{1+1}^{u,f}$ foliation have subordinate product structure to the $\mc{S}^{u,f}_i$ foliation.
\item
The $\mc{S}^{u,f}_{i+1}$ holonomies between leaves of $\mc{W}_i^{u,f}$ are uniformly $C^{1+}$.
\end{enumerate}

\end{prop}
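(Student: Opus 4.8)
Here is the plan.

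\medskip
\emph{Strategy.} I will produce $\mc{W}_i^{u,f}$ as the $h_f$-pullback of the algebraic foliation $\mc{W}_i^{u,L}$ of Proposition~\ref{prop:dynamical_foliations_for_nilmanifold_automorphisms}, read its leaves off an escape-rate estimate, identify those leaves with the integral curves of $E_i^{u,f}$, and finally treat the product structure and the holonomy regularity.

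\emph{Choosing $\mc{U}$.} Since $L$ has simple Mather spectrum on $E^{u,L}$, Theorem~\ref{thm:mather_spectrum_perturbation} lets me shrink $\mc{U}$ so that each $f\in\mc{U}$ carries a H\"older $Df$-invariant splitting $E^{u,f}=\bigoplus_j E_j^{u,f}$ into $[a_j^f,b_j^f]$-rings with $\mf{s}_j^{u,f}=\bigoplus_{k\ge j}E_k^{u,f}$ integrating to strong foliations $\mc{S}_j^{u,f}$ that, by Proposition~\ref{prop:strong_is_uniformly_immersed}, have uniformly $C^{1+}$, uniformly immersed leaves. Fix the index $i$, and let $\sigma_v<\eta<\sigma$ be the constants of Theorem~\ref{thm:V_i_characterization} for $L$. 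By Corollary~\ref{cor:smallest_escape_rate} the slowest eigenvector $v$ of $dL|_{\mf{s}_i^u}$ lies in $\cgrad{\mf{n}}{1}$, so $\sigma_v=|\lambda_i^u|<|\lambda_{i+1}^u|$; choosing $\eta$ close to $\sigma_v$ and $\mc{U}$ small I may also assume $b_i^f<\eta<a_{i+1}^f$.

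\emph{The foliation, conclusion~(1), and an intrinsic criterion.} Define $\mc{W}_i^{u,f}(x):=h_f^{-1}\big(\mc{W}_i^{u,L}(h_f(x))\big)$. The hypothesis $h_f(\mc{S}_i^{u,f})=\mc{S}_i^{u,L}$ places this leaf in $\mc{S}_i^{u,f}(x)$, and $h_f f=Lh_f$ makes $\mc{W}_i^{u,f}$ an $f$-invariant topological foliation, with conclusion~(1) built in. By Corollary~\ref{cor:QI_foliations}, $h_f$ restricts to a quasi-isometry $\mc{S}_i^{u,f}(x)\to\mc{S}_i^{u,L}(h_f(x))$ with constants uniform in $x$; feeding $h_f f^n=L^n h_f$ and Theorem~\ref{thm:V_i_characterization} into this (and computing $d_{\mc{S}_i^{u,L}}(e,L^nm)\asymp\sigma_v^n$ for $m\in W_i^u$ via Theorem~\ref{thm:guivarchlength}) yields: for $y\in\mc{S}_i^{u,f}(x)$, one has $y\in\mc{W}_i^{u,f}(x)$ if and only if there are $C,D$ with $d_{\mc{S}_i^{u,f}}(f^nx,f^ny)\le C\eta^n+D$ for all $n\ge0$.

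\emph{Unique integrability and $C^{1+}$ leaves.} From $\|Df|_{E_i^{u,f}}\|\le b_i^f<\eta$, any $C^1$ curve tangent to $E_i^{u,f}$ issuing from a point has $f^n$-images of length at most $(b_i^f)^n\le\eta^n$ times its own length, so by the criterion above such a curve lies in a single leaf of $\mc{W}_i^{u,f}$. A continuous local section of $E_i^{u,f}$ admits integral curves by Peano existence, and these extend to a maximal integral curve $\gamma$ through any point $x$, which is $C^{1+}$ because $E_i^{u,f}$ is H\"older. By the trapping just observed $\gamma\subseteq\mc{W}_i^{u,f}(x)$; and since $h_f$ identifies $\mc{W}_i^{u,f}(x)$ with $\mc{W}_i^{u,L}(h_f(x))\cong W_i^u\cong\R$, a proper boundary point of $\gamma$ in $\mc{W}_i^{u,f}(x)$ would be an honest point of $M$ through which $\gamma$ could be prolonged (the Peano integral curve there is again trapped in $\mc{W}_i^{u,f}(x)$ and matches tangent lines), contradicting maximality. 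Hence $\gamma=\mc{W}_i^{u,f}(x)$: the leaves of $\mc{W}_i^{u,f}$ are exactly the maximal integral curves of $E_i^{u,f}$, so $E_i^{u,f}$ is uniquely integrable, with uniformly $C^{1+}$ leaves; transversality of $\mc{W}_i^{u,f}$ and $\mc{S}_{i+1}^{u,f}$ inside each $\mc{S}_i^{u,f}$-leaf is then immediate from $E_i^{u,f}\oplus\mf{s}_{i+1}^{u,f}=T\mc{S}_i^{u,f}$, and this transversality endows $\mc{W}_i^{u,f}$ with the structure of a genuine foliation.

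\emph{Conclusions~(2) and~(3); the main obstacle.} Conclusion~(3) should follow from the spectral gap $b_i^f<a_{i+1}^f$, which is exactly the bunching that makes the strong subfoliation $\mc{S}_{i+1}^{u,f}$ inside $\mc{S}_i^{u,f}$ have $C^{1+\theta}$ holonomy, for some small $\theta>0$, along the $C^{1+}$ transversals $\mc{W}_i^{u,f}$, by the standard regularity theory of strong-unstable holonomies (Hirsch--Pugh--Shub, Pugh--Shub--Wilkinson). I expect the main obstacle to be conclusion~(2): promoting the transversality of $\mc{W}_i^{u,f}$ and $\mc{S}_{i+1}^{u,f}$ to a global product structure on each contractible leaf $\mc{S}_i^{u,f}(x)$, the delicate point being that a leaf of $\mc{W}_i^{u,f}$ meets a leaf of $\mc{S}_{i+1}^{u,f}$ in \emph{exactly one} point. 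Here one transports the algebraic product structure of $\mc{W}_i^{u,L}$ with $\mc{S}_{i+1}^{u,L}$ inside $\mc{S}_i^{u,L}$ --- which holds because $\mf{s}_{i+1}^u$ is an ideal of $\mf{s}_i^u$ with one-dimensional quotient spanned by $v$ --- through the conjugacy $h_f$; the quantitative input is a coarse-geometric separation estimate, obtained from Theorem~\ref{thm:guivarchlength} applied to $dL$ on $S_i^u$, to the effect that points off the slowest direction escape from it faster than rate $\eta$, so that a leaf of $\mc{S}_{i+1}^{u,f}$ cannot re-meet a leaf of $\mc{W}_i^{u,f}$. This is the place where the coarse geometry of Section~\ref{sec:coarse_geometry} is used in an essential way.
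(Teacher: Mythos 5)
The core of your construction matches the paper: you control $\|Df|_{E_i^{u,f}}\|$ via Theorem \ref{thm:mather_spectrum_perturbation}, use Corollary \ref{cor:QI_foliations} together with $h_f\circ f^n=L^n\circ h_f$ and the escape-rate characterization of Theorem \ref{thm:V_i_characterization} to show that every curve tangent to $E_i^{u,f}$ is sent by $h_f$ into a single leaf of $\mc{W}_i^{u,L}$, and conclude that $E_i^{u,f}$ uniquely integrates to the pullback foliation $h_f^{-1}(\mc{W}_i^{u,L})$, giving conclusion (1); your handling of conclusion (3) by citing standard regularity of strong-unstable holonomies under the spectral gap is also what the paper does.

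The genuine gap is conclusion (2), exactly where you anticipated trouble, and the route you sketch would not work. You propose to transport the algebraic product structure of $\mc{W}_i^{u,L}$ with $\mc{S}_{i+1}^{u,L}$ inside $\mc{S}_i^{u,L}$ through $h_f$. But at this stage $h_f$ is only known to intertwine $\mc{S}_i^{u,f}$ with $\mc{S}_i^{u,L}$ and, by your part (1), $\mc{W}_i^{u,f}$ with $\mc{W}_i^{u,L}$; it is not known that $h_f(\mc{S}_{i+1}^{u,f})=\mc{S}_{i+1}^{u,L}$ --- proving that is precisely the next step of the induction in Theorem \ref{thm:sufficiency}, via Lemma \ref{lem:F_is_isometric} and Proposition \ref{prop:isometric_foliations}, and those arguments take the subordinate product structure asserted in Proposition \ref{prop:slowtoslow} as an input, so your plan is circular. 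Nor can the coarse-geometric separation estimate on the $L$-side be applied to $h_f(\mc{S}_{i+1}^{u,f})$: that image is at this point only a topological foliation, so Corollary \ref{cor:QI_foliations} (which requires uniformly $C^1$ leaves on both sides) gives no control of leafwise distances in it. The paper proves (2) entirely on the $f$-side, without reference to $h_f$: the distributions $E_i^{u,f}$ and $\bigoplus_{j\ge i+1}E_j^{u,f}$ are uniformly transverse inside $T\mc{S}_i^{u,f}$, which gives (a) a uniform lower bound on the distance between distinct points of $\mc{W}_i^{u,f}(x)\cap\mc{S}_{i+1}^{u,f}(x)$ and (b) a uniform $\epsilon$ such that two points of the same $\mc{S}_i^{u,f}$ leaf at leafwise distance less than $\epsilon$ are connected by the local product structure. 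Backward iteration, which contracts leafwise distances in the unstable, then yields uniqueness (two intersection points would be carried arbitrarily close while remaining intersection points of the corresponding leaves, contradicting (a)) and existence (iterate any pair backwards until within $\epsilon$, produce an intersection there by (b), and push it forward by invariance of both foliations). This intrinsic transversality-plus-iteration argument is the step missing from your proposal; once you replace your sketch for (2) by it, the rest of your outline goes through as in the paper.
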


\begin{proof}

It suffices to construct such a neighborhood for each $i$; intersecting these neighborhoods then gives the result. 

Fix some $1\le i\le \dim E^u$.
We apply Theorem \ref{thm:V_i_characterization} on the manifold $\mc{S}^{u,L}_i$. 
Let $\Sigma$ be an eigenbasis for the action of $L$ on $\mf{s}^u_i$ that contains $v$, a vector tangent to the subgroup $W_i^u$. Let $\sigma=\min_{w\in \Sigma\setminus{\{v\}}} \sigma_w>\abs{\lambda_i^u}$. Note that the $\sigma_w$ are the escape rates of the action on $S_i^u$. Choose $\eta$ such that $\sigma_v<\eta<\sigma$. Then for any $x\in N/\Gamma$ and $y\in \mc{S}^{u,L}_i(x)$, if there exist $C,D$ such that if $d_{\mc{S}^{u,L}_i}(L^n(x),L^n(y))\le C\eta^n+D$ for all $n\ge 0$, then, by Theorem \ref{thm:V_i_characterization}, $x$ lies in the same $\mc{S}_i^{u,L}$ leaf as $y$. 

Pick $\epsilon>0$ such that $\abs{\lambda_i^u}+\epsilon<\sigma$. Then by Theorem \ref{thm:mather_spectrum_perturbation}, there exists a $C^1$ neighborhood $\mc{U}_{\epsilon}$ of $L$ such that for any unit vector $w\in E_{i}^{u,f}$, we have $\|Dfw\|\le (\abs{\lambda_i^u}+\epsilon)<\eta$.

Suppose now that $f\in \mc{U}_\epsilon$ and that $h$ is a conjugacy between $f$ and $L$ intertwining the $i$th strong foliations. Then all of the previously mentioned considerations hold for $f$. Suppose that $x\in N/\Gamma$ and let $\gamma$ be a curve tangent to the $E_i^{u,f}$ distribution containing $x$. Suppose that $y$ is another point on $\gamma$.
Write $\ell(\gamma)$ for the length of $\gamma$. The inequality 
$\|Dfw\|\le (\abs{\lambda_i^u}+\epsilon)<\sigma$ implies
\[
\ell(f^n\circ \gamma)\le (\abs{\lambda_i^u}+\epsilon)^n\ell(\gamma),
\]
and hence,
\[
d_{\mc{S}_i^{u,f}}(f^n(x),f^n(y))\le (\abs{\lambda_i^u}+\epsilon)^n\ell(\gamma).
\]

By Proposition \ref{prop:strong_is_uniformly_immersed} the $\mc{S}_i^{u,f}$ foliation has uniformly $C^1$ leaves and so by Corollary \ref{cor:QI_foliations} there exist constants $C,D$ such that for all $n\ge 0$
\begin{equation}
d_{\mc{S}^{u,L}_i}(h(f^n(x)),h(f^n(y)))\le Cd_{\mc{S}^{u,f}_i}(f^n(x),f^n(y))+D.
\end{equation}
But as $h\circ f^n=L^n\circ h$, this implies that 
\[
d_{\mc{S}^{u,L}_i}(L^n(h(x)),L^n(h(y)))\le C(\abs{\lambda_i^u}+\epsilon)^n+D.
\]
Consequently, as $\abs{\lambda_i^u}+\epsilon<\sigma$, we see that $h(x)\in \mc{W}_i^{u,L}(h(y))$. Thus $h(\gamma)\subseteq \mc{W}_i^{u,L}(h(x))$. This implies that $E_i^{u,f}$ is uniquely integrable and integrates to the $h^{-1}(\mc{W}_i^{u,L})$ foliation. 

Finally, we need to show the claim about subordinate product structure. The dimensions of the foliations are correct, so we just need to show that a $\mc{W}_i^{u,f}$ leaf and a $\mc{S}_{i+1}^{u,f}$ leaf intersect at exactly one point if they lie in the same $\mc{S}_i^{u,f}$ leaf.

To see that there is at most one point of intersection, note that as $\mc{S}_{i+1}^{u,f}$ and $\mc{W}_i^{u,f}$ are uniformly transverse there is a uniform lower bound on the distance between points of $\mc{S}_{i+1}^{u,f}(x)\cap \mc{W}_i^{u,f}(x)$ independent of $x$. If there were another point $y\in \mc{S}_{i+1}^{u,f}(x)\cap \mc{W}_i^{u,f}(x)$, then by iterating the dynamics backwards, we would obtain points $f^{-n}(x)$ and $f^{-n}(y)$ arbitrarily close to each other and with $f^{-n}(y)\in \mc{S}_{i+1}^{u,f}(x)\cap \mc{W}_i^{u,f}(x)$. This contradicts our previous observation about transversality.

Next, we show how to deduce that there exists a point of intersection. The argument is similar: the distributions that the $\mc{W}_i^{u,f}$ and $\mc{S}_{i+1}^{u,f}$ foliations are tangent to are uniformly transverse. Consequently, there exists $\epsilon>0$ such that if there are two points $x,y\in \mc{S}_i^{u,f}(z)$ and $d_{\mc{S}_i^{u,f}}(x,y)<\epsilon$, then $\mc{W}_i^{u,f}(x)\cap \mc{S}_{i+1}^{u,f}(y)$ is non-empty. So, to see that for $y\in \mc{S}_{i}^{u,f}(x)$ that $\mc{W}_i^{u,f}(x)\cap \mc{S}_{i+1}^{u,f}(x)$ intersect, observe that for sufficiently large $n$, we have $d_{\mc{S}_i^{u,f}}(f^{-n}(x),f^{-n}(y))<\epsilon$. This concludes the first two claims.

For the claim about the holonomies, see \autocite[Sec. 2.2]{brown2016smoothness}, which gives this result in the $C^{1+}$ setting. Also, see the appendix of \autocite{barreira1999dimension}.
\end{proof}

\section{Weak and strong distance along foliations}

Suppose that two foliations $\mc{F}$ and $\mc{G}$ have subordinate product structure to a foliation $\mc{W}$ of a Riemannian manifold $M$ and that all three foliations have uniformly $C^1$ leaves. Fix two points $x,y\in M$ such that $x\in \mc{W}(y)$. Then one can consider the distance $d_{\mc{F}}(x,\mc{F}(x)\cap \mc{G}(y))$. While one might not expect such a notion of distance to be useful in general, we will study such a notion in an algebraic setting that permits a substantial application.
As before, fix a nilmanifold $N/\Gamma$ and an Anosov automorphism $L$ that is sorted and irreducible. In this section, we study this construction in the case of the foliations $\mc{W}_i^{u,L}$ and $\mc{S}_{i+1}^{u,L}$ subordinate to the foliation $\mc{S}_i^{u,L}$.

We begin by studying the analogous foliations on $N$.
As we have seen in Proposition \ref{prop:dynamical_foliations_for_nilmanifold_automorphisms}, the $\mc{S}_i^{u,L}$ foliation of $N$ is subfoliated by the $\mc{S}_{i+1}^{u,L}$ foliation as well as the $\mc{W}_i^{u,L}$ foliation.

\begin{defn}
Suppose that $L$ is an Anosov automorphism of the nilpotent group $N$ endowed with a right-invariant metric. For $1\le i\le \dim N^u$, we define the \emph{$i$th weak} and \emph{strong distances} on $\mc{S}_i^{u,L}(x)=S_i^ux$ as follows. For two points, $q,r\in \mc{S}_i^{u,L}(x)$, we define 
\begin{align*}
d_{\mc{W}_i}(q,r)&=d_{\mc{W}_i^{u,L}}(S_{i+1}^{u}q\cap W_i^{u}r,r),\\
d_{\mc{S}_i}(q,r)&=d_{\mc{S}_{i+1}^{u,L}}(q,S_{i+1}^uq\cap W_i^{u}r).\\
\end{align*}

\end{defn}

In other words, we find the point $z=\mc{S}_{i+1}^{u,L}(q)\cap \mc{W}_i^{u,L}(r)$ and measure the distance from $q$ to $z$ along the $(i+1)$st strong unstable foliation to define the strong distance.  One measures the distance from $z$ to $r$ along the $\mc{W}_i^{u,L}$ foliation to find the weak distance.  Although we have used the term ``distance" here, note that this notion is not symmetric. 
Further, note that this notion depends on the automorphism $L$ used to define the strong and weak foliations. We do not include the automorphism in the notation as we only ever consider one automorphism at a time.

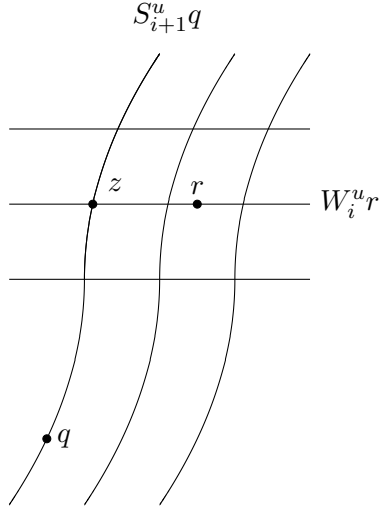
\begin{figure}
\centering
\begin{tikzpicture}[domain=0:4, samples=4000]
    \draw[color=black]    plot[domain=1:2] ({\x},{3*(\x-1)^(1/2)})             node[label={[xshift=0.1cm, yshift=0.0cm]$S_{i+1}^uq$}] {}; 
    \draw[name path=phi, color=black]    plot[domain=0:1] ({\x},{-3*(1-\x)^(1/2)})           ; 
    \draw[color=black]    plot[domain=0:4] ({\x},{1})             node[right] {$W_i^ur$}; 
    \draw[color=black]    plot[domain=0:4] ({\x},{2}); 
    \draw[color=black]    plot[domain=0:4] ({\x},{0}); 
    \filldraw (1.111111,1) circle (1.5pt) node[label={[xshift=0.3cm, yshift=-0.1cm]$z$}] {}; 
    \filldraw (2.5,1) circle (1.5pt) node[above] {$r$};
    \filldraw (.5,{-3*(1-.5)^(1/2)}) circle (1.5pt) node[right] {$q$};
    \draw[color=black]    plot[domain=1:2] ({\x},{3*(\x-1)^(1/2)})             ; 
    \draw[color=black]    plot[domain=1:2] ({\x},{-3*(2-\x)^(1/2)})             ; 
    \draw[color=black]    plot[domain=2:3] ({\x},{3*(\x-2)^(1/2)})    ; 
    \draw[color=black]    plot[domain=2:3] ({\x},{-3*(3-\x)^(1/2)})             ; 
    \draw[color=black]    plot[domain=3:4] ({\x},{3*(\x-3)^(1/2)})    ; 
\end{tikzpicture}
\caption{A diagrammatic depiction of the weak and strong foliations illustrating the points involved in the definition of the weak and strong distances. The strong distance between $q$ and $r$ is the distance from $z$ to $q$ along the $\mc{S}_{i+1}^u$ foliation. The weak distance between $q$ and $r$ is the distance between $z$ and $r$ along the $\mc{W}_i^u$ foliation.}
\end{figure}

The following is 
immediate due to the right-invariance of the dynamical foliations and the right-invariance of distance measured along these foliations.
\begin{lem}
The weak and strong distances are invariant under right multiplication by elements of $N$, i.e. 
for $n\in N$, and any $q,r\in S^u_{i}x$,
\begin{align*}
d_{\mc{W}_i}(q,r)&=d_{\mc{W}_i}(qn,rn),\\
d_{\mc{S}_i}(q,r)&=d_{\mc{S}_i}(qn,rn).
\end{align*}
\end{lem}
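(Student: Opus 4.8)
The plan is to reduce everything to the definitions of the weak and strong distances together with the right-invariance properties already recorded in Proposition \ref{prop:dynamical_foliations_for_nilmanifold_automorphisms}. Fix $n\in N$ and $q,r\in S_i^u x$. The key structural facts I would invoke are: (i) the foliations $\mc{S}_{i+1}^{u,L}$ and $\mc{W}_i^{u,L}$ on $N$ are right-invariant, meaning $\mc{S}_{i+1}^{u,L}(pn) = \mc{S}_{i+1}^{u,L}(p)\cdot n = S_{i+1}^u p n$ and likewise $\mc{W}_i^{u,L}(pn) = W_i^u p n$ for every $p,n\in N$; and (ii) right multiplication $R_n$ is an isometry for the right-invariant metric on $N$, and since it preserves each leaf structurally it restricts to an isometry of leaves, so $d_{\mc{S}_{i+1}^{u,L}}(a,b) = d_{\mc{S}_{i+1}^{u,L}}(an,bn)$ and $d_{\mc{W}_i^{u,L}}(a,b) = d_{\mc{W}_i^{u,L}}(an,bn)$ whenever $a,b$ lie in a common leaf.

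The first concrete step is to track how the auxiliary intersection point transforms. Let $z = S_{i+1}^u q \cap W_i^u r$, the point appearing in both definitions. Using right-invariance of the two foliations, $S_{i+1}^u(qn) = (S_{i+1}^u q)n$ and $W_i^u(rn) = (W_i^u r)n$, so $S_{i+1}^u(qn)\cap W_i^u(rn) = \big((S_{i+1}^u q)\cap(W_i^u r)\big)n = zn$. (Here one uses that right multiplication is a bijection of $N$, so intersecting the translated sets is the same as translating the intersection; since the subordinate product structure guarantees the intersection is a single point, no subtlety about multiple intersection points arises, and in any case it suffices that $zn$ is \emph{a} point in the intersection.) Thus the point used to define $d_{\mc{W}_i}(qn,rn)$ and $d_{\mc{S}_i}(qn,rn)$ is exactly $zn$.

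The second step is to plug this in. By definition, $d_{\mc{W}_i}(qn,rn) = d_{\mc{W}_i^{u,L}}(zn, rn)$ and $d_{\mc{S}_i}(qn,rn) = d_{\mc{S}_{i+1}^{u,L}}(qn, zn)$. Now $z$ and $r$ lie in the common leaf $W_i^u r$, and $q$ and $z$ lie in the common leaf $S_{i+1}^u q$, so the isometry property (ii) applies: $d_{\mc{W}_i^{u,L}}(zn,rn) = d_{\mc{W}_i^{u,L}}(z,r) = d_{\mc{W}_i}(q,r)$ and $d_{\mc{S}_{i+1}^{u,L}}(qn,zn) = d_{\mc{S}_{i+1}^{u,L}}(q,z) = d_{\mc{S}_i}(q,r)$. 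This gives both claimed identities.

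There is no real obstacle here — the statement is a bookkeeping consequence of right-invariance — so the write-up should be short. The only point deserving a sentence of care is justifying that $R_n$ restricts to a leaf-isometry for the \emph{intrinsic} leaf metrics (not just that $R_n$ is an ambient isometry): this follows because $R_n$ maps the leaf $\mc{F}(p)$ diffeomorphically onto $\mc{F}(pn)$ and, being an ambient isometry, pulls back the induced Riemannian metric on $\mc{F}(pn)$ to the induced metric on $\mc{F}(p)$, hence preserves the induced path-length distance. I would state this once as a parenthetical and then apply it to both $\mc{W}_i^{u,L}$ and $\mc{S}_{i+1}^{u,L}$.

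\begin{proof}
Fix $n\in N$ and $q,r\in S_i^u x$. Recall from Proposition \ref{prop:dynamical_foliations_for_nilmanifold_automorphisms} that the foliations $\mc{S}_{i+1}^{u,L}$ and $\mc{W}_i^{u,L}$ of $N$ are right-invariant: for any $p\in N$ one has $\mc{S}_{i+1}^{u,L}(pn)=(S_{i+1}^u p)n$ and $\mc{W}_i^{u,L}(pn)=(W_i^u p)n$. Moreover, right multiplication $R_n$ is an isometry of $N$ for the right-invariant metric, and it carries the leaf through $p$ diffeomorphically onto the leaf through $pn$; since $R_n$ is an ambient isometry it pulls back the metric induced on the target leaf to the metric induced on the source leaf, and hence preserves the intrinsic leaf distances $d_{\mc{W}_i^{u,L}}$ and $d_{\mc{S}_{i+1}^{u,L}}$.

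Let $z=S_{i+1}^u q\cap W_i^u r$ be the point appearing in the definitions of $d_{\mc{W}_i}(q,r)$ and $d_{\mc{S}_i}(q,r)$. Using right-invariance of the two foliations and the fact that $R_n$ is a bijection of $N$,
\[
S_{i+1}^u(qn)\cap W_i^u(rn)=\big((S_{i+1}^u q)n\big)\cap\big((W_i^u r)n\big)=\big((S_{i+1}^u q)\cap(W_i^u r)\big)n=zn,
\]
so $zn$ is the corresponding intersection point for the pair $(qn,rn)$.

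Now $z$ and $r$ lie in the common leaf $W_i^u r$, and $q$ and $z$ lie in the common leaf $S_{i+1}^u q$. Applying the definitions together with the fact that $R_n$ preserves the intrinsic leaf distances,
\[
d_{\mc{W}_i}(qn,rn)=d_{\mc{W}_i^{u,L}}(zn,rn)=d_{\mc{W}_i^{u,L}}(z,r)=d_{\mc{W}_i}(q,r),
\]
and
\[
d_{\mc{S}_i}(qn,rn)=d_{\mc{S}_{i+1}^{u,L}}(qn,zn)=d_{\mc{S}_{i+1}^{u,L}}(q,z)=d_{\mc{S}_i}(q,r).
\]
This proves both identities.
\end{proof}
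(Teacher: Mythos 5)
Your proof is correct and follows exactly the route the paper takes: the paper dispenses with this lemma in one line as ``immediate due to the right-invariance of the dynamical foliations and the right-invariance of distance measured along these foliations,'' and your write-up simply spells out that argument, tracking the intersection point $z\mapsto zn$ and using that $R_n$ is a leaf-preserving ambient isometry. Nothing is missing; you have just made explicit what the paper leaves implicit.
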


We now record some lemmas concerning weak and strong distances that will be of use later.

\begin{lem}\label{lem:vh_distance_invariance}

Suppose that $w\in W_i^u$ and $s\in S^{u}_{i+1}$.  Then 
\begin{align*}
d_{\mc{W}_i}(e,ws)&=d_{\mc{W}_i}(e,w), \text{ and}\\
d_{\mc{S}_i}(e,ws)&=d_{\mc{S}_i}(e,s).
\end{align*}
\end{lem}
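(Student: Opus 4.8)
The claim is essentially an algebraic identity about how the product structure on $S_i^u$ interacts with multiplication by elements of the two sub-foliations through the identity. The plan is to unwind the definitions of $d_{\mc{W}_i}$ and $d_{\mc{S}_i}$ at the points $e$ and $ws$, and to use Proposition \ref{prop:directsum} (the sorted hypothesis gives $[\mf{n}^s,\mf{n}^u]=0$, but here we work inside $\mf{s}_i^u$; the relevant structural fact is that the $L$-grading of $\mf{n}^u$ is Carnot, with $W_i^u$ the slowest direction). Write $p = S_{i+1}^u(ws)\cap W_i^u(e)$. By definition $d_{\mc{W}_i}(e,ws) = d_{\mc{W}_i^{u,L}}(p, e)$ and $d_{\mc{S}_i}(e,ws) = d_{\mc{S}_{i+1}^{u,L}}(ws, p)$.

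First I would identify the intersection point $p$ explicitly. Since $s\in S_{i+1}^u$, the coset $S_{i+1}^u(ws) = S_{i+1}^u w s = S_{i+1}^u w$ (because $s$ is absorbed on the right into $S_{i+1}^u$), so $p = S_{i+1}^u w \cap W_i^u$. The key geometric input is that $S_i^u = W_i^u \ltimes S_{i+1}^u$ in the sense that every element of $S_i^u$ factors uniquely, and more precisely that $W_i^u$ normalizes $S_{i+1}^u$ or at least that $S_{i+1}^u$ is normal in $S_i^u$ (it is the subgroup for the faster eigenvalues, which form an ideal by the Carnot/sorted structure — brackets only increase the grading, hence the magnitude, so $[\mf{s}_i^u, \mf{s}_{i+1}^u]\subseteq \mf{s}_{i+1}^u$, confirming $S_{i+1}^u\trianglelefteq S_i^u$). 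Thus $S_{i+1}^u w = w S_{i+1}^u$, and $p = wS_{i+1}^u \cap W_i^u = \{w\}$ by unique factorization. So $p = w$.

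With $p = w$ in hand, the two identities become immediate: $d_{\mc{W}_i}(e,ws) = d_{\mc{W}_i^{u,L}}(w, e) = d_{\mc{W}_i}(e, w)$, since by the same computation the intersection point defining $d_{\mc{W}_i}(e,w)$ is $S_{i+1}^u w \cap W_i^u = w$ as well. For the strong distance, $d_{\mc{S}_i}(e, ws) = d_{\mc{S}_{i+1}^{u,L}}(ws, w)$; and measuring along the $\mc{S}_{i+1}^{u,L}$ leaf through $ws$ (which equals $w S_{i+1}^u$), we translate on the left — or rather, use right-invariance after writing $ws = w\cdot s$ — to get $d_{\mc{S}_{i+1}^{u,L}}(ws, w) = d_{\mc{S}_{i+1}^{u,L}}(s, e) = d_{\mc{S}_i}(e, s)$, where the last equality holds because the point defining $d_{\mc{S}_i}(e,s)$ is $S_{i+1}^u e \cap W_i^u s = e$ (again $s\in S_{i+1}^u$ so $S_{i+1}^u s = S_{i+1}^u$). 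The only subtlety in this last step is that right-invariance of $d_{\mc{S}_{i+1}^{u,L}}$ moves $(ws, w)$ to $(wsw^{-1}, e)$, not to $(s,e)$; I would instead observe directly that both $d_{\mc{S}_{i+1}^{u,L}}(ws,w)$ and $d_{\mc{S}_{i+1}^{u,L}}(s,e)$ are computed in the leaf $S_{i+1}^u$ translated on the right, and use that left multiplication by $w$ restricted to $S_{i+1}^u$ need not be an isometry — so the cleaner route is: $d_{\mc{S}_{i+1}^{u,L}}(ws, w)$; apply right-invariance with $n = s^{-1}$ to get $d_{\mc{S}_{i+1}^{u,L}}(w, ws^{-1})$, which is not obviously $d_{\mc{S}_i}(e,s)$ either.

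The main obstacle is precisely this last point: reconciling the non-commutativity of $N$ with the desired clean identity. I expect the intended argument uses that $ws$ and $w$ lie in the same $\mc{S}_{i+1}^{u,L}$ leaf $wS_{i+1}^u$ and that the leaf distance there, being induced by a \emph{right}-invariant metric, is computed by $d_{\mc{S}_{i+1}^{u,L}}(wa, wb) = d_{\mc{S}_{i+1}^{u,L}}(a,b)$ only when the translation is on the right — so one should instead factor $ws$ as $(wsw^{-1})w$ and use that $wsw^{-1}\in S_{i+1}^u$ (normality), giving $d_{\mc{S}_{i+1}^{u,L}}(ws, w) = d_{\mc{S}_{i+1}^{u,L}}((wsw^{-1})w, w) = d_{\mc{S}_{i+1}^{u,L}}(wsw^{-1}, e)$ by right-invariance, and then one needs $d_{\mc{S}_{i+1}^{u,L}}(wsw^{-1}, e) = d_{\mc{S}_{i+1}^{u,L}}(s, e)$. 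This last equality is \emph{not} automatic and is where I would need to think carefully; most likely the correct reading of the lemma is that one should define things so the intersection point $p$ for $d_{\mc{S}_i}(e,ws)$ is computed first and equals $w$ exactly, making $d_{\mc{S}_i}(e,ws) = d_{\mc{S}_{i+1}^{u,L}}(ws, w)$, and then the statement of the lemma as literally written might only be asserting the weak-distance identity cleanly with the strong one following by a symmetric but more delicate argument, or the metric on $S_{i+1}^u$-leaves is in fact bi-invariant in the relevant quotient direction. I would resolve this by checking whether, in the Carnot-graded setting with $S_{i+1}^u$ an abelian-modulo-higher-terms normal subgroup, conjugation by $w\in W_i^u$ acts on $S_{i+1}^u$ by a map that is a leaf-isometry — which would follow if $w$ acts trivially on the abelianization directions of $\mf{s}_{i+1}^u$, plausible since $W_i^u$ is one-dimensional and slowest. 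That verification is the one genuinely non-formal step; everything else is bookkeeping with cosets and the unique product decomposition $S_i^u = W_i^u S_{i+1}^u$.
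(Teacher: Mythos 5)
There is a genuine gap, and its source is a misreading of the definition of the weak and strong distances. In the paper's definition the intersection point for $d_{\mc{W}_i}(q,r)$ and $d_{\mc{S}_i}(q,r)$ is $z=S_{i+1}^u q\cap W_i^u r$: the strong leaf passes through the \emph{first} argument $q$ and the weak leaf through the \emph{second} argument $r$, with $d_{\mc{W}_i}(q,r)=d_{\mc{W}_i^{u,L}}(z,r)$ and $d_{\mc{S}_i}(q,r)=d_{\mc{S}_{i+1}^{u,L}}(q,z)$. You instead set $p=S_{i+1}^u(ws)\cap W_i^u(e)$, i.e.\ you interchanged the roles of $q=e$ and $r=ws$; since this ``distance'' is not symmetric (the paper warns exactly this), you are computing $d_{\mc{W}_i}(ws,e)$ and $d_{\mc{S}_i}(ws,e)$ rather than the quantities in the lemma. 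With the correct reading, $W_i^u ws=W_i^u s$ because $w\in W_i^u$, so the intersection point is $S_{i+1}^u\cap W_i^u s=s$, not $w$. Then $d_{\mc{S}_i}(e,ws)=d_{\mc{S}_{i+1}^{u,L}}(e,s)=d_{\mc{S}_i}(e,s)$ with nothing left to prove, and $d_{\mc{W}_i}(e,ws)=d_{\mc{W}_i^{u,L}}(s,ws)=d_{\mc{W}_i^{u,L}}(e,w)=d_{\mc{W}_i}(e,w)$ by a single right translation by $s^{-1}$. This is the paper's argument; it uses only right-invariance of the leafwise metrics and needs neither normality of $S_{i+1}^u$ in $S_i^u$ nor any conjugation.

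The wall you correctly sensed at the end is not a delicate point to be finessed but a symptom of the swapped convention: your route needs $d_{\mc{S}_{i+1}^{u,L}}(ws,w)=d_{\mc{S}_{i+1}^{u,L}}(s,e)$, i.e.\ that left translation by $w$ is an isometry between strong leaves. In the right-invariant trivialization, left translation by $w$ acts on tangent vectors by $\Ad(w)$, which on $\mf{s}_{i+1}^u$ is a nontrivial unipotent shear whenever $[\log w,\mf{s}_{i+1}^u]\neq 0$ (for a Heisenberg-type unstable algebra with $w=\exp(tX)$ and strong directions $Y$, $Z=[X,Y]$, one has $\Ad(w)Y=Y+tZ$, and $d(e,\exp(yY+tyZ))>d(e,\exp(yY))$ in the flat induced metric). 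So that step is false in general, and your proposed verification (that conjugation by $w$ acts trivially on the relevant directions ``since $W_i^u$ is one-dimensional and slowest'') cannot succeed; likewise the guesses about bi-invariance or about the lemma only asserting the weak identity are not what is going on. Your weak-distance computation happens to land on the correct value because both conventions reduce it to $d_{\mc{W}_i^{u,L}}(e,w)$, but the strong-distance identity only becomes the triviality it is once the intersection point is identified correctly as $s$.
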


\begin{proof}
We use right-invariance of the distance. By definition,
\begin{align*}
d_{\mc{W}_i}(e,ws)&=d_{\mc{W}^{u,L}_i}(S^u_{i+1}e\cap W_i^u ws,ws)\\
&=d_{\mc{W}^{u,L}_i}(S^u_{i+1}\cap W_i^us,ws)\\
&=d_{\mc{W}^{u,L}_i}(s,ws)\\
&=d_{\mc{W}^{u,L}_i}(e,w)\\
&=d_{\mc{W}^{u,L}_i}(S^u_{i+1}\cap W_i^uw,w)\\
&=d_{\mc{W}_i}(e,w).
\end{align*}
For the second claim,
\begin{align*}
d_{\mc{S}_i}(e,ws)&=d_{\mc{S}_{i+1}^{u,L}} (e, S_{i+1}^u\cap W_i^u ws)\\
&=d_{\mc{S}_{i+1}^{u,L}}(e,S_{i+1}^u\cap W_i^us)\\
&=d_{\mc{S}_{i+1}}(e,s).
\end{align*}
\end{proof}

Before the next proof, we record a useful observation about one-parameter subgroups.

\begin{obs}\label{obs:one_parameter_subgroup_distance}
Suppose that $G$ is a Lie group endowed with a right-invariant metric. Fix an abelian subalgebra $\mf{h}\in \mf{g}$, and let $H\coloneqq \exp(\mf{h})$ be the analytic subgroup of $G$ tangent to $\mf{h}$ at $e\in G$. Then $\exp=\Exp$, where $\Exp$ is the Riemannian exponential map on $H$ in the metric given by the restriction of the metric on $G$.
\end{obs}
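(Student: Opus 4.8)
The claim is that for an abelian analytic subgroup $H = \exp(\mf{h})$ of a Lie group $G$ with a right-invariant metric, the group exponential $\exp\colon \mf{h} \to H$ coincides with the Riemannian exponential $\Exp$ of the induced metric on $H$. First I would reduce to a statement purely about $H$ itself: since $H$ is abelian, the restriction of $d_G$ to $H$ is a metric for which $H$ is a Lie group with a metric that is \emph{both} left- and right-invariant (on an abelian group these coincide). So it suffices to prove: on an abelian Lie group with a bi-invariant metric, $\exp = \Exp$. This is a standard fact, but I would give the argument since it is short.

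The key step is to verify that one-parameter subgroups $t \mapsto \exp(tX)$, for $X \in \mf{h}$, are geodesics of the induced metric. One clean way: on any Lie group with a bi-invariant metric, the Levi-Civita connection satisfies $\nabla_X Y = \tfrac12[X,Y]$ for left-invariant vector fields $X,Y$; hence $\nabla_X X = 0$ for every left-invariant $X$, so the integral curves of left-invariant vector fields — which are exactly the cosets of one-parameter subgroups — are geodesics. Since $H$ here is abelian the bracket vanishes identically, making this even more transparent: the connection is flat and every one-parameter subgroup through $e$ is a geodesic through $e$. Therefore $\Exp_e(X) = \exp(X)$ for all $X \in \mf{h}$, which is the assertion (after identifying $T_eH$ with $\mf{h}$ in the usual way). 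I would also note that the induced metric on $H$ is complete (as $H$ is a closed, indeed analytic, subgroup — or simply because it is bi-invariant and $H$ is connected), so $\Exp$ is defined on all of $\mf{h}$ and the two maps have the same domain.

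The only mild subtlety — and the step I would be most careful about — is the claim that the restriction of a right-invariant metric on $G$ to the abelian subgroup $H$ is \emph{bi-invariant as a metric on $H$}. Right-invariance on $G$ restricts to right-invariance on $H$ immediately. For left-invariance on $H$: for $h \in H$, left translation $L_h$ on $H$ equals $R_h$ on $H$ because $H$ is abelian ($hx = xh$), and $R_h$ restricted to $H$ is an isometry since $R_h$ is an isometry of $G$ and preserves $H$. Hence $L_h|_H$ is an isometry of $(H, d_G|_H)$, giving bi-invariance. After this observation the rest is the textbook computation above. I would keep the write-up to a few lines, citing the bi-invariant-metric geodesic fact (e.g.\ from any standard Riemannian geometry reference) rather than rederiving $\nabla_X Y = \tfrac12[X,Y]$, since in the abelian case the conclusion $\nabla_X X = 0$ is the natural endpoint anyway.
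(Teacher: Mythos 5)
Your proposal is correct and follows essentially the same route as the paper: restrict the right-invariant metric to the abelian subgroup $H$, observe it is bi-invariant there (left $=$ right translation on an abelian group), and invoke the standard fact that for an invariant metric on an abelian Lie group the Riemannian exponential is the Lie exponential — you merely spell out via $\nabla_X X=0$ what the paper cites as standard. One small caveat on your completeness aside: an analytic subgroup need not be closed in general, but your alternative justification (the induced metric is bi-invariant, hence homogeneous, hence complete) is the right one and makes that remark harmless.
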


\begin{proof}
The right-invariant metric on $G$ restricted to $H$ is a bi-invariant metric on the abelian group $H$. The exponential map of any invariant metric on an abelian Lie group is the Lie exponential. Since $H$ is abelian $\Exp$ and $\exp$ must coincide.
\end{proof}

\begin{lem}\label{lem:distance_growth}
Let $L$ be an Anosov automorphism of a nilpotent Lie group $N$ with sorted simple spectrum. Suppose that $w\in W_i^u$ and $s\in S^u_{i+1}$. Then there exist $C,D>0$ such that for all $m\in \Z$,
\[
d_{\mc{W}_i}(e,w^ms^m)=mD,
\]
and
\[
d_{\mc{S}_i}(e,w^ms^m)\le mC.
\]
\end{lem}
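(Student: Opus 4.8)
The plan is to exploit right-invariance together with the two previous lemmas (Lemma \ref{lem:vh_distance_invariance} and Observation \ref{obs:one_parameter_subgroup_distance}) to reduce the two claims to a computation on one-parameter subgroups. First I would note that $W_i^u$ is a one-parameter subgroup, so its powers satisfy $w^m = \exp(m\log w)$, and by Observation \ref{obs:one_parameter_subgroup_distance} the Lie exponential agrees with the Riemannian exponential on $W_i^u$; hence $d_{\mc{W}_i^{u,L}}(e, w^m) = |m| \cdot d_{\mc{W}_i^{u,L}}(e,w) =: |m| D$ where $D = d_{\mc{W}_i^{u,L}}(e,w) > 0$ (assuming $w \neq e$; if $w = e$ take $D = 0$ and the statement is trivial, though presumably the intended setting has $w \neq e$). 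Similarly the powers $s^m$ lie along the one-parameter... wait, $s \in S_{i+1}^u$ need not lie in a one-parameter subgroup since $S_{i+1}^u$ is not abelian in general; but $\{s^m : m \in \Z\}$ does lie in the cyclic (hence abelian) subgroup generated by $s$, and the Riemannian distance $d_{\mc{S}_{i+1}^{u,L}}(e, s^m)$ is at most the distance measured along the one-parameter subgroup $\exp(\R \log s)$, which by Observation \ref{obs:one_parameter_subgroup_distance} again equals $|m| \cdot \|\log s\|_{\text{something}}$; in any case $d_{\mc{S}_{i+1}^{u,L}}(e, s^m) \le |m| C$ for $C = d_{\mc{S}_{i+1}^{u,L}}(e,s)$ by the triangle inequality and right-invariance (the path $e \to s \to s^2 \to \cdots \to s^m$, each leg a right-translate of the leg $e \to s$).

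Next I would handle the element $w^m s^m$. The key point is that $w^m s^m \in W_i^u \cdot S_{i+1}^u$, so by Lemma \ref{lem:vh_distance_invariance} (with the roles matched: $w^m \in W_i^u$, $s^m \in S_{i+1}^u$) we get
\[
d_{\mc{W}_i}(e, w^m s^m) = d_{\mc{W}_i}(e, w^m) \quad\text{and}\quad d_{\mc{S}_i}(e, w^m s^m) = d_{\mc{S}_i}(e, s^m).
\]
Then I would observe that by the definitions of the weak and strong distances, since $w^m \in W_i^u$ we have $S_{i+1}^u \cap W_i^u w^m = \{w^m\}$ (the intersection point $z$ is just $w^m$ itself, because $w^m$ already lies on $W_i^u \cdot e = W_i^u$), so $d_{\mc{W}_i}(e, w^m) = d_{\mc{W}_i^{u,L}}(w^m, w^m)$... let me recheck the definition: $d_{\mc{W}_i}(q,r) = d_{\mc{W}_i^{u,L}}(S_{i+1}^u q \cap W_i^u r, r)$, so $d_{\mc{W}_i}(e, w^m) = d_{\mc{W}_i^{u,L}}(S_{i+1}^u \cap W_i^u w^m, w^m) = d_{\mc{W}_i^{u,L}}(w^m, w^m)$? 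No — $W_i^u w^m = W_i^u$ and $S_{i+1}^u \cap W_i^u = \{e\}$, so the intersection point is $e$, giving $d_{\mc{W}_i}(e, w^m) = d_{\mc{W}_i^{u,L}}(e, w^m) = |m| D$. Similarly $d_{\mc{S}_i}(e, s^m) = d_{\mc{S}_{i+1}^{u,L}}(e, S_{i+1}^u \cap W_i^u s^m)$; since $s^m \in S_{i+1}^u$, $W_i^u s^m \cap S_{i+1}^u = \{s^m\}$, so this equals $d_{\mc{S}_{i+1}^{u,L}}(e, s^m) \le |m| C$.

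The main obstacle — really the only subtle point — is the precise justification that $d_{\mc{W}_i^{u,L}}(e, w^m) = |m| \, d_{\mc{W}_i^{u,L}}(e, w)$ exactly, rather than just $\le |m| \, d_{\mc{W}_i^{u,L}}(e,w)$. This is where Observation \ref{obs:one_parameter_subgroup_distance} is essential: it tells us the leaf $W_i^u \cong \R$ isometrically with the Lie exponential as the arclength parametrization (up to the norm of the generator), so along a one-dimensional leaf the distance is literally additive along the subgroup, with no shortcuts — $w^m$ is the point at arclength $m \cdot \ell(e \to w)$ and the leaf is a geodesic line. For the strong distance only an upper bound is claimed, so there the triangle inequality suffices and no exactness is needed. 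I would also remark, to match the statement which asserts existence of $C, D > 0$ with the formulas holding for all $m \in \Z$: the formula $d_{\mc{W}_i}(e, w^m s^m) = mD$ as written presumes $m \ge 0$ (or should read $|m|D$); I would state it with $|m|$, or restrict to $m \ge 0$, consistently with how it is invoked later. Assembling these observations gives both displayed estimates with $D = d_{\mc{W}_i^{u,L}}(e,w)$ and $C = d_{\mc{S}_{i+1}^{u,L}}(e,s)$.
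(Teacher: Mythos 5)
Your proposal is correct and follows essentially the same route as the paper: Lemma \ref{lem:vh_distance_invariance} strips off the $s^m$ (resp.\ $w^m$) factor, Observation \ref{obs:one_parameter_subgroup_distance} gives the exact linear growth along the one-dimensional leaf $W_i^u$, and the strong distance is bounded by the length along the one-parameter subgroup through $s$ (the paper compares $d_{\mc{S}_i}$ with the restricted metric on $P=\exp(t\log s)$, which is the same idea as your telescoping/right-invariance bound). Your remark that the stated equalities should read $\abs{m}D$ and $\abs{m}C$ for $m\in\Z$ (or be restricted to $m\ge 0$) is a fair observation about the statement, consistent with how the lemma is later used.
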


\begin{proof}

For the first claim, as $s$ is in $S^{u}_{i+1}$, we see by the previous lemma that
\[
d_{\mc{W}_i}(e,w^m s^m)=d_{\mc{W}_i}(e,w^m).
\]
There exists $v\in \mf{n}$ such that $w=\exp(v)$, so $w^m=\exp(mv)$. Consequently,
\[
d_{\mc{W}_i}(e,w^m)=d_{\mc{W}_i}(e,\exp(mv))=d_{W_i^{u}}(e,\Exp(mv))=md_{W_i^{u}}(e,\exp(v)),
\]
as the Lie and Riemannian exponential coincide by the previous observation and $W_i^u$ is flat. Thus the equality follows.

We now show the second claim. By Lemma \ref{lem:vh_distance_invariance}, $d_{\mc{S}_i}(e,w^ms^m)=d_{\mc{S}_i}(e,s^m)$. Suppose that $\exp(v)=s$, so that $\exp(mv)=s^m$. The one-parameter subgroup $P=\exp(tv)$ is isomorphic to $\R$ as a Lie group. When endowed with the restriction of a right-invariant metric on $N$, the observation gives that $d_P(e,s^m)=md_P(e,s)$.  Note that $d_{\mc{S}_i}(e,s^m)\le d_P(e,s^m)$, as $P$ is endowed with the restricted metric and so $d_{\mc{S}_i}(e,s^m)\le md_P(e,s)$.
\end{proof}

\subsection{Weak and strong distance and automorphisms}

Here we study the change in weak and strong distance under an automorphism of $N/\Gamma$. As before, we consider an Anosov automorphism $L$ of $N/\Gamma$ such that $L$ has simple sorted spectrum.

\begin{claim}\label{lem:horizontal_distance_growth}
Suppose that $L$ is an Anosov automorphism of a nilpotent Lie group $N$ with simple sorted spectrum. 
Suppose that $x,y\in N$ with $y\in \mc{S}^{u,L}_{i}(x)$. Then for all $m\in \Z$,
\begin{equation}
d_{\mc{W}_i}(L^m(x),L^m(y))=\abs{\lambda^u_i}^md_{\mc{W}_i}(x,y).
\end{equation}

\end{claim}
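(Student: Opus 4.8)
The plan is to reduce the claim to the already-established behavior of the weak distance on one-parameter subgroups, exploiting right-invariance so that we may assume $x=e$. Writing $y=ws$ with $w\in W_i^u$ and $s\in S_{i+1}^u$ — which is possible by the subordinate product structure of $\mc{W}_i^{u,L}$ and $\mc{S}_{i+1}^{u,L}$ inside $\mc{S}_i^{u,L}$, applied at $e$ — Lemma \ref{lem:vh_distance_invariance} gives $d_{\mc{W}_i}(e,ws)=d_{\mc{W}_i}(e,w)$. Since $w\in W_i^u$ and $W_i^u$ is a one-parameter subgroup that is flat in the induced metric, Observation \ref{obs:one_parameter_subgroup_distance} identifies the Lie and Riemannian exponentials there, so if $w=\exp(v)$ with $v\in E_{\lambda_i^u}$ then $d_{\mc{W}_i}(e,w)=d_{W_i^u}(e,\exp v)=\|v\|$ up to the fixed scaling by the chosen metric.

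Next I would track what $L^m$ does to this data. By Proposition \ref{prop:dynamical_foliations_for_nilmanifold_automorphisms}, $L$ preserves both the $W_i^u$ and $S_{i+1}^u$ subgroups (the latter because $\mf{s}_{i+1}^u$ is $D_eL$-invariant), and $L$ acts on $E_{\lambda_i^u}$ by multiplication by $\lambda_i^u$ in the right-invariant trivialization. Hence $L^m(ws)=L^m(w)L^m(s)$ with $L^m(w)=\exp((\lambda_i^u)^m v)\in W_i^u$ and $L^m(s)\in S_{i+1}^u$. Applying Lemma \ref{lem:vh_distance_invariance} again, $d_{\mc{W}_i}(e,L^m(w)L^m(s))=d_{\mc{W}_i}(e,L^m(w))=\|(\lambda_i^u)^m v\|=\abs{\lambda_i^u}^m\|v\|$, which is exactly $\abs{\lambda_i^u}^m\,d_{\mc{W}_i}(e,y)$. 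Finally, right-invariance of the weak distance (the Lemma following the definition) together with $L^m(y)(L^m(x))^{-1}=L^m(yx^{-1})$ lets us pass from the $x=e$ case to general $x$: $d_{\mc{W}_i}(L^m(x),L^m(y))=d_{\mc{W}_i}(e,L^m(yx^{-1}))=\abs{\lambda_i^u}^m d_{\mc{W}_i}(e,yx^{-1})=\abs{\lambda_i^u}^m d_{\mc{W}_i}(x,y)$.

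I expect the only real point requiring care is the interchange of $L^m$ with the product decomposition $y=ws$ — i.e., verifying that $L$ genuinely permutes the leaves of $\mc{W}_i^{u,L}$ and $\mc{S}_{i+1}^{u,L}$ so that the factorization is respected — but this is exactly the content of the invariance statements in Proposition \ref{prop:dynamical_foliations_for_nilmanifold_automorphisms}, so it is immediate rather than an obstacle. Everything else is a direct application of Lemma \ref{lem:vh_distance_invariance}, Observation \ref{obs:one_parameter_subgroup_distance}, and right-invariance; in fact this claim is essentially the $m$-fixed analogue of the computation already carried out in Lemma \ref{lem:distance_growth}.
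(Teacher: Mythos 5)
Your proposal is correct and follows essentially the same route as the paper's proof: right-invariance reduces everything to distances from $e$, the decomposition $yx^{-1}=ws$ with $w\in W_i^u$, $s\in S_{i+1}^u$ is preserved by the automorphism $L^m$, Lemma \ref{lem:vh_distance_invariance} discards the strong factor, and Observation \ref{obs:one_parameter_subgroup_distance} yields the exact scaling $\abs{\lambda_i^u}^m$ along the one-parameter subgroup $W_i^u$.
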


\begin{proof}

Note that $d_{\mc{W}_i}(L^m(x),L^m(y))=d_{\mc{W}_i}(e,L^m(yx^{-1}))$ by right-invariance of horizontal distance. So, writing $yx^{-1}$ as $ws$ for some $w\in W_i^u$ and $s\in S_{i+1}^u$, we see that 
\[
d_{\mc{W}_i}(L^m(x),L^m(y))=d_{\mc{W}_i}(e,L^m(w)L^m(s))=d_{\mc{W}_i}(e,L^m(w)),
\]
by Lemma \ref{lem:vh_distance_invariance}.

Note that if $\exp(v)=w$, then $L^m(w)=\exp(dL_e^mv)=\exp((\lambda_i^u)^m v)$. By appealing to Observation \ref{obs:one_parameter_subgroup_distance}, we see immediately that $d_{W_i^{u}}(e,\exp(tv))=\abs{t}d_{W_i^u}(e,w)$. Thus,
\[
d_{\mc{W}_i}(e, L^m(w))=\abs{\lambda_i^u}^md_{\mc{W}_i^u}(e,w)=\abs{\lambda_i^u}^md_{\mc{W}_i}(e,w)=\abs{\lambda_i^u}^md_{\mc{W}_i}(e,ws)=\abs{\lambda_i^u}^md_{\mc{W}_i}(x,y).
\]
\end{proof}

\section{Irreducibility}\label{sec:irreducibility}

In this section, we discuss irreducibility and show the equivalence between its algebraic and dynamical formulations. 

\subsection{The toral case}

We begin by recalling the existing notion of irreducibility for toral automorphisms.
An element $A\in \SL(n,\Z)$ is said to be \emph{irreducible} if the characteristic polynomial of $A$  is irreducible over $\Q$. This notion appears in \autocite{gogolev2011local} and \autocite{gogolev2018local}. This is equivalent to each eigenspace of $A$ being dense when projected to $\mathbb{T}^n$.
If $A$ is not irreducible, it is shown in \autocite[Sec. 3]{gogolev2008smooth}, that $A$ is not locally Lyapunov spectrum rigid. There is a small oversight in the argument of \autocite{gogolev2008smooth}, which occurs in the case of non-simple spectrum. We will not explain this here, however, as the argument below, which is based on \autocite[Sec. 3]{gogolev2008smooth}, is self-contained.

\subsection{Nilmanifold case}

We now consider nilmanifolds. Fix an automorphism, $L$, of $N/\Gamma$. As in the preliminaries, we write $E^u=\oplus E_i^u$. If these subspaces are one-dimensional, then they are tangent to subgroups $W_i^u$ at the identity. Moreover, the foliation by $W_i^ux$ leaves is invariant under left translation by elements of $W_i^u$. Now, descending to the quotient $N/\Gamma$, we are interested in the density of the $W_i^ux\Gamma$ leaves. As before, we write $N_k$ for the $k$th term in the lower central series of $N$.

\begin{defn}\label{defn:irreducibility}
We say that an Anosov automorphism $L$ of a nilmanifold $N/\Gamma$ is \emph{irreducible} if, for any eigenvector $w$ of $L$ such that $w\in \cgrad{\mf{n}}{k}$, we have  
$\overline{W\Gamma}=N_k\Gamma$ in $N/\Gamma$ where 
$W$ is the one-parameter subgroup tangent to $w$. Here $\cgrad{\mf{n}}{k}$ is the $k$th term in the $L$-grading as defined in subsection \ref{subsec:automorphisms}.
\end{defn}

Note that in the case that $N/\Gamma$ is a torus that Definition \ref{defn:irreducibility} coincides with the definition of irreducibility recalled in the previous subsection. We think of the above definition as a dynamically defined irreducibility criterion because it concerns the density of leaves of a foliation defined using $L$. The following proposition gives an algebraic characterization of irreducibility. Recall that if $N$ is a nilpotent Lie group admitting a lattice $\Gamma$, then $\exp^{-1}(\Gamma)$ is a discrete subset of $\mf{n}$. Any basis of $\mf{n}$ contained in the $\Z$-span of $\exp^{-1}(\Gamma)$ has rational structure constants. Such a choice of basis gives a $\Q$-structure on $\mf{n}$ in the sense that it determines $\Q$-vector space $V$ contained in $\mf{n}$ of the same dimension as $\mf{n}$. If $L\colon N\to N$ is an automorphism that preserves $\Gamma$, then with respect to a basis contained in $\exp^{-1}(\Gamma)$, $dL$ is defined over $\Q$.

\begin{prop}\label{prop:alg_condition}
Suppose that $L\colon N/\Gamma\to N/\Gamma$ is an automorphism of a nilmanifold with sorted simple spectrum. Then $L$ is irreducible if and only if for each $k$, the induced action of $L$ on $N_k/[N_{k},N_{k}]$ is irreducible over $\Q$ with respect to the $\Q$-structure given by $\Gamma$. 
\end{prop}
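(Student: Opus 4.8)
The plan is to relate the dynamical density condition of Definition \ref{defn:irreducibility} to the classical rationality criterion by working layer-by-layer in the lower central series, using the fact that for a nilpotent Lie group with lattice $\Gamma$, a connected subgroup $W$ has $\overline{W\Gamma} = H\Gamma$ for the smallest rational connected subgroup $H$ containing $W$ (this is the standard Borel density / closure-of-subgroup statement for nilmanifolds, e.g. from \autocite{raghunathan1972discrete}: the closure of $W\Gamma$ is a submanifold of the form $H\Gamma$ where $H$ is the smallest connected $\Q$-subgroup whose Lie algebra contains that of $W$). So, for an eigenvector $w \in \cgrad{\mf{n}}{k}$ with one-parameter subgroup $W$, irreducibility at $w$ says that the smallest rational subalgebra containing $\R w$ is all of $\lcc{\mf{n}}{k}$.

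First I would fix $k$ and consider $\lcc{\mf{n}}{k}/\lcc{\mf{n}}{k+1}$, on which $L$ acts. Because $L$ has sorted simple spectrum, $\cgrad{\mf{n}}{k}$ is an $L$-invariant complement to $\lcc{\mf{n}}{k+1}$ in $\lcc{\mf{n}}{k}$, and it maps isomorphically and $L$-equivariantly onto $\lcc{\mf{n}}{k}/\lcc{\mf{n}}{k+1}$; moreover this quotient is abelian, so $N_k/[N_k,N_k]$ is (up to the finite-index/covering subtleties that are harmless for density statements) a torus on which $L$ acts by the matrix $dL$ restricted to $\cgrad{\mf{n}}{k}$. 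I would then argue: the leaf $W\Gamma$ is dense in $N_k\Gamma$ if and only if its projection to $N_k/[N_k,N_k]$ is dense in that torus — the forward direction is immediate, and the reverse direction uses that $[N_k,N_k] \subseteq N_{k+1} \subseteq \cdots$ is generated by brackets and, combined with the eigenvector structure (Proposition \ref{prop:Lgradingcarnot}), the smallest rational subalgebra containing $\R w$ and surjecting onto a full rational subspace of $\lcc{\mf{n}}{k}/\lcc{\mf{n}}{k+1}$ must actually contain all the higher layers $\lcc{\mf{n}}{j}$, $j > k$, because those are spanned by repeated brackets of generators. This last point is where the Carnot structure from subsection \ref{subsec:automorphisms} does the real work.

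Next I would translate "the projection of $W$ is dense in the torus $N_k/[N_k,N_k]$" into "the $L$-orbit closure of the line $\R \bar w$ is the whole rational vector space", and then invoke the classical fact for a single rational matrix: a $\Q$-irreducible integer (or rational) matrix acting on a torus has every eigenline dense, and conversely if the matrix is $\Q$-reducible then the rational invariant subspace not containing the eigenline in question projects to a proper closed invariant subtorus trapping that leaf. Since by the sorted simple spectrum hypothesis each eigenvalue in layer $k$ corresponds to a one-dimensional eigenspace, density of the single eigenline $\R \bar w$ in the quotient torus is equivalent to $dL|_{\cgrad{\mf{n}}{k}}$ — equivalently the induced action on $N_k/[N_k,N_k]$ — being irreducible over $\Q$. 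Running this equivalence for every eigenvector $w$, i.e. over all layers $k$ and all eigenvalues within each layer, yields exactly the stated biconditional. The key observation making this uniform across eigenvectors in a fixed layer is that irreducibility of $dL|_{\cgrad{\mf{n}}{k}}$ over $\Q$ is equivalent to density of \emph{one} (hence every) eigenline, since the Galois action permutes the eigenvalues transitively precisely when the characteristic polynomial is $\Q$-irreducible.

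The main obstacle I anticipate is the careful handling of the reverse implication in the first step — showing that density of $W\Gamma$ in the quotient torus $N_k/[N_k,N_k]$ forces density in all of $N_k\Gamma$, not just in that abelian quotient. This requires knowing that the smallest rational connected subgroup $H \supseteq W$ has Lie algebra that is $L$-invariant (it is, being a minimal rational object stable under the rational automorphism $dL$), and then that an $L$-invariant rational subalgebra of $\lcc{\mf{n}}{k}$ which surjects onto $\lcc{\mf{n}}{k}/\lcc{\mf{n}}{k+1}$ must equal $\lcc{\mf{n}}{k}$: this follows because such a subalgebra contains a generating set for $\lcc{\mf{n}}{k}$ modulo $\lcc{\mf{n}}{k+1}$, hence contains $[\cgrad{\mf{n}}{1},\cgrad{\mf{n}}{j}]$-type brackets building up all higher terms — but one must check the bracket computations stay inside the rational subalgebra, which they do since it is a subalgebra. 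I would also need to be slightly careful that $N_k$ might not be simply connected or that $N_k \cap \Gamma$ is a lattice in $N_k$ (true for rational subgroups, by \autocite{raghunathan1972discrete}), so that "$N_k/[N_k,N_k]$" with its $\Q$-structure is well-defined as the statement requires; this is standard but worth a sentence.
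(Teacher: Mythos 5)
Your overall architecture---reduce density of the eigenleaf $W\Gamma$ to density of its projection in a torus, then invoke the classical fact that, for a hyperbolic rational matrix with simple spectrum, an eigenline is dense if and only if the characteristic polynomial is irreducible over $\Q$---is the same as the paper's. The paper carries out the reduction by quoting the nilrotation density criterion for the nilmanifold $N_k/(\Gamma\cap N_k)$: a one-parameter subgroup is dense there if and only if its projection to the maximal torus factor $(N_k/[N_k,N_k])/\pi(\Gamma\cap N_k)$ is dense. Your alternative route, via the orbit-closure theorem $\overline{W\Gamma}=H\Gamma$ with $H$ the smallest connected rational subgroup containing $W$, is workable, but the way you execute the reduction has a genuine gap.

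The gap is in the reverse direction of your first step, and it comes from quotienting by the wrong subgroup. You identify $N_k/[N_k,N_k]$ with the torus whose Lie algebra is $\lcc{\mf{n}}{k}/\lcc{\mf{n}}{k+1}\cong\cgrad{\mf{n}}{k}$. But $[\lcc{\mf{n}}{k},\lcc{\mf{n}}{k}]\subseteq\lcc{\mf{n}}{2k}$, which for $k\ge 2$ is in general strictly smaller than $\lcc{\mf{n}}{k+1}$, so the abelianization of $N_k$---which is what both the statement of the proposition and the density criterion refer to---is strictly larger than $N_k/N_{k+1}$, and the induced map on it is not $dL\mid_{\cgrad{\mf{n}}{k}}$. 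More seriously, the key claim you lean on, that an ($L$-invariant, rational) subalgebra $\mf{h}\subseteq\lcc{\mf{n}}{k}$ with $\mf{h}+\lcc{\mf{n}}{k+1}=\lcc{\mf{n}}{k}$ must equal $\lcc{\mf{n}}{k}$ ``because the Carnot structure lets you rebuild the higher layers by brackets,'' does not go through: the layers $\lcc{\mf{n}}{j}$, $j>k$, are generated by brackets with elements of $\cgrad{\mf{n}}{1}$, and for $k\ge 2$ those elements do not lie in $\lcc{\mf{n}}{k}$, hence are not available inside $\mf{h}$; brackets formed inside $\mf{h}$ only reach $[\lcc{\mf{n}}{k},\lcc{\mf{n}}{k}]\subseteq\lcc{\mf{n}}{2k}$. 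Concretely, in a three-step algebra $\lcc{\mf{n}}{2}$ is abelian, and $\cgrad{\mf{n}}{2}$ is then a subalgebra of $\lcc{\mf{n}}{2}$ that surjects onto $\lcc{\mf{n}}{2}/\lcc{\mf{n}}{3}$ yet is proper whenever $\lcc{\mf{n}}{3}\neq 0$; so the mechanism you invoke (subalgebra plus surjection onto your quotient) does not force equality, and rationality together with $L$-invariance would have to do work that your bracket argument does not supply. The statement that actually powers the reduction is intrinsic to $N_k$: a subalgebra of the nilpotent algebra $\lcc{\mf{n}}{k}$ that surjects onto $\lcc{\mf{n}}{k}/[\lcc{\mf{n}}{k},\lcc{\mf{n}}{k}]$ equals $\lcc{\mf{n}}{k}$. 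If you replace $\lcc{\mf{n}}{k+1}$ by $[\lcc{\mf{n}}{k},\lcc{\mf{n}}{k}]$ throughout, so that the torus is genuinely the one in the statement, and use that standard fact---or simply quote the nilrotation criterion, as the paper does---your orbit-closure argument goes through; the Carnot grading of $\mf{n}$ is not what does the work here. In the two-step examples of the paper the two quotients coincide, which is why your version looks harmless there, but the proposition is stated for all $k$.
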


Before we give the proof, we elaborate on what is meant by the statement of the proposition. If $\Gamma$ is a lattice in $N$, then $\Gamma\cap N_k$ is a lattice in $N_k$, see \autocite[Thm. 2.3 Cor. 1]{raghunathan1972discrete}. Consequently, if $L$ is a map of $N$ preserving $\Gamma$, then $L$ restricts to a map on $N_k$ preserving $\Gamma\cap N_k$. There is a quotient map $\pi\colon N_{k}\to N_{k}/[N_{k},N_{k}]$. We may then further quotient by $\pi(\Gamma\cap N_{k})$. The automorphism $L$ induces an automorphism on the resulting torus $(N_{k}/[N_{k},N_{k}])/\pi(\Gamma\cap N_{k})$. The irreducibility in the theorem is equivalent to the irreducibility for each $k$ of the automorphism on this torus in the sense mentioned in the previous subsection.

In the proof of the proposition, we use the following claim that is derived from the discussion of nilrotations in \autocite[Ch. 10]{einsiedler2013ergodic}.
\begin{lem} Suppose that $\exp(tv)$ is a one-parameter subgroup of a nilpotent group $N$ containing a lattice $\Gamma$. Then $\exp(tv)$ is dense in $N/\Gamma$ if and only if $\pi(\exp(tv))$ is dense in $(N/[N,N])/(\pi(\Gamma))$.
\end{lem}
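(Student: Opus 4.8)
The statement is an equivalence about density of a one-parameter subgroup $\exp(tv)$ in $N/\Gamma$ versus density of its projection $\pi(\exp(tv))$ in the maximal torus quotient $T \coloneqq (N/[N,N])/\pi(\Gamma)$. The forward implication is trivial: if $\exp(tv)$ is dense in $N/\Gamma$ and $\pi\colon N/\Gamma \to T$ is the continuous surjection induced by the projection $N \to N/[N,N]$ (well-defined because $\Gamma$ maps into the lattice $\pi(\Gamma)$), then the image of a dense set under a continuous surjection is dense, so $\pi(\exp(tv))$ is dense in $T$. The content is in the reverse implication. I would deduce it from the structure theory of nilrotations: the closure of a one-parameter orbit $\overline{\{\exp(tv)\Gamma : t \in \R\}}$ is a homogeneous subspace $H\Gamma/\Gamma$ for some closed connected subgroup $H \le N$ containing the one-parameter subgroup generated by $v$ (this is the Lie-group analogue of the classification of orbit closures of unipotent/nilpotent flows, and is exactly what is cited from \autocite[Ch. 10]{einsiedler2013ergodic}).

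\textbf{Key steps.} First, invoke the homogeneity of the orbit closure: $\overline{\exp(tv)\Gamma} = H\Gamma/\Gamma$ where $H$ is a closed connected subgroup of $N$ with $H\Gamma$ closed and $H \cap \Gamma$ a lattice in $H$, and the induced action of $\{\exp(tv)\}$ on $H\Gamma/\Gamma$ is ergodic (indeed minimal, since $H\Gamma/\Gamma$ is a compact homogeneous nilspace on which the flow has dense orbit). Second, suppose toward a contradiction that $H \ne N$. Since $H$ is a proper closed connected subgroup of the nilpotent group $N$, it is contained in a proper \emph{normal} closed connected subgroup $N' \trianglelefteq N$ with $N/N'$ abelian — concretely one can take $N' = H[N,N]$, which is proper because $H \ne N$ forces $H[N,N] \ne N$ (as $N/[N,N]$ is abelian and a proper subgroup of $N$ modulo $[N,N]$ stays proper, since $H$ containing $[N,N]$ and all of $N/[N,N]$ would give $H=N$). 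Third, pass to the abelian quotient $N/N'$, which is a quotient torus of $T$: the orbit $\exp(tv)$ projects into the proper closed subgroup $N'/\Gamma \cdot$(image) of this torus, contradicting the assumed density of $\pi(\exp(tv))$ in $T$ (density in $T$ forces density in every torus quotient of $T$). Hence $H = N$, i.e. $\exp(tv)$ is dense in $N/\Gamma$.

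\textbf{Main obstacle.} The delicate point is the first step — justifying that the orbit closure of the one-parameter nilflow is a closed \emph{homogeneous} subset $H\Gamma/\Gamma$ with $H$ connected and $H\Gamma$ closed. This is precisely the nilrotation structure theorem being cited, so in the write-up I would state it as a quoted fact from \autocite[Ch. 10]{einsiedler2013ergodic} rather than reprove it; the rest is then an elementary argument about proper subgroups of nilpotent groups and passing to abelian quotients. A secondary subtlety is verifying that $\pi(\exp(tv))$ density in the \emph{full} abelianization torus implies density in the smaller torus $N/N'$ — but this is immediate because $N/N'$ is a further quotient of $N/[N,N]$ by a closed subgroup, and continuous surjections of compact groups preserve density; one just needs $N'$ to contain $[N,N]$, which is why I take $N' = H[N,N]$. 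I expect no computation here, only careful bookkeeping of which subgroup contains which.
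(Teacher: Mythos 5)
The paper does not actually prove this lemma: it is stated as a fact ``derived from the discussion of nilrotations'' in the cited reference and the text proceeds directly to the proof of the proposition that uses it. So there is no in-paper argument to match; what you propose is the standard derivation behind such a citation (orbit closures of nilflows are homogeneous sub-nilmanifolds, plus the fact that in a nilpotent group the commutator subgroup is superfluous for generation), and the easy direction via continuity of the projection is fine.

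Two steps in the hard direction need repair, and the second is a genuine gap as written. First, your justification that $H\neq N$ forces $H[N,N]\neq N$ is circular as phrased: ``a proper subgroup of $N$ modulo $[N,N]$ stays proper, since \dots would give $H=N$'' is exactly the assertion to be proved. The correct reason is the lower-central-series (Frattini-type) argument: if $\mf{h}+[\mf{n},\mf{n}]=\mf{n}$, then bracketing and inducting gives $\mf{n}=\mf{h}+\mf{n}_k$ for every term $\mf{n}_k$ of the lower central series, hence $\mf{h}=\mf{n}$ by nilpotency; this uses that $H$ is connected, which the orbit-closure theorem supplies. Second, and more seriously, your contradiction needs $\pi(\exp(tv))$ to lie in a \emph{proper closed} subgroup of the torus $T=(N/[N,N])/\pi(\Gamma)$. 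What you get for free is only containment in $(V+\Lambda)/\Lambda$, where $V=d\pi(\mf{h})$ is a proper subspace and $\Lambda=\pi(\Gamma)$; a proper subspace plus a lattice can perfectly well be dense (an irrational line in $\mathbb{T}^2$), so properness alone gives no contradiction with density, and likewise $N'\Gamma$ with $N'=H[N,N]$ need not be closed, so $N/N'$ modulo the image of $\Gamma$ need not be a torus. The missing ingredient is rationality of $V$, and it comes precisely from the clause ``$H\cap\Gamma$ is a lattice in $H$'' that you quoted but never used: $\log(H\cap\Gamma)$ spans $\mf{h}$, and its image under $d\pi$ lies in $\Lambda$, so $V$ is spanned by lattice vectors, hence $V+\Lambda$ is closed and $(V+\Lambda)/\Lambda$ is a proper subtorus containing the projected orbit. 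With these two repairs your argument is complete.
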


We now turn to the proof of the proposition.

\begin{proof}[Proof of Proposition \ref{prop:alg_condition}]

The equivalence is seen by using the claim stated in the previous paragraph. Suppose that $W$ is a one-parameter subgroup tangent to $w$, which is an eigenvector of $L$ in $\cgrad{\mf{n}}{k}$.
By the lemma, $W\Gamma$ is dense in  $N_{k}/(\Gamma\cap N_k)$ if and only if $\pi(W)$ is dense in $(N_{k}/[N_{k},N_{k}])/\pi(N_k\cap \Gamma)$. 
Note that $\pi(W)$ is a one-parameter subgroup tangent to $d\pi(w)$, which is an eigenvector tangent to an eigenspace of the induced automorphism on this quotient torus. 
Using the $\Q$-structure coming from the lattice, the induced map on the quotient torus $(N_k/[N_k,N_k])/\pi(N_k\cap \Gamma)$ may be identified with a matrix $A_k\in \SL_m(\Z)$ for some $m$. Each eigenspace of $A_k$ is dense in this torus if and only the induced automorphism $A_k$ is irreducible in the sense that its characteristic polynomial does not factor over $\Q$. Thus we have obtained equivalence between irreducibility in the dynamical sense and in the algebraic sense.
\end{proof}

\begin{rem}
In the case of maps of the torus, one is able to obtain estimates on how many of the elements of $\SL_n(\Z)$ define rigid automorphisms of the torus \autocite[Prop. 3.1]{gogolev2011local}. One is able to do this by stating reducibility as a Zariski closed condition on $\SL_n(\R)$. However, in our case as the condition of sorted spectrum involves inequalities between eigenvalues it is unclear if this approach can be adapted.
\end{rem}

\section{Foliations with isometric holonomies}\label{sec:foliations_with_isometric_isometries}

In this section, we prove a rigidity result characterizing a particular type of topological foliation subordinate to the strong unstable foliation of an Anosov automorphism $L$ on $N/\Gamma$ with sorted simple spectrum.

\begin{prop}\label{prop:isometric_foliations}
Suppose that $L$ is an irreducible automorphism of a nilmanifold $N/\Gamma$ with sorted simple spectrum. Suppose that $\mc{F}$ is an $L$-invariant, continuous foliation subordinate to the $\mc{S}^{u,L}_{i}$ foliation. Moreover, suppose that $\mc{F}$ and $\mc{W}^{u,L}_{i}$ have subordinate product structure to the $\mc{S}_i^{u,L}$ foliation. Further, suppose that the $\mc{F}$ holonomy between the $\mc{W}_i^{u,L}$ leaves endowed with their right-invariant metric is an isometry. Then $\mc{F}$ coincides with $\mc{S}_{i+1}^{u,L}$.
\end{prop}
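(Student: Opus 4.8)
The plan is to show that $\mc{F}$ and $\mc{S}_{i+1}^{u,L}$ agree by first lifting to $N$ and using the isometry hypothesis together with Proposition~\ref{prop:isometry_description} to pin down the $\mc{F}$-holonomies algebraically, then using irreducibility to eliminate the ``extra'' freedom in that algebraic description. First I would work on the universal cover $N$, where $\mc{S}_i^{u,L}$ has leaves $S_i^u x$, subfoliated by the cosets of the normal subgroup $S_{i+1}^u$ and by the cosets $W_i^u y$. The assumed subordinate product structure means that each leaf $S_i^u x$ is, as a homogeneous space, a product of $S_{i+1}^u x$ and $W_i^u x$; since $\mf{s}_i^u = \mf{s}_{i+1}^u \oplus E_{\lambda_i^u}$ with $\mf{s}_{i+1}^u$ an ideal, one can coordinatize $S_i^u x$ by pairs $(s,w) \mapsto w s x$ with $w \in W_i^u$, $s \in S_{i+1}^u$. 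The foliation $\mc{W}_i^{u,L}$ has leaves $\{w s x : w \in W_i^u\}$ for fixed $s$, and $\mc{S}_{i+1}^{u,L}$ has leaves $\{w s x : s \in S_{i+1}^u\}$ for fixed $w$; the claim to prove is that the $\mc{F}$-holonomy between two $\mc{W}_i^{u,L}$-leaves is the ``constant-$w$'' identification, i.e.\ that $\mc{F} = \mc{S}_{i+1}^{u,L}$.

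Next I would exploit the isometry hypothesis. Between two $\mc{W}_i^{u,L}$-leaves $W_i^u a$ and $W_i^u b$ inside a common $\mc{S}_i^{u,L}$-leaf, the $\mc{F}$-holonomy $H^{\mc{F}}_{a,b}$ is an orientation-preserving isometry (after fixing orientations consistently, using continuity and connectedness of the leaf of $\mc{S}_i^{u,L}$), so by Proposition~\ref{prop:isometry_description} it is given by right multiplication, $z \mapsto z a^{-1} w b$ for some $w = w(a,b) \in W_i^u$. Thus $\mc{F}$ restricted to each $\mc{S}_i^{u,L}$-leaf is a foliation whose holonomies are right translations; I would show the cocycle $w(a,b)$ is continuous and, using the $L$-invariance of $\mc{F}$ together with Claim~\ref{lem:horizontal_distance_growth} (the $\mc{W}_i$-distance scales by $\abs{\lambda_i^u}^m$ under $L^m$), derive a functional equation forcing $w(a,b)$ to depend only on the ``$\mc{W}_i$-displacement'' between $a$ and $b$ in a homogeneous way. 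Concretely, writing $b = w_0 s_0 a$ with $w_0 \in W_i^u$, $s_0 \in S_{i+1}^u$, one expects $H^{\mc{F}}_{a,b}$ to be right multiplication by $a^{-1} w_0 \psi(s_0) a$ for some map $\psi\colon S_{i+1}^u \to W_i^u$; the foliation $\mc{F}$ then coincides with $\mc{S}_{i+1}^{u,L}$ precisely when $\psi \equiv e$. The $L$-equivariance gives $\psi(L s_0) = \lambda_i^u \cdot \psi(s_0)$ (in the one-dimensional $W_i^u \cong \R$), and since $L$ on $S_{i+1}^u$ expands with all escape speeds strictly exceeding $\sigma_v = \abs{\lambda_i^u}$ (sortedness, via Corollary~\ref{cor:smallest_escape_rate} and Theorem~\ref{thm:V_i_characterization}), a growth/coarse-geometry comparison à la Proposition~\ref{prop:expandingslowest} shows $\psi$ must vanish on a dense set; here is where I'd most carefully invoke the coarse estimates of section~\ref{sec:coarse_geometry}.

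Then I would bring in irreducibility to upgrade ``$\psi$ vanishes on a large set'' (or ``$\mc{F}$ agrees with $\mc{S}_{i+1}^{u,L}$ somewhere'') to ``$\mc{F} = \mc{S}_{i+1}^{u,L}$ everywhere,'' by descending to $N/\Gamma$. The foliation $\mc{F}$ on $N/\Gamma$ is continuous and $L$-invariant; $\mc{F}$ and $\mc{S}_{i+1}^{u,L}$ agree iff the ``defect'' $\psi$ descends to something well-defined and vanishing on $N/\Gamma$. By Proposition~\ref{prop:alg_condition} and Definition~\ref{defn:irreducibility}, irreducibility says the leaf $W_i^u x \Gamma$ is dense in $N_k \Gamma / \Gamma$ (where $w \in \cgrad{\mf{n}}{k}$); this minimality-type statement forces any continuous $L$-invariant subfoliation with the right holonomy structure to be the algebraic one. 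I would run a closing argument: the set where $\mc{F}$ and $\mc{S}_{i+1}^{u,L}$ have the same leaf is closed, $L$-invariant, and nonempty (from the coarse-geometry step), and is saturated by $\mc{W}_i^{u,L}$-leaves; density of those leaves then forces it to be everything. The main obstacle, I expect, is the middle step: carefully setting up the algebraic/coordinate description of $\mc{F}$ on a single $\mc{S}_i^{u,L}$-leaf, proving the holonomy cocycle $w(a,b)$ is continuous and satisfies the right equivariance, and turning the sortedness-driven escape-speed inequality into a clean statement that the defect $\psi$ is forced to be trivial — essentially re-deriving a $\mc{W}$-vs-$\mc{S}$ version of Proposition~\ref{prop:expandingslowest} in this holonomy setting. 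The irreducibility step at the end is comparatively soft once the local/coarse statement is in hand.
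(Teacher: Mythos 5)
Your overall frame overlaps with the paper's argument (classify the $\mc{F}$-holonomies via Proposition \ref{prop:isometry_description} as right translations $z\mapsto z\,w(s)\,s$, use $L$-invariance to get the scaling $w(L(s))=\lambda_i^u\,w(s)$ in $W_i^u\cong\R$, and exploit the gap $\abs{\lambda_{i+1}^u}>\abs{\lambda_i^u}$), but the logical order is backwards and the decisive middle step fails. The scaling relation together with the coarse-geometry estimates does \emph{not} force the defect $w(\cdot)$ (your $\psi$) to vanish anywhere: the functional equation $\psi(Ls)=\lambda_i^u\psi(s)$ with $d(e,Ls)\approx\abs{\lambda_{i+1}^u}\,d(e,s)$ admits nontrivial continuous solutions of H\"older type, $\psi(s)\sim d(e,s)^{\alpha}$ with $\alpha=\log\abs{\lambda_i^u}/\log\abs{\lambda_{i+1}^u}<1$. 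Already in the two-dimensional linear model (weak direction $e_i$, strong direction $e_{i+1}$) the foliation by translates of the graph $t\mapsto(\abs{t}^{\alpha},t)$ is continuous, $L$-invariant, has global product structure with the horizontals, and has isometric (translation) holonomies, yet is not the strong foliation. So no argument ``\`a la Proposition \ref{prop:expandingslowest}'' that sees only one leaf of the lifted foliation on $N$ can show $\psi$ vanishes on a dense set; irreducibility and $\Gamma$-invariance must enter \emph{before} the growth comparison, not as a soft density upgrade afterwards. Your closing argument has a second gap: the set where $\mc{F}(x)=\mc{S}_{i+1}^{u,L}(x)$ is not obviously saturated by $\mc{W}_i^{u,L}$-leaves (agreement at $x$ gives nothing at points of $W_i^u x$), so density of those leaves cannot be applied to it; and its nonemptiness was exactly what the failed middle step was supposed to provide.

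What the paper does instead: after reducing to a fixed point and lifting so that the relevant leaf is $\wt{\mc{F}}(e)$, it assumes $s\notin\wt{\mc{F}}(e)$, writes the holonomy as $x\mapsto xws$ with $w\neq e$, and uses irreducibility (via Proposition \ref{prop:alg_condition}: $W_i^u\Gamma$ is dense) together with the right-$\Gamma$-invariance of $\wt{\mc{F}}$ and an auxiliary induction showing $\wt{\mc{F}}$ is invariant under \emph{left} translation by the faster subgroups $S^u_{\dim E^u-j}$ (needed to absorb the commutators $[b,ws]$ coming from the Carnot structure) to conclude $(ws)^2\in\wt{\mc{F}}(e)$, hence $w^{2^m}s^{2^m}\in\wt{\mc{F}}(e)$ for all $m$. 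This manufactures points of $\wt{\mc{F}}(e)$ whose weak distance from $e$ grows \emph{linearly} while their strong distance grows at most linearly (Lemma \ref{lem:distance_growth}); applying $L^{-c_\epsilon(m)}$ and comparing the exact weak contraction rate $\abs{\lambda_i^u}$ (Claim \ref{lem:horizontal_distance_growth}) with the strong contraction rate $\ge\abs{\lambda_{i+1}^u}$ then contradicts compactness of $\{x\in\wt{\mc{F}}(e)\mid d_{\mc{S}_i}(e,x)\le\epsilon\}$, which is where the product structure and Lemma \ref{lem:leaf_convergence} are used. In short, the H\"older solutions above grow sublinearly, and only the irreducibility-driven doubling construction rules them out; your proposal is missing both this construction and the left-invariance induction that makes it work in the nonabelian setting.
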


In the proof of this proposition we use the following lemma. Note that without the product structure the lemma is false. For example, consider the Reeb foliation.

\begin{lem}\label{lem:leaf_convergence}
Let $L$ be a irreducible Anosov automorphism with sorted simple spectrum of a nilmanifold $N/\Gamma$. Assume that $\mc{F}$ is a continuous foliation that subfoliates the $\mc{S}_i^{u,L}$ foliation and has product structure with the $\mc{W}_{i}^{u,L}$ foliation subordinate to the $\mc{S}_i^{u,L}$ foliation. Write $\wt{\mc{F}}$ for the lift of this foliation to $N$. Suppose that for some $x_n,y_n\in N$ that $x_n\to x$ and $y_n\to y$ and $y_n\in \wt{\mc{F}}(x_n)$. Then $y\in \wt{\mc{F}}(x)$.
\end{lem}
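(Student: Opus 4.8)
The plan is to use the global product structure to reduce the statement to a compactness argument combined with the continuity of holonomy maps. First I would work in $N$, where $\wt{\mc{S}}^{u,L}_i(x) = S^u_i x$ and the leaves of $\wt{\mc{W}}^{u,L}_i$ through a point are right translates of the one-parameter subgroup $W^u_i$. Since $y_n \in \wt{\mc{F}}(x_n) \subseteq \wt{\mc{S}}^{u,L}_i(x_n)$ and $x_n \to x$, $y_n \to y$, the limit point $y$ lies in $\wt{\mc{S}}^{u,L}_i(x)$ (the strong unstable foliation has closed leaves in $N$ and varies continuously, or one can see this directly since $S^u_i x$ is a closed subset and $y_n x_n^{-1} \to y x^{-1}$ with $y_n x_n^{-1} \in S^u_i$). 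So the question is entirely within a single leaf $L_0 := S^u_i x$ of the strong foliation, on which we have the subordinate product structure of $\mc{F}$ and $\mc{W}^{u,L}_i$.

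Next I would exploit the product structure: on $L_0$, every leaf of $\wt{\mc{F}}$ meets every leaf of $\wt{\mc{W}}^{u,L}_i$ in exactly one point, and this identification is a homeomorphism. Fix a reference leaf $\mc{W}_0 := \wt{\mc{W}}^{u,L}_i(x)$. The point is that $\wt{\mc{F}}(x)$ can be recovered as the image of $\mc{W}_0$ under a family of holonomy maps, but more usefully, $\wt{\mc{F}}(x)$ is characterized as $\{ \mc{W}^{u,L}_i(z) \cap H^{\wt{\mc{F}}}(\,\cdot\,) \}$; concretely, a point $p \in L_0$ lies in $\wt{\mc{F}}(x)$ if and only if sliding $x$ to $\mc{W}^{u,L}_i(p)$ along the $\wt{\mc{F}}$-holonomy lands on $p$. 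The essential input is that for a continuous foliation with a genuine global product structure on $L_0$, the map $L_0 \to \mc{W}_0$ sending $p$ to $\wt{\mc{F}}(p) \cap \mc{W}_0$ is continuous, and two points lie on the same $\wt{\mc{F}}$-leaf if and only if they have the same image under this map. Call this map $\Pi$. Then $y_n \in \wt{\mc{F}}(x_n)$ says $\Pi(y_n) = \Pi(x_n)$, and passing to the limit using continuity of $\Pi$ (which follows from continuity of the foliation together with the product structure — this is where the product structure is indispensable, as the Reeb foliation remark signals) gives $\Pi(y) = \Pi(x)$, hence $y \in \wt{\mc{F}}(x)$.

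The main obstacle is establishing continuity of the projection $\Pi$ (equivalently, that the partition into $\wt{\mc{F}}$-leaves is closed as a relation on $L_0$) from the hypotheses of a merely \emph{continuous} foliation with global product structure. This requires care because a priori continuity of a foliation only gives local transverse continuity of plaques, whereas $\Pi$ is defined by following $\wt{\mc{F}}$-leaves globally across $L_0$; one must use the global product structure to propagate the local statement. I would argue as follows: the product structure gives a homeomorphism $L_0 \cong \wt{\mc{F}}(x) \times \mc{W}_0$ (via $p \mapsto (\wt{\mc{F}}(p)\cap \mc{W}_0\text{-leaf identified}, \ \wt{\mc{F}}(p) \cap \mc{W}_0)$ — more precisely $p \mapsto (\mc{W}^{u,L}_i(p) \cap \wt{\mc{F}}(x),\ \wt{\mc{F}}(p)\cap \mc{W}_0)$), under which $\wt{\mc{F}}$-leaves are exactly the sets $\wt{\mc{F}}(x) \times \{w\}$. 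Continuity of $\Pi$ is then just continuity of the second coordinate projection of this homeomorphism, which holds by construction. The only remaining point is to verify that this coordinate map is indeed continuous from the definition of global product structure — since $\mc{F}(x)\cap\mc{G}(y)$ is a single continuously-varying point for continuous foliations $\mc{F}, \mc{G}$ with global product structure (this is essentially the content of the product-structure hypothesis, and is how such structures are used throughout, e.g.\ in the construction of holonomies in Subsection \ref{subsec:foliations}). Granting that, the limit argument closes immediately.
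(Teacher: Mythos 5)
There is a genuine gap, and it is exactly at the step where you say "the question is entirely within a single leaf $L_0:=S^u_i x$." Your preliminary observation that $y\in S_i^u x$ (via $y_nx_n^{-1}\in S_i^u$ and closedness of $S_i^u$) is fine, but it does not reduce the problem to $L_0$: the hypothesis $y_n\in\wt{\mc{F}}(x_n)$ concerns points $x_n,y_n$ that in general lie on strong leaves $S_i^u x_n$ \emph{different} from $L_0$, since $x_n\to x$ only in the ambient topology of $N$. Your map $\Pi\colon L_0\to\mc{W}_0$, $p\mapsto\wt{\mc{F}}(p)\cap\mc{W}_0$, is therefore not defined at $x_n$ or $y_n$, so the identity "$\Pi(y_n)=\Pi(x_n)$" is meaningless and there is nothing to pass to the limit in. The continuity you actually need is \emph{transverse} to the $\mc{S}_i^{u,L}$ foliation — a statement about how the $\wt{\mc{F}}$-plaques inside nearby strong leaves converge to those inside $L_0$ — and the global product structure subordinate to a single strong leaf, which is all you invoke, gives no information about this. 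This is not a cosmetic issue: in the application (Proposition \ref{prop:isometric_foliations}) the approximating points $b_i\gamma_i^{-1}$ lie on strong leaves $S_i^u\gamma_i^{-1}$ that differ from the leaf of the limit $ws$, so a leafwise closedness statement would be useless there.

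The paper's proof handles precisely this point by making $\Pi$ a single globally defined, globally continuous map. One picks a transversal $\tau$ to the $\mc{S}_i^{u,L}$ foliation of $N$ meeting every strong leaf in exactly one point (possible because of the algebraic, right-coset structure of the leaves in the simply connected group $N$); for a point $q$ in the strong leaf through $p\in\tau$, one projects $q$ along $\wt{\mc{F}}$ onto the weak leaf $W_i^u p$ using the subordinate product structure, and then identifies $W_i^u p$ with the fixed subgroup $W_i^u$ by $n\mapsto np^{-1}$. The resulting map $\Pi\colon N\to W_i^u$ is continuous (continuity of $\mc{F}$ together with the product structure), points of a common strong leaf lie on the same $\wt{\mc{F}}$-leaf if and only if they have equal $\Pi$-values, and now $\Pi(x_n)=\Pi(y_n)$ does make sense and passes to the limit, giving $\Pi(x)=\Pi(y)$ and hence $y\in\wt{\mc{F}}(x)$ (using your correct observation that $x$ and $y$ share a strong leaf). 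If you want to salvage your write-up, you must replace your leaf-restricted projection with such a globally defined one; the uniform identification of all weak leaves with $W_i^u$ via right translation is the ingredient your version is missing.
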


\begin{proof}
We work in the universal cover. Suppose we have such a pair of sequences. Pick a transversal $\tau$ to the $\mc{S}_i^{u,L}$ foliation of $N$. Note that the algebraic structure of the $\mc{S}_i^{u,L}$ foliation allows us to ensure that any leaf of $\mc{S}_i^{u,L}$ intersects $\tau$ at exactly one point.
We define a map $\Pi\colon \bigsqcup_{p\in \tau} \mc{W}_i^{u,L}(t)\to W_i^u$ as follows. First, using the subordinate product structure project along $\wt{\mc{F}}$ onto the $\mc{W}_i^{u,L}$ leaf containing $p$. Since $\mc{W}_i^{u,L}=W_i^up$, we then compose with the identification of $W_i^{u}p$ with $W_i^u$ via $n\mapsto np^{-1}$. Note that $\Pi$ is continuous due to the continuity of $\mc{F}$ and the subordinate product structure. Observe that if $x\in \mc{S}_i^{u,L}(y)$, then $\wt{\mc{F}}(x)=\wt{\mc{F}}(y)$ if and only if $\Pi(x)=\Pi(y)$ due to the product structure.  To conclude, note that $\Pi(x_n)=\Pi(y_n)$, and so  by continuity we have $\Pi(x)=\Pi(y)$.
\end{proof}

The contradiction obtained in the following proof is the same contradiction as obtained in \autocite[p. 851]{gogolev2011local}.

\begin{proof}[Proof of Proposition \ref{prop:isometric_foliations} ]
We proceed by induction and show that $\mc{F}$ is invariant under left translation by elements of the subgroups $S^u_{\dim E^u-j}$. We begin with $j=0$, and increase $j$ until we reach $j=\dim E^u-(i+1)$.

Suppose we know the result for $j$; we show it for $j+1$.
By continuity of $\mc{F}$ and the density of periodic points of $L$, it suffices to prove that $\mc{F}(x)=\mc{S}_i^{u,L}(x)$ at each periodic point $x$. For any periodic point, we may pass to a power of the dynamics so that the point is fixed. If $p\Gamma$ is a fixed point of $L$ then consider the cover $\pi\circ R_{p}\colon N\to N/\Gamma$ which sends $x\mapsto xp\Gamma$. Note that in this cover $e$ is in the fiber over $p\Gamma$. As $L(p\Gamma)=p\Gamma$, $L\colon N\to N$ is a lift of $L\colon N/\Gamma \to N/\Gamma$ with respect to this cover. These two observations show that we have reduced to the case of the periodic point $p\Gamma$ being equal to the $e\Gamma$. Using the lift to this particular cover, it now suffices to show that $\wt{\mc{F}}(e)$ is equal to  $\mc{S}_{i+1}^{u,L}(e)$.

Suppose that $s\in S^u_{\dim E^u-(j+1)}$. If $s\in \wt{\mc{F}}(e)$, then we are done. For the sake of contradiction, suppose not. 
By Proposition \ref{prop:isometry_description}, which characterizes the isometries between leaves of the $\mc{W}_i^u$ foliation, there exists $w\in W_i^u$ such that 
\begin{equation}\label{eq:what_holonomy_is}
H^{\wt{\mc{F}}}_{e,s}(x)=x w s.
\end{equation}

By Proposition \ref{prop:alg_condition}, the assumption of irreducibility implies  that
$S_i^u\Gamma\subseteq \overline{W_i^u\Gamma}$ in $N/\Gamma$. Thus we may 
fix a sequence $b_i\in W_i^u$ and $\gamma_i\in \Gamma$ such that $b_i\gamma_i^{-1}\to ws$.

Now, we have that $b_iw s\in \wt{\mc{F}}(b_i)$ by equation (\ref{eq:what_holonomy_is}). Thus 
\[
b_i ws\in \wt{\mc{F}}(b_i).
\]
Note that by the definition of the commutator\footnote{We define the commutator by $[g,h]=ghg^{-1}h^{-1}$.},
\[
b_i ws =[b_i,ws]ws b_i,
\]
and that $[b_i,ws]\in S^u_{\dim E^u-j}$, by the Carnot grading on $N^u$.  Since $\wt{\mc{F}}$ is $S^u_{\dim E^u-j}$-invariant under left multiplication, we have
\[
ws b_i \in \wt{\mc{F}}(b_i).
\]
Consequently by right-invariance of $\wt{\mc{F}}$ under the action of $\Gamma$,
\[
ws b_i\gamma_i^{-1}\in \wt{\mc{F}}(b_i\gamma_i^{-1}).
\]
As $b_i\gamma_i^{-1}\to ws$, and left multiplication is continuous, we find that 
\[
ws b_i\gamma_i^{-1}\to (ws)^2.
\]
So, by the Lemma \ref{lem:leaf_convergence}, we see that 
\[
(ws)^2\in \wt{\mc{F}}(ws)=\wt{\mc{F}}(e).
\]

By once again using the Carnot structure and commutators we see that $wsws=s'w^2s^2$, for some $s'\in S^u_{\dim E^u-j}$. So, again using invariance under left multiplication, we see that $w^2s^2\in \wt{\mc{F}}(e)$. By repeating the argument from the point where we chose the sequence $b_i$, we obtain that $w^{2^m}s^{2^m}$ is in $\wt{\mc{F}}(e)$ for all $m\ge 0$.

By considering the weak and strong distances, we show that this leads to a contradiction.
By Lemma \ref{lem:distance_growth}, there exist $C,D>0$ such that
\[
d_{\mc{S}_i}(e,w^{2^m}s^{2^m})\le 2^mC
\]
and
\[
d_{\mc{W}_i}(e,w^{2^m}s^{2^m})=2^mD.
\]
We will obtain a contradiction by applying $L^{-1}$.
Fix some small $\epsilon>0$ and let $c_{\epsilon}(m)$ be the minimum $j\ge 0$ such that $\epsilon^2<d_{\mc{S}_i}(e,L^{-j}(w^{2^m}s^{2^m}))<\epsilon$. 
We now estimate the weak and strong distances of the points $L^{-c_{\epsilon}(m)}(w^ms^m)$ from $e$. By Claim \ref{lem:horizontal_distance_growth},  we see that the weak distance is contracted at a rate of exactly $\abs{\lambda_i^u}$. The strong distance is contracted by a rate of at least $\abs{\lambda^u_{i+1}}$, as the norm of the differential of $L$ restricted to $S^u_{i+1}$ is bounded below by $\abs{\lambda_{i+1}^u}$. Consequently, we see that 
\[
\abs{\lambda_{i+1}^u}^{c_{\epsilon}(m)}\le 2^mC/\epsilon^2.
\]
Consequently, for each $m$, by Claim \ref{lem:horizontal_distance_growth} and the inequality,

\begin{align*}
d_{\mc{W}_i}(L^{-c_{\epsilon}(m)}(e),L^{-c_\epsilon(m)}(w^ms^m))&=\abs{\lambda^u_{i}}^{-c_{\epsilon(m)}}2^mD\\
&\ge \abs{\lambda_i^u}^{-c_{\epsilon}(m)}\abs{\lambda_{i+1}^u}^{c_{\epsilon}(m)}\frac{D}{C}\epsilon^{2}.
\end{align*}
Observe that as $m\to \infty$, that $c_{\epsilon}(m)\to \infty$ as well, so this lower bound is going to $\infty$. This is impossible however: we will show that it violates the continuity of the $\wt{\mc{F}}(e)$ foliation.

We claim that the set
\[
K=\{x\in \wt{\mc{F}}(e)\mid d_{\mc{S}_i}(e,x)\le \epsilon\}
\]
is compact. To see this, note that the $\wt{\mc{F}}(e)$ foliation and the $\mc{S}_{i+1}^{u,L}$ foliation both have subordinate global product structure with the $\mc{W}_i^{u,L}$ foliation to the $\mc{S}_i^{u,L}$. Consequently, there is a well-defined homeomorphism $\Pi\colon S_{i+1}^{u,L}\to \wt{\mc{F}}(e)$ given by $s\mapsto W_{i}^{u,L}s\cap \wt{\mc{F}}(e)$. Note that $\Pi(s)=ws$ for some $w\in W_i^{u}$, so, by Lemma \ref{lem:vh_distance_invariance}, $\Pi$ preserves strong distances from $e$. Consequently, 
\[
K=\Pi(\{x\in S_{i+1}^{u}\mid d_{\mc{S}_i}(e, x)\le \epsilon\}),
\]
is compact. Because weak distance varies continuously and $K$ is compact, the function $x\mapsto d_{\mc{W}_i}(e,x)$ is bounded on $K$. But this contradicts the existence of the points $L^{-c(m)}(s^mw^m)\in \wt{\mc{F}}(e)$ that have strong distance from $e$ of size at most $\epsilon$, but for large $m$ have unbounded weak distance from $e$. Having reached this contradiction, we see that $s\in \wt{\mc{F}}(e)$ and we are done.
\end{proof}

\section{Proof of main rigidity theorem}

In this section, we prove the sufficiency of the condition in Theorem \ref{thm:rigidity}.

\begin{thm}\label{thm:sufficiency}
Suppose that $L$ is an Anosov automorphism of a nilmanifold $N/\Gamma$ that is irreducible and has sorted simple Lyapunov spectrum. Then there exists a $C^1$ neighborhood $\mc{U}$ of $L$ in $\Diff^{1+}_{vol}(N/\Gamma)$ such that if $f\in \mc{U}$ and the Lyapunov spectrum of $f$ with respect to volume coincides with that of $L$ and $h_f$ is a conjugacy between $f$ and $L$, then $h_f$ is $C^{1+}$.
\end{thm}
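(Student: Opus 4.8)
The plan is to run the inductive argument sketched in the introduction. First I would fix the neighborhood $\mc{U}$: start with a neighborhood small enough that Theorem \ref{thm:mather_spectrum_perturbation} applies, so that for $f \in \mc{U}$ the unstable bundle $E^{u,f}$ splits into one-dimensional H\"older subbundles $E_i^{u,f}$ with expansion rings close to those of $L$, and the strong unstable foliations $\mc{S}_i^{u,f}$ exist with uniformly $C^{1+}$ leaves by Proposition \ref{prop:strong_is_uniformly_immersed}. Then shrink $\mc{U}$ further to be inside the neighborhood provided by Proposition \ref{prop:slowtoslow} (for every $i$ simultaneously). Structural stability provides a conjugacy $h_f$, and by Proposition \ref{prop:same_regularity} it suffices to prove that some conjugacy is $C^{1+}$. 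The base case of the induction is $i=1$: by Proposition \ref{prop:unstabletounstable}, $h_f$ intertwines $\mc{S}_1^{u,f} = E^{u,f}$ with $\mc{S}_1^{u,L}$.

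Next I would carry out the inductive step: assuming $h_f$ intertwines $\mc{S}_i^{u,f}$ with $\mc{S}_i^{u,L}$, show it intertwines $\mc{S}_{i+1}^{u,f}$ with $\mc{S}_{i+1}^{u,L}$. Here Proposition \ref{prop:slowtoslow} does the heavy lifting: it gives that $E_i^{u,f}$ uniquely integrates to a foliation $\mc{W}_i^{u,f}$ with uniformly $C^{1+}$ leaves, that $h_f(\mc{W}_i^{u,f}(x)) = \mc{W}_i^{u,L}(h_f(x))$, that $\mc{W}_i^{u,f}$ and $\mc{S}_{i+1}^{u,f}$ have subordinate product structure to $\mc{S}_i^{u,f}$, and that the $\mc{S}_{i+1}^{u,f}$-holonomies between $\mc{W}_i^{u,f}$ leaves are uniformly $C^{1+}$. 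Define $\mc{F} \coloneqq h_f^{-1}(\mc{S}_{i+1}^{u,L})$; this is an $f$-invariant continuous foliation subordinate to $\mc{S}_i^{u,f}$ having subordinate product structure with $\mc{W}_i^{u,f}$. Pushing $\mc{F}$ forward by $h_f$ gives a foliation $\mc{G}$ of $N/\Gamma$ subordinate to $\mc{S}_i^{u,L}$, with subordinate product structure with $\mc{W}_i^{u,L}$. The key point — which I would invoke from Lemma \ref{lem:F_is_isometric} (referenced in the introduction, "these holonomies are isometries") — is that the $\mc{G}$-holonomies between $\mc{W}_i^{u,L}$ leaves, with their right-invariant metrics, are isometries; this follows by transporting the $C^{1+}$ isometric structure of the $h_f$-conjugacy along $\mc{W}_i$ leaves together with Claim \ref{lem:horizontal_distance_growth}, which pins down the exact expansion factor $|\lambda_i^u|$ so that the conjugation must be an affine, hence isometric, map on each one-dimensional leaf. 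Once that is established, Proposition \ref{prop:isometric_foliations} applies and gives $\mc{G} = \mc{S}_{i+1}^{u,L}$, i.e. $h_f$ intertwines $\mc{S}_{i+1}^{u,f}$ with $\mc{S}_{i+1}^{u,L}$, closing the induction. At the top of the induction we obtain that $h_f$ intertwines the one-dimensional foliations $\mc{W}_i^{u,f}$ and $\mc{W}_i^{u,L}$ for every $i$, and similarly (applying the whole argument to $f^{-1}$ and $L^{-1}$, using sortedness of the stable spectrum) the stable analogues.

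Finally I would upgrade "intertwines the one-dimensional weak foliations" to "$C^{1+}$ along those foliations." Here I invoke Lemma \ref{lem:regularitycriterion}: $\mc{W}_i^{u,f}$ is a one-dimensional $C^{1+}$ expanding foliation for $f$, $\mc{W}_i^{u,L}$ is one for $L$, and $h_f$ intertwines them. Using Proposition \ref{prop:periodic_approximation} — the Lyapunov spectra of all ergodic invariant measures of $f$ coincide with those of $L$ — together with the standard fact that a volume-preserving $C^{1+}$ Anosov diffeomorphism with matching Lyapunov data has volume as a Gibbs expanding state along each $\mc{W}_i^{u,f}$ (this is where Lemma \ref{lem:pesin} and the Pesin entropy formula enter: the entropy/exponent balance forces the conditional measures of volume along $\mc{W}_i^{u,f}$ to be absolutely continuous, hence Gibbs expanding), one checks that $h_f$ pushes this Gibbs expanding state to the Gibbs expanding state of $L$ along $\mc{W}_i^{u,L}$ (which is the analytically explicit one, since $L$ is algebraic). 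Lemma \ref{lem:regularitycriterion} then yields that $h_f$ restricted to each $\mc{W}_i^{u,f}$ leaf is uniformly $C^{1+}$. Doing this for all unstable indices $i$ and, via $f^{-1}$, all stable indices, one concludes that $h_f$ is uniformly $C^{1+}$ along a full framing of transverse one-dimensional foliations whose union of tangent spaces is all of $T(N/\Gamma)$; a Journ\'e-type regularity lemma (differentiability along a pair, then a web, of transverse H\"older foliations) then gives that $h_f$ is globally $C^{1+}$.

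I expect the main obstacle to be the inductive step — specifically, verifying the hypotheses of Proposition \ref{prop:isometric_foliations} for $\mc{G}$, i.e. proving that the pushed-forward holonomies are genuinely isometries of the right-invariant leaf metrics (not merely bi-Lipschitz or quasi-isometric). This requires combining the $C^{1+}$ smoothness of $h_f$ along $\mc{W}_i^{u,f}$ leaves from the previous stage of a separate induction with the rigidity of the conjugation on $\R$ (forcing the derivative to be locally constant, then globally $1$ by matching the expansion rate $|\lambda_i^u|$ via Claim \ref{lem:horizontal_distance_growth}), and it is exactly the place where the coarse-geometry machinery of Section \ref{sec:coarse_geometry} and the sortedness hypothesis are indispensable.
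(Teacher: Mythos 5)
Your overall architecture matches the paper's: the same neighborhood coming from Proposition \ref{prop:slowtoslow}, the same induction on strong foliations starting from Proposition \ref{prop:unstabletounstable}, the same use of Lemma \ref{lem:F_is_isometric} and Proposition \ref{prop:isometric_foliations} to close the induction, and the same Journ\'e argument at the end. But two points need repair. First, your inductive step is circular as written: you set $\mc{F}=h_f^{-1}(\mc{S}_{i+1}^{u,L})$ and then push forward by $h_f$, so $\mc{G}=h_f(\mc{F})$ is tautologically $\mc{S}_{i+1}^{u,L}$, and concluding ``$\mc{G}=\mc{S}_{i+1}^{u,L}$'' says nothing about $\mc{S}_{i+1}^{u,f}$. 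The foliation to which Proposition \ref{prop:isometric_foliations} must be applied is $h_f(\mc{S}_{i+1}^{u,f})$; the content of the step is precisely that this pushed-forward foliation coincides with $\mc{S}_{i+1}^{u,L}$.

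Second, and more substantively, the $C^{1+}$ regularity of $h_f$ along $\mc{W}_i^{u,f}$ cannot be postponed to the end of the proof: it is needed at stage $i$ of the induction, because the uniformly $C^1$ holonomy hypothesis of Lemma \ref{lem:F_is_isometric} is verified by writing the holonomy of $h_f(\mc{S}_{i+1}^{u,f})$ between $\mc{W}_i^{u,L}$ leaves as $h_f\circ H^{\mc{S}_{i+1}^{u,f}}\circ h_f^{-1}$, using the third conclusion of Proposition \ref{prop:slowtoslow} for the middle factor and the regularity of $h_f$ along weak leaves for the outer factors; your plan half-acknowledges this (``from the previous stage of a separate induction'') but as structured the dependency runs backwards. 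Moreover, the justification you offer for that regularity is too thin: it is not a ``standard fact'' that matching the volume Lyapunov spectrum forces volume to be a Gibbs expanding state along the intermediate one-dimensional foliations $\mc{W}_i^{u,f}$ --- that is exactly the nontrivial point. The paper obtains it by first proving $(h_f)_*\vol=\vol$ (volume is the unique measure of maximal entropy for $L$; the matching spectrum together with the Pesin formula shows $h_{\vol}(f)$ equals the topological entropy, so volume is also the MME for $f$, and a conjugacy intertwines MMEs), and then computing the conditional entropy along $\mc{W}_i^{u,f}$ by transporting an increasing partition subordinate to $\mc{W}_i^{u,L}$ through $h_f^{-1}$, which yields $H_{\vol}(f^{-1}h_f^{-1}\xi\mid h_f^{-1}\xi)=\ln\abs{\lambda_i^u}=\int\log\|Df\mid_{\mc{W}_i^{u,f}}\|\,d\vol$, so that Lemma \ref{lem:pesin} gives absolute continuity and Lemma \ref{lem:regularitycriterion} applies. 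Your appeal to Proposition \ref{prop:periodic_approximation} does not help here, since the hypothesis of Theorem \ref{thm:sufficiency} is equality of volume spectra, not of periodic data.
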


We begin with a lemma.

\begin{lem}\label{lem:F_is_isometric}
Let $N/\Gamma$ and $L$ be given as in Theorem \ref{thm:sufficiency}. Suppose that $\mc{F}$ is a continuous, $L$-invariant foliation subordinate to the $\mc{S}^{u,L}_{i}$ foliation, that $\mc{F}$ and $\mc{W}_i^{u,L}$ have subordinate product structure, and that $\mc{F}$ has uniformly $C^{1}$ holonomies between the $\mc{W}_i^{u,L}$ leaves. Then the holonomies of $\mc{F}$ between the leaves of $\mc{W}_i^{u,L}$ are isometries in the induced metric on the $\mc{W}_i^{u,L}$ leaves.
\end{lem}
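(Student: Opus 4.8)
The plan is to show that every $\mc{F}$-holonomy has leafwise derivative identically equal to $1$; since the leaves of $\mc{W}_i^{u,L}$ are one-dimensional, connected and complete, a bijective holonomy with unit leafwise derivative is an isometry. Introduce the \emph{holonomy Jacobian}, defined on $Z \coloneqq \{(x,x') \in N/\Gamma \times N/\Gamma : x' \in \mc{F}(x)\}$. Because $\mc{F}$ is subordinate to $\mc{S}_i^{u,L}$ we have $x' \in \mc{F}(x) \subseteq \mc{S}_i^{u,L}(x)$, so $x$ and $x'$ lie in a common $\mc{S}_i^{u,L}$-leaf, and the subordinate product structure of $\mc{F}$ and $\mc{W}_i^{u,L}$ furnishes the holonomy $H^{\mc{F}}_{x,x'} \colon \mc{W}_i^{u,L}(x)\to\mc{W}_i^{u,L}(x')$, which satisfies $H^{\mc{F}}_{x,x'}(x) = x'$. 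Set $J(x,x') = \|D_x H^{\mc{F}}_{x,x'}\|$, the leafwise derivative at $x$ measured in the induced metrics. The hypothesis that $\mc{F}$ has uniformly $C^1$ holonomies between the $\mc{W}_i^{u,L}$-leaves makes $J$ continuous on $Z$, and $J(x,x) = 1$ since $H^{\mc{F}}_{x,x}$ is the identity; moreover, since $N/\Gamma$ is compact, the holonomies appearing near the diagonal are holonomies between nearby $\mc{W}_i^{u,L}$-leaves inside a single $\mc{S}_i^{u,L}$-leaf and are therefore $C^1$-close to the identity uniformly, so $J(y,y')\to 1$ uniformly as $d(y,y')\to 0$ with $(y,y')\in Z$.

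The crucial point is that $J$ is \emph{exactly} $L$-invariant. Since $\mc{F}$ and $\mc{W}_i^{u,L}$ are $L$-invariant, $H^{\mc{F}}_{Lx, Lx'} = L \circ H^{\mc{F}}_{x,x'} \circ L^{-1}$; and by Proposition \ref{prop:dynamical_foliations_for_nilmanifold_automorphisms}(4) the map $L$ multiplies the induced metric on every $\mc{W}_i^{u,L}$-leaf by the single constant factor $|\lambda_i^u|$. Differentiating $L \circ H^{\mc{F}}_{x,x'} \circ L^{-1}$ at $Lx$ by the chain rule, the factor $|\lambda_i^u|$ contributed by $DL$ at $H^{\mc{F}}_{x,x'}(x) = x'$ cancels the factor $|\lambda_i^u|^{-1}$ contributed by $DL^{-1}$ at $Lx$, leaving $J(Lx, Lx') = J(x,x')$. (It is essential here that $L$ acts as a homothety of one and the same ratio on all weak leaves, which is exactly what Proposition \ref{prop:dynamical_foliations_for_nilmanifold_automorphisms} provides under the hypothesis of simple sorted spectrum.)

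These two facts finish the proof. Fix $(x_0,x_0') \in Z$. The foliation $\mc{S}_i^{u,L}$ is a strong unstable foliation, hence uniformly contracted by $L^{-1}$, so $d(L^{-m}x_0, L^{-m}x_0') \to 0$ as $m \to \infty$; and $(L^{-m}x_0, L^{-m}x_0')$ remains in $Z$ because $\mc{F}$ is $L$-invariant. The near-diagonal behaviour of $J$ then gives $J(L^{-m}x_0, L^{-m}x_0') \to 1$, while $L$-invariance gives $J(L^{-m}x_0, L^{-m}x_0') = J(x_0,x_0')$ for every $m$; hence $J(x_0,x_0') = 1$. Since $(x_0,x_0')$ was arbitrary, every $\mc{F}$-holonomy between $\mc{W}_i^{u,L}$-leaves has unit leafwise derivative, and, being a $C^1$ bijection between complete one-dimensional leaves, is an isometry.

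The main obstacle is bookkeeping rather than conceptual: unwinding the definition of uniformly $C^1$ holonomy from Subsection \ref{subsec:foliations} to obtain the continuity and uniform near-diagonal behaviour of the leafwise derivative $J$, and checking that the chain-rule cancellation makes $J$ genuinely $L$-invariant and not merely asymptotically so. Closedness of the graph $Z$, if needed, follows from the product structure exactly as in Lemma \ref{lem:leaf_convergence}. Note that irreducibility plays no role in this lemma; it enters only in Proposition \ref{prop:isometric_foliations}, which upgrades ``isometric holonomies'' to the identification $\mc{F} = \mc{S}_{i+1}^{u,L}$.
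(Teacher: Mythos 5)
Your proposal is correct and follows essentially the same argument as the paper: conjugating the holonomy by $L^{\pm n}$, using that $\|DL\|$ is the constant $\abs{\lambda_i^u}$ on the $\mc{W}_i^{u,L}$ leaves so the derivative norms cancel, contracting the base pair together under backward iteration inside the strong leaf, and invoking the uniform $C^1$ hypothesis (plus compactness) to force the limiting derivative to be that of an identity holonomy, hence $1$. Your "holonomy Jacobian" $J$ is just a bookkeeping repackaging of the paper's telescoped identity $\|DH^{\mc{F}}_{a,b}\|=\|DL^{n}\|\,\|DH^{\mc{F}}_{a_n,b_n}\|\,\|DL^{-n}\|$, so there is nothing substantively different to compare.
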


\begin{proof}
Write $H^\mc{F}_{a,b}$ for the holonomy of $\mc{F}$ between $\mc{W}_i^{u,L}(a)$ and $\mc{W}_i^{u,L}(b)$. As $\mc{F}$ is $L$-invariant, this holonomy satisfies 
\[
H_{a,b}^{\mc{F}}=L^{n}\mid_{\mc{W}_i^{u,L}(b_n)}\circ H_{a_n,b_n}^{\mc{F}}\circ L^{-n}\mid_{\mc{W}_i^{u,L}(a)},
\]
where $a_n=L^{-n}a, b_n=L^{-n}b$.

It suffices to show that that $\|DH^{\mc{F}}_{a,b}\|=1$, as we are working with maps of $1$-manifolds. The differentials are conformal, so the norm of a composition is the product of norms. In particular,
\[
\|DH^{\mc{F}}_{a,b}\|=\|DL^{n}\|\|DH^{\mc{F}}_{a_n,b_n}\|\|DL^{-n}\|.
\]

As we are regarding $\mc{W}_i^u$ with its right-invariant metric, the norm of $DL$ is constant, so the first and third terms above multiply to $1$. Thus we need only show that $\|DH^{\mc{F}}_{a_n,b_n}\|\to 1$ as $n\to \infty$. Pass to a subsequence so that $a_n,b_n\to c$ in $N/\Gamma$. Then as the holonomies are uniformly $C^{1}$, we see that $\|DH_{a_n,b_n}^{\mc{F}}\|$ converges to $\|DH_{c,c}^{\mc{F}}\|=1$ because $H_{c,c}^{\mc{F}}$ is the identity. Thus $\|DH_{a,b}^{\mc{F}}\|=1$. The result follows.
\end{proof}

\begin{rem}
If the foliation $\mc{W}_i^u$ had higher dimensional leaves and we assumed that $DL$ is conformal on $\mc{W}_i^u$, then the proof of Lemma \ref{lem:F_is_isometric} still works and we get the same conclusion.
\end{rem}

We now proceed to the proof of the theorem.

\begin{proof}[Proof of Theorem \ref{thm:sufficiency}]
By Proposition \ref{prop:slowtoslow}, there exists a $C^1$ neighborhood $\mc{U}$ of $L$ in $\Diff^{1+}_{\vol}(M)$ such that if $f\in \mc{U}$ and $h_f$ is a conjugacy between $f$ and $L$, and if, for some $i$, $h_f(\mc{S}_i^{u,f})=\mc{S}_i^{u,L}$, then there exists a continuous foliation $\mc{W}_{i}^{u,f}$ satisfying the properties mentioned in the conclusion of Proposition \ref{prop:slowtoslow}.

Suppose that $f\in \mc{U}$ and that the volume Lyapunov spectrum of $f$ coincides with the volume Lyapunov spectrum of $L$. We will prove the claim about $f$ by induction.
However, before proceeding to the induction we make the following observation. 
\begin{lem}\label{lem:h_is_C1}
If $h_f(\mc{W}_i^{u,f})=\mc{W}_i^{u,L}$, then $h_f$ is uniformly $C^{1+}$ along $\mc{W}_i^{u,f}$.
\end{lem}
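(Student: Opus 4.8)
The plan is to invoke the regularity criterion, Lemma \ref{lem:regularitycriterion}, with the one-dimensional expanding foliation $\mc{F}=\mc{W}_i^{u,f}$ of $f$, the expanding foliation $\mc{G}=\mc{W}_i^{u,L}$ of $L$, the conjugacy $h=h_f$, and the measure $\mu=\vol$. Both foliations have uniformly $C^{1+}$ leaves (for $\mc{W}_i^{u,f}$ by Proposition \ref{prop:slowtoslow}, for $\mc{W}_i^{u,L}$ because it is analytic by Proposition \ref{prop:dynamical_foliations_for_nilmanifold_automorphisms}), and each is expanding since $E_i^{u,f}$ and $E_{\lambda_i^u}$ lie in the respective unstable bundles. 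By hypothesis $h_f$ intertwines $\mc{F}$ and $\mc{G}$, and since $\supp\vol=N/\Gamma$, conclusion (2) of Lemma \ref{lem:regularitycriterion} is precisely the assertion that $h_f$ is uniformly $C^{1+}$ along $\mc{W}_i^{u,f}$. So it remains to verify the standing hypothesis of that lemma, that $\vol$ is a Gibbs expanding state of $f$ along $\mc{W}_i^{u,f}$, and conclusion (1), that $\nu\coloneqq(h_f)_*\vol$ is a Gibbs expanding state of $L$ along $\mc{W}_i^{u,L}$.

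For the latter I would first identify $\nu$ with Haar measure by an entropy computation. Since $f$ is a $C^{1+}$ volume-preserving Anosov diffeomorphism, Pesin's entropy formula gives $h_{\vol}(f)=\sum_{\chi_j>0}\chi_j$, the sum of the positive volume Lyapunov exponents of $f$. These coincide with those of $L$, and for the algebraic automorphism $L$ one has $h_{\mathrm{top}}(L)=\sum_{\chi_j^L>0}\chi_j^L$; as topological entropy is a conjugacy invariant, $h_{\vol}(f)=h_{\mathrm{top}}(L)=h_{\mathrm{top}}(f)$, so $\vol$ is a measure of maximal entropy for the (topologically mixing) Anosov map $f$, hence its unique one. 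Transporting by $h_f$, the measure $\nu$ is $L$-invariant with $h_\nu(L)=h_{\vol}(f)=h_{\mathrm{top}}(L)$, so $\nu$ is the unique measure of maximal entropy of $L$; since that measure is Haar, $\nu=\vol$. Finally $\vol$ is Haar measure and $\mc{W}_i^{u,L}$ is the foliation by right cosets of the one-parameter subgroup $W_i^{u,L}$, so by Fubini the disintegration of $\vol$ along these smooth plaques is equivalent to Lebesgue; thus $\vol$ is a Gibbs expanding state of $L$ along $\mc{W}_i^{u,L}$.

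The remaining point, that $\vol$ is a Gibbs expanding state of $f$ along $\mc{W}_i^{u,f}$, is the main obstacle and the place where the $C^{1+}$-holonomy results of Proposition \ref{prop:slowtoslow} are used. Since $f$ is $C^{1+}$, volume-preserving and Anosov, $\vol$ is its SRB measure, so its disintegration along the full unstable foliation $\mc{S}_1^{u,f}$ is equivalent to the leafwise Riemannian volume (with the positive, uniformly H\"older density of the type computed in the proof of Lemma \ref{lem:regularitycriterion}). One then descends inside a single unstable leaf: the strong unstable sub-foliation $\mc{S}_i^{u,f}$ is absolutely continuous within $\mc{S}_1^{u,f}$, so the conditionals of $\vol$ along $\mc{S}_i^{u,f}$ are again equivalent to Lebesgue; and inside each $\mc{S}_i^{u,f}$ leaf, Proposition \ref{prop:slowtoslow} furnishes the subordinate product structure of $\mc{W}_i^{u,f}$ with $\mc{S}_{i+1}^{u,f}$ together with uniformly $C^{1+}$ (hence absolutely continuous) $\mc{S}_{i+1}^{u,f}$-holonomies between $\mc{W}_i^{u,f}$ leaves, so disintegrating along this product structure shows that the conditionals of $\vol$ along $\mc{W}_i^{u,f}$ are equivalent to Lebesgue. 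Chaining these absolute-continuity statements is the delicate step: because $\mc{W}_i^{u,f}$ is only a continuous (though $C^{1+}$-leaved) sub-foliation and the complementary distribution $E_1^{u,f}\oplus\cdots\oplus E_{i-1}^{u,f}$ need not be integrable, one must argue throughout in terms of conditional measures within a fixed unstable leaf rather than with global transverse holonomies. With both hypotheses of Lemma \ref{lem:regularitycriterion} now in place, conclusion (2) gives that $h_f$ is uniformly $C^{1+}$ along $\mc{W}_i^{u,f}$.
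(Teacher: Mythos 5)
Your reduction to Lemma \ref{lem:regularitycriterion} and your identification of $(h_f)_*\vol$ with $\vol$ via the unique measure of maximal entropy, followed by Fubini for the algebraic foliation $\mc{W}_i^{u,L}$, match the paper. The gap is in the remaining step, which is in fact the heart of the lemma: your argument that $\vol$ is a Gibbs expanding state of $f$ along $\mc{W}_i^{u,f}$ does not go through. Granting that the conditionals of $\vol$ along the strong leaves $\mc{S}_i^{u,f}$ are Lebesgue-equivalent, you then claim that the subordinate product structure of $\mc{W}_i^{u,f}$ with $\mc{S}_{i+1}^{u,f}$, together with the uniformly $C^{1+}$ $\mc{S}_{i+1}^{u,f}$-holonomies between $\mc{W}_i^{u,f}$ leaves from Proposition \ref{prop:slowtoslow}, forces the disintegration of leaf volume along $\mc{W}_i^{u,f}$ plaques to be Lebesgue-equivalent. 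That implication is false: a continuous foliation with uniformly smooth leaves, admitting a transverse foliation with global product structure whose holonomies between the leaves are uniformly smooth, can still have atomic conditional measures (Katok's ``Fubini foiled'' foliation of the square by uniformly analytic graphs, identified with one another by the smooth vertical holonomy, is exactly such an example). What would be needed is absolute continuity of the $\mc{W}_i^{u,f}$-holonomies between $\mc{S}_{i+1}^{u,f}$-transversals, about which Proposition \ref{prop:slowtoslow} says nothing; the weak foliation $\mc{W}_i^{u,f}$ is a ``center-like'' direction inside the unstable leaf, and for a generic volume-preserving perturbation its conditionals are singular.

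The telltale sign is that your argument for this step never uses the hypothesis that the volume Lyapunov spectrum of $f$ equals that of $L$ (you use it only to identify $(h_f)_*\vol$ with $\vol$). If the step were correct unconditionally, absolute continuity along $\mc{W}_1^{u,f}$ would hold for every volume-preserving $C^{1+}$ perturbation intertwining the unstable foliations, which contradicts what is known (and what the non-rigidity constructions in Section \ref{sec:necessity} exploit). The paper closes this gap by an entropy argument: pull back an increasing measurable partition $\xi$ subordinate to $\mc{W}_i^{u,L}$ by $h_f^{-1}$ (legitimate since $(h_f)_*\vol=\vol$), compute $H_{\vol}(f^{-1}(h_f^{-1}\xi)\mid h_f^{-1}\xi)=H_{\vol}(L^{-1}\xi\mid\xi)=\ln\abs{\lambda_i^u}$, observe that the equality of volume spectra gives $\int\ln\|Df\mid_{\mc{W}_i^{u,f}}\|\,d\vol=\ln\abs{\lambda_i^u}$ as well, and then invoke Ledrappier's criterion (Lemma \ref{lem:pesin}) to conclude that the conditionals along $\mc{W}_i^{u,f}$ are absolutely continuous; only then does Lemma \ref{lem:regularitycriterion} apply. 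You should replace your chaining-of-absolute-continuity argument with this entropy comparison (or an equivalent use of the spectral hypothesis).
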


\begin{proof}
If $h_f(\mc{W}_i^{u,f})=\mc{W}_i^{u,L}$, 
then $h_f$ intertwines the action of $f$ and $L$ on the $\mc{W}_i^{u,f}$ and $\mc{W}_i^{u,L}$ foliations. Both are expanding foliations, as elements of $\mc{U}$ have simple Mather spectrum. By the Pesin entropy formula,
\[
h_f(\vol)=\int_{N/\Gamma} \sum_{1\le i\le \dim E^u} \lambda_i^u\,d\vol,
\]
which coincides with the entropy of $L$ against volume. Volume is the measure of maximal entropy for $L$. Consequently as $f$ and $L$ have the same volume entropy, $\vol$ is also the unique measure of maximal entropy for $f$.
Thus $(h_f)_*\vol=\vol$ as $\vol$ is the unique measure of maximal entropy for $f$ and $L$ and a conjugacy intertwines the measures of maximal entropy.
We next claim that the disintegration of volume along $\mc{W}_i^{u,L}$ and $\mc{W}_i^{u,f}$ leaves is absolutely continuous. The case of $\mc{W}_i^{u,L}$ is immediate by Fubini's theorem as $\mc{W}_i^{u,L}$ is analytic. We now explain why the disintegration along $\mc{W}_i^{u,f}$ is absolutely continuous.
 First, note that if $\xi$ is an increasing measurable partition subordinate to the $\mc{W}_i^{u,L}$ foliation then $h_f^{-1}(\xi)$ is an increasing measurable partition subordinate to the $\mc{W}_i^{u,f}$ foliation as $(h_f)_*(\vol)=\vol$. Consequently, we have the following equality of conditional entropies: 
\[
H_{\vol}(f^{-1}(h_f^{-1}(\xi))\mid h_f^{-1}(\xi))=H_{\vol}(L^{-1}\xi\mid \xi)=\ln \abs{\lambda_i^{u}}.
\]
But as the volume spectrum of $f$ is the same as the volume spectrum of $L$, we see that 
\[
\int \ln \|Df\mid_{\mc{W}_i^{u,f}}\|\,d\vol=\ln \abs{\lambda_i^{u}}
\]
as well. Consequently, the hypotheses of Lemma \ref{lem:pesin} are satisfied and so the disintegration of volume along the $\mc{W}_i^{u,f}$ foliation is absolutely continuous.
Then, by Lemma \ref{lem:regularitycriterion}, we conclude that $h_f$ is uniformly $C^{1+}$ along $\mc{W}_i^{u,f}$. 
\end{proof}

We now proceed by induction to show that 
$h_f(\mc{S}_i^{f,u})=\mc{S}^{u,L}_{i}$; i.e., that $h_f$ carries strong foliations to strong foliations.
 We induct on $1\le i \le \dim E^u$ beginning with $i=1$.
 In the case that $i=1$, this is the statement that a conjugacy carries unstable manifolds to unstable manifolds, which is verified in Proposition \ref{prop:unstabletounstable}.
Suppose now that the claim holds for $i$. Then as $f\in \mc{U}$ and the induction hypothesis, we see that there exists a foliation $\mc{W}_i^{u,f}$ such that $h_f(\mc{W}_i^{u,f})=\mc{W}_i^{u,L}$ satisfying the conclusion of Proposition \ref{prop:slowtoslow}. By Lemma \ref{lem:h_is_C1}, $h_f$ is uniformly $C^{1+}$ along $\mc{W}_i^{u,f}$.

Let $\mc{F}$ denote the image of $\mc{S}_{i+1}^{u,f}$ by $h_f$. Then $\mc{F}$ is a subfoliation of $\mc{S}_i^{u,L}$, by the induction hypothesis. As $h_f(\mc{W}_i^{u,f})=\mc{W}_i^{u,L}$, $\mc{F}$ and 
$\mc{W}_i^{u,L}$ have subordinate product structure to the $\mc{S}_i^{u,L}$ foliation.
Further, we claim that the holonomy of $\mc{F}$ between $\mc{W}_i^{u,L}$ leaves is uniformly $C^{1+}$. The holonomy $H^{\mc{F}}$ is the composition $h_f\circ H^{\mc{W}^{u,f}_{i+1}}\circ h_f^{-1}$. The conjugacy $h_f$ restricted to $\mc{W}_i^{u,L}$ is uniformly $C^{1+}$ by the previous discussion.  The holonomies of the fast foliation $\mc{S}^{u,f}_{i+1}$ between leaves of the $\mc{W}_i^{u,f}$ foliation are uniformly $C^{1+}$ by Proposition \ref{prop:slowtoslow}. Thus $\mc{F}$ satisfies the hypotheses of Lemma \ref{lem:F_is_isometric} and so $\mc{F}$ has isometric holonomies between $\mc{W}_i^{u,L}$ leaves. Consequently, $\mc{F}$ satisfies the hypotheses of Proposition \ref{prop:isometric_foliations}, which implies $\mc{F}=\mc{S}^{u,L}_{i+1}$. Thus $h_f(\mc{S}^{u,f}_{i+1})=\mc{S}^{u,L}_{i+1}$ and the induction holds.

Note that at each step in the induction that we concluded that $h_f$ is uniformly $C^{1+}$ along $\mc{W}_i^{u,f}$.
This shows that for $f\in \mc{U}$ with the same volume spectrum as $L$ that the map $h_f$ is $C^{1+}$ along $\mc{W}_i^{u,f}$ for each $1\le i\le \dim E^u$. To conclude that $h_f$ is $C^{1+}$ on the full unstable manifold $\mc{S}_1^{u,f}$, we now appeal to Journ\'e's lemma:

\begin{lem}\autocite{journe1988regularity} Let $\mc{F}_s$ and $\mc{F}_u$ be two continuous transverse foliations with uniformly $C^{1+}$ leaves. If $f$ is uniformly $C^{1+}$ along the leaves of $\mc{F}_s$ and $\mc{F}_u$ then $f$ is $C^{1+}$.
\end{lem}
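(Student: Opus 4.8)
The plan, were one to prove this classical lemma of Journ\'e rather than cite it, is as follows. Since being $C^{1+\theta}$ is a local and pointwise-checkable property, it suffices to show that $f$ is $C^{1+\theta}$ on a neighborhood of an arbitrary point $p$. I would work in a $C^{\infty}$ chart centered at $p$ in which $T_p\mc{F}_s$ and $T_p\mc{F}_u$ are complementary coordinate subspaces $\R^k\times\{0\}$ and $\{0\}\times\R^m$. Because the two foliations are continuous, transverse, and have uniformly $C^{1+\theta}$ leaves, on a small box $B_s\times B_u$ every $\mc{F}_s$-leaf is the graph of a map $B_s\to\R^m$ that is uniformly $C^{1+\theta}$ and depends continuously on the leaf, and symmetrically every $\mc{F}_u$-leaf is a graph over $B_u$. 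Post-composing with a homeomorphism that is $C^{1+\theta}$ along the $\mc{F}_s$-leaves (hence preserves the hypothesis on $f$), one may additionally assume that $\mc{F}_s$ is the horizontal foliation, so that $\mc{F}_u$ is a transverse foliation whose leaves are graphs over the vertical factor.

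The analytic core is a one-variable multiscale lemma: if $g$ is continuous on an interval and there is a constant $C$ such that around every point and at every scale $\delta$ one can find a function $g_{\delta}$ with $\|g_{\delta}\|_{C^{1+\theta}}\le C$ that approximates $g$ to order $\delta^{1+\theta}$ on the $\delta$-neighborhood of that point, then $g$ is itself $C^{1+\theta}$, with norm bounded in terms of $C$. I would prove this by the standard telescoping argument: at each point the derivatives $g_{\delta}'$ form a Cauchy net (comparing consecutive dyadic scales and using the $C^{1+\theta}$ bounds), so there is a candidate derivative, and the first-order Taylor remainder of $g$ at scale $r$ is estimated by splitting at the dyadic scale nearest $r$ and summing the resulting geometric series of scale-by-scale errors.

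Granting this lemma, what remains is geometric bookkeeping. Two points lying on a common $\mc{F}_s$- or $\mc{F}_u$-leaf are already controlled by the hypothesis. Two points in general position are joined by a path that alternates finitely many times between an $\mc{F}_s$-segment and an $\mc{F}_u$-segment; transversality gives a uniform comparison between the ambient distance of the endpoints and the total length of such a zigzag. Choosing the turning points dyadically produces, at each scale $\delta$, an approximant to $f$ in each transverse direction, synthesized by composing the leafwise $C^{1+\theta}$ restrictions of $f$ with the (leafwise $C^{1+\theta}$) graph maps of the foliations. Feeding these approximants into the one-variable lemma in each coordinate direction shows that all first partial derivatives of $f$ exist and are jointly $\theta$-H\"older, so $f$ is $C^{1+\theta}$ near $p$; covering the compact manifold by finitely many such boxes upgrades the local estimates to a global one.

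The main obstacle is that the transverse holonomy of the pair of foliations is merely continuous, so there is no $C^{1}$ change of coordinates straightening both of them: the scale-by-scale approximants must be assembled purely from the data of $C^{1+\theta}$ regularity along leaves, and the real work is to check that the merely continuous transverse dependence of the leaf-graphs never spoils the H\"older estimates, all while tracking the uniformity of every constant (the number and lengths of zigzag segments, the leafwise $C^{1+\theta}$ norms of the graph maps) through the construction. This is precisely the delicate part of Journ\'e's argument, and the reason we invoke the lemma as a black box.
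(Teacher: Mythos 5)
The paper does not prove this lemma: it is Journ\'e's theorem, quoted with a citation and used as a black box in the proof of Theorem \ref{thm:sufficiency}, so there is no in-paper argument to compare yours against. Judged against Journ\'e's actual proof, your overall plan is the right one: the one-variable multiscale statement you formulate is exactly the converse-Taylor/Campanato-type characterization of $C^{1+\theta}$ that drives his argument (approximability at every point and scale $\delta$ by uniformly $C^{1+\theta}$ functions, equivalently by affine functions, with error $O(\delta^{1+\theta})$), and the approximants are indeed assembled from leafwise data at dyadic scales, with all constants tracked uniformly.

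There is, however, one step in your sketch that would fail as stated: the reduction in which you straighten $\mc{F}_s$ by a homeomorphism that is $C^{1+\theta}$ along the $\mc{F}_s$-leaves. Such a straightening map (essentially the foliation chart $\Phi(x,y)=(x,\gamma_y(x))$, where $\gamma_y$ parametrizes the leaf through $(0,y)$) is uniformly $C^{1+\theta}$ in the leaf direction but only \emph{continuous} transversally. Pre-composing $f$ with it does preserve leafwise regularity along the now-horizontal foliation, but the pulled-back $\mc{F}_u$-leaves cross the $\mc{F}_s$-plaques transversally, so their parametrizations involve the merely continuous transverse dependence of $\Phi$; in general they become only continuous curves, destroying both the hypothesis that $\mc{F}_u$ has uniformly $C^{1+\theta}$ leaves and the very meaning of ``$f$ is $C^{1+\theta}$ along $\mc{F}_u$'' for the new map. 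This is precisely why Journ\'e does not straighten either foliation: he works with both families of leaves simultaneously, building the scale-$\delta$ approximant by interpolating the values of $f$ on a grid of points obtained by intersecting an $\mc{F}_s$-leaf with nearby $\mc{F}_u$-leaves, and the quantitative interpolation estimate there is the genuinely two-variable heart of the proof. Relatedly, your final step of ``feeding the approximants into the one-variable lemma in each coordinate direction'' elides the same issue: separate leafwise regularity plus transverse continuity does not by itself yield joint regularity, which is the whole content of the theorem; your sketch correctly flags this as the delicate part but leaves it unperformed. Citing the lemma, as the paper does, is the right call; as a proof outline, yours needs the straightening step removed and the two-foliation interpolation estimate supplied.
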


We now apply Journ\'e's lemma inductively. The foliations $\mc{S}^{u,f}_{i+1}$ and $\mc{W}_i^{u,f}$ are transverse subfoliations of $\mc{S}^{u,f}_i$. So, if $h_f$ is $C^{1+}$ along both, then, by the lemma, $f$ is $C^{1+}$ along $\mc{S}^{u,f}_i$. Proceeding inductively from strongest to weakest, we see that $h_f$ is $C^{1+}$ along the full unstable manifold. Repeating the argument for the stable manifold gives the full result.
\end{proof}

\section{Necessity of irreducibility and sorted spectrum for local rigidity}\label{sec:necessity}

In this section, we establish through constructions a necessary condition for Lyapunov spectrum rigidity in the case of simple spectrum. We will frequently consider a nilmanifold $N/\Gamma$ as well as the quotient nilmanifold $N/Z(N)/\pi(\Gamma)$, which we denote by $\underline{N}/\underline{\Gamma}$. As elsewhere, $Z(N)$ denotes the center of $N$. If $L$ is an Anosov automorphism of $N/\Gamma$, we denote by $\underline{L}$ the induced map on $\underline{N}/\underline{\Gamma}$.
To show necessity, we produce perturbations of $L$ with the same periodic data that are not even Lipschitz conjugate to $L$. A volume-preserving map with the same periodic data as $L$ has the same volume spectrum as $L$ by Proposition \ref{prop:periodic_approximation}. This implies the necessity of the condition in Theorem \ref{thm:rigidity} on volume Lyapunov spectrum rigidity.
The proof of necessity proceeds by induction. The base case of the induction is the claim that if the induced automorphism $\underline{L}$ of $\underline{N}/\underline{\Gamma}$ is irreducible and has sorted spectrum but $L$ does not, then $L$ is not periodic data rigid.
The induction step shows that if an automorphism is not rigid, then a central extension of this automorphism is also not rigid. By considering iterated central extensions, we reduce to the base case.

The organization of this section is as follows. First, we give explicit constructions in the base case depending on whether the automorphism is not reducible or fails to have sorted spectrum.
The approach is an extension to nilmanifolds of the perturbative technique studied by Gogolev \autocite{gogolev2008smooth} and de la Llave \autocite{de1992smooth} in the case of the torus.
 The general idea is to shear a fast unstable direction into a slower unstable direction.
 After giving the constructions in the base case, we give a separate construction for the induction step. In the final section, we conclude.

\subsection{Non-sorted spectrum}\label{subsec:foo}

In this section we show that if $L\colon N/\Gamma\to N/\Gamma$ is an Anosov automorphism with unsorted simple spectrum and $\underline{L}\colon \underline{N}/\underline{\Gamma}\to \underline{N}/\underline{\Gamma}$ is sorted, then $L$ is not rigid.

\begin{prop}\label{prop:out_of_order_reducible_prop}
Suppose that $L\colon N/\Gamma\to N/\Gamma$ is an Anosov automorphism with simple spectrum of a nilmanifold $N/\Gamma$. Suppose that the induced action on $\mf{n}$ has an unstable eigenvector $w\in \mf{z}$, where $\mf{z}$ is the center of $\mf{n}$, and another unstable eigenvector $u\notin \mf{z}$. Write $\lambda_w$ and $\lambda_u$ for the eigenvalues of $u$ and $w$. If $\abs{\lambda_u}>\abs{\lambda_w}$, then $L$ is not Lyapunov spectrum or periodic data rigid. Indeed, there exist arbitrarily $C^{\infty}$-small perturbations of $L$ with the same periodic data so that a conjugacy between $L$ and the perturbation need not even be Lipschitz.
\end{prop}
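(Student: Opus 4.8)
The plan is to construct the perturbation by shearing the fast direction $u$ into the slow central direction $w$, using that $w$ is central so the shear is an honest automorphism-like perturbation of the time-one map, and then to show the resulting conjugacy cannot be Lipschitz by an argument on the unstable leaves. Concretely, I would first set up coordinates adapted to the eigenbasis of $dL$ on $\mf{n}$, writing a point of $N$ via $\exp$ of a sum of eigenvector components, and pick the eigenvectors $u$ (with $\abs{\lambda_u}$ large) and $w\in\mf{z}$ (with $\abs{\lambda_w}<\abs{\lambda_u}$). Because $w$ is central, the map $x\mapsto x\exp(\psi(x)w)$ for a suitable function $\psi$ descending to $N/\Gamma$ is a well-defined diffeomorphism of $N/\Gamma$; composing $L$ with such a map, where $\psi$ depends (in the $u$-coordinate, say through a $\Gamma$-periodic bump) on the point, gives a $C^\infty$-small perturbation $f$ of $L$. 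The first technical step is to choose $\psi$ so that $f$ and $L$ have the same periodic data: one arranges $\psi$ to vanish to high order (or to be cohomologous to a constant) along periodic orbits, or more cleanly, following Gogolev--de la Llave, one chooses the shear so that its effect integrates to zero around every periodic orbit; since $w$ is central and an eigenvector, the derivative cocycle of $f$ along a periodic orbit is conjugate to that of $L$ precisely when the accumulated shear is trivial, which one secures by a suitable choice of the periodic (mean-zero in the appropriate sense) function.

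The second and main step is to show $f$ is not Lipschitz conjugate to $L$. Here I would argue by contradiction: suppose $h$ is a Lipschitz conjugacy, $h\circ f=L\circ h$. By Corollary \ref{cor:QI_foliations}, $h$ restricts to a quasi-isometry between unstable leaves of $f$ and those of $L$. Now track a segment inside an unstable leaf that is tangent to the $u$-direction: under iteration of $f$, the shear pumps $u$-displacement into $w$-displacement, so a point that under $L$ would escape along the $u$-direction at rate $\abs{\lambda_u}$ instead, under $f$, acquires a $w$-component that grows — but the key point is a mismatch of \emph{escape speeds}. The escape speed in the central direction $w$ as seen through the large-scale geometry (via Theorem \ref{thm:guivarchlength}, the Guivarc'h length) is governed by $\sigma_w=\abs{\lambda_w}^{1/k}$ for the appropriate degree $k$ of $w$ in the lower central series filtration, whereas $u\notin\mf{z}$ sits at a lower filtration level so its contribution to the Guivarc'h length is measured with a different root. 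The shear therefore produces, for $f$, a family of points whose distance growth under iteration does not match the distance growth of their $h$-images under $L$ for \emph{any} fixed pair of quasi-isometry constants — one exhibits points at distance $\asymp R$ in an $f$-unstable leaf whose images are at distance growing superlinearly (or sublinearly) in $R$, contradicting the quasi-isometry bound of Corollary \ref{cor:QI_foliations}. This is essentially the ``shear a fast direction into a slower one'' mechanism, and the sortedness of $\underline{L}$ guarantees that the only obstruction lives in the center, so the construction is localized there.

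I expect the main obstacle to be the precise bookkeeping in the second step: making the distance-growth discrepancy quantitative enough to defeat an arbitrary quasi-isometry, not merely an isometry. One has to use the coarse geometry carefully — the point is that in $N$ the word/Riemannian metric along an unstable leaf is controlled by the Guivarc'h length of the logarithm, and the shear changes which eigencomponent dominates that length at scale $R$, so that the exponent of $R$ (not just the constant) in the leaf-distance-versus-time relation differs between $f$ and $L$. Once one has that exponent mismatch, the quasi-isometry of Corollary \ref{cor:QI_foliations}, which only allows affine distortion, is violated, and Lipschitz conjugacy (hence $C^{1}$, hence Lyapunov spectrum rigidity, and via Proposition \ref{prop:periodic_approximation} periodic data rigidity) is impossible. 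A secondary, more routine obstacle is verifying the perturbation is genuinely volume-preserving and $C^\infty$-close to $L$ while retaining identical periodic data; this follows the torus template of Gogolev and de la Llave but must be checked to descend correctly through $\Gamma$ using that $w$ is central.
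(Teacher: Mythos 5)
Your construction of the shear is essentially the right one, but both of your two main steps contain problems, and the second is fatal. On the periodic data: no condition on the shear function at periodic orbits is needed, and the conditions you propose are either vacuous or counterproductive. If the shear is $L_\phi(x)=L(x)+\phi(\underline{x})w$ with $\phi$ a function of the base point only, then in bundle charts $DL_\phi$ is block lower-triangular with diagonal blocks $D\underline{L}$ and $L\mid_{Z(N)}$, so at every periodic point the return derivative has the same eigenvalues as that of $L$, and by simple spectrum it is conjugate to it \emph{for every} $\phi$. Your claim that the derivative cocycle is conjugate ``precisely when the accumulated shear is trivial'' is false, and the fixes you suggest are worse: periodic orbits are dense, so requiring $\phi$ to vanish (to high order) along them forces $\phi\equiv 0$, while the twisted cohomological equation $\phi(\underline{x})+\psi(\underline{L}\,\underline{x})=\lambda_w\psi(\underline{x})$ has a continuous solution for \emph{any} $\phi$ because $\abs{\lambda_w}>1$, so there is no Liv\v{s}ic-type obstruction to kill in the first place.

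The fatal gap is the non-Lipschitz argument. The failure of Lipschitz regularity here is a small-scale phenomenon and cannot be detected by the coarse geometry of leaves: the explicit conjugacy $I_\psi$ with $\psi=\mc{J}(\phi)=\lambda_w^{-1}\sum_{k\ge 0}\lambda_w^{-k}\phi\circ\underline{L}^k$ is continuous and moves every point a uniformly bounded distance inside its fiber, so it distorts leafwise distances by at most an additive constant and is in particular a leafwise quasi-isometry. The unstable leaves of $L_\phi$ are images of those of $L$ under this bounded homeomorphism, so their Guivarc'h-length growth exponents and escape speeds are identical to those of $L$; there is no ``exponent mismatch,'' and Corollary \ref{cor:QI_foliations} holds for every conjugacy (Lipschitz or not), so no contradiction can be extracted from it. What the paper does instead is entirely local-analytic: it writes the conjugacy explicitly as above, shows (following Gogolev) that $\psi$ is Lipschitz along the foliation tangent to $u$ if and only if the two series in equation \eqref{eqn:lipschitz_condition}, whose weights are $(\lambda_u/\lambda_w)^{\pm k}$, converge distributionally and agree, and then uses the unitary representation of $N/Z$ on $L^2(\underline{N}/\underline{\Gamma})$ — the functions $\phi\circ\underline{L}^k$ land in representations scaled by $\lambda_u^k$ and become pairwise orthogonal — to exhibit arbitrarily small smooth $\phi$ for which the forward series (with exponentially growing weights, since $\abs{\lambda_u}>\abs{\lambda_w}$) cannot equal the backward one. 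Your proposal is missing this analytic core, and the coarse-geometric route you substitute for it cannot be repaired to yield the conclusion.
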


Before we begin the proof, we outline the approach. We construct a family of perturbations of $L$ obtained by shearing the base dynamics into the slow direction, $w$, in the fiber. For each element in the family, we then obtain an explicit equation for a conjugacy between that element and $L$. By analyzing the resulting equation, we then obtain a necessary and sufficient condition for the conjugacy to be Lipschitz. We then show that this necessary condition for the conjugacy to be Lipschitz does not hold on the entire family, which proves the proposition. This approach is originally due to de la Llave \autocite{de1992smooth} and was extended significantly by Gogolev in \autocite{gogolev2008smooth}. Both studied maps of the torus. The criterion for regularity appearing below is due to Gogolev, see \autocite[Prop. 1]{gogolev2008smooth}.

\begin{proof}[Proof of Proposition \ref{prop:out_of_order_reducible_prop}.]
The map $L$ descends to a map $\underline{L}$ on $\underline{N}/\underline{\Gamma}$. 
If $x\in N/\Gamma$, we write $\underline{x}$ to denote the image of $x$ in $\underline{N}/\underline{\Gamma}$. We will also use $\underline{x}$ to denote an element of $\underline{N}/\underline{\Gamma}$ even if we have not introduced an element $x$.

Recall that $Z$ acts on $N/\Gamma$ on the left and this action preserves the structure of the fibration $\mathbb{T}^n\to N/\Gamma\to (N/Z)/\pi(\Gamma)$. Consequently, for an element $z\in Z$ and a point $x\in N/\Gamma$, we may consider the point $x+z$. 
If $\phi\in C^{\infty}(\underline{N}/\underline{\Gamma},\R)$, we define a map $I_\phi\colon N/\Gamma\to N/\Gamma$ via $x\mapsto x+\phi(\underline{x}) w$.  We similarly define the perturbation $L_{\phi}\coloneqq L(x)+\phi(\underline{x})w$. Finally, we observe that for $t\in \R$ and $x\in N/\Gamma$, that $L(x+t w)=L(x)+\lambda_w t w$.

\begin{lem}
The perturbation $L_\phi$ has the same periodic data as $L$.
\end{lem}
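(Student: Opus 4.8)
The plan is to exhibit $L_\phi$ as a topological conjugate of $L$ by a homeomorphism of the special form $I_\psi(x)=x+\psi(\underline x)w$, and then to read off the coincidence of periodic data from a direct block-triangular computation of the differentials along periodic orbits, using only that $\mf z$ is $L$-invariant and that $L$ has simple spectrum.

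First I would produce the conjugacy. Since $w$ is a central eigenvector, $L(x+tw)=L(x)+\lambda_w t w$ and $\underline{x+tw}=\underline x$ for all $t$, so the identity $I_\psi\circ L=L_\phi\circ I_\psi$ is equivalent to the cohomological equation
\[
\psi\circ\underline L-\lambda_w\psi=\phi .
\]
Because $w$ is unstable we have $\abs{\lambda_w}>1$, so this is solved uniformly by the absolutely convergent series $\psi=-\sum_{k\ge 0}\lambda_w^{-(k+1)}\,\phi\circ\underline L^{k}$. Then $I_\psi$ is a homeomorphism of $N/\Gamma$ (with inverse $y\mapsto y-\psi(\underline y)w$) conjugating $L$ to $L_\phi$; in particular it carries each periodic point $p$ of $L$ of period $n$ to a periodic point $I_\psi(p)$ of $L_\phi$ of the same period, and every periodic point of $L_\phi$ arises this way.

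Next I would compute the differentials in the right trivialization $T(N/\Gamma)\cong(N/\Gamma)\times\mf n$. There $DL$ is the constant map $dL$, which preserves the characteristic ideal $\mf z$, so in a basis of $\mf n$ adapted to $\mf z$ the matrix $dL$ is block lower triangular with diagonal blocks $dL\mid_{\mf z}$ and the induced map $\overline{dL}$ on $\mf n/\mf z$. A short computation (differentiating $x\mapsto \exp(\phi(\underline x)w)\cdot L(x)$, using that left translation by a central element equals right translation, whose differential is the identity in this trivialization) gives
\[
D_xL_\phi(v)=dL(v)+(D_{\underline x}\phi)(\overline v)\,w ,
\]
where $\overline v\in T_{\underline x}(\underline N/\underline\Gamma)$ is the projection of $v$. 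The correction term lies in $\R w\subseteq\mf z$ and depends only on $\overline v$, so $DL_\phi$ also preserves $\mf z$, acts there as $dL\mid_{\mf z}$, and induces $\overline{dL}$ on $\mf n/\mf z$; hence $D_xL_\phi$ is block lower triangular with exactly the same diagonal blocks as $dL$, only the lower-left block varying with $x$.

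Finally I would conclude with a disjoint-spectra argument. For a periodic point $p$ of $L$ of period $n$ and $q=I_\psi(p)$, the matrix $D_pL^n=(dL)^n$ and the matrix $D_qL_\phi^n$ (a product of $n$ block-lower-triangular factors with diagonal blocks $dL\mid_{\mf z}$ and $\overline{dL}$) are both block lower triangular with diagonal blocks $(dL\mid_{\mf z})^n$ and $(\overline{dL})^n$. Since $L$ has simple spectrum, the eigenvalues of $dL$ have pairwise distinct moduli, so the spectra of $dL\mid_{\mf z}$ and $\overline{dL}$ are disjoint, and the same holds after raising to the $n$th power. By the standard fact that $\left(\begin{smallmatrix}A&0\\ C&B\end{smallmatrix}\right)$ is conjugate to $\left(\begin{smallmatrix}A&0\\ 0&B\end{smallmatrix}\right)$ when $\operatorname{spec}(A)\cap\operatorname{spec}(B)=\varnothing$ (solve the Sylvester equation $AX-XB=-C$ and conjugate by $\left(\begin{smallmatrix}I&0\\ X&I\end{smallmatrix}\right)$), both $D_pL^n$ and $D_qL_\phi^n$ are conjugate to $(\overline{dL})^n\oplus(dL\mid_{\mf z})^n$, hence to one another; thus $L$ and $L_\phi$ have the same periodic data with respect to $I_\psi$. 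The only delicate point is the differential computation — keeping track of the trivialization and checking that the block structure with constant diagonal blocks is preserved under iteration — while the cohomological equation and the Sylvester normal form are routine.
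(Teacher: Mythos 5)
Your proof is correct and takes essentially the same route as the paper: the heart of both arguments is the block lower-triangular form of $DL_\phi$ in the bundle trivialization, with diagonal blocks $D\underline{L}$ and $L\mid_{Z(N)}$ independent of the point. The extra details you supply --- the explicit conjugacy $I_\psi$ (which the paper constructs immediately afterwards as $I_{\mathcal{J}(\phi)}$) and the disjoint-spectra/Sylvester step upgrading ``same diagonal blocks'' to genuine conjugacy of the derivatives along periodic orbits --- are precisely what the paper compresses into ``we see immediately from the block form,'' and they are valid since simple spectrum makes the spectra of the two diagonal blocks disjoint even after taking powers.
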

\begin{proof}
Consider the differential of $L_\phi$ when viewed in charts that trivialize the principal bundle structure of $N/\Gamma$. In such charts, $L_\phi$ is a map $B_1\times \mathbb{T}^n\to B_2\times \mathbb{T}^n$ where $B_i$ is an open disk in $\R^k$ and $k=\dim \underline{N}/\underline{\Gamma}$. As $L_\phi$ is a bundle map, we may choose the chart so that the differential is of the form:
\[
\begin{bmatrix}
D\underline{L} & 0 \\
D\phi(\underline{x})w & L\mid_{Z(N)}
\end{bmatrix}.
\]
We see immediately from the block form of this matrix that $L_{\phi}$ has the same periodic data as $L$.
\end{proof}

\begin{lem}\label{lem:I_psi_conjugacy}
If $\phi\in C^{\infty}(\underline{N}/\underline{\Gamma},\R)$ and $\psi\in C^0(\underline{N}/\underline{\Gamma},\R)$  satisfy the cohomological equation
\begin{equation}\label{eq:conj_criterion}
\phi(\underline{x})+\psi(\underline{L}\,\underline{x})=\lambda_w\psi(\underline{x}),
\end{equation}
for all $x\in \underline{N}/\underline{\Gamma}\,$,
then $I_\psi$ is a conjugacy between $L_{\phi}$ and $L$.
\end{lem}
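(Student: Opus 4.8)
The plan is to verify directly that $I_\psi$ conjugates $L_\phi$ to $L$, i.e.\ that $I_\psi\circ L_\phi = L\circ I_\psi$, together with the (easy) point that $I_\psi$ is a homeomorphism. First I would record the two structural facts that drive the computation. Since $w$ lies in the center $\mf{z}$ of $\mf{n}$, left translation by $tw$ on a fiber does not move the projection of a point to $\underline{N}/\underline{\Gamma}$, so $\underline{x+tw}=\underline{x}$ for every $t\in\R$; in particular $\underline{L_\phi(x)}=\underline{L(x)}=\underline{L}\,\underline{x}$, so the fiber shear $\phi(\underline{x})w$ is invisible to the base dynamics $\underline{L}$. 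Second, as already noted before the lemma, $L(x+tw)=L(x)+\lambda_w t w$ because $w$ is an eigenvector of $L$ with eigenvalue $\lambda_w$.

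Granting these, the core verification is two lines. On one side, $L\circ I_\psi(x)=L\big(x+\psi(\underline{x})w\big)=L(x)+\lambda_w\psi(\underline{x})w$. On the other side, using $\underline{L_\phi(x)}=\underline{L}\,\underline{x}$,
\[
I_\psi\circ L_\phi(x)=L_\phi(x)+\psi(\underline{L}\,\underline{x})w=L(x)+\big(\phi(\underline{x})+\psi(\underline{L}\,\underline{x})\big)w.
\]
The cohomological equation \eqref{eq:conj_criterion} asserts exactly that the coefficient of $w$ on the right equals $\lambda_w\psi(\underline{x})$, so the two expressions coincide and $I_\psi\circ L_\phi=L\circ I_\psi$.

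Finally I would check that $I_\psi$ is a homeomorphism: it is continuous since $\psi$ is, and because $\underline{I_\psi(x)}=\underline{x}$ one reads off that the inverse is the continuous map $y\mapsto y-\psi(\underline{y})w$. There is essentially no serious obstacle here; the only thing requiring care is the bookkeeping with the (partially defined, additively written) left action of $Z$ on the fibers, and the observation that centrality of $w$ is precisely what collapses the conjugacy identity to the scalar equation \eqref{eq:conj_criterion}.
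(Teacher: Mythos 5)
Your proof is correct and follows essentially the same direct computation as the paper: evaluate $I_\psi\circ L_\phi$ and $L\circ I_\psi$ and match the coefficient of $w$ using equation \eqref{eq:conj_criterion}. The only addition is your explicit check that $I_\psi$ is a homeomorphism with inverse $y\mapsto y-\psi(\underline{y})w$, which the paper leaves implicit but is a harmless (and accurate) extra.
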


\begin{proof}
We check by computation that $I_\psi\circ L_{\phi}=L\circ I_\psi$. Suppose that $x\in N/\Gamma$, then $I_\psi\circ L_{\phi}$ sends 
\[
x\mapsto L(x)+\phi(\underline{x})w \mapsto L(x)+\phi(\underline{x})w+\psi(\underline{L}\,\underline{x})w= I_\psi\circ L_{\phi}(x).
\]
On the other hand, $L\circ I_\psi(x)$ is 
\[
L(x)+\lambda_w\psi(\underline{x})w.
\]
Thus if the stated condition holds, then $I_\psi$ is such a conjugacy.
\end{proof}

In fact, we can write down an explicit function $\psi$ satisfying equation (\ref{eq:conj_criterion}). The form of $\psi$ can be guessed by using the relation appearing in Lemma \ref{lem:I_psi_conjugacy} as a recurrence. Define
\begin{equation}
\mc{J}(\phi)=\lambda_w^{-1}\sum_{k\ge 0}\lambda_w^{-k} \phi(\underline{L}\,\underline{x}),
\end{equation}
so that $I_{\mc{J}(\phi)}$ provides a conjugacy between $L$ and $L_{\phi}$. Note that $\mc{J}(\phi)$ is well-defined as a continuous function as this series is absolutely convergent.

\begin{lem}
The vector $u$ defines a vector field $U$ on $\underline{N}/\underline{\Gamma}$. Let $\mc{U}$ be the foliation of $\underline{N}/\underline{\Gamma}$ tangent to the integral curves of $U$. For $\phi\in C^{\infty}(\underline{N}/\underline{\Gamma},\R)$, $\mc{J}(\phi)$ is Lipschitz along $\mc{U}$ if and only if the following two sums converge and are equal in the sense of distributions,
\begin{equation}\label{eqn:lipschitz_condition}
\sum_{k\ge 0} \lambda_{w}^{-k}\lambda_u^k \phi_u\circ \underline{L}^k=-\lambda_u^{-1}\sum_{k<0}\lambda_w^{-k}\lambda_u^{k}\phi_u\circ \underline{L}^k,
\end{equation}
where $\phi_u=U(\phi)$ is the derivative of $\phi$ in the direction of $U$.
\end{lem}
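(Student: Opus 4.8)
The plan is to deduce the lemma from the regularity criterion of Gogolev \autocite[Prop.\ 1]{gogolev2008smooth}: a continuous function $g$ on a compact manifold is Lipschitz along a one-dimensional foliation $\mc{U}$ if and only if its distributional derivative $U(g)$ along $\mc{U}$ is represented by an $L^{\infty}$ function. So everything comes down to understanding $U(\mc{J}(\phi))$ as a distribution. Since $\abs{\lambda_w}>1$, the series $\mc{J}(\phi)=\sum_{k\ge 0}\lambda_w^{-k-1}\phi\circ\underline{L}^{k}$ converges uniformly, so the distributional derivative commutes with the sum, $U(\mc{J}(\phi))=\sum_{k\ge 0}\lambda_w^{-k-1}\,U(\phi\circ\underline{L}^{k})$ in $\mathcal{D}'$. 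Moreover $\underline{L}$ carries the flow of the right-invariant field $U$ to its reparametrization by $\lambda_u$, i.e.\ $\underline{L}\circ\Phi_U^{t}=\Phi_U^{\lambda_u t}\circ\underline{L}$, so the chain rule gives $U(\phi\circ\underline{L}^{k})=\lambda_u^{k}\,\phi_u\circ\underline{L}^{k}$. Hence the left-hand series of \eqref{eqn:lipschitz_condition} is exactly $\lambda_w\,U(\mc{J}(\phi))$ as a distribution, and in particular it always converges. Differentiating the cohomological equation $\phi+\mc{J}(\phi)\circ\underline{L}=\lambda_w\,\mc{J}(\phi)$ along $U$ gives the twisted cohomological equation
\[
\phi_u=\lambda_w\,U(\mc{J}(\phi))-\lambda_u\,\bigl(U(\mc{J}(\phi))\bigr)\circ\underline{L},
\]
which I would iterate backwards below.

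The key structural observation is that the right-hand series of \eqref{eqn:lipschitz_condition} converges \emph{trivially}: after reindexing it is, up to a constant, a sum of the form $\sum_{j\ge 1}(\lambda_w/\lambda_u)^{j}\,\phi_u\circ\underline{L}^{-j}$, the ratio $\abs{\lambda_w/\lambda_u}$ is strictly less than $1$ by hypothesis, and $\|\phi_u\circ\underline{L}^{-j}\|_{C^{0}}=\|\phi_u\|_{C^{0}}$, so it converges absolutely in $C^{0}$ to a continuous function $G_{-}$, which (by construction as a geometric backward solution) satisfies the twisted cohomological equation. For the ``if'' direction, \eqref{eqn:lipschitz_condition} then says precisely that $U(\mc{J}(\phi))$ is a constant multiple of $G_{-}$, hence continuous, hence in $L^{\infty}$; Gogolev's criterion then gives that $\mc{J}(\phi)$ is Lipschitz along $\mc{U}$.

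For the ``only if'' direction, assume $\mc{J}(\phi)$ is Lipschitz along $\mc{U}$, so $G\coloneqq U(\mc{J}(\phi))$ is represented by an $L^{\infty}$ function; by the first paragraph the left-hand side of \eqref{eqn:lipschitz_condition} already equals $\lambda_w G$. To identify $G$ with $G_{-}$ I would iterate the twisted cohomological equation backwards to write, for every $n$,
\[
G=\Bigl(\tfrac{\lambda_w}{\lambda_u}\Bigr)^{\!n}\bigl(G\circ\underline{L}^{-n}\bigr)+\bigl(\text{$n$th partial sum of }G_{-}\bigr).
\]
Because $\underline{L}$ preserves the volume on $\underline{N}/\underline{\Gamma}$ (it preserves a lattice, so $\abs{\det d\underline{L}}=1$), the family $\{G\circ\underline{L}^{-n}\}_{n}$ is bounded in $L^{1}$; since $\abs{\lambda_w/\lambda_u}<1$ the remainder term tends to $0$ in $\mathcal{D}'$, so $G=G_{-}$ and \eqref{eqn:lipschitz_condition} holds, completing the equivalence.

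I expect the main obstacle to be essentially bookkeeping: pinning down the reparametrization identity $U(\phi\circ\underline{L}^{k})=\lambda_u^{k}\,\phi_u\circ\underline{L}^{k}$ with the right constants, justifying that $U$ commutes with the uniformly convergent series defining $\mc{J}(\phi)$, controlling the vanishing of the backward remainder in $\mathcal{D}'$, and invoking the uniform (foliated) form of Gogolev's Lipschitz criterion rather than merely its one-variable prototype. Conceptually the content is the asymmetry just exploited: convergence of the ``past'' series is automatic, whereas convergence of the ``future'' series $\sum_{k\ge 0}\lambda_w^{-k}\lambda_u^{k}\phi_u\circ\underline{L}^{k}$ (whose coefficients blow up) to a bounded function is the genuine condition, and the two sums can agree only when $U(\mc{J}(\phi))$ is a genuine $L^\infty$ function.
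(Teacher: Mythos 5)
Your proposal is correct and follows essentially the same route as the paper's proof: term-by-term distributional differentiation of the series defining $\mc{J}(\phi)$ using $U(\phi\circ\underline{L}^k)=\lambda_u^k\,\phi_u\circ\underline{L}^k$, differentiation of the cohomological equation followed by backward iteration to produce the absolutely convergent ``past'' series, and the equivalence between Lipschitz regularity along $\mc{U}$ and boundedness of the distributional $U$-derivative (the criterion the paper itself attributes to Gogolev). The only deviations are cosmetic—your $L^1$/volume-preservation bound on the backward remainder in place of the paper's implicit $L^\infty$ bound, and your retention of the $\lambda_w^{-1}$ prefactor of $\mc{J}$ (you even inherit the same harmless constant-normalization slip in \eqref{eqn:lipschitz_condition} that the paper does)—so nothing of substance differs.
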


\begin{proof}
In the proof we write $\psi$ for $\mc{J}(\phi)$ for both clarity and convenience.
First, suppose that $\psi$ is Lipschitz along $\mc{U}$. Then for Lebesgue-a.e.\@ point $p$ in $\underline{N}/\underline{\Gamma}$,  $\phi_u$ exists at $p$, and hence by differentiating equation (\ref{eq:conj_criterion}), there is an $\underline{L}$-invariant set of full volume such that 
\[
\phi_u+\lambda_u \psi_u\circ \underline{L}=\lambda_w \psi_u
\]
on this set. This relation implies that almost everywhere
\[
\psi_u=-\frac{1}{\lambda_u}\phi_u\circ \underline{L}^{-1}+\frac{\lambda_w}{\lambda_u}\psi_u\circ \underline{L}^{-1}.
\]
By similarly using this relation as a recurrence, we obtain that 
\begin{equation}\label{eq:psi_u_expression}
\psi_u=-\lambda_u^{-1} \sum_{k<0} \pez{\frac{\lambda_u}{\lambda_w}}^k\phi_u\circ \underline{L}^k,
\end{equation}
almost everywhere.
As $\abs{\lambda_u}>\abs{\lambda_w}$, the series above converges in $C^0$ sense. As $\psi$ is Lipschitz along $\mc{U}$, it is equal to the integral of its derivative by $U$. The foliation $\mc{U}$ is analytic, so we see that $\psi$ is differentiable with derivative $\psi_u$, and  $\psi_u$ satisfies equation \eqref{eq:psi_u_expression} along a.e.\@ $\mc{U}$ leaf. In particular, this implies that the distributional derivative of $\psi$ along almost every $U$ line is given by pairing with $\psi_u$ as in equation \eqref{eq:psi_u_expression}. This implies that the distributional derivative of $\psi$ in the direction $U$ is regular, and, in particular, is given by pairing with the expression in equation \eqref{eq:psi_u_expression}.

We may compute the distributional derivative of $\psi$ in another way as well. In particular, by definition,
\[
\psi=\sum_{k\ge 0} \lambda_w^{-k} \phi\circ \underline{L}^k.
\]
To find the distributional derivative of $\psi$ in the direction $U$, we differentiate term by term; hence, in the sense of distributions,
\[
\psi_u  =\sum_{k\ge 0}\lambda_{w}^{-k} \lambda_u^k \phi_u\circ \underline{L}^k.
\]
Thus the claimed equality holds. This establishes the implication in the theorem.

Next, suppose that the stated sums converge in the sense of distributions and are equal. The distribution given by pairing with 
\[
\psi=\sum_{k\ge 0} \lambda_w^{-k}\phi\circ \underline{L}^k,
\]
has distributional derivative  in direction $U$ given by the sum of the distributions
\[
\sum_{k\ge 0} \lambda_w^k \lambda_u^k \phi_u\circ \underline{L}^k.
\]
By assumption this is equal to the distribution
\[
-\lambda_u^{-1}\sum_{k<0}\lambda_w^{-k}\lambda_u^k\phi_u\circ \underline{L}^k.
\]
However, this distribution is regular as $\abs{\lambda_u}>\abs{\lambda_w}$, and is equal to pairing with some function $\omega\in C^0$. Hence the distributional derivative of $\psi$ is given by pairing with $\omega$, i.e. $\psi$ is weakly differentiable along $U$ with weak derivative $\omega$. Note that $\omega$ is in $C^0$, as is $\psi$. Hence a standard argument implies that $\psi$ is Lipschitz in the direction $U$ with Lipschitz constant depending on $\|\omega\|_{C^0}$.
\end{proof}

The proof of Proposition \ref{prop:out_of_order_reducible_prop} is then finished by the following lemma.

\begin{lem}
In any neighborhood of $0$ in $C^{\infty}(\underline{N}/\underline{\Gamma},\R)$, there exists a function $\phi$ violating equation (\ref{eqn:lipschitz_condition}).
\end{lem}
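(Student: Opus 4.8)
The plan is to argue by contradiction. Suppose some neighborhood $V$ of $0$ in $C^\infty(\underline N/\underline\Gamma,\R)$ consists entirely of functions $\phi$ for which both series in \eqref{eqn:lipschitz_condition} converge in the sense of distributions and coincide. First I would upgrade this to a statement about all of $C^\infty$: given any $\phi\in C^\infty(\underline N/\underline\Gamma,\R)$, for $t>0$ small enough we have $t\phi\in V$; since $\phi\mapsto\phi_u$ and $\phi\mapsto\phi\circ\underline L^k$ are linear, the two series attached to $t\phi$ are $t$ times those attached to $\phi$, so dividing by $t$ shows \eqref{eqn:lipschitz_condition} holds for $\phi$ as well. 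Thus under the contradiction hypothesis \eqref{eqn:lipschitz_condition} is an identity of distributions valid for every $\phi\in C^\infty(\underline N/\underline\Gamma,\R)$.

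Next I would exploit the $\underline L$-equivariance of the identity. Since $U$ is the right-invariant vector field associated to an eigenvector of $d\underline L$ of eigenvalue $\lambda_u$, we have $D_x\underline L(U(x))=\lambda_u U(\underline L x)$, hence $(\phi\circ\underline L)_u=\lambda_u\,(\phi_u\circ\underline L)$ and more generally $(\phi\circ\underline L)_u\circ\underline L^k=\lambda_u\,\phi_u\circ\underline L^{k+1}$. Applying \eqref{eqn:lipschitz_condition} to $\phi\circ\underline L\in C^\infty(\underline N/\underline\Gamma,\R)$ and reindexing both sums by $j=k+1$, the powers of $\lambda_w$ and $\lambda_u$ rearrange so that, after dividing through by $\lambda_w$, one gets the distributional identity
\[
\sum_{j\ge 1}\lambda_w^{-j}\lambda_u^{j}\,\phi_u\circ\underline L^{j}=-\lambda_u^{-1}\sum_{j\le 0}\lambda_w^{-j}\lambda_u^{j}\,\phi_u\circ\underline L^{j}.
\]
Subtracting this from \eqref{eqn:lipschitz_condition} for $\phi$ (whose summation ranges are $j\ge 0$ and $j<0$), every term with $j\ge 1$ and every term with $j<0$ cancels, leaving only the $j=0$ terms: the left-hand sides contribute $\phi_u$ and the right-hand sides contribute $\lambda_u^{-1}\phi_u$. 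Hence $(1-\lambda_u^{-1})\phi_u=0$ as a distribution, and since $\lambda_u$ is an unstable eigenvalue we have $\lambda_u^{-1}\ne 1$, so $\phi_u\equiv 0$.

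Finally, this says $U(\phi)=0$ for every $\phi\in C^\infty(\underline N/\underline\Gamma,\R)$, which is absurd since $U$ is a nonvanishing vector field and there are smooth functions with nonzero derivative along $U$. (Equivalently, irreducibility of $\underline L$ makes the leaves of $\mc{U}$ dense, so $\phi_u\equiv 0$ would force $\phi$ constant, again absurd.) This contradiction shows no such neighborhood $V$ exists, i.e.\ every neighborhood of $0$ in $C^\infty(\underline N/\underline\Gamma,\R)$ contains a $\phi$ violating \eqref{eqn:lipschitz_condition}, which is the lemma; together with the previous lemmas this completes the proof of Proposition \ref{prop:out_of_order_reducible_prop}. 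Note that this route avoids harmonic analysis on $\underline N/\underline\Gamma$ and works verbatim when $\underline N/\underline\Gamma$ is a non-toral nilmanifold.

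The one point requiring care is the bookkeeping with series of distributions: one must check that the reindexing $j=k+1$ and the termwise subtraction are legitimate, which is immediate once we know (from the first paragraph) that the relevant series converge in $\mathcal D'$ for $\phi$, for $\phi\circ\underline L$, and hence for their difference. I expect that to be the only subtlety; the rest is manipulation of exponents of $\lambda_u,\lambda_w$ plus the trivial fact that $U(\phi)$ cannot vanish for all smooth $\phi$.
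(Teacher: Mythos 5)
Your argument is correct, and it is genuinely different from the paper's. You run a soft scaling-plus-equivariance argument: neighborhoods of $0$ in the Fr\'echet space $C^{\infty}(\underline{N}/\underline{\Gamma},\R)$ are absorbing and both sides of \eqref{eqn:lipschitz_condition} are linear and scale-invariant in $\phi$, so a counterexample-free neighborhood would force the identity for \emph{all} smooth $\phi$; applying it to both $\phi$ and $\phi\circ\underline{L}$ (using $(\phi\circ\underline{L})_u=\lambda_u\,\phi_u\circ\underline{L}$, the same relation the paper itself uses when differentiating the cohomological equation) and comparing, the two series differ only in the $k=0$ term after reindexing, so the subtraction is legitimate and yields $(1-\lambda_u^{-1})\phi_u=0$, hence $\phi_u\equiv 0$ for every smooth $\phi$, which is absurd since $U$ is nonvanishing. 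The paper instead argues constructively via harmonic analysis: it reduces to the case where $u$ projects to a central direction of $\mf{n}/\mf{z}$, decomposes $L^2(\underline{N}/\underline{\Gamma})$ into irreducible representations of $N/Z$, picks a smooth vector $\phi$ in a nontrivial irreducible on which $u$ acts nontrivially, and uses that the functions $\phi\circ\underline{L}^k$ lie in representations with distinct central characters (hence are eventually orthogonal) to exhibit a test function on which the two sides of \eqref{eqn:lipschitz_condition} disagree. Your route is shorter, avoids representation theory entirely, and transfers verbatim to any nilmanifold; it can even be made constructive, since it shows that for any $\phi$ with $\phi_u\not\equiv 0$ either small multiples of $\phi$ or of $\phi\circ\underline{L}$ must violate \eqref{eqn:lipschitz_condition}. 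The paper's route produces an explicit violating function and more quantitative information about how the equality fails, which is in the spirit of the de la Llave--Gogolev constructions. One small quibble: your parenthetical alternative justification invokes irreducibility of $\underline{L}$, which is not a hypothesis of Proposition \ref{prop:out_of_order_reducible_prop}; drop it and keep the primary justification, which is all you need.
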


\begin{proof}
It suffices to consider the case that the image of $u$ is central in $\mf{n}/\mf{z}$.
We may make this reduction by the following means. If $N'/\Gamma'$ is a nilmanifold fibering over $\underline{N}'/\underline{\Gamma}'$, then we can pullback a function $\phi$ on $\underline{N}'/\underline{\Gamma}'$ to a function on $N'/{\Gamma'}$. Suppose that $\underline{\phi}$ is a function on $\underline{N}'/\underline{\Gamma}$ that fails to satisfy equation (\ref{eqn:lipschitz_condition}). Then there exists a function $\underline{\psi}$ on $\underline{N}'/\underline{\Gamma}'$ that pairs to different things with each side of equation (\ref{eqn:lipschitz_condition}). Denote by $\phi$ and $\psi$ the pullbacks of these functions to $N'/\Gamma'$. We claim that $\phi$ does not satisfy equation (\ref{eqn:lipschitz_condition}). To see this note that if $\underline{\rho}$ and $\underline{\omega}$ are two functions on $\underline{N}'/\underline{\Gamma}'$ then the pairing of $\underline{\rho}$ with $\underline{\omega}$ is equal to the pairing of $\rho$ with $\omega$.

We now assume that $u$ is tangent to the central direction in $\underline{N}/\underline{\Gamma}$. As is standard, $L^2(\underline{N}/\underline{\Gamma},\mathbb{C})$ is a unitary representation of the group $N/Z$, in which $\exp(u)$ is central. Note that as the subgroup tangent to $u$ is central in $N/Z$ that $\exp(u)$ acts inside of irreducible representations by multiplication.
Pick a non-trivial irreducible representation $V_{\gamma}\subseteq L^2(\underline{N}/\underline{\Gamma},\mathbb{C})$ as well as a $C^{\infty}$ vector $\phi\in V_{\gamma}$ on which $u$ acts nontrivially. Note that there exists such a vector as there are functions on $\underline{N}/\underline{\Gamma}$ that are not constant in the $u$ direction.

 Let $U^t$ be the flow given by left translation by $\exp{ut}$. Observe that as $u$ is central inside $V_{\gamma}$ that $U^t$  acts by multiplication by $e^{i\lambda_\gamma}$ for some $\lambda_{\gamma}\in \R\setminus \{0\}$.  Suppose now that $\phi$ is a smooth function in $V_{\gamma}$. Then
\[
\phi\circ U^t=e^{i\lambda_{\gamma}t}\phi.
\]
Observe that 
\[
\phi\circ L\circ U^t=\phi\circ U^{\lambda_u t}\circ L.
\]
Thus $\phi\circ L$ lies in a representation $V_{\gamma'}$ where  $u$ acts by $\lambda_{\gamma'}=\lambda_u\lambda_{\gamma}$.  Note that $\phi_u=\lim_{t\to 0} (\phi\circ U^t-\phi)/t$ lies within $V_{\gamma}$ as $\phi$ is smooth. Similarly, by applying $L^k$, we obtain a function $\phi^{(k)}$ on which $U^t$ acts by multiplication by $e^{i\lambda_{\gamma}\lambda_u^kt}$. For $j$ and $k$ such that $\abs{j-k}$ is sufficiently large, $\phi^{(j)}$ and $\phi^{(k)}$ are orthogonal as
\[
\langle \phi^{(j)},\phi^{(k)}\rangle=\langle U^t\phi^{(j)},U^t\phi^{(k)}\rangle=\langle e^{i\lambda_{\gamma}\lambda_u^jt}\phi,e^{i\lambda_{\gamma}\lambda_u^kt}\psi\rangle=e^{i\lambda_{\gamma}\lambda_u^j t}e^{-i\lambda_{\gamma}\lambda_u^k t}\langle \phi,\phi\rangle
\]
must be constant in $t$.

Observe now that if we evaluated equation (\ref{eqn:lipschitz_condition}) on $\phi$, that the two distributions in that equation are different. The functions $\phi\circ L^k$ for large positive and negative $k$ are orthogonal by the discussion above. 
Consequently, in equation (\ref{eqn:lipschitz_condition}), 
when we pair with $\phi\circ L^k$ for sufficiently large $k$ the left hand distribution gives a non-zero quantity, while the right hand gives a zero quantity. 
By taking the real parts we obtain a function in $C^{\infty}(\underline{N}/\underline{\Gamma},\R)$ with the same property. As the relation in equation (\ref{eqn:lipschitz_condition}) is linear, if it fails for $\phi$ it fails for $\epsilon \phi$ as well, and so the needed result holds in any neighborhood of $0$ in $C^{\infty}(\underline{N}/\underline{\Gamma},\R)$.
\end{proof}

This finishes the proof of Proposition \ref{prop:out_of_order_reducible_prop}, as we have now shown that there exist arbitrarily small $\phi$ such that $L_{\phi}$ has the same periodic data as $L$ and is not Lipschitz conjugate to $L$.
\end{proof}

\subsection{Lack of irreducibility}

In this section we show the following.

\begin{prop}\label{prop:irreducibility_fails}
Suppose that $L\colon N/\Gamma\to N/\Gamma$ is an Anosov automorphism with simple spectrum. If the restricted map $L\mid_Z$, where $Z$ is the center of $Z$, is reducible with respect to the $\Q$-structure given by $\Gamma$, then $L$ is not periodic data rigid.
\end{prop}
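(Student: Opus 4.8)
The plan is to adapt the de~la~Llave--Gogolev shearing construction of Proposition~\ref{prop:out_of_order_reducible_prop}, the one new feature being that the shear now takes place into a \emph{central} eigendirection lying inside a proper rational $L$-invariant subspace of the center; reducibility of $L\mid_Z$ is exactly what furnishes such a subspace. To begin, I would set up the algebra. Simple spectrum forces $L$ to have distinct real eigenvalues on $\mf z$, so its $\Q$-minimal polynomial there is squarefree and $\mf z=V_1\oplus\cdots\oplus V_r$ with each $V_i$ a rational, $L$-invariant, $\Q$-irreducible subspace; reducibility of $L\mid_Z$ means $r\ge 2$. Since $L$ preserves $\Gamma$, the determinant of $L$ on $\mf z$ is $\pm1$, so each irreducible factor has constant term $\pm1$; being a product of nonunit-modulus eigenvalues, each $V_i$ therefore contains at least one unstable eigenvector (and, incidentally, has dimension $\ge 2$). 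Let $w\in\mf z$ be the unstable eigenvector of smallest eigenvalue modulus, let $V$ be the piece $V_i$ containing $w$ — which is proper in $\mf z$ since $r\ge 2$ — and let $u$ be any unstable eigenvector lying in a different piece; then $w\in V$, $u\notin V$, and $\abs{\lambda_u}>\abs{\lambda_w}$. As $V$ is a central ideal of $\mf n$, the subgroup $\exp(V)\Gamma$ is closed and $L$-invariant, and there is a fibration $\rho\colon N/\Gamma\to Q:=N/\exp(V)\Gamma$ onto a nilmanifold, with $L$ descending to $\bar L$ on $Q$.

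Second, the perturbation: for $\phi\in C^{\infty}(Q,\R)$ set $L_\phi(x)=L(x)\exp(\phi(\rho(x))\,w)$. Because $w$ is central, on the cover $\log L_\phi(x)=D_eL(\log x)+\phi(\rho(x))w$, and in the right trivialization $D_xL_\phi=D_eL+d(\phi\circ\rho)_x\otimes w$. Since $\phi\circ\rho$ is constant along $\exp(V)$-orbits and $w\in V$, the covector $d(\phi\circ\rho)_x$ annihilates $w$; hence, listing the eigenbasis of $D_eL$ with $w$ last, every $D_xL_\phi$ is lower triangular with diagonal $(\lambda_1,\dots,\lambda_m)$, and in particular $\abs{\det D_xL_\phi}=1$, so $L_\phi$ preserves volume. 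Along any periodic orbit of period $n$ the product $D_pL_\phi^{n}$ is then lower triangular with diagonal $(\lambda_1^{n},\dots,\lambda_m^{n})$, hence diagonalizable (distinct entries) and conjugate to $(D_eL)^{n}=D_{I_\psi(p)}L^{n}$; thus $L_\phi$ has the same periodic data as $L$ with respect to the conjugacy $I_\psi$ constructed below and, by Proposition~\ref{prop:periodic_approximation}, the same volume Lyapunov spectrum, while $\phi\mapsto L_\phi$ can be made arbitrarily $C^{\infty}$-small.

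Third, the failure of regularity. As in Lemma~\ref{lem:I_psi_conjugacy}, if $\psi\in C^{0}(Q,\R)$ solves $\phi+\psi\circ\bar L=\lambda_w\psi$ then $I_\psi(x)=x\exp(\psi(\rho(x))w)$ is a homeomorphism conjugating $L_\phi$ to $L$, and since $\abs{\lambda_w}>1$ the unique continuous solution is $\psi=\mc J(\phi)=\lambda_w^{-1}\sum_{k\ge 0}\lambda_w^{-k}\phi\circ\bar L^{k}$. It then suffices to show $I_\psi$ is not $C^{1+}$, equivalently that $\psi$ is not Lipschitz along the image $\bar u$ of $u$ in $\mf n/V$, an eigenvector of $D_e\bar L$ with eigenvalue $\lambda_u$ (by Proposition~\ref{prop:same_regularity} this is in fact the best regularity any conjugacy can have, though for the statement it is enough to exhibit this one bad conjugacy). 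Formally $\psi_{\bar u}=\lambda_w^{-1}\sum_{k\ge0}(\lambda_u/\lambda_w)^{k}\phi_{\bar u}\circ\bar L^{k}$ with $\abs{\lambda_u/\lambda_w}>1$. As in the final lemma of the proof of Proposition~\ref{prop:out_of_order_reducible_prop}, I would reduce to the case $\bar u$ central in $\mf n/V$: replacing $Q$ by $Q/\exp(\lcc{(\mf n/V)}{d+1})$ where $\bar u\in\cgrad{(\mf n/V)}{d}$ makes $\bar u$ central without changing $\lambda_u$, and pulling a function back from this further quotient preserves validity or failure of the identity in question — with no conflict arising from the shear target, since $w$ has already been collapsed by $\rho$. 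With $\exp(\bar u)$ central one picks a small smooth real $\phi$ inside a nontrivial irreducible summand of $L^{2}$ of the quotient nilmanifold on which $\bar u$ acts by a nonzero imaginary scalar, so that the iterates $\phi_{\bar u}\circ\bar L^{k}$ lie in representations where $\exp(\bar u)$ acts by pairwise distinct scalars and are therefore eventually mutually orthogonal; the partial sums of the (divergent, since $\abs{\lambda_u/\lambda_w}>1$) series for $\psi_{\bar u}$ then have unbounded $L^{2}$-norm, and the distributional argument of that lemma forces $\psi$ to fail to be Lipschitz along $\bar u$. This produces, in every $C^{\infty}$-neighborhood of $L$, a volume-preserving $L_\phi$ with the same periodic data that is not even Lipschitz conjugate to $L$, which proves the proposition.

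The step I expect to be the main obstacle is the periodic-data bookkeeping in the second paragraph: checking carefully, via the Baker--Campbell--Hausdorff formula and the centrality of $w$, that $D_xL_\phi$ differs from $D_eL$ only by a term that is strictly subdiagonal for a single fixed reordering of the eigenbasis, so that the periodic-point differentials of $L_\phi$ are genuinely diagonalizable with exactly the eigenvalues of those of $L$. The analytic core, in paragraphs one and three, is essentially a transplant of the machinery already established for Proposition~\ref{prop:out_of_order_reducible_prop}, with the only genuinely new input being the selection of the triple $(w,V,u)$ from the $\Q$-irreducible decomposition of $\mf z$.
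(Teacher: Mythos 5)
Your proposal is correct and follows essentially the same route as the paper: reducibility of $L\mid_Z$ over $\Q$ gives a rational $L$-invariant splitting of the center, one rational piece is quotiented out to produce a torus-bundle fibration over a nilmanifold, and then the de la Llave--Gogolev shear of Proposition \ref{prop:out_of_order_reducible_prop} is run with a central shear target in the fiber and a faster unstable detection direction in the base. Your selection of the triple (slowest unstable central eigenvector $w$, its rational piece $V$, a faster unstable eigenvector $u$ in a different piece, each piece containing unstable directions because its integer constant term is $\pm1$) is the same mechanism the paper implements with its two factors $V_p$, $V_q$, just spelled out in more detail.
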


This proposition follows because if the action of $L$ is reducible, the map \[\pi\colon N/\Gamma\to \underline{N}/\underline{\Gamma}\] naturally decomposes into a sequence of two fiber bundles each with torus fiber. We then shear a fast unstable direction tangent to one of these bundles into a slower unstable direction tangent to the other just as we did in Proposition~\ref{prop:out_of_order_reducible_prop}.

\begin{proof}[Proof of Proposition \ref{prop:irreducibility_fails}.]
If $L\mid_Z\colon Z\to Z$ is reducible with respect to the $\Q$-structure coming from $\Gamma\cap Z$ and $L\mid_Z$ has simple spectrum, then the characteristic polynomial of $L$ splits over $\Q$ into two distinct factors $p$ and $q$ with no common roots. Let $V_p$ be the subspace of $\mf{z}$ tangent to the eigenvectors of $L\mid_Z$ with eigenvalues coming from $p$ and $V_q$ by the analogous subspace for $q$. Both of these subspaces are rational. Consequently, $V_q\cap \Gamma$ is a lattice in $V_q$.

Let $\overline{V_q}$ denote the image of $V_q$ in $Z/(Z\cap \Gamma)$. 
In particular, observe that $\overline{V_q}$ is a torus. This torus acts freely and faithfully on $N/\Gamma$ by translation. Note in addition that the map $L$ commutes with the action of $\overline{V_q}$ up to an element of $\overline{V_q}$ because $\overline{V_q}$ is invariant under $L$. Consequently, if we quotient by the action of $\overline{V_q}$, we obtain  that $N/\Gamma$ fibers over some manifold $(N/\overline{V_q})/(\pi(\Gamma))$ with torus fiber and that the map $L$ descends to the quotient. 

We may now repeat the proof of Proposition \ref{prop:out_of_order_reducible_prop}. Suppose that the action of $L$ on $V_q$ has a larger unstable eigenvalue $\lambda$ than any eigenvalue of the action on $V_p$. Let $w$ be a vector tangent to the subspace associated to $\lambda$. 
Using $w$ we can shear $\overline{V_p}$ into $\overline{V_q}$ as in the proof of Proposition \ref{prop:out_of_order_reducible_prop}, and repeat the argument there to obtain that $L$ is not Lyapunov rigid.
\end{proof}

\subsection{Lack of rigidity persists in extensions}\label{subsec:bar}

If one has an automorphism $L\colon N/\Gamma\to N/\Gamma$ that is not Lyapunov spectrum rigid, then one may wonder if there exists an Anosov automorphism $L'\colon N'/\Gamma'\to N'/\Gamma'$ and a natural algebraic quotient map $\pi\colon N'/\Gamma'\to N/\Gamma$ such that $\pi\circ L'=L\circ \pi$ and $L'$ is Lyapunov spectrum rigid.  The content of the following proposition is that this cannot happen. The value of the proposition is that it allows for Lyapunov spectrum rigidity to be studied inductively.

The construction that follows is closely related to the homotopy theoretic approach developed by Gogolev, Otaneda, and Rodriguez Hertz in \autocite{gogolev2015new}. For an automorphism $A$ of a Lie group $G$, they study $A$-maps of principal $G$-bundles, which  means that if $F\colon E\to F$ is a map of principal $G$-bundles, then $F(x.g)=F(x).A(g)$. Note that any Anosov automorphism $N/\Gamma\to N/\Gamma$ is an $A$-map of $N/\Gamma$ when viewed as a map of principal torus bundles. In this case, the map $A$ is the map restricted to the torus fiber through $e\Gamma$, which is $Z/(Z\cap \Gamma)$. The basic theory of such $A$-maps is developed clearly in \autocite{gogolev2015new} and so we do not repeat the development here. We recall one result, Proposition 4.4., which gives that if $A,B\in \Aut(G)$, and $f$ is an $A$ map and $g$ is a $B$ map, then $f\circ g$ is an $BA$-map.

\begin{prop}\label{prop:lack_of_rigidity_persists_in_extensions}
Suppose that  $L\colon N/\Gamma\to N/\Gamma$ is a Anosov automorphism with simple spectrum that is not periodic data rigid. Then, if $N'$ is a nilpotent group containing a lattice $\Gamma'$ such that $N'/N'_{k}=N$ and $\Gamma'/(N'_{k}\cap \Gamma')=\Gamma$, and $L'\colon N'/\Gamma'\to N'/\Gamma'$ is an Anosov automorphism with simple spectrum inducing the map $L$ on the quotient $N/\Gamma$, then $L'$ is not periodic data rigid.
\end{prop}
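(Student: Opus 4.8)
The plan is to lift the perturbation upstairs. Since $L$ is not periodic data rigid, by the constructions in the base case (Propositions \ref{prop:out_of_order_reducible_prop} and \ref{prop:irreducibility_fails}) there is a function $\phi\in C^{\infty}(\underline{N}/\underline{\Gamma},\R)$, arbitrarily small, so that the shear perturbation $L_\phi$ of $L$ has the same periodic data as $L$ but is not Lipschitz conjugate to $L$. (Here the shear is along a central unstable direction $w\in \mf{z}(\mf{n})$ and over a base quotient.) First I would pull $\phi$ back along the composite projection $N'/\Gamma' \to N/\Gamma \to \underline{N}/\underline{\Gamma}$ to get a smooth function $\phi'$ on the appropriate quotient of $N'/\Gamma'$, and use the same direction $w$ — which, since $N'/N'_k = N$, still makes sense as an unstable eigenvector of $dL'$ sitting inside the center-direction relevant to the bundle structure of $N'/\Gamma'$ — to define $L'_{\phi'}(x) = L'(x) + \phi'(\underline{x})\,w$. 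As in the base case this is a smooth perturbation of $L'$, arbitrarily $C^\infty$-small provided $\phi$ was chosen small, and the block-triangular form of its differential in bundle-trivializing charts shows it has the same periodic data as $L'$.

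The second step is to show $L'_{\phi'}$ is not Lipschitz conjugate to $L'$. Here I would argue by contradiction: suppose $H$ is a Lipschitz conjugacy, $H L'_{\phi'} = L' H$. The projection $\pi\colon N'/\Gamma' \to N/\Gamma$ is a bundle map intertwining $L'_{\phi'}$ with $L_\phi$ and $L'$ with $L$ (because $\phi'$ was pulled back through $\pi$ and $w$ projects appropriately — in fact if $w$ already lies in $N'_k$ it projects to $0$, so one should instead run the construction with $w$ a lift to $N'$ of the central direction used downstairs, and then $\pi(L'_{\phi'}(x)) = L_\phi(\pi(x))$). Since $\pi$ is $L'$-equivariant and $L_\phi$, $L'$ etc.\ are all bundle maps over the base, and conjugacies intertwine stable/unstable foliations (Proposition \ref{prop:unstabletounstable}) and hence respect the torus-fiber subbundle $N'_k/(N'_k\cap\Gamma')$, the conjugacy $H$ descends through $\pi$ to a conjugacy $\underline{H}$ between $L_\phi$ and $L$; and a Lipschitz $H$ descends to a Lipschitz $\underline{H}$ (the fibers are subtori on which distances only shrink under projection, and $\pi$ is a local isometry so it does not increase Lipschitz constants). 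This contradicts the non-rigidity of $L$ downstairs, completing the proof.

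I expect the main obstacle to be making the descent of the conjugacy rigorous — specifically verifying that any conjugacy $H$ between $L'_{\phi'}$ and $L'$ must map fibers of $\pi$ to fibers of $\pi$, so that it genuinely induces a map on $N/\Gamma$. This requires knowing that the fiber subbundle $N'_k/(N'_k\cap\Gamma')$ is dynamically characterized — for instance, that it is built out of stable and unstable distributions of $L'$ that are distinguished by being the ``innermost'' pieces matching the eigenvalues coming from $N'_k$, and that the same characterization holds for $L'_{\phi'}$ since the two maps have identical periodic data and hence (by Proposition \ref{prop:periodic_approximation} and the structure of the perturbation) the same coarse hyperbolic splitting near the fiber. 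An alternative, perhaps cleaner route that avoids descending $H$: show directly, as downstairs, that Lipschitzness of $H$ along the lift of the foliation $\mc{U}$ forces the divergent series identity of the type in equation \eqref{eqn:lipschitz_condition} for $\phi'$, which fails because it already fails for $\phi$ and pulling back through $\pi$ preserves the relevant pairings (exactly the reduction already used in the last lemma of Subsection \ref{subsec:foo}). I would pursue this second route and only fall back on the descent argument if technical issues with the foliation $\mc{U}$ on $N'/\Gamma'$ arise.
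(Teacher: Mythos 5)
There is a genuine gap: your proof establishes (at best) a special case of the statement, not the proposition itself. The hypothesis is only that $L$ is not periodic data rigid, with an \emph{arbitrary} witness perturbation $f$; you instead assume the witness is the explicit central shear $L_\phi$ produced in Propositions \ref{prop:out_of_order_reducible_prop} and \ref{prop:irreducibility_fails}. This is not merely cosmetic: the paper's own proof reduces by induction to a tower of central extensions, and at intermediate stages the witness of non-rigidity is whatever perturbation came out of the previous stage, which is no longer a shear of the prescribed form, so an argument that only lifts shears cannot run the induction. The paper handles an arbitrary $f$ by a genuinely different mechanism: it lifts $f$ to an $A$-map $F$ of the principal torus bundle $N'/\Gamma'\to N/\Gamma$ via a connection and parallel transport, checks $F$ has the same periodic data as $L'$ (here block-triangularity is legitimate because $F$ is an honest bundle map over $f$), proves that any $C^1$ conjugacy $H$ between $F$ and $L'$ preserves the fibers (this uses $C^1$ crucially: $DH$ intertwines the one-dimensional subbundles of the simple Mather splitting, which at periodic points are tangent to the fibers), and then descends $H$ to a $C^1$ conjugacy between $f$ and $L$, contradicting Proposition \ref{prop:same_regularity}. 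Your fallback route (a) founders on exactly the point you flag: for a merely Lipschitz $H$ there is no $DH$ and no available argument that $H$ maps fibers to fibers, so the descent step is not justified.

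Your preferred route (b) could be salvaged into a direct proof of Theorem \ref{thm:non_rigid} (bypassing the proposition), but as written it has two unproved steps. First, the lifted direction $\wt{w}$ (the unstable eigenvector of $dL'$ projecting to $w$) is in general \emph{not} central in $N'$, so the perturbation $x\mapsto \exp(\phi'(\underline{x})\wt{w})\,L'(x)$ is no longer a translation in a principal-bundle fiber: in the right trivialization, left multiplication by $\exp(t\wt{w})$ contributes a factor $\Ad(\exp(t\wt{w}))$, and the claim that periodic data is unchanged requires an argument (e.g.\ that this factor is unipotent with respect to the lower central series flag, which $dL'$ also preserves), not just the phrase ``block-triangular form.'' Second, the non-rigidity conclusion cannot be phrased as ``Lipschitzness of $H$ forces equation \eqref{eqn:lipschitz_condition}'' for a general conjugacy $H$: the downstairs argument analyzes the \emph{specific} conjugacy $I_{\mc{J}(\phi)}$ and then appeals to the Conze--Marcuard rigidity of self-conjugacies (Proposition \ref{prop:same_regularity}) to exclude all other conjugacies. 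Upstairs you would need to exhibit the analogous explicit conjugacy $I_{\psi'}$ (which does exist, with the same cohomological equation, because $\wt{w}$ is an eigenvector), show it is not Lipschitz because $\psi=\mc{J}(\phi)$ is not, and then invoke Proposition \ref{prop:same_regularity}. Until these steps are supplied, and unless the statement is weakened to the shear case, the proposal does not prove the proposition.
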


\begin{proof}

By induction on the degree of nilpotency, it suffices to show the result when $N'$ is a central extension of $N$ so that $N'/\Gamma'\to N/\Gamma$ is a principal $\mathbb{T}^n$-bundle, where $n=\dim Z(N')$.

\begin{lem}
Suppose that $L\colon N/\Gamma\to N/\Gamma$ is an Anosov automorphism and $L'$ is a central extension of $L$ to the torus bundle $N'/\Gamma'$ that is an $A$-map, where $A$ is the restriction of $L'$ to the fiber through $e\Gamma'$.
Then there exists a $C^0$ neighborhood $\mc{U}$ of $L$ in $\Diff^{\infty}(N/\Gamma)$ such that if $f\in \mc{U}$ then there exists a smooth $A$-map $F$ covering $f$ such that $d_{C^{k}}(F,L')=O(d_{C^{k+1}}(f,L))$.
\end{lem}

In principle, the above lemma is immediate from Theorem 6.2 in \autocite{gogolev2015new}. We will not recapitulate that theorem in full here as it requires developing the language of classifying spaces. Vaguely, the theorem says that the only obstruction to the existence of such a map is at the level of homotopy. As $L'$ is an $A$-map covering $L$ and $f$ is homotopic to $L$, there is no obstruction to finding an $A$-map covering $f$. However, the theorem there does not assure us that $F$ is near to $L'$ nor that $F$ is as smooth as $L'$.

Consequently, we will give a different more explicit proof which produces $F$ that is $C^\infty$ near to $L'$.

\begin{proof}

Choose $\mc{U}$ sufficiently $C^0$-small so that if $f\in \mc{U}$ then for any $x\in N/\Gamma$ there exists a unique length minimizing geodesic $\gamma_x$ starting at $f(x)$ and ending at $L(x)$.  We write $E_x$ for the fiber of $N'/\Gamma'$ over $N/\Gamma$.

As $N'/\Gamma'$ is a principle $\mathbb{T}^n$-bundle over $N/\Gamma$, we may choose a $\mathbb{T}^n$-connection on this bundle, which gives an associated parallel transport. See, for instance, \autocite[Ch. 2 Sec. 3]{kobayashi1963foundations}. Consequently, parallel transport along $\gamma_x$ gives a map $P_x\colon E_{f(x)}\to E_{L(x)}$, which in a trivialization, is a translation on $\mathbb{T}^n$. Observe that these maps $P_x$ piece together to form a $C^{\infty}$ map $\Psi\colon  N'/\Gamma'\to N'/\Gamma'$, which is defined fiberwise and is an $\Id$-map of $\mathbb{T}^n$-bundles covering $L\circ f^{-1}$. Note that $\|\Psi\|_{C^{k}}=O(\|L\circ f^{-1}\|_{C^{k+1}})$ due to the definition of $\Psi$ via parallel transport. Consequently, $F=\Psi\circ L'$ is an $A$-map covering $f$ such that $d_{C^k}(F,L')=O(d_{C^{k+1}}(f,L))$. 
\end{proof}

\begin{lem}
Suppose that $F$ is an $A$-map of the bundle $N'/\Gamma'$ covering a map $f\colon N/\Gamma\to N/\Gamma$. Suppose $f$ has the same periodic data as $L$, then $F$ has the same periodic data as $L'$. Further, if $f$ is volume-preserving then so is $F$ and, in this case, $F$ has the same volume Lyapunov spectrum as $L'$.
\end{lem}

\begin{proof}
To begin, we show that $F$ and $L'$ have the same periodic data. Note that every periodic point of $F$ lies above a periodic point of $f$, i.e.\@ if $p$ is $F$ periodic then $\pi(p)$ is $f$ periodic. Suppose, for the moment, that $x$ is a fixed point. Then consider the differential of $F$ in a trivialization $U\times \mathbb{T}^n$. As $F$ is an $A$-map, the trivialization looks like:
\[
(u,z)\mapsto (f(u),Az+\phi(u)),
\]
for some $C^{\infty}$ function $\phi\colon U\to \mathbb{T}^n$, which gives the translational part of the map. Observe that the differential of the map is 
\begin{equation}\label{eqn:return_differential_of_F}
\begin{bmatrix}
Df & 0\\
D\phi & A
\end{bmatrix}.
\end{equation}
Similar considerations apply at all periodic points.
Consequently, the periodic data of $F$ is the union of the periodic data of $f$ with the periodic data of $A$. By Proposition \ref{prop:periodic_approximation}, this implies that all of the Lyapunov exponents for every invariant measure of $F$ coincide with those of $L'$. 

It remains to show that $F$ preserves a volume. Let $\omega$ be the volume form on $N/\Gamma$ preserved by $f$. There is a well-defined $n$-form, $\eta$, on $N'/\Gamma'$ coming from the principal torus bundle structure. Consider the form $(\pi^*\omega)\wedge \eta$, i.e.\@ the pullback of volume on the base wedged with the volume on the fiber. That this form is preserved is immediate due to the block form of the differential of $F$ obtained in equation (\ref{eqn:return_differential_of_F}).
\end{proof}

The following lemma shows that a conjugacy between $L'$ and the perturbation $F$ fibers over a map on $N/\Gamma$.

\begin{lem}
As above, suppose that $F\colon N/\Gamma\to N/\Gamma$ is an $A$-map and $L$ is an Anosov automorphism with simple spectrum that is also an $A$-map. If $H$ is a $C^1$ conjugacy between $F$ and $L$ then $H$ perserves the fibers of the fibration $N/\Gamma\to \underline{N}/\underline{\Gamma}$.
\end{lem}

\begin{proof}

By supposition as $H$ is $C^1$, $F$ has a number of dynamical features that it inherits from $L$. In particular, the stable and unstable subspaces are defined and have a continuous splitting into continuous one-dimensional subbundles. Let $E_i$ be a subspace associated to an eigenvalue $\lambda_i$ of $A$. We claim that $H$ intertwines the corresponding distributions $E_i$ associated to eigenvalues of the map $A$ acting on the fiber.
By inspecting equation (\ref{eqn:return_differential_of_F}), we see that for a periodic point of $F$ that the subspace $E_i$ is tangent to the fiber of the projection $N/\Gamma\to \underline{N}/\underline{\Gamma}$. This holds for each exponent arising from $A$ as the distribution defined by the splitting into one-dimensional subbundles is continuous and periodic points are dense. Similarly, the one-dimensional stable and unstable subspaces of $L$ arising from the action of $L$ on the center of $N$ are tangent to the fiber. Since we assumed simple spectrum, we see that $DH$ carries $E_i^{*,F}$ to $E_i^{*,L}$ for $*\in \{s,u\}$, and consequently $H$ preserves the fibers of the fibration.
\end{proof}

We now show that the perturbation $F$ of $L'$ that we constructed cannot be $C^1$ conjugate to  $L'$.
Suppose, for the sake of contradiction, that it is so that there exists a $C^1$ conjugacy $H$ satisfying $F\circ H=H\circ L'$. By the lemma, all three of these maps preserve the structure of the fibration $N'/\Gamma'\to N/\Gamma$. So, each descends to a map $N/\Gamma\to N/\Gamma$ and these quotient maps satisfy $\underline{F}\circ \underline{H}=\underline{H}\circ \underline{L'}$. By assumption, we already know what two of these maps are, so we have that $f\circ \underline{H}=\underline{H}\circ L$. 
But, as $H$ was assumed to be a $C^1$ map, and we showed that it perserves the fibers of the bundle $N'/\Gamma'\to N/\Gamma$, we obtain that $\underline{H}$ is $C^1$. However, as $h$ is not $C^1$, this contradicts Proposition \ref{prop:same_regularity}, which shows that if one conjugacy between $f$ and $L$ is $C^1$ then all conjugacies are $C^1$.
\end{proof}

Using the above we can prove the following theorem.

\begin{thm}\label{thm:non_rigid}
Suppose that $L\colon N/\Gamma\to N/\Gamma$ is an Anosov automorphism with simple spectrum such that either $L$ is not irreducible or the exponents of $L$ are not sorted. Then $L$ is not locally Lyapunov spectrum rigid or periodic data rigid.
\end{thm}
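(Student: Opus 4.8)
The plan is to show that whenever $L$ fails irreducibility or sortedness one can produce a volume-preserving diffeomorphism $g$ that is arbitrarily $C^{\infty}$-close to $L$, has the same periodic data as $L$, and is not even Lipschitz conjugate to $L$. By Proposition~\ref{prop:periodic_approximation} such a $g$ automatically has the same volume Lyapunov spectrum as $L$, so a single construction of this kind contradicts both local periodic data rigidity and local Lyapunov spectrum rigidity at once. At the outset I would also record that being irreducible, having sorted spectrum, and being locally Lyapunov spectrum rigid are all unchanged under $L\mapsto L^{-1}$ (for the first two this is immediate from Definitions~\ref{defn:irreducibility} and~\ref{defn:sorted_spectrum} with Proposition~\ref{prop:alg_condition}; for the third, $hgh^{-1}=L$ iff $hg^{-1}h^{-1}=L^{-1}$, the map $g\mapsto g^{-1}$ is a homeomorphism of $\Diff^{1+}_{\vol}$, and inversion reverses and negates Lyapunov spectra), so we may freely replace $L$ by $L^{-1}$.

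The argument is by induction on $\dim N$. Suppose first that $N$ is non-abelian and that the induced automorphism $\underline L$ of $\underline N/\underline\Gamma$ is itself not irreducible or not sorted. Since $\dim\underline N<\dim N$, the inductive hypothesis gives a volume-preserving $\underline g$ close to $\underline L$ with the same periodic data that is not Lipschitz conjugate to $\underline L$. Now $N/\Gamma\to\underline N/\underline\Gamma$ is a principal $Z(N)/(Z(N)\cap\Gamma)$-bundle obtained from a central extension, so the $A$-map argument in the proof of Proposition~\ref{prop:lack_of_rigidity_persists_in_extensions} applies: it lifts $\underline g$ to a volume-preserving $A$-map $g$ on $N/\Gamma$ with $d_{C^{k}}(g,L)=O(d_{C^{k+1}}(\underline g,\underline L))$ and with the same periodic data as $L$, and it shows that any $C^{1}$ conjugacy between $g$ and $L$ descends to a $C^{1}$ conjugacy between $\underline g$ and $\underline L$. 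Since no conjugacy between $\underline g$ and $\underline L$ is $C^{1}$ (Proposition~\ref{prop:same_regularity}), $L$ is not periodic data rigid, which completes the inductive step.

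It remains to treat the base case: $L$ fails irreducibility or sortedness but $\underline L$ is irreducible and sorted (this subsumes the smallest possibility, $N$ abelian, where $\underline N$ is trivial). If $N$ is abelian then sortedness is vacuous, so $L$ is reducible, and since $Z(N)=N$ Proposition~\ref{prop:irreducibility_fails} applies. In general, if $L\mid_{Z(N)}$ is reducible over $\Q$ we again invoke Proposition~\ref{prop:irreducibility_fails}. Otherwise $L\mid_{Z(N)}$ is irreducible, and using that the top layer of the $L$-grading coincides with the last nonzero term of the lower central series and is therefore central, that this grading and the induced grading of $\underline{\mf{n}}$ are Carnot (Proposition~\ref{prop:Lgradingcarnot}), and that the irreducibility and sortedness of $\underline L$ prevent the obstruction from already being visible in $\underline{\mf{n}}$, one extracts --- after possibly passing to $L^{-1}$ --- an unstable eigenvector $w\in\mf{z}:=Z(\mf{n})$ and an unstable eigenvector $u\notin\mf{z}$ with $\abs{\lambda_u}>\abs{\lambda_w}$. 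This is exactly the hypothesis of Proposition~\ref{prop:out_of_order_reducible_prop}, which supplies the required perturbations.

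I expect the main obstacle to be precisely this last step: the algebraic bookkeeping by which the bare failure of irreducibility or sortedness of $L$, combined with the good behaviour of $\underline L$, is forced to reside on an eigenvector in the center of $\mf{n}$ and put into exactly the configuration demanded by Proposition~\ref{prop:out_of_order_reducible_prop} or Proposition~\ref{prop:irreducibility_fails}. One has to track carefully how the $L$-grading meets the center $\mf{z}$, keep the two modes of failure separate, and exploit the freedom to replace $L$ by $L^{-1}$ to normalize which side the offending eigenvalue sits on. The inductive step is comparatively routine; the only care needed there is to confirm that each lemma inside the proof of Proposition~\ref{prop:lack_of_rigidity_persists_in_extensions} survives passage to the central extension by $Z(N)$ rather than by a term of the lower central series.
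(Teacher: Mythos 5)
Your overall architecture is the paper's: the paper proves Theorem \ref{thm:non_rigid} by passing to a lower-central-series quotient on which the induced automorphism satisfies the hypotheses of Proposition \ref{prop:out_of_order_reducible_prop} or Proposition \ref{prop:irreducibility_fails}, and then lifting non-rigidity back up via Proposition \ref{prop:lack_of_rigidity_persists_in_extensions}; your induction on $\dim N$ through quotients by the center is a reorganization of this, and the central-extension case is indeed what the proof of Proposition \ref{prop:lack_of_rigidity_persists_in_extensions} actually uses. The genuine gap is exactly the step you defer: the base-case claim that if $L$ fails sortedness while $\underline{L}$ is sorted and irreducible and $L\mid_{Z(N)}$ is irreducible, then (after possibly replacing $L$ by $L^{-1}$) one can extract an unstable eigenvector $w\in\mf{z}$ and an unstable eigenvector $u\notin\mf{z}$ with $\abs{\lambda_u}>\abs{\lambda_w}$. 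That is not justified and, as stated, is not forced by the hypotheses. An unstable sortedness violation gives eigenvectors $v\in\cgrad{\mf{n}}{k}$ and $w\in\cgrad{\mf{n}}{j}$ with $j>k$, both unstable, and $\abs{\lambda_w}<\abs{\lambda_v}$; sortedness of $\underline{L}$ only forces at least one of them to lie in $\mf{z}$. If the \emph{slower} vector $w$ is central and $v$ is not, you are indeed in the configuration of Proposition \ref{prop:out_of_order_reducible_prop}. But the violation can just as well have the \emph{faster} vector $v$ central and $w$ non-central, or both central; then Proposition \ref{prop:out_of_order_reducible_prop} does not apply to $L$ with this pair, and passing to $L^{-1}$ does not help, since $v$ and $w$ become stable for $L^{-1}$ and the proposition only concerns unstable eigenvectors (inversion only converts stable-side violations into unstable-side ones, it does not rearrange which member of an unstable unsorted pair is central).

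These leftover cases can be closed, but by routing them through Proposition \ref{prop:irreducibility_fails} rather than the shear of Proposition \ref{prop:out_of_order_reducible_prop}: if $v\in\mf{z}\cap\cgrad{\mf{n}}{k}$ while some eigenvector lies in a layer $j>k$, then the nilpotency class exceeds $k$, so $\mf{z}$ also contains the last nonzero term of the lower central series; hence $\mf{z}\cap\lcc{\mf{n}}{k+1}$ is a nonzero, proper, rational, $L$-invariant subspace of $\mf{z}$, so $L\mid_{Z(N)}$ is reducible over $\Q$ after all, contradicting the branch you are in (equivalently, in that branch the central eigenvectors all sit in the top layer and the problematic configurations cannot occur). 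Your proposal contains no such argument, and the same issue affects your irreducibility base case: failure of irreducibility of $L$ lives on the tori $N_k/[N_k,N_k]$ for all $k$, and with $\underline{L}$ irreducible it does not formally reduce to reducibility of $L\mid_{Z(N)}$ without an argument relating those tori to their quotients by the image of the center. This bookkeeping is precisely what the paper compresses into its single sentence asserting the existence of a suitable quotient $N/N_k$; so your plan is right in spirit and uses the same ingredients, but the specific case split you propose would fail as written, and supplying the missing algebraic reduction is the actual content of the theorem beyond Propositions \ref{prop:out_of_order_reducible_prop}, \ref{prop:irreducibility_fails}, and \ref{prop:lack_of_rigidity_persists_in_extensions}.
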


\begin{proof}
There exists a term $N_k$ in the lower central series such that the map induced by $L$ on $(N/N_k)/\pi(\Gamma)$ satisfies the hypotheses of either Proposition \ref{prop:out_of_order_reducible_prop} or Proposition \ref{prop:irreducibility_fails}. Consequently the induced map on the quotient is not periodic data rigid. By Proposition \ref{prop:lack_of_rigidity_persists_in_extensions}, we conclude that $L$ is not periodic data rigid or Lyapunov spectrum rigid either.
\end{proof}

\section{Examples, counterexamples, and curiosities}\label{sec:examples}

In this section we construct several examples of Anosov automorphisms of nilmanifolds illustrating novelties of the nilmanifold case. Most importantly, we construct an automorphism of a two-step nilmanifold that is irreducible and has sorted spectrum, which shows that the rigidity theorem in this paper is not vacuous. We also describe nilmanifolds that do not admit locally rigid automorphisms. We also give an example of an automorphism where the conjugacy is $C^{1+}$ along the unstable foliation but not the stable foliation.

\subsection{A locally rigid automorphism}

In his seminal survey of smooth dynamics, \autocite{smale1967differentiable}, Smale gave two examples of Anosov automorphisms on a particular two-step nilmanifold. However, Smale's examples are not rigid because on the quotient torus they are the direct sum of two automorphisms on $\mathbb{T}^2$. Consequently, they are reducible.
Though Smale's particular examples are not rigid, it is possible to construct a rigid example on that nilmanifold. 

\begin{example}[Smale's nilmanifold]
Consider the Lie group given by the product of two copies of the Heisenberg group, $H_1$ and $H_2$. Write $X_i,Y_i,Z_i$, where $[X_i,Y_i]=Z_i$ for the usual basis of the Lie algebra $\mf{h}_i$ of $H_i$.  We then construct an automorphism of the Lie group and algebra by setting:
\begin{align*}
X_1&\mapsto (26+15\sqrt{3})X_1+(8733+5042\sqrt{3})Y_1\\
Y_1&\mapsto (71+41\sqrt{3})X_1+(28901 + 16686\sqrt{3})Y_1\\
Z_1&\mapsto (262087+151316\sqrt{3})Z_1\\
X_2&\mapsto (26-15\sqrt{3})X_2+(8733-5042\sqrt{3})Y_2\\
Y_2&\mapsto (71-41\sqrt{3})X_2+(28901 - 16686\sqrt{3})Y_2\\
Z_2&\mapsto (262087-151316\sqrt{3})Z_2.
\end{align*}
Note that for $X_2,Y_2,Z_2$ we have just changed $+\sqrt{3}$ to $-\sqrt{3}$. 
This defines an automorphism of $H_1\times H_2$. As in Smale's case, we define a lattice by first defining a lattice in $\mf{h}_1\oplus \mf{h}_2$. View this Lie algebra as matrices of the form
\[
\begin{bmatrix}
0 & X & Z\\
0 & 0 & Y\\
0 & 0 & 0
\end{bmatrix}\oplus
\begin{bmatrix}
0 & X & Z\\
0 & 0 & Y\\
0 & 0 & 0
\end{bmatrix}.
\]
Now for the lattice, take the elements where the first matrix has entries in $\mc{O}(\Q(\sqrt{3}))$ and for the second factor take the conjugate entries to those chosen in the first factor. Explicitly, this lattice consists of matrices with entries in $\Z+\Z\sqrt{3}$ of the form 
\[
\begin{bmatrix}
0 & a+ b\sqrt{3} & e+f\sqrt{3}\\
0 & 0 & c+d\sqrt{3}\\
0 & 0 & 0\\
\end{bmatrix}\oplus
\begin{bmatrix}
0 & a-b\sqrt{3}& e-f\sqrt{3}\\
0 & 0 & c-d\sqrt{3}\\
0 & 0 & 0\\
\end{bmatrix}.
\]
The automorphism specified preserves this lattice in $\mf{h}$ because the automorphism is defined over $\Z[\sqrt{3}]$, and is invertible because its determinant is in $\Z[\sqrt{3}]^{\times}$. Thus it can be shown that the lattice, $\Gamma$, generated by the exponential image of this lattice in $H_1\oplus H_2$ is preserved by the corresponding automorphism of $H_1\oplus H_2$. Thus we obtain an automorphism of $(H_1\oplus H_2)/\Gamma$.

We now check that this automorphism is sorted and totally irreducible. First we need irreducibility over $\Q$ of the action in the base and in the fiber. The map in the fiber is a map of a $2$-torus and hence is irreducible as it is hyperbolic. The map in the fiber has eigenvalues $262087+151316\sqrt{3}$
and $262087-151316\sqrt{3}$.

In order to compute the map in the base, it suffices to study the automorphism we obtain of the lattice when we have quotiented out by the fiber. Represent an element of the basis for the quotient lattice by $(a,b,c,d)$ corresponding to the matrix:
\[
\begin{bmatrix}
0 & a+ b\sqrt{3} & *\\
0 & 0 & c+d\sqrt{3}\\
0 & 0 & 0\\
\end{bmatrix}\oplus
\begin{bmatrix}
0 & a-b\sqrt{3}& *\\
0 & 0 & c-d\sqrt{3}\\
0 & 0 & 0\\
\end{bmatrix}.
\]
Written with respect to the coordinates $(a,b,c,d)$, the automorphism is:
\begin{equation}
\begin{bmatrix}
26 & 45& 71 & 123 \\
15 & 26 & 41 & 71 \\
8733 & 15126 & 28901 & 50058\\
5042 & 8733 & 16686 & 28901
\end{bmatrix}.
\end{equation}
We then calculate the eigenvalues of this matrix using Mathematica. They are approximately:
\begin{center}
\texttt{57844.9, 9.06171, 0.0193643, 0.0000985198}
\end{center}
Whereas the eigenvalues in the fiber are approximately:
\begin{center}
\texttt{524174, .00000190779(=1.9*10e-6)}
\end{center}
Thus we see that the stable and unstable spectrum are sorted.
Finally, one may compute the characteristic polynomial of this matrix and check that it is irreducible over $\Q$. This automorphism is sorted and totally irreducible and so is locally Lyapunov spectrum rigid by the main theorem.
\end{example}

\subsection{Non-rigid families}

Using the necessity of sorted spectrum for local rigidity, we can exhibit nilmanifolds such that no Anosov automorphism with simple spectrum on that nilmanifold is Lyapunov spectrum rigid. For this construction we use a free nilpotent Lie group.
Much additional information about automorphisms of free nilpotent groups is contained in \autocite{payne2009anosov}. 
 We consider the two-step free nilpotent group on $3$ generators, $N_{3,2}$. Explicitly, 
define a Lie algebra $\mf{n}$ via three generators $x,y,z$. These generators bracket to linearly independent elements $[x,y]$, $[x,z]$ and $[y,z]$, and we stipulate that $\mf{n}_{3}=0$, so that this Lie algebra is two-step. In fact, we see that $\{x,y,z,[x,y],[x,z],[y,z]\}$ is a basis for $\mf{n}$ with rational structure coefficients, and hence its $\Z$-span is a lattice in $\mf{n}$. The exponential image of this lattice generates a lattice in $N$, which we call $\Lambda$. 
As $N_{3,2}$ is free, an automorphism $L$ of $N_{3,2}$ is determined by the induced map $\underline{L}$ on $N_{3,2}/Z(N_{3,2})$, which we identify with $\R^3$. If $\underline{L}$ has eigenvalues $0<\abs{\lambda_1}<1<\abs{\lambda_2}<\abs{\lambda_3}$, and preserves volume so that $\abs{\lambda_1}\abs{\lambda_2}\abs{\lambda_3}=1$, then we see that $L\mid_{Z(N_{2,3})}$ has as its exponents $\abs{\lambda_i}^{-1}$, for $i\in \{1,2,3\}$. In particular, $1>\abs{\lambda_2}^{-1}>\abs{\lambda_1}$, and so such an automorphism has its stable exponents out of order. Consequently, no such automorphism of $N_{3,2}/\Lambda$ is Lyapunov spectrum rigid.

Despite such an automorphism not being rigid, we can still construct an automorphism $L$ on this nilmanifold so that a conjugacy between $L$ and a sufficiently small perturbation with the same spectrum is $C^{1+}$ along the unstable foliation even though it is not $C^{1+}$ along the stable foliation. 

\begin{example}[Free nilpotent group]

Consider the automorphism of $\mf{n}$ defined by:

\begin{align*}
x&\mapsto y\\
y&\mapsto z\\
z&\mapsto -x+8y-z\\
[x,y]&\mapsto [y,z]\\
[x,z]&\mapsto [x,y]-[y,z]\\
[y,z]&\mapsto [x,z]-8[y,z].\\
\end{align*}
This automorphism preserves the lattice in $\mf{n}$ and hence extends to an automorphism of the group $N_{3,2}$ preserving the lattice, $\Lambda$, generated by the exponential image this lattice. The differential at the identity of this automorphism is:
\[
\begin{bmatrix}
0 & 0 & -1 & 0 & 0 & 0\\
1 & 0 & 8 & 0 & 0 & 0\\
0 & 1 & -1 & 0 & 0 & 0\\
0 & 0 & 0 & 0 & 1 & 0\\
0 & 0 & 0 & 0 & 0 & 1\\
0 & 0 & 0 & 1 & -1 & -8
\end{bmatrix}
\]
The map on the $\mf{n}/[\mf{n},\mf{n}]$ is given by the upper left hand $3\times 3$ submatrix. This map has characteristic polynomial $x^3+x^2-8x+1$, which defines a totally real extension of degree $3$ over $\R$. In particular this polynomial is irreducible over $\Q$. Similarly the bottom right corner has characteristic polynomial $1+x+8x^2-x^3$, which is also irreducible over $\Q$. One may compute the eigenvalues of the matrix numerically to find that they are, in order of magnitude:

\begin{center}
\texttt{-7.85652, -3.4227, 2.29542, -0.435651, 0.292167, 0.127283}.
\end{center}
The eigenvalues coming from the three by three matrix in the upper left are the second, third, and last. The induced map on the quotient and fiber are both irreducible and the Lyapunov exponents on the unstable manifold are sorted. The exponents on the stable manifold are not sorted as the strongest contraction happens in the base.
\end{example}

\subsection{Rigidity of non-linear examples}

It is natural to ask whether there are non-linear Anosov diffeomorphisms that exhibit volume Lyapunov spectrum rigidity. Recent examples constructed by Erchenko show that even in the case of $\mathbb{T}^2$ that two diffeomorphisms may have the same volume Lyapunov spectrum and not be $C^1$ conjugate. This follows from \autocite[Thm. 1.1]{erchenko2019flexibility}, which gives examples of diffeomorphisms with the same volume Lyapunov spectrum but different Lyapunov spectrum for the measure of maximal entropy. See Question 2 of \autocite{erchenko2019flexibility} for a more refined question relating rigidity to an associated pressure function.

\printbibliography

\appendix

\end{document}